\tikzset{snake it/.style={decorate, decoration=snake}}
\newtheorem{lemma}{Lemma}[section]
\newtheorem{corollary}[lemma]{Corollary}
\newtheorem{theorem}[lemma]{Theorem}
\newtheorem{conjecture}[lemma]{Conjecture}
\theoremstyle{definition}
\newtheorem{defn}[lemma]{Definition}
\newtheorem{claim}[lemma]{Claim}
\theoremstyle{remark}
\global\long\def\eps{\varepsilon}
\global\long\def\N{\mathbb{N}}
\global\long\def\R{\mathbb{R}}
\global\long\def\P{\mathbb{P}}
\global\long\def\cF{\mathcal{F}}
\global\long\def\cK{\mathcal{K}}
\global\long\def\E{\mathbb{E}}
\newcommand{\claimproofstart}[1][Proof]{\begin{proof}[#1]
\renewcommand{\qedsymbol}{$\boxdot$}}
\newcommand\claimproofend{
\end{proof}
\renewcommand{\qedsymbol}{$\square$}}
\def\NatOpt#1{}
\def\RichardOpt#1{}
\def\DanielOpt#1{}
\def\nat#1{}
\def\richard#1{}
\def\daniel#1{}
\let\nat=\NatOpt 
\let\richard=\RichardOpt
\let\daniel=\DanielOpt
\newcommand{\labelinthm}[1]{%
   \label{temp#1}
   \protected@write \@auxout {}{\string \newlabel{#1}{{\emph{\ref{temp#1}}}{\thepage}{\emph{\ref{temp#1}}}{temp#1}{}} }%
}
\newcounter{propcounter}
\global\long\def\cP{\mathcal{P}}
\title{A proof of the Kim-Vu sandwich conjecture}
\author{Natalie Behague\thanks{Mathematics Institute, University of Warwick, Coventry CV4 7AL, UK. natalie.behague@warwick.ac.uk, daniel.ilkovic@warwick.ac.uk, and richard.montgomery@warwick.ac.uk.}
\and Daniel  Il'kovi\v{c}$^*$\thanks{Institute of Mathematics, Leipzig University, Augustusplatz 10, 04109 Leipzig, Germany.
\newline \vspace{-0.2cm}
\newline NB and RM were supported by the European
Research Council (ERC) under the European Union Horizon 2020 research and innovation programme (grant agreement No.\ 947978).
\newline
 DI was supported by the Alexander von Humboldt Foundation in the framework of the Alexander von Humboldt Professorship of Daniel Král' endowed by the Federal Ministry of Education and Research.\vspace{-1cm}} \and Richard Montgomery$^*$}
\begin{document}

\maketitle

\begin{abstract}
In 2004, Kim and Vu conjectured that, when $d=\omega(\log n)$, the random $d$-regular graph $G_d(n)$ can be sandwiched with high probability between two random binomial graphs $G(n,p)$ with edge probabilities asymptotically equal to $\frac{d}{n}$. That is, there should exist $p_*=(1-o(1))\frac{d}{n}$, $p^*=(1+o(1))\frac{d}{n}$ and a coupling $(G_*,G,G^*)$ such that $G_*\sim G(n,p_*)$, $G\sim G_d(n)$, $G^*\sim G(n,p^*)$, and $\P(G_*\subset G\subset G^*)=1-o(1)$.
Known as the \emph{sandwich conjecture}, such a coupling is desirable as it would allow properties of the random regular graph to be inferred from those of the more easily studied binomial random graph.
The conjecture was recently shown to be true when $d\gg\log^4n$ by Gao, Isaev and McKay. In this paper, we prove the sandwich conjecture in full.
We do so by analysing a natural coupling procedure introduced in earlier work by Gao, Isaev and McKay, which had only previously been done when $d\gg n/\sqrt{\log n}$.
\end{abstract}

\section{Introduction}\label{sec:intro}

Random graphs have been a central object of study in Combinatorics since the foundational work of Erd\H{o}s and R\'enyi~\cite{ER59} in 1959.
The binomial random graph, or Erd\H{o}s-R\'enyi random graph, $G(n,p)$ has vertex set $[n]=\{1,\ldots,n\}$ and each potential edge included independently at random with probability $p$.
Along with the closely related uniformly random graph with $n$ vertices and $\lceil p\binom{n}{2}\rceil$ edges, $G(n,p)$ is the most studied random graph model. The next most studied model is probably that of the random regular graph $G_d(n)$, which is chosen uniformly at random from all $d$-regular graphs with vertex set $[n]$. Throughout this paper, and as is common, we will implicitly assume that $dn$ is even, so that the set of such graphs is non-empty.

The study of random regular graphs began in earnest in the late 1970's, with early work including that by Bender and Canfield~\cite{bender1974asymptotic,bender1978asymptotic}, Bollob\'as~\cite{bollobas1980probabilistic}, and
Wormald~\cite{wormald1981asymptoticconnect,wormald1981asymptoticcycle}. Compared to $G(n,p)$, where the independence of the edges allows the use of a wide variety of techniques, studying $G_d(n)$ is often much more difficult. For example, from developments~\cite{bollobas1984evolution,komlos1983limit} of the breakthrough work of P\'osa~\cite{posa1976hamiltonian} in 1976, it has been long understood when we may expect $G(n,p)$ to be Hamiltonian. On the other hand, it was widely anticipated that, for each $3\leq d\leq n-1$, $G_d(n)$ should be Hamiltonian with high probability (i.e., with probability $1-o(1)$), but proving this took the combined work of many authors~\cite{bollobas1983almost,cooper2002randomconnect,cooper2002randomindependence,krivelevich2001random,robinson1992almost,robinson1994almost} over the course of 20 years (see the surveys~\cite{frieze2019hamilton,wormald1999models} for more details), using a variety of tools in different regimes for $d$, from the configuration model~\cite{bollobas1983almost}, through switching methods~\cite{mckay1981subgraphs}, to estimates on the number of regular graphs~\cite{mckay1990asymptotic,shamir1984large}.

This increased difficulty and technicality gives great appeal to finding links from $G_d(n)$ to $G(n,p)$, so that we may hopefully deduce properties of $G_d(n)$ from those known for $G(n,p)$. In 2004, Kim and Vu~\cite{kim2004sandwiching} formalised this desire in their famous `sandwich conjecture'. They conjectured that, if $d=\omega(\log n)$, then there are $p_*,p^*=(1+o(1))d/n$ and a coupling $(G_*,G,G^*)$ such that $G_*\sim G(n,p_*)$, $G\sim G_d(n)$, $G^*\sim G(n,p^*)$,  and $\P(G_*\subset G\subset G^*)=1-o(1)$. If true, the requirement $d=\omega(\log n)$ cannot be removed; as is well-known, for each $C>0$ there is some $\eps>0$ such that if $d=C\log n$ then, with probability $1-o(n^{-1})$, $G(n,(1+\eps)d/n)$ has minimum degree less than $d$, and hence contains no $d$-regular subgraph.

Kim and Vu~\cite{kim2004sandwiching} proved the lower part of their conjecture when $\log n\ll d\ll n^{1/3}/\log^2n$, and a weakened upper part for the same range of $d$. More specifically, with high probability their coupling $(G_*,G,G^*)$ satisfied $G_*\subset G$ and $\Delta(G\setminus G^*)\leq (1+o(1))\log n$.
In 2017, Dudek, Frieze, Ruci{\'n}ski and {\v{S}}ileikis~\cite{dudek2017embedding} extended this by showing that the lower part holds when $d=o(n)$, and that it can be generalised to hypergraphs. Subsequently, a major breakthrough was made by Gao, Isaev and McKay~\cite{gao2020sandwichingFIRSTBOUND,gao2022sandwichingFIRSTBOUND}, who gave the first coupling $(G_*,G,G^*)$ without a weakened upper part, that is, in which $\P(G_*\subset G\subset G^*)=1-o(1)$. This allowed them to show that the sandwich conjecture is true if $\min\{d,n-d\}\gg n/\sqrt{\log n}$. Klimo{\v{s}}ov{\'a}, Reiher, Ruci{\'n}ski  and {\v{S}}ileikis~\cite{klimovsova2023sandwiching} then extended this to show that it holds for $\min\{d,n-d\}\gg(n\log n)^{3/4}$ (while generalising it to biregular graphs).
Very significant progress was then made again by  Gao, Isaev and McKay~\cite{gao2020kim}, who showed that the conjecture is true provided that  $d\gg \log^4n$.

Here,  we will prove the sandwich conjecture in full, in the following very slightly stronger form.
\begin{theorem}\label{thm:sandwich}
For each $\eps>0$ there is some $C>0$ such that the following holds for each $d\geq C\log n$. There is some $(1-\eps)d/n\leq p_*,p^*\leq (1+\eps)d/n$ and a coupling $(G_*,G, G^*)$ of random graphs such that $G_*\sim G(n,p_*)$, $G \sim G_d(n)$, $G^* \sim G(n,p^*)$, and $\P(G_*\subset G \subset G^*)=1-o(1)$.
\end{theorem}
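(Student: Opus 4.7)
The plan is to analyse the natural coupling procedure introduced by Gao, Isaev, and McKay and push its analysis all the way down from the very dense regime $d \gg n/\sqrt{\log n}$ to the sharp threshold $d \geq C \log n$. In that coupling one samples $G^* \sim G(n, p^*)$ with $p^* = (1+\varepsilon)d/n$ and then, conditional on $G^*$, runs an edge-removal process driven by random switchings that outputs a uniformly random $d$-regular subgraph $G \subseteq G^*$; a symmetric (or complementary) procedure produces $G_* \subseteq G$ from $G_* \sim G(n, p_*)$. By monotonicity it suffices to carry out the upper inclusion in detail and then compose the two halves via an independent coupling step, so I focus on constructing the pair $(G, G^*)$.

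The first preparatory step is to establish the quasirandomness properties of $G^* \sim G(n, p^*)$ that are needed as a deterministic input: sharp upper and lower tail bounds on each vertex degree, codegree bounds, and bounds on the number of short paths and small subgraphs through each vertex or edge. Since $n p^* = (1+\varepsilon)d$, the typical degree deviations are of order $\sqrt{d \log n}$, which is comparable to the slack $\varepsilon d$ when $d = \Theta(\log n)$, so these properties must be formulated in a way that is robust to vertex-by-vertex fluctuations. The second step is to describe the removal process precisely as a Markov chain on graphs whose transitions are local (switchings of pairs of edges together with carefully weighted single-edge deletions), and to verify by a reversibility computation that its composition with $G(n, p^*)$ gives the uniform measure on $d$-regular graphs, so that the output is indeed distributed as $G_d(n)$.

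The main obstacle, and the heart of the proof, is the probabilistic analysis of the removal process at $d = \Theta(\log n)$. At this density the standard concentration estimates, based on Azuma--Hoeffding or simple Chernoff arguments applied to the process, have error terms that blow up in exactly the regime that remains open. To push through one must replace these coarse bounds with finer estimates that jointly track the evolution of the degree sequence and of certain switching-visible statistics (cherries and short paths through each vertex or through each pair of vertices). A two-phase strategy is natural: first a short preprocessing phase applies local surgery at the $o(n)$ vertices of $G^*$ whose degrees or local neighbourhoods are atypical, and second a main switching phase is analysed by an inductive concentration argument in which, at each step, the remaining graph is still pseudorandom enough to control the transition probabilities to within a multiplicative $1 + o(1)$ factor. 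I expect the technical core of the paper to consist of these new concentration estimates, together with a careful accounting of how rare local structures (very high-degree vertices, dense neighbourhoods, short cycles) can propagate through the process and create obstructions that were absorbed into slack in the denser regimes but must now be controlled exactly.
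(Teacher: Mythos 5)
Your plan contains a construction error and misses the ideas that actually make the analysis work at $d=\Theta(\log n)$. First, the coupling you describe — sample $G^*\sim G(n,p^*)$ and then, conditional on $G^*$, output a uniformly random $d$-regular subgraph $G\subset G^*$ — does not give $G\sim G_d(n)$: one would have $\P(G=K)=\sum_{F\supset K}\P(G^*=F)/|\cK_d(F)|$, which is not constant over $K\in\cK_d(n)$. The paper instead runs an edge-deletion process from the \emph{complete} graph, removing an edge $e$ from the current graph $F$ with probability $|\cK_d(F-e)|/\max_f|\cK_d(F-f)|$; this is tuned (by an induction on the distribution of the intermediate graphs, not a reversibility argument) so that the final graph is exactly uniform on $\cK_d(n)$, and $G^*$ is produced by a \emph{separate} deletion process coupled to the first only through shared random seeds $(e_i,x_i)$. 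A further essential point you omit is that the acceptance probability must be at least $1-\eta_i$ where $\eta_i$ \emph{varies} with the number of edges removed, and the whole coupling hinges on $\sum_m \eta_m\,dn/(2(m-dn/2))$ being small; a fixed $1-o(1)$ bound per step accumulates ruinously over the $\binom{n}{2}-dn/2$ steps.

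Second, the "technical core" you sketch — tracking degrees, cherries and short paths, with a preprocessing surgery at atypical vertices — would fail in the critical regime. When $d=\Theta(\log n)$ and the current graph $F$ has $m$ excess edges with $\delta=2m/n=\log^{O(1)}n$, two fixed vertices typically have \emph{no} alternating switching paths of bounded length between them, so the switchings must be counted along $(F\setminus K,K)$-alternating paths of length $2\ell$ with $\ell=\Omega(\log n/\log\log n)$; controlling these to within the required relative error (as small as $O(\mu/\log n)$) is the real difficulty, and it is not addressed by codegree/cherry statistics. Moreover, one cannot do "local surgery" on $G^*$ without destroying its binomial law. The two ideas that replace your plan in the paper are: (i) the distributional identity $F_i\sim F(n,d,m)$, which lets one prove the path-counting estimates for the pair $(F,K)$ with $K\sim G_d(n)$ and $F=K+\text{(random edges)}$, rather than for a worst-case $d$-regular subgraph of a pseudorandom graph; and (ii) the observation that the switching double-count tolerates the estimates failing for an $n^{-6}$-fraction of $K\in\cK_d(F)$, which is exactly what the translation lemma (Lemma~\ref{lem:translate}) delivers. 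Without (i) and (ii), and without the long-path counting, the inductive "still pseudorandom at each step" argument you propose has no route to the accuracy needed.
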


In their breakthrough initial work, Gao, Isaev and McKay~\cite{gao2020sandwichingFIRSTBOUND,gao2022sandwichingFIRSTBOUND} introduced a beautiful natural coupling process, and analysed it in the regime $\min\{d,n-d\}\gg n/\sqrt{\log n}$.
In order to prove Theorem~\ref{thm:sandwich}, we analyse this process throughout the whole regime $d\geq C\log n$. As highlighted by Gao, Isaev and McKay~\cite[Question 2.4]{gao2022sandwichingFIRSTBOUND},
key to this analysis is showing that in certain random graphs $F$ a uniformly chosen $d$-regular subgraph is likely to include any given edge in $F$ with roughly equal probability, and our success in doing so is of some independent interest (see Theorem~\ref{thm:maintechnicalthm-forprocess}).
It allows us to avoid the complications of running two versions of this process, one after another, as was done in the
later work of Gao, Isaev and McKay~\cite{gao2020kim}. While this two-process coupling ingeniously avoided the most significant challenge in analysing a coupling generated by the single process, it encounters very significant barriers when $d$ is below $\log ^4n$.

We outline our methods in context with previous work in Section~\ref{sec:proofsketch}, but for now make some brief remarks.
As in~\cite{gao2020kim,gao2020sandwichingFIRSTBOUND,gao2022sandwichingFIRSTBOUND}, we work via switching techniques, requiring the counting of certain `switching paths'. The accuracy we use for our bounds has to be carefully chosen, particularly in the most critical regime, and our analysis is completely separate to previous work. Very broadly, our improvements come in two main areas. Firstly, we work much more directly with random graphs than~\cite{gao2020kim}, which used pseudorandom properties from the spectral properties of random graphs. 
Secondly, our bounds on the number of switching paths do not hold for every $d$-regular subgraph of our random graph (as they do in \cite{gao2020kim}), but typically hold for sufficiently many for our proof. This allows us to pivot in a slight but very useful way, considering instead `switching paths' between two random graphs (a random $d$-regular graph and a random graph containing it) rather than between a random graph and any one of its $d$-regular subgraphs.

While, due to~\cite{gao2020kim}, the previously open regime for Theorem~\ref{thm:sandwich} is the upper part of the sandwich for $d=O(\log^4n)$, our methods here naturally work as long as $d\leq n^{\sigma}$ for some fixed $\sigma>0$. With some brief additional work (see Section~\ref{sec:Gndedgedistribution}), we use our methods to give the upper part of the sandwich for the whole range $d\geq C\log n$. This gives a more straightforward proof of the sandwich conjecture in the whole regime as, for example, it avoids the complex analytic techniques used by Gao, Isaev and McKay~\cite[Section 7]{gao2022sandwichingFIRSTBOUND} when $d=\Omega(n)$. For completion, and as our methods give a slightly alternative approach, we also include a full proof of the lower part of the sandwich
(see Section~\ref{sec:lowerpart}).
Altogether, then, we give a self-contained proof of Theorem~\ref{thm:sandwich} which only relies on some quoted results on the concentration of certain random variables (see Section~\ref{sec:concentration}).

It follows immediately from Theorem~\ref{thm:sandwich} that, if $d_2\geq d_1=\omega(\log n)$ and $d_2-d_1=\Omega(d_1)$, and $d_1n$ and $d_2n$ are even, then there is a coupling $(G_1,G_2)$ with $G_1\sim G_{d_1}(n)$, $G_2\sim G_{d_2}(n)$, and $\P(G_1\subset G_2)=1-o(1)$. (This naturally improves the same conclusion when $d\gg \log^4 n$ by Gao, Isaev and McKay~\cite{gao2020kim}.)
That this monotonic coupling property for $G(n,d)$ should be possible for $d_1\leq d_2$ more widely was the subject of some discussion in the community (see~\cite{gao2022sandwichingFIRSTBOUND}), but has appeared only recently in print, in the following form.

\begin{conjecture}[Gao, Isaev and McKay~\cite{gao2022sandwichingFIRSTBOUND}]\label{conj:coupletwoGnd}
Let $d_1,d_2\in [n-1]$ with $d_1\leq d_2$ be such that $d_1n$ and $d_2n$ are even and $(d_1,d_2)\notin\{(1,2),(n-3,n-2)\}$. Then, there is a coupling $(G_1,G_2)$ of random graphs such that $G_1\sim G_{d_1}(n)$, $G_2\sim G_{d_2}(n)$, and, with probability $1-o(1)$, $G_1\subset G_2$.
\end{conjecture}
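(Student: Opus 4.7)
The plan is to reduce Conjecture~\ref{conj:coupletwoGnd} to Theorem~\ref{thm:sandwich} together with a new small-step coupling $(G_d,G_{d+2})$ with failure probability $o(1/n^2)$, which can then be chained over a sequence of $O(n)$ intermediate degrees. By taking complements we may assume $d_1\le (n-1)/2$, since $G\sim G_d(n)$ iff $\overline{G}\sim G_{n-1-d}(n)$ and containment reverses under complement; under this symmetry the two exceptional pairs $(1,2)$ and $(n-3,n-2)$ are exchanged. I would then dispatch the ``large gap'' case first: when $d_1\ge C\log n$ and $d_2\ge (1+3\eps)d_1$ for some fixed small $\eps>0$, Theorem~\ref{thm:sandwich} gives with high probability $G_{d_1}(n)\subset G(n,(1+\eps)d_1/n)\subset G(n,(1-\eps)d_2/n)\subset G_{d_2}(n)$, using the trivial monotone coupling of binomial random graphs. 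This leaves two regimes: (i) $d_1=O(\log n)$ and (ii) $d_2-d_1=o(d_1)$, both of which I would handle by iterating a small-step coupling $d\to d+2$ a total of $O(n)$ times.

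To produce the step coupling, take $G\sim G_{d+2}(n)$ and select a uniformly random $d$-regular subgraph $H\subset G$. The marginal law of $H$ is uniform on $G_d(n)$ precisely when $M(G):=|\{H\subset G:H\text{ is }d\text{-regular}\}|$ is constant across $G$, which it is not. Instead, if $M(G)$ is concentrated around its mean with failure probability $o(1/n^2)$, a standard rejection/perturbation step turns this into a coupling with the same failure bound, and the $O(n)$-fold iteration then yields $(G_{d_1},G_{d_2})$ with $G_{d_1}\subset G_{d_2}$ with probability $1-o(1)$. Via the double count $\sum_G M(G)=\sum_H N(H)$, where $N(H)$ counts $(d+2)$-regular supergraphs of a $d$-regular $H$, this reduces to the concentration of $N(H)$ for $H\sim G_d(n)$; for $d=\omega(\log n)$ the switching-path analysis of this paper, particularly the methods behind Theorem~\ref{thm:maintechnicalthm-forprocess}, should extend to the required $o(1/n^2)$ accuracy with additional care in the error terms.

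The main obstacle is the constant-degree regime: switching arguments inherently cannot produce the $o(1/n^2)$ concentration needed to sustain $\Theta(n)$ iterations when $d$ is bounded. Here one would replace switching by explicit enumerative and contiguity results for $G_d(n)$ at constant $d$ (in the spirit of Janson, Robinson--Wormald, and Molloy--Reed), which pin down the number of $d$-factors in $G_{d+2}(n)$ asymptotically, and after substantial further work should deliver the polynomial accuracy required for iteration. Bridging the constant-degree enumerative regime to the switching regime smoothly --- while respecting the genuine obstruction at $(1,2)$, where a typical $2$-regular graph is a union of cycles of mixed parity and contains no perfect matching with positive probability --- is where the real difficulty of the conjecture lies.
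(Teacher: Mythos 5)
The statement you are proving is stated in the paper only as an open conjecture (Conjecture~\ref{conj:coupletwoGnd}); the paper contains no proof of it, and only derives the special case $d_2\ge d_1=\omega(\log n)$, $d_2-d_1=\Omega(d_1)$ from Theorem~\ref{thm:sandwich}, which is exactly your ``large gap'' case. So your text is a proposal for an open problem rather than a reproof, and should be judged on its own.

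There is a genuine gap in the small-step part of your plan. Your construction takes $G\sim G_{d+2}(n)$ and a uniformly random $d$-regular $H\subset G$, and hopes the marginal law $\mu$ of $H$ is within total variation $o(1/n)$ of uniform on $\cK_d(n)$; iterating up to $\Theta(n)$ times then requires this per-step accuracy. Even granting perfect concentration of $M(G)$, a direct computation shows $d_{TV}(\mu,\mathrm{unif})=\tfrac12\,\E_{H}\bigl|N(H)/\bar N-1\bigr|$, where $N(H)$ is the number of $(d+2)$-regular supergraphs of $H\sim G_d(n)$. In the constant-degree regime the very contiguity results you invoke (Janson, Robinson--Wormald, Molloy--Reed) establish the \emph{opposite} of what you need: $N(H)/\bar N$ converges in distribution to a nondegenerate (lognormal-type) limit, so $\E|N(H)/\bar N-1|=\Theta(1)$, the total variation distance between $\mu$ and uniform is bounded away from zero, and a single step of your coupling already fails with constant probability. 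These theorems are therefore an obstruction to the approach, not a route to ``polynomial accuracy.'' In the $d=\omega(\log n)$ regime with $d_2-d_1=o(d_1)$, the switching analysis in this paper (Theorem~\ref{thm:maintechnicalthm-forprocess}) only controls ratios of subgraph counts to within a multiplicative error of order $\max\{\mu/\log n,(n\log n/m)^{1/8}\}$, which is polynomially larger than the $o(1/n)$ you require; nothing in the paper suggests switching can be pushed to that accuracy. Indeed the progress the paper cites on the $d_2=d_1+1$ case (Gao; Hollom--Lichev--Mond--Portier--Wang) uses precise asymptotic enumeration rather than switchings, and is restricted to $d_1=\omega(1)$, $d_1\le n^{1/7}/\log n$, which is consistent with the difficulty of your step coupling rather than evidence for it. To make a step coupling work in the constant-degree range one would at minimum have to couple the short-cycle statistics of $G_1$ and $G_2$ first (since these explain the lognormal variance) and then argue conditional concentration; that is a different and substantially harder argument than what you have sketched.
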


While the current progress on Conjecture~\ref{conj:coupletwoGnd} can be found outlined in~\cite{hollom2025monotonicity}, the most interesting cases are perhaps where $n$ is even and $d_2=d_1+1$ (so that good bounds on the probabilities involved may imply the conjecture more widely). Here, the conjecture is known as long as $d_1=\omega(1)$  and $d_1\leq n^{1/7}/\log n$, due to work by Gao~\cite{gao2023number} and then Hollom, Lichev, Mond, Portier and Wang~\cite{hollom2025monotonicity}.

A version of the sandwich conjecture has also been proposed for graphs chosen uniformly at random from those with vertex set $[n]$ and degree sequence $\mathbf{d}=(d_1,\ldots,d_n)$, by Gao, Isaev and McKay (see~\cite[Conjecture~1.4]{gao2022sandwichingFIRSTBOUND}), to generalise the sandwich conjecture beyond the sequences $\mathbf{d}=(d,\dots,d)$. They conjectured that the corresponding coupling should be possible for sequences $\mathbf{d}=(d_1,\ldots,d_n)$ with $d_i=(1+o(1))d$ for each $i\in [n]$ under the same condition $d=\omega(\log n)$. The results in~\cite{gao2020kim,gao2020sandwichingFIRSTBOUND,gao2022sandwichingFIRSTBOUND} were shown in this wider generality (sometimes with a stronger bound needed on the distribution of the $d_i$). Our methods are likely to be able to make further progress here, but to avoid further notational and technical cost we will prove only Theorem~\ref{thm:sandwich}.

As well as this potential generalisation, it is also natural to ask for which functions $\eps=\eps(n)$ and $d=d(n)$ the coupling in Theorem~\ref{thm:sandwich} exists. Broadly, it seems we should anticipate that the coupling is likely to be possible if, with high probability, $\Delta(G(n,(1-\eps)d/n))\leq d\leq \delta(G(n,(1+\eps)d/n))$. In this direction, we find the following conjecture appealing, for which we recall that the $n$-vertex \emph{random graph process} $G_0,G_1,\ldots,G_{\binom{n}{2}}$ begins with the graph $G_0$ with vertex set $[n]$ and no edges and, for each $i\geq 1$, $G_i$ is formed from $G_{i-1}$ by the addition of an edge selected uniformly and independently at random from $[n]^{(2)}\setminus E(G_{i-1})$.

\begin{conjecture}\label{conj:hittingtime} Let $G_0,G_1,\ldots,G_{\binom{n}{2}}$ be the $n$-vertex random graph process and let $1\leq d\leq n-1$ be such that $dn$ is even. Then, there is some $G\sim G_d(n)$ for which the following holds with high probability. For each $0\leq i\leq \binom{n}{2}$, if $\Delta(G_i)\leq d$ then $G_i\subset G$, and if $\delta(G_i)\geq d$ then $G\subset G_i$.
\end{conjecture}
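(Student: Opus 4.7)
The plan is to work with the edge-ordering picture of the random graph process: represent it by a uniformly random permutation $\pi = (e_1, \ldots, e_{\binom{n}{2}})$ of $E(K_n)$, so that $G_i = \{e_1, \ldots, e_i\}$. Writing $\tau_\Delta$ for the hitting time of $\Delta \geq d+1$ and $\tau_\delta$ for that of $\delta \geq d$, and setting $S = \{i : e_i \in E(G)\}$ for a candidate $G \sim G_d(n)$, the conjecture becomes the statement that $[\tau_\Delta - 1] \subset S \subset [\tau_\delta]$ with high probability. Equivalently, one wants $G_{\tau_\Delta - 1} \subset G \subset G_{\tau_\delta}$. Classical extreme-value estimates on the degree sequence of $G(n, d/n)$ show that, when $d = \omega(\log n)$, the window $\tau_\delta - \tau_\Delta$ is typically of order $\sqrt{dn\log n}$, so the interval in which $G$ and the process are allowed to diverge is very narrow compared to the edge count $dn/2$.

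My approach would be to construct the coupling dynamically in two stages. First, reveal $G_0, G_1, \ldots, G_{\tau_\Delta - 1}$ and let $H = G_{\tau_\Delta - 1}$, which has maximum degree at most $d$. Then sample $G$ from the distribution of $G_d(n)$ conditioned on $H \subset G_d(n)$, and complete the process for positions $i \geq \tau_\Delta$ by interleaving a uniform random ordering of the edges of $G \setminus H$ with a uniform random ordering of $K_n \setminus E(G)$, where at each step one draws the next edge from $G \setminus H$ with a probability chosen so that the overall marginal on $\pi$ is uniform. The critical requirement is that the joint law of $(G_0, \ldots, G_{\tau_\Delta - 1}, G)$ produced this way has the correct marginals; this reduces to showing that for each $d$-regular $G$, the sum $\sum_{H} \Pr[G_{\tau_\Delta - 1} = H] / |\{G' \sim G_d(n) : H \subset G'\}|$, taken over subgraphs $H$ of $G$ with $H = G_{\tau_\Delta - 1}$ for some realisation, equals $1/|G_d(n)|$, at least approximately. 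Given such a construction, the remaining task is to show that, with high probability, every edge of $G \setminus H$ is placed before time $\tau_\delta$; since there are roughly $\sqrt{dn \log n}$ such edges and roughly $\sqrt{dn \log n}$ positions in $[\tau_\Delta, \tau_\delta]$, this is a concentration statement about the interleaving, which should follow from standard martingale inequalities provided the draws are sufficiently biased toward $E(G)$ in the window.

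The main obstacle is the marginal-compatibility step above, which is essentially the requirement that a uniformly random $d$-regular supergraph of a typical process graph $H$ behaves well enough for the dynamic sampling to reproduce $G_d(n)$ exactly. This is closely related to, but strictly stronger than, the switching-path analysis used for Theorem \ref{thm:sandwich}: here one would need to understand, for every admissible $H$, the uniform distribution on $d$-regular extensions of $H$, and in particular show that each permissible additional edge is present with probability very close to what a greedy process would predict. I would attempt this by extending Theorem \ref{thm:maintechnicalthm-forprocess} from the random-graph setting to the conditional setting where a prescribed low-degree subgraph $H$ is forced to lie inside the $d$-regular graph, using the same switching-path counting philosophy but carrying along the edges of $H$ as forbidden pairs throughout the analysis. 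Making this quantitative enough to cover the full hitting-time window, especially when $d = \Theta(\log n)$ and the window is widest relative to $d$, is where the heart of the difficulty would lie.
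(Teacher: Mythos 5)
You are attempting Conjecture~\ref{conj:hittingtime}, which the paper states as an \emph{open conjecture} and does not prove; the authors explicitly remark that proving it ``seems to fundamentally require additional ideas rather than simply a refinement of the techniques presented here.'' There is therefore no proof in the paper to compare against, and your proposal must stand on its own. As written it is a plan rather than a proof: both of the steps you identify as critical (marginal compatibility of the two-stage sampling, and the concentration of the interleaving) are left unresolved, and the second of them contains a quantitative contradiction that I do not think your scheme can escape.

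Concretely: once you fix $H=G_{\tau_\Delta-1}$ and sample $G\sim G_d(n)$ conditioned on $H\subset G$, you need all $M=|E(G)\setminus E(H)|$ edges of $G\setminus H$ to appear among the first $W=\tau_\delta-\tau_\Delta$ positions of the continuation. For $d=\omega(\log n)$ both $M$ and $W$ are of order $n\sqrt{d\log n}$ (not $\sqrt{dn\log n}$ as you write), while the number of remaining edges to be ordered is $\Theta(n^2)$. The marginal law of the continuation given $H$ must be the uniform ordering of the remaining edges --- this is forced by the definition of the random graph process --- and under that uniform law the event that $M$ prescribed edges all land in the first $W$ positions has probability $\exp(-\Theta(M\log n))$ for each fixed $G$. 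So ``the draws are sufficiently biased toward $E(G)$ in the window'' and ``the overall marginal on $\pi$ is uniform'' are not simultaneously achievable for a fixed $G$; your only freedom is that the conditional law given $(H,G)$ may be tilted, with uniformity required only after averaging over $G\sim\P(\cdot\mid H)$. For that to work, the events $\{\text{all of }G\setminus H\text{ precedes }\tau_\delta\}$, as $G$ ranges over $d$-regular supergraphs of $H$ weighted by $\P(G\mid H)$, would have to cover the space of continuations with multiplicity large enough to absorb a tilting factor of size $\exp(\Theta(M\log n))$. You give no argument for this covering statement, which is essentially the conjecture itself, and committing to $G$ at time $\tau_\Delta$ as a function of $H$ alone (rather than letting $G$ depend on the future of the process, as the paper's own deletion process does for the upper part) may well make it false. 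Finally, the conjecture is stated for all $1\le d\le n-1$, whereas your outline only addresses $d=\omega(\log n)$; for $d=O(\log n)$ the window $[\tau_\Delta,\tau_\delta]$ occupies almost the entire process and the framing above does not apply.
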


If true, proving Conjecture~\ref{conj:hittingtime} seems to fundamentally require additional ideas rather than simply a refinement of the techniques presented here.
In the next section, we will give some brief details of our notation before discussing in detail our proof and its relation to previous methods.
The rest of the paper is then outlined in detail, where the proof of the upper part of Theorem~\ref{thm:sandwich} appears in Sections~\ref{sec:process} to \ref{sec:maintechnicalconclusion} and the proof of the lower part appears in Section~\ref{sec:lowerpart}.

\vspace{-0.2cm}

\subsection*{Acknowledgements}

\vspace{-0.2cm}

The authors would like to thank participants of the workshop `Combinatorics, Probability and Algorithms' at the Mathematisches Forschungsinstitut Oberwolfach in September 2025 for conversations that led to Conjecture~\ref{conj:hittingtime} and inspired a simplification of the proof of Theorem~\ref{thm:sandwich}.

\section{Preliminaries and proof discussion}\label{sec:proofsketch}

In this section we will cover some of our notation, before discussing the proof of Theorem~\ref{thm:sandwich}
in detail and giving a brief outline of the paper. 

\subsection{Notation}
A graph $G$ has vertex set $V(G)$ and edge set $E(G)$, and we let $|G|=|V(G)|$ and $e(G)=|E(G)|$.
For any two vertex sets $U,V$ of a graph $K$, we use $e_K(U,V)=|\{(u,v):u\in U,v\in V,uv\in E(K)\}|$, and the set $N_G(U)$ is the \emph{neighbourhood of $U$ in $G$}, that is, the set of vertices in $V(G)\setminus U$ with at least one neighbouring edge to $U$. Given a graph $G$, $v\in V(G)$ and $e\in V(G)^{(2)}$, $G-v$ is the graph $G[V(G)\setminus \{v\}]$, while $G-e$ and $G+e$ are, respectively, the graphs with vertex set $V(G)$ and edge set $E(G)\setminus \{e\}$ and $E(G)\cup \{e\}$. We extend such notation in the usual manner to, for example, $G-A$ and $G+E$ where $A\subset V(G)$ and $E\subset V(G)^{(2)}$. Given graphs $G$ and $H$, $G\setminus H$ is the graph with vertex set $V(G)\setminus V(H)$ and edge set $E(G)\setminus E(H)$.

All logarithms are natural. We use `hierarchy' notation to manage the constants we use, where $\alpha \ll \beta$ means that there is some non-negative decreasing function $f$ such that what follows will hold for all $\alpha\leq f(\beta)$. For hierarchies with more than 2 variables these functions are chosen from right to left. For any $a, b, c \in \R$, we say $a = b \pm c$ if $b-c \leq a \leq  b+c$. We say an event occurs with high probability if it occurs with probability $1 - o(1)$.

For each $d,n\in \mathbb{N}$, $\cK_d(n)$ is the set of $d$-regular graphs with vertex set $[n]$, and, for each graph $F$ with $V(F)=[n]$, $\cK_d(F)=\{K\in \cK_d(n):K\subset F\}$. For distinct vertices $x,y$, an $x,y$-path is a path with endvertices $x$ and $y$. Though our paths are undirected, we implicitly assume the path starts at $x$ and ends at $y$. We say the path is \emph{$(F,K)$-alternating} for graphs $F$ and $K$ if, starting from $x$, the edges are alternately in $F$ and then $K$. For each $n\in \N$, we use $K_n$ for the complete graph with vertex set $[n]$.

\subsection{Proof discussion}
Let $\eps>0$ be constant and, for convenience in this discussion, take the slightly stronger condition that $d=\omega(\log n)$. To prove Theorem~\ref{thm:sandwich} we will create the lower part of sandwich (i.e., $(G_*,G)$ with the required properties) using an edge addition process and the upper part of the sandwich (i.e., $(G,G^*)$) using an edge removal process. These processes are essentially those used by Gao, Isaev and McKay~\cite{gao2020sandwichingFIRSTBOUND,gao2022sandwichingFIRSTBOUND} for their first result on the sandwich conjecture, and the main difference here is in how we analyse it. The first process is (roughly) the complement of the other, but the analysis is separate so we discuss them one after another in Section~\ref{subsec:coupling} below, starting with the process for the lower part of the sandwich. Both processes have a key result, Lemma~\ref{lem:maintechnicallem-forprocess} and Theorem~\ref{thm:maintechnicalthm-forprocess} respectively, that shows that the critical property we will need to construct the coupling is likely to hold. These are understandable out of context, using only Definitions~\ref{defn:Fminusndm} and~\ref{defn:Fndm}.  We will prove these key results using switching methods, as discussed in Section~\ref{subsec:switching}. The rest of the paper is briefly outlined in Section~\ref{subsec:paperoutline}.


\subsubsection{Coupling processes}\label{subsec:coupling}

\noindent\textbf{The lower part of the sandwich.} Start with $F$ as the graph with vertex set $[n]$ and  no edges. Iteratively, as long as $F$ is not a $d$-regular graph, pick a random edge $e$ from $[n]^{(2)}$ and if it is not in $E(F)$ then add it to $F$ with probability
\begin{equation}\label{eqn:goodpartial}
\frac{|\{K\in \cK_d(n):F+e\subset K\}|}{\max_{f\in [n]^{2}\setminus E(F)}|\{K\in \cK_d(n):F+f\subset K\}|},
\end{equation}
where $\cK_d(n)$ is the set of $d$-regular graphs with vertex set $[n]$.
Observe that here an edge $e$ is only added to $F$ if $F$ is not $d$-regular but there is some $d$-regular graph containing $F+e$; thus the denominator at \eqref{eqn:goodpartial} is always non-zero. Furthermore, as long as the process runs there is some edge $e$ in $[n]^{(2)}\setminus E(F)$ which is added to $F$ with strictly positive probability if it is chosen, so the process stops with probability 1. Let $G$ be the graph produced at the end of the process, which is then a $d$-regular graph with vertex set $[n]$.
The probability at \eqref{eqn:goodpartial} is chosen so that $G\sim G_d(n)$ (as we will later show in Section~\ref{subsec:lowerboundFdist}).

While doing this, we will simultaneously build a random graph $G_*$, starting similarly with the graph with vertex set $[n]$ and no edges. When we consider the random edge $e$, we also consider whether to add the random edge $e$ to $G_*$ or not; when $e\notin E(G_*)$ we do this with the fixed probability $1-\eta$, where $0<\eta \ll \eps$. Then, if $e$ is not in $F$ when it is considered and the probability at \eqref{eqn:goodpartial} is at least $1-\eta$, we can couple the random addition of $e$ so that $e$ is always added to $F$ if it is added to $G_*$. As discussed below, with high probability, for most of this process, the probability at \eqref{eqn:goodpartial} is very close to 1. Thus, if we build such a random graph $G_*$ and stop when it has $\lceil(1-2\eta)dn/2\rceil$ edges, then with high probability we will have $G_*\subset F$.

Now, observe that by symmetry $\P(G_*=H)$ is the same for any graph $H$ with $V(H)=[n]$ and $e(H)=\lceil(1-2\eta)dn/2\rceil$. Therefore, $G_*$ has the distribution of a uniformly random graph with vertex set $[n]$ and $\lceil(1-2\eta)dn/2\rceil$ edges. As $e(G(n,(1-\eps)d/n))<\lceil(1-2\eta)dn/2\rceil$ with high probability, we can use a standard coupling of $G(n,(1-\eps)d/n)$ (so that it is within $G_*$ with high probability), to get the exact coupling that we want.

As observed and proved by Gao, Isaev and McKay~\cite{gao2020kim}, for each $0\leq i\leq dn/2$, conditioned on having $i$ edges, the distribution of $F$ is simple to state. For this, then, for each $0\leq i\leq dn/2$ let $F_i$ be the value of $F$ in this process when it has $i$ edges. We use the following definition.

\begin{defn}\label{defn:Fminusndm} Let $d,n,m\in \N$ with $0\leq d\leq n-1$, $dn$ even, and $0\leq m\leq dn/2$. We say $F\sim F^-(n,d,m)$ if $F$ is a random graph with vertex set $[n]$ which has the same distribution as $H-E$ if $H\sim G_d(n)$ and $E$ is a uniformly random set of $m$ edges from $E(H)$.
\end{defn}

Then, for each $0\leq i\leq dn/2$, $F_i\sim F^-(n,d,(dn/2)-i)$. We include a proof of this (along the lines of \cite{gao2020kim}) in Section~\ref{subsec:lowerboundFdist}. Given this, and the discussion above, to show that the probability at \eqref{eqn:goodpartial} is likely above $1-\eta$ until $F$ has at least $\lfloor(1-\eta)dn/2\rfloor$ edges (which we can then expect is later than when $G_*$ reaches  $\lceil(1-2\eta)dn/2\rceil$ edges), it will suffice to prove the following result.

\begin{lemma}\label{lem:maintechnicallem-forprocess}
Let $1/n\ll 1/C\ll \eta,\eps\ll 1$. Let $d\geq C\log n$ and $\eps dn \leq  m\leq d n/2$. Let $F\sim F^-(n,d,m)$.  Then, with probability $1-o(n^{-2})$, for each $e,f\notin E(F)$, we have
\begin{equation}\label{eq:dregcontainingeorfinlem:lowerbound}
|\{K\in \mathcal{K}_d(n):F+e\subset K\}|\leq \left(1+\eta\right)|\{K\in \mathcal{K}_d(n):F+f\subset K\}|.
\end{equation}
\end{lemma}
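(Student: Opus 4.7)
Let $\mathcal{N}(G) := |\{K \in \cK_d(n) : G \subset K\}|$. I will bound the ratio $\mathcal{N}(F+e)/\mathcal{N}(F+f)$ by exhibiting a switching that approximately pairs $d$-regular supergraphs of $F+e$ with those of $F+f$, so that the ratio is governed by counts of valid switches in the two directions. These counts will balance to within $1+\eta$ once $F$ is known to have well-concentrated degrees.

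\textbf{Typicality of $F$.} Since $F = H - E$ with $H \sim G_d(n)$ and $E$ uniform in $\binom{E(H)}{m}$, the codegree $d - \deg_F(v)$ is hypergeometric with mean $2m/n$. By Chernoff--Hoeffding concentration (as quoted in Section~\ref{sec:concentration}) applied with deviation $\Theta(\sqrt{(m/n)\log n})$ and a union bound over $v$,
\[
\deg_F(v) = \tfrac{2m}{n}(1 \pm \delta) \quad \text{for every } v
\]
holds with probability $1 - o(n^{-2})$, where $\delta$ is small in terms of $\eps, \eta$ (using $m \ge \eps dn \ge \eps C n \log n$). I will also need similar control on the relevant $F$-codegrees; the combined typicality event is the one I condition on.

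\textbf{Switching.} Write $e = xy$ and $f = uv$. For $K \supset F+e$ with $f \notin K$, and for distinct auxiliary vertices $a, b \notin \{x,y,u,v\}$ with $ua, vb \in K \setminus F$ and $xa, yb \notin K$, the $3$-swap
\[
\tau_{a,b}(K) := K - \{xy, ua, vb\} + \{uv, xa, yb\}
\]
is easily checked to lie in $\{K \in \cK_d(n) : F+f \subset K\}$, and is inverted by the analogous swap from $\tau_{a,b}(K)$ back to $K$. Let $S_f(K)$ (resp.\ $S_e(K')$) denote the number of valid pairs $(a,b)$ starting from $K$ (resp.\ from $K' \supset F+f$ with $e \notin K'$). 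Reversibility gives the identity
\[
\sum_{K \supset F+e,\, f \notin K} S_f(K) \;=\; \sum_{K' \supset F+f,\, e \notin K'} S_e(K'),
\]
so (after handling the common term $\{K \supset F+e+f\}$, which affects the ratio in our favour) the lemma reduces to bounding the ratio of averaged switch counts $\overline{S_e}/\overline{S_f}$ by $1+\eta$.

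\textbf{Counting and main obstacle.} For a \emph{typical} $K \supset F+e$ --- one with codegrees $|N_K(u) \cap N_K(x)|$ and $|N_K(v) \cap N_K(y)|$ close to $d^2/n$ ---
\[
S_f(K) \;\approx\; (d - \deg_F(u))(d - \deg_F(v))(1 - d/n)^2,
\]
the $(1-d/n)^2$ arising as the fraction of $N_K(u) \setminus N_F(u)$ avoiding $N_K(x)$ times the analogous factor for $b$. The analogous expression for $S_e(K')$ uses $(x,y)$ in place of $(u,v)$. By $F$-typicality, each factor $d - \deg_F(z)$ equals $\tfrac{2m}{n}(1\pm\delta)$, so the ratio of the two expressions is $1+O(\delta) \le 1+\eta$. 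The key difficulty, and the main obstacle to the proof, is that the estimate on $S_f(K)$ cannot be guaranteed for \emph{every} $K \supset F+e$. In~\cite{gao2020kim} the corresponding estimate was proven pointwise via pseudorandomness derived from spectral bounds, which constrained that method to $d \gg \log^4 n$. Here I would instead allow atypical $K$ to exist and argue only that their aggregate contribution to $\overline{S_f}$ is negligible --- this is the ``switching between two random graphs'' perspective highlighted in the introduction, and is what should extend the method down to $d = \Omega(\log n)$.
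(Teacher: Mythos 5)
Your high-level strategy is genuinely the paper's: switch between supergraphs of $F+e$ and of $F+f$, verify the switch counts for typical $K$, transfer typicality from the coupled pair $(F,K)$ (with $K\sim G_d(n)$ and $F=K-E$) to most $d$-regular supergraphs of $F$ (this is Lemma~\ref{lem:translateforFminus}), and absorb the atypical graphs. However, as written there is a genuine gap, concentrated in the dense range of $d$, which the lemma must cover since $d$ may be as large as $n-1$. With your $3$-swap (a $6$-cycle switching) the per-graph count is $S_f(K)\approx(d-\deg_F(u))(d-\deg_F(v))(1-d/n)^2$, whose order of magnitude is $\bigl(\delta(n-1-d)/n\bigr)^2$ with $\delta=2m/n$. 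When $n-1-d$ is small (polylogarithmic or even $O(1)$, all allowed here), this is $O(1)$: no individual $K$ has a concentrated count, ``typical $K$'' in your sense does not exist, and the codegree control you invoke would have to hold at relative precision $\eta(1-d/n)$, which is unattainable; your fallback (``atypical $K$ contribute negligibly'') cannot rescue this, because in that regime \emph{every} $K$ is atypical in the relevant sense. The paper's two devices for exactly this are missing from your plan: it uses $10$-cycles built from two $(K_n\setminus K,K\setminus F)$-alternating paths of length $4$, whose counts are of order at least $n$ and concentrate even when $n-1-d=O(1)$ (Lemma~\ref{lem:concforlowerpart}; the middle edge ranges over all $m$ edges of $K\setminus F$, and the dense cases additionally need an edge-distribution statement for dense $G_d(n)$, Lemma~\ref{lem:probUVgoodinK}, proved by comparison with $G(n,q)$ — your proposal gives no route to codegree typicality of $G_d(n)$ when $d\geq n^{0.99}$); and in the near-complete regime $n(n-1-d)\leq \eta m/10$ it abandons the $e$-versus-$f$ comparison altogether, instead showing that every non-edge of $F$ lies in a $(1-O(\eta))\frac{2m}{2m+n(n-1-d)}$ fraction of \emph{all} supergraphs of $F$ (Claim~\ref{clm:edgeswellspreadlowerbound}), which already yields \eqref{eq:dregcontainingeorfinlem:lowerbound}.

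A second, related gap: your outlier handling needs a lower bound on $|\{K\in\cK_d(n):F+f\subset K\}|$ (and on the $e$-side) relative to $|\{K:F\subset K\}|$. The translation step only says the atypical graphs form an $n^{-6}$ fraction of all supergraphs of $F$; a priori $\cK^f$ could be far smaller than that, in which case atypical graphs could dominate both sides of your double count. Supplying this lower bound is precisely the role of Claim~\ref{clm:edgeswellspreadlowerbound} in the paper (used even in the sparse regime), and it requires its own switching argument, which your outline does not contain. (A minor further omission: you only treat vertex-disjoint $e,f$; overlapping pairs are handled by comparing each to a third edge disjoint from both, at the cost of squaring the error factor.) In the regime $C\log n\leq d\leq n^{0.99}$ your scheme could likely be completed along these lines, but the statement as a whole needs the additional ingredients above.
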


We prove Lemma~\ref{lem:maintechnicallem-forprocess} using switching methods (as discussed in Section~\ref{subsec:switching} and carried out in Section~\ref{subsec:LemmaTechlowerboundproof} using the work more generally in Section~\ref{sec:lowerpart}).

\medskip


\noindent\textbf{The upper part of the sandwich.} The following process is the natural complement of the above process producing $F$.
Start with $F$ as the graph with vertex set $[n]$ and edge set $[n]^{(2)}$. Iteratively, as long as $F$ is not a $d$-regular graph, pick a random edge $e$ from $[n]^{(2)}$ and if it is in $E(F)$ then remove it from $F$ with probability
\begin{equation}\label{eqn:goodpartialupper}
\frac{|\{K\in \cK_d(n):K\subset F-e\}|}{\max_{f\in E(F)}|\{K\in \cK_d(n):K\subset F-f\}|}=\frac{|\cK_d(F-e)|}{\max_{f\in E(F)}|\cK_d(F-f)|},
\end{equation}
where $\cK_d(F-e)$ is the set of $d$-regular subgraphs of $F-e$ with vertex set $[n]$.
Observe that here an edge $e$ is only removed from $F$ if there is some $d$-regular graph in $F-e$; thus the denominator at \eqref{eqn:goodpartialupper} is always non-zero. Furthermore, as before, the process stops with probability 1 with a $d$-regular graph, $G$ say. The probability at \eqref{eqn:goodpartialupper} is chosen so that $G\sim G_d(n)$ (as we will later show in Section~\ref{subsec:clm:main:b}).

As before, we will want to argue that the probability at \eqref{eqn:goodpartialupper} is, with high probability, close to 1 throughout much of the process (which will allow us, similarly, to couple it with a binomial random graph, but one likely to contain $F$). However, a bound of the form $1-\eta$, with $\eta>0$ fixed, will not be good enough here as, instead of adding $dn/2$ edges, the process will remove $\binom{n}{2}-dn/2$ edges to get a $d$-regular graph. This is potentially far many more iterations, which would allow our error terms to accumulate ruinously. We will have to use then a bound in place of $1-\eta$ that depends on the number of edges removed so far.

So much to the negative. In our favour, if we show a natural bound similar to Lemma~\ref{lem:maintechnicallem-forprocess} then it gives a better bound when translated into \eqref{eqn:goodpartialupper}. That is, if we can show that, for each $e,f\in E(F)$,
\begin{equation}\label{eq:dregcontainingeorf}
|\{K\in \mathcal{K}_d(F):e\in E(K)\}|\leq (1+\eta)|\{K\in \mathcal{K}_d(F):f\in E(K)\}|,
\end{equation}
we can deduce (as we do in Section~\ref{subsec:forclmmainb}) that
\begin{align}
\frac{|\mathcal{K}_d(F-e)|}{\max_{f\in E(F)}|\mathcal{K}_d(F-f)|}
&=
\frac{|\mathcal{K}_d(F)|-|\{K\in \mathcal{K}_d(F):e\in E(K)\}|}{\max_{f\in E(F)}(|\mathcal{K}_d(F)|-|\{K\in \mathcal{K}_d(F):f\in E(K)\}|)}\geq 1-\frac{\eta dn/2}{e(F)-dn/2}.\label{eq:actuallycloseto1upp}
\end{align}
Thus, we will be able to show from a bound like \eqref{eq:dregcontainingeorf} that the probability at \eqref{eqn:goodpartialupper} will be very close to 1, and only when $F$ has close to $dn/2$ edges (as it nears a $d$-regular graph) will our bound begin to resemble $1-O(\eta)$.

This, however, is not good enough and our `error term' $\eta$ in \eqref{eq:dregcontainingeorf} will still need to change during the process. For some $C_0,\mu$ with $1/n\ll 1/C_0\ll \mu\ll \eps$, we will set
$\eta_m=C_0\max\Big\{\frac{\mu}{\log n},\left(\frac{n\log n}{m}\right)^{1/8}\Big\}$,
for each $m$, and our desired value of $\eta$ in \eqref{eq:dregcontainingeorf} will be (roughly) $1-\eta_{e(F)}$, so that we can show that \eqref{eqn:goodpartialupper} is always at least the right hand side of \eqref{eq:actuallycloseto1upp} with (roughly) $\eta_{e(F)}$ replacing $\eta$.
From \eqref{eq:actuallycloseto1upp} (and as proved in Section~\ref{sec:proveclmsub:5}), as we will use this for each value of $e(F)$ from $\binom{n}{2}$ down to $\left\lfloor\left(1+\frac{\eps}{2}\right)\frac{dn}{2}\right\rfloor$, one key point is that the sum of $\frac{\eta_mdn/2}{m-dn/2}$ over $m=e(F)$ in this range is $O(C_0\mu/\eps)$, so that this will be much smaller than $\eps$.

The changing value of $\eta_{e(F)}$ in the process makes it more difficult to create the corresponding coupling to create $G^*$. Indeed, starting with $G^*$ as the complete graph with vertex set $[n]$, when we consider an edge $e$, if $e\in E(G^*)$, then (throughout most of the process) we will wish to remove it with some probability likely to be lower than that at \eqref{eqn:goodpartialupper}, except this will now change depending on the value of $e(F)$. This, however, will be predictable enough that in creating $G^*$ we can use a bound that does not depend on $F$ (see Section~\ref{subsec:clm:main:c}).

The most significant challenge, then, is to prove that bounds like that at \eqref{eq:dregcontainingeorf} are likely to hold for much of the process (with our new, varying, error term in place of $\eta$). As we will see (and as discussed below), this is much more challenging than the corresponding bound in the lower part of sandwich. Therefore, here, it is critical for us here that, as observed and proved by Gao, Isaev and McKay~\cite{gao2020kim}, conditioned on its number of edges, $F$ has a nice distribution using the following definition.

\begin{defn}\label{defn:Fndm} Let $d,n,m\in \N$ with $0\leq d\leq n-1$, $dn$ even, and $0\leq m\leq \binom{n}{2}-dn/2$. We say $F\sim F(n,d,m)$ if $F$ is a random graph with vertex set $[n]$ which has the same distribution as $H+E$ if $H\sim G_d(n)$ and $E$ is a uniformly random set of $m$ edges from $[n]^{(2)}\setminus E(H)$.
\end{defn}

For each $dn/2\leq i\leq \binom{n}{2}$, let $F_i$ be the value of $F$ in this process when it has $i$ edges.
Then, for each $dn/2\leq i\leq \binom{n}{2}$, $F_i\sim F(n,d,i-(dn/2))$. We include a proof of this (along the lines of \cite{gao2020kim}) in Section~\ref{subsec:clm:main:b}. We arrive now at the main challenge, requiring the key novelties of this paper. From our discussion above (and as proved later in detail in Section~\ref{sec:process}), it will suffice to prove  the following.

\begin{theorem}\label{thm:maintechnicalthm-forprocess}
Let $1/n\ll 1/C\ll \mu \ll 1/C_0\ll \eps\ll 1$. Let $d\geq C\log n$ and $\eps dn\leq  m\leq (n-1-d)n/2$. Let $F\sim F(n,d,m)$.  Then, with probability $1-o(n^{-2})$, for each $e,f\in E(F)$, we have
\begin{equation}\label{eq:dregcontainingeorfinthm}
|\{K\in \mathcal{K}_d(F):e\in E(K)\}|\leq \left(1+C_0\max\left\{\frac{\mu}{\log n},\left(\frac{n\log n}{m}\right)^{1/8}\right\}\right)|\{K\in \mathcal{K}_d(F):f\in E(K)\}|.
\end{equation}
\end{theorem}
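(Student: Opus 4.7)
I plan to apply the method of switchings, comparing $N_e:=|\{K\in\cK_d(F):e\in E(K)\}|$ with $N_f$ via a bijection on alternating cycles. Fix an even length $L$ (to be chosen later). For $e,f\in E(F)$, call $(K,C)$ a \emph{switching configuration at $e$} if $K\in\cK_d(F)$ has $e\in E(K)$ and $f\notin E(K)$, and $C$ is an $(F,K)$-alternating cycle of length $L$ in $F$ passing through $e$ as a $K$-edge and through $f$ as an $(F\setminus K)$-edge. The map $(K,C)\mapsto(K\triangle E(C),C)$ is an involutive bijection between switching configurations at $e$ and those at $f$: $K\triangle E(C)\in\cK_d(F)$ because $C$ is $(F,K)$-alternating (so each vertex of $C$ both loses and gains one $K$-edge), and along $C$ the statuses of all cycle edges, and in particular of $e$ and $f$, are flipped.

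\textbf{Reduction.} Let $S_e=\{K\in\cK_d(F):e\in K,\,f\notin K\}$, define $S_f$ symmetrically, and write $M(K,e\to f)$ for the number of length-$L$ alternating cycles through $(K,e,f)$. The bijection yields
\[
\sum_{K\in S_e} M(K,e\to f)\;=\;\sum_{K'\in S_f} M(K',f\to e).
\]
Hence, if I can exhibit a quantity $M^{\star}=M^{\star}(F,e,f)$ such that the $S_e$-average of $M(K,e\to f)$ and the $S_f$-average of $M(K',f\to e)$ each lie within a factor $1\pm\tfrac{1}{3}\eta_m$ of $M^{\star}$, where $\eta_m$ is the right-hand factor in \eqref{eq:dregcontainingeorfinthm}, then $|S_e|=(1\pm\eta_m)|S_f|$. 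Since $N_e-N_f=|S_e|-|S_f|$ (the $K$'s containing both $e$ and $f$ contribute equally), this yields \eqref{eq:dregcontainingeorfinthm}.

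\textbf{Controlling the cycle count.} This is where the pivot alluded to in the introduction enters. Rather than bounding $M(K,e\to f)$ deterministically for every $K\in\cK_d(F)$, I would use the fact that sampling $F\sim F(n,d,m)$ together with a uniformly random $K\in\cK_d(F)$ is equivalent to sampling the pair $(H+E,H)$, where $H\sim G_d(n)$ and $E$ is a uniform $m$-subset of non-edges of $H$ (as noted in the discussion following Definition~\ref{defn:Fndm}). In this joint model, an $(F,K)$-alternating length-$L$ cycle through $e$ and $f$ is determined by short alternating paths in $(H,E)$ between the endpoints of $e$ and $f$, so the expectation of $M(H+E,H,e\to f)$ can be computed via codegree and short-path pseudo-randomness of $G_d(n)$, and concentration can be obtained via an edge-exposure martingale on $E$ together with switching-based concentration on $H$. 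This should give $M(K,e\to f)=(1\pm \tfrac{1}{3}\eta_m)M^{\star}$ outside an exceptional set of $(H,E)$-outcomes of measure $o(n^{-4})$; union-bounding over $(e,f)\in E(F)^2$ and a Markov-type transfer then give, for typical $F$, the estimate for all but an $\eta_m$-fraction of $K\in S_e$, with the exceptional $K$'s absorbed via a crude deterministic upper bound on $M$.

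\textbf{Main obstacle.} The delicate regime is $m=\Theta(\eps dn)$ at $d=\Theta(\log n)$, where $m=\Theta(n\log n)$ and $F$ is only a mild perturbation of a random $d$-regular graph. Here the expected number of length-$L$ alternating cycles through a fixed pair $(e,f)$ scales as a product of $L$ local incidence counts, each of order $d$ or $m/n$; for small $L$ this product is $o(1)$, so $M(K,e\to f)$ is typically zero and no concentration is possible. One must therefore take $L$ growing slowly with $n$, but longer cycles simultaneously amplify the sensitivity of $M$ to individual $E$-edges (weakening the martingale step) and increase the union-bound cost over cycle shapes. Balancing these competing pressures against the $o(n^{-2})$ failure-probability requirement produces precisely the exponent $1/8$ and the floor $\mu/\log n$ in $\eta_m$. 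The pivot to two random graphs is essential, because in the sparse regime some $K\in\cK_d(F)$ can exhibit anomalous switching counts; only by replacing "for every $K$" with "for typically many $K$" can the argument reach all the way down to $d=\Theta(\log n)$, whereas the approach of \cite{gao2020kim} required $d\gg\log^4 n$.
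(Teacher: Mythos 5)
Your overall skeleton does match the paper's: the observation that $E(K)\triangle E(K')$ should be a single alternating cycle through $e$ and $f$ (equivalently, two vertex-disjoint alternating paths joining the endpoints of $e$ to those of $f$) is exactly the auxiliary graph $L_{e,f}$ in Theorem~\ref{thm:switching:propstoapply}; your identity $\sum_{K\in S_e}M(K,e\to f)=\sum_{K'\in S_f}M(K',f\to e)$ is the double count of $e(L_{e,f})$; and your pivot to the joint model $(H+E,H)$ with $H\sim G_d(n)$, followed by a Markov-type transfer to ``most $K\in\cK_d(F)$'', is the paper's Lemma~\ref{lem:translate}. But two steps are genuinely missing. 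First, the absorption of exceptional $K$'s does not work as stated: the transfer controls the bad fraction of $\cK_d(F)$, not of $S_e$ or $S_f$, and the crude deterministic bound on $M$ exceeds the typical value $M^\star$ by a polynomial factor in $n$. If $S_e$ or $S_f$ were a tiny fraction of $\cK_d(F)$, the exceptional graphs would swamp the count. The paper needs Claim~\ref{clm:edgeswellspread2} --- every edge of $F$ lies in at least a $d/n^2$ proportion of the graphs in $\cK_d(F)$, itself proved by a separate switching argument that already uses the lower-bound path property --- precisely to close this hole; your sketch assumes it silently.

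Second, and more seriously, the concentration of $M(K,e\to f)$, which is the real content of the theorem, is not obtainable by ``an edge-exposure martingale on $E$ together with switching-based concentration on $H$''. In the critical regime $d=\Theta(\log n)$ and $\delta=2m/n=\log^{O(1)}n$ the cycles must have length $\Theta(\log n/\log\log n)$ and the required relative accuracy can be as small as $C_0\mu/\log n$, yet a single edge of $E$ incident to an endpoint of $e$ or $f$ carries roughly a $1/\delta$ proportion of all relevant paths; bounded-difference/Azuma-type estimates then give fluctuations larger than the mean by a factor of order $\sqrt{n\log n/\delta}\,/(\eta\delta)$, far beyond the target error. This is why the paper never attempts direct concentration of the cycle count: it decomposes it into alternating path counts and establishes those through a chain of pseudorandom properties --- degree-sum regularity (Lemma~\ref{lem:nbrhoodsum}), expansion of small sets in $K$ proved by switchings within $G_d(n)$ and in $F\setminus K$, and connection properties --- feeding an inductive lower bound for paths avoiding prescribed sets (Theorem~\ref{thm:randomproperties:lowerbounds:newnew}) and a delicate iterative upper bound for paths between two fixed vertices (Lemma~\ref{lem:randomproperties:upperbounds:newnew}), with Kim--Vu polynomial concentration (Lemma~\ref{lem:alternatingpathsfromKV}) reserved for the dense range $m\geq n^{1+\sigma}$. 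You have correctly located the difficulty (growing cycle length versus single-edge influence), but the mechanism you propose to resolve it would fail quantitatively, and no substitute is given; as it stands the proposal reduces the theorem to its hardest step without proving it.
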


Similar to Lemma~\ref{lem:maintechnicallem-forprocess}, we will prove Theorem~\ref{thm:maintechnicalthm-forprocess} using switching methods, but this will be significantly more difficult. 
Why this is more challenging is explained below, but the new ideas we use to do this are the heart of our ability to analyse this process as long as $d=\omega(\log n)$, compared to the regime $d\gg n/\sqrt{\log n}$ in which this was done by Gao, Isaev and McKay~\cite{gao2022sandwichingFIRSTBOUND}. (In \cite{gao2020kim} a modified two-step process is used to neatly avoid some of these difficulties.)


\subsubsection{Switching methods to prove Lemma~\ref{lem:maintechnicallem-forprocess} and Theorem~\ref{thm:maintechnicalthm-forprocess}}\label{subsec:switching}

The switching method was introduced by McKay~\cite{mckay1981subgraphs}, and is a crucial part of our analysis for both the lower and upper part of the sandwich. We will explain these methods first in the context of the lower part of the sandwich as they are much simpler, before discussing the challenges of switching for the upper part and the key new ideas we will introduce. This also allows us to note one key change in the analysis of the lower part of the sandwich which allows a simpler approach when $d$ is large (and is also used for the upper part of the sandwich).


\medskip

\noindent\textbf{Switching methods for the lower part of the sandwich, i.e., for Lemma~\ref{lem:maintechnicallem-forprocess}.}
Let $F$ have $V(F)=[n]$, and let $e,f\in [n]^{(2)}\setminus E(F)$ share no vertices. Suppose, for \eqref{eq:dregcontainingeorfinlem:lowerbound}, we wish to compare the number of $d$-regular subgraphs in $\cK_d(n)$ containing $F+e$ with the number containing $F+f$. 
Let $\mathcal{K}^e=\{K\in \cK_d(n):F+e\subset K,f\notin E(K)\}$ and $\mathcal{K}^f=\{K\in \cK_d(n):F+f\subset K,e\notin E(K)\}$, and note that if we can show that $|\cK^e|\leq (1+\eta)|\cK^f|$ then \eqref{eq:dregcontainingeorfinlem:lowerbound} will hold.

To use the switching method, we define a notion of `switching' and consider when we can transform $K\in \cK^e$ into $K'\in \cK^f$ using a switching. Comparing the number of ways we can do such a transformation on $K$ with the number of ways we could transform $K'$, will allow us to compare $|\cK^e|$ to $|\cK^f|$. This is best seen via an example.

Suppose for simplicity that $d=o(n)$. Take the auxiliary bipartite graph $L$ with vertex classes $\mathcal{K}^e$ and $\mathcal{K}^f$ where there is an edge between $K\in \cK^e$ and $K'\in \cK^f$ if $E(K)\triangle E(K')$ is a cycle with length $10$ in which $e$ and $f$ appear on opposite sides of the cycle (see Figure~\ref{fig:switch}). Note that such a cycle alternates between $K\setminus K'$ and $K'\setminus K$. We now aim to show that, for some $D>0$, for each $K\in \cK^e$ we have $d_L(K)\geq (1-\eta/3)D$ and for each $K'\in \cK^f$ we have $d_L(K')\leq (1+\eta/3)D$, so that, double-counting the edges of $L$ gives
\begin{equation}\label{eq:sketchdoublecounting}
|\cK^e|\cdot (1-\eta/3)D\leq e(L) \leq |\cK^f|\cdot (1+\eta/3)D,
\end{equation}
and, as $\eta\ll 1$, we thus have $|\cK^e|\leq (1+\eta)|\cK^f|$.

Letting $K\in \cK^e$, for each $K'\in \cK^f$ with $KK'\in E(L)$, $E(K)\triangle E(K')$ is the edge set of a length 10 cycle $S$ which alternates between $K\setminus F$ and $K_n\setminus K$ with $e$ and $f$ on opposite sides. Thus, we can bound $d_L(K)$ by counting such cycles. We will have that the vertex degrees of $K\setminus F$ are all $(1\pm \eta/100)\Delta$ for some $\Delta$ (in Lemma~\ref{lem:maintechnicallem-forprocess} this will be $2m/n$). Arbitrarily labelling $V(e)=\{v_1,v_6\}$, we will use the notation in Figure~\ref{fig:switch} to choose such a cycle $S$ vertex by vertex. After labelling the vertices of $f$ as $v_5$ and $v_{10}$ (with 2 choices in total), and then choosing $v_2$ and $v_7$ (with at most $n$ choices for each), we then have at most $(1+ \eta/100)\Delta$ choices for choosing each of $v_3,v_4,v_8,v_9$, as they are the neighbour in $K\setminus F$ of some previously chosen vertex. Therefore, we have $d_L(K)\leq (1+\eta/100)^4\Delta^4n^2\leq (1+\eta/3)\Delta^4n^2$.

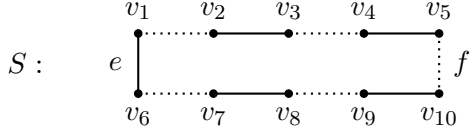
\begin{figure}\centering
\begin{tikzpicture}
\def\hspacer{1cm}
\def\vspacer{0.8cm}
\foreach \n in {1,...,5}
{
\coordinate (A\n) at ($(\n*\hspacer,0)$);
}
\foreach \n in {6,7,8,9,10}
{
\coordinate (A\n) at ($(\n*\hspacer-5*\hspacer,-\vspacer)$);
}

\foreach \n in {1,...,5}
{
\draw ($(A\n)+(0,0.3)$) node {$v_\n$};
\draw [fill] (A\n) circle[radius=0.05cm];
}
\foreach \n in {6,7,8,9,10}
{
\draw ($(A\n)-(0,0.3)$) node {$v_{\n}$};
\draw [fill] (A\n) circle[radius=0.05cm];
}

\draw ($0.5*(A1)+0.5*(A6)-(0.3,0)$) node {$e$};
\draw ($0.5*(A5)+0.5*(A10)+(0.3,0)$) node {$f$};

\foreach \x/\y in {1/2,3/4,5/10,6/7,8/9}
{
\draw[thick,dotted] (A\x) -- (A\y);
}
\foreach \x/\y in {1/6,2/3,4/5,9/10,7/8}
{
\draw[thick] (A\x) -- (A\y);
}

\draw ($0.5*(A1)+0.5*(A6)-(1.5,0)$) node {$S:$};

\end{tikzpicture}
\caption{The cycle $S$ alternating between $K\setminus K'$ (solid lines) and $K'\setminus K$ (dotted lines).}\label{fig:switch}
\end{figure}

This upper bound however, is actually very close to $d_L(K)$: to ensure for a choice of $v_2$,$v_3$,$v_4$,$v_7$,$v_8$, and $v_9$ that there is some $K'\in \cK^f$ with $E(S)=E(K)\triangle E(K')$ we need only make sure that the vertices are all distinct, and that $v_1v_2,v_3v_4,v_6v_7,v_8v_9\in E(K_n\setminus K)$. When $d=o(n)$, these additional conditions  are rather weak as $K_n\setminus K$ is very dense, so we will have at least $(1-\eta/3)\Delta^4n^2$ such cycles
(see Section~\ref{subsec:concentrationforlowerpart}). Each such cycle gives rise to an edge in $L$ from $K$ to a different $K'=K-E(S)\cap E(K)+E(S)\setminus E(K)$. Therefore, including the lower bound, we have $d_L(K)=(1\pm \eta/3)\Delta^4n^2$. As a similar bound can be shown for $f$ (or using the symmetry between $e$ and $f$ in our set-up), we can do the double-counting at \eqref{eq:sketchdoublecounting} using $D=\Delta^4n^2$.

For Lemma~\ref{lem:maintechnicallem-forprocess}, as $F\sim F^-(n,d,m)$, with $m=\omega(n\log n)$ and $d=\omega(\log n)$, a simple concentration inequality (see Section~\ref{sec:concentration}) and a union bound shows that $K\setminus F$ will be approximately regular, allowing this sketch to be carried out when $d=o(n)$. For larger $d$ it is harder to count these switching cycles as accurately, as no longer is $K_n\setminus K$ so dense. In previous work~\cite{gao2020kim}, inequalities like \eqref{eq:dregcontainingeorfinlem:lowerbound} in such a regime were shown to be likely to hold using complex analytic methods. Here, we will use instead two simple ideas to complete the proof when $d=\Omega(n)$. These ideas are very useful for the upper part of the sandwich, so we will introduce them now in the simpler setting of the lower part.

Firstly, we note that we do not need to prove good bounds on the degrees in $L$ that hold for all $K\in \cK^e\cup \cK^f$. That is, the double-counting at \eqref{eq:sketchdoublecounting} will tolerate good bounds which hold only for most of the graphs $K\in \cK^e\cup \cK^f$, while for any outliers we can use weaker bounds that are more straightforward to prove. This will allow us to focus on proving good bounds for graphs $K\in \cK^e\cup \cK^f$ which satisfy some pseudorandom conditions.

Then, in order to show it is likely that most of the graphs $K\in \cK_e\cup \cK_f$ satisfy some pseudorandom conditions, we exploit the distribution of $F\sim F^-(n,d,m)$. Intuitively, if we form $F$ by taking $K\sim G_d(n)$ and deleting $m$ random edges, then if a property is very likely to hold for $K$ it should be likely to hold for most of the $d$-regular graphs containing $F$. (This is proved with the exact parameters we use in Sections~\ref{sec:translating} and~\ref{sec:translatingforlowerbound}.) This means that instead of studying the properties of the $d$-regular graphs containing $F$, we can more simply study a random $d$-regular graph.

\medskip

\noindent\textbf{Switching methods for the upper part of the sandwich, i.e., for Theorem~\ref{thm:maintechnicalthm-forprocess}.}
After using the two ideas above, carrying out a similar switching argument for the upper part (i.e., for Theorem~\ref{thm:maintechnicalthm-forprocess}) will essentially require us to show that if $K\sim G_d(n)$ and $F$ is formed from $K$ by adding $m$ edges uniformly at random from $[n]^{(2)}\setminus E(K)$ (and thus $F\sim F(n,d,m)$) then with high probability we have the following for some suitable $\ell$.
\stepcounter{propcounter}
\begin{enumerate}[label = {{\textbf{\Alph{propcounter}}}}]
\item For each distinct $x,y\in [n]$ the number of $(F\setminus K,K)$-alternating $x,y$-paths of length $2\ell$ is well concentrated.\label{prop:crux}
\end{enumerate}

Indeed, if some version of this holds where the paths avoid an arbitrary small set of vertices, then we can put together two such paths between $e$ and $f$ to count degrees in the natural analogue of the auxiliary graph $L$ discussed above. This is done in detail in Section~\ref{sec:switching}, while here we will focus on \ref{prop:crux}.

Compared to similar tasks in the lower part, this is much more difficult in the region $d=o(n)$ as no longer is one of the $K$ or $F\setminus K$ graphs very close to being complete. Our previous discussion would correspond to paths of length $2\ell=4$, but now we will have to take $\ell=\Omega(\log n/\log \log n)$ when $d=\log^{O(1)}n$, otherwise between some distinct $x,y\in [n]$ no such alternating paths will exist.
This pinpoints why the process for the upper part is so much harder to analyse via switchings than the process for the lower part. In their work, Gao, Isaev, and McKay~\cite{gao2020kim} used two processes, one run after the other, so that (very roughly), when such $(F\setminus K,K)$-alternating paths were found, at least $K$ was sparse when compared to $F\setminus K$ (see \cite{gao2020kim} for more details).

When $F\setminus K$ is sufficiently dense, with $m\geq n^{1+\sigma}$ for any fixed $\sigma>0$, we use the results of Kim and Vu~\cite{kim2000concentration} on the concentration of multivariate polynomials to get sufficiently good bounds on the counts for \ref{prop:crux} (see Section~\ref{sec:concpoly}).
Thus, we can fix some small $\sigma>0$, and assume that $d\leq n^{\sigma}$ and $m\leq n^{1+\sigma}$. Take distinct $x,y\in [n]$ and let $\delta=2m/n$, and note that we may expect $F\setminus K$ to have vertex degrees $(1+o(1))\delta$. Due to the randomness of $F\setminus K$ and $K$, if we randomly walk out from $x$ using edges alternately from $F\setminus K$ and then $K$ we should expect this random walk to rapidly mix so that, as long as (say) $\ell\geq 2\log n/\log (\delta d)$, after $2\ell$ edges we are roughly no more likely to arrive at $y$ than any other vertex. Heuristically, considering the vertex degrees in $F\setminus K$ and in the $d$-regular graph $K$, we should expect there are likely to be $(1+o(1))\delta^\ell d^\ell/n$ $(F\setminus K,K)$-alternating $x,y$-paths of length $2\ell$.

Bounds of this form were proved in certain regimes (roughly when $\delta\gg d\log n$) by Gao, Isaev and McKay~\cite{gao2020kim} by working via the spectral properties of binomial random graphs (and using only that $K$ is $d$-regular), while similar counts on paths within a single graph have been shown by showing certain Markov chains mix rapidly (see, for example, as in \cite{joos2022fractional}, building on work in \cite{cuckler2009hamiltonian}). For Theorem~\ref{thm:maintechnicalthm-forprocess}, we need to work with the two different random graph models, in particular using properties of the more difficult to study random regular graph, and we need to deal with various subtleties due to the accuracy we need and the weaker bound on $d$ that we use. As far as is convenient, then, we rely on pseudorandom conditions of the random graphs involved, in part so that the results we prove may ultimately perhaps prove useful for the study of Conjecture~\ref{conj:hittingtime}.

The number of random walks discussed above can be bounded using bounds on the degrees of $F\setminus K$ and the regularity of $K$. However, when $d=C\log n$ (for our large fixed $C$), our paths will have length $2\ell$ with $\ell=\Omega(\log n/\log\log n)$ and the degrees of $F\setminus K$ are likely to vary by up to $\Omega_{C}(\delta)$. Thus, we cannot rely alone on the likely concentration of the degrees of $F\setminus K$. However, after revealing the edges of $K$, for each $v\in [n]$, we will have that $\sum_{u\in N_K(v)}d_{F\setminus K}(u)$ is better concentrated. For any fixed vertex $v$, even when $d=C\log n$ and $\delta=2\eps d$, this is likely to be concentrated up to an error term of $O(1/\ell)$. However, to get variables that are this concentrated for \emph{every} vertex $v$, we ultimately look a step further and consider $\sum_{v'\in N_{F\setminus K}(v)}\sum_{u\in N_K(v')}d_{F\setminus K}(u)$ (see \ref{eq:nbrhoodsinFK:random} in Lemma~\ref{lem:nbrhoodsum}).

Roughly, then, we count walks alternating between $F\setminus K$ and $K$ using these sums of degrees, and show that most of these walks are actually paths. To do so we rely on pseudorandom properties that later appear as~\ref{eq:fewdoubled:K} and~\ref{eq:fewdoubled:F-K} in Lemma~\ref{lem:lbpathcountF}. Then, considering alternating paths from $x$ and $y$ respectively to sets of different endvertices we use lower bounds on the number of edges in $F\setminus K$ between these different sets of endvertices (using \ref{prop:F-Kconnection} later) to give a lower bound between the number of alternating paths between $x$ and $y$.
A subtlety that is hard to anticipate from our discussion so far is that perhaps the most difficult case for Theorem~\ref{thm:maintechnicalthm-forprocess} is when $d=C\log n$ and $\delta=\log^{O(1)}n$, but $\delta=\omega(\log^{8}n)$. This is because the error term, essentially $\eta$, that we wish to use in the counting of our alternating paths is to be smaller than $C_0\mu /\log n$ (see \eqref{eq:dregcontainingeorfinthm}), but we still have $\ell=\Omega(\log n/\log\log n)$, so that when we build our alternating paths of length $\ell$ from $x$, if the next edge from the current endpoint, say $v$, is to be in $K$ then $v$ may have a fraction of neighbours in the path so far which exceeds $\eta d$, which here is $O(1)$. This creates some additional small but important complication that, for example, is reflected in the error term we use in \ref{prop:lowerboundKFKalternating:newnewnew} in Theorem~\ref{thm:randomproperties:lowerbounds:newnew} later.

To show that the number of $(F\setminus K,K)$-alternating $x,y$-paths of length $2\ell$ is likely bounded above, we exploit our likely lower bound (applied with a slightly different path length) as well as an inductive upper bound on the number of alternating paths from $y$. Revealing the edges in $F\setminus K$ not containing $x$, we deduce that most vertices in $[n]\setminus \{x,y\}$ -- those not in $Z$, say -- have at most $(1+O(\eta))\delta^{\ell-1}d^{\ell}/n$  $(K,F\setminus K)$-alternating paths of length $2\ell-1$ to $y$. Then, revealing the edges in $F\setminus K$ next to $x$, we get that there are at most $(1+O(\eta))\delta^{\ell}d^{\ell}/n$ $(F\setminus K, K)$-alternating $x,y$-paths of length $2\ell$ which do not have their second vertex in $Z\cap N_{F\setminus K}(x)$. As $Z$ was a relatively small set (it will have $O(\eta n)$ vertices), using this and similar properties iteratively will allow us to give an upper bound for all the $x,y$-paths we are interested in apart from those in an increasingly constrained set of paths. After sufficiently many iterations, a simple bound on the remaining possibilities will be sufficient to give us our desired upper bound. This argument is carried out in Section~\ref{sec:upper}.

\subsubsection{Paper outline}\label{subsec:paperoutline}

In Section~\ref{sec:process}, we give the details of the edge deletion process we use to create the upper part of the sandwich, reducing the proof of the upper part of the sandwich to Theorem~\ref{thm:maintechnicalthm-forprocess}. In Section~\ref{sec:switching}, using our main switching argument we reduce proving Theorem~\ref{thm:maintechnicalthm-forprocess} to proving certain pseudorandom conditions concerning switching paths are likely to hold for random graphs $F$ and $K$.
In Sections~\ref{sec:uppernottethered}--\ref{sec:upper} we prove these pseudorandom conditions are likely to hold when $m\leq n^{1+\sigma}$ (for any fixed $\sigma>0$)
In Section~\ref{sec:uppernottethered}, we give likely upper bounds on the number of $(F\setminus K,K)$-alternating paths of a given length which start from a fixed vertex. In Section~\ref{sec:lower}, we give likely lower bounds on the number of $(F\setminus K,K)$-alternating paths between any two fixed vertices (moreover, on such paths which avoid an arbitrary small set of vertices). In Section~\ref{sec:upper}, we give likely upper bounds on the number of $(F\setminus K,K)$-alternating paths between any two fixed vertices. In Section~\ref{sec:noncritical}, we give similar bounds to Sections~\ref{sec:uppernottethered}--\ref{sec:upper} when $m\geq n^{1+\sigma}$ (for any fixed small $\sigma$).
In Section~\ref{sec:maintechnicalconclusion}, we put this together to prove Theorem~\ref{thm:maintechnicalthm-forprocess}.
In combination, this completes the proof for the upper part of the sandwich. In Section~\ref{sec:lowerpart}, we give the lower part of the sandwich and its analysis. 

Note that a reader new to the subject may find it more accessible to start with the proof for the lower part of the sandwich in Section~\ref{sec:lowerpart} before moving on to the proof for the upper part of the sandwich in Sections~\ref{sec:process}--\ref{sec:maintechnicalconclusion}.

\subsection{Concentration inequalities}\label{sec:concentration}
We will use the following well-known result of McDiarmid~\cite[Theorem 2.3(b)]{mcdiarmid1998concentration}.

\begin{theorem}\label{thm:mcd:conc} Let $X_1,\ldots,X_n$ be independent random variables with $0\leq X_i\leq 1$ for each $i\in [n]$. Let $S=\sum_{i=1}^nX_i$ and $\mu=\E S$. Then, for any $\eps>0$,
\[
\P(S\geq (1+\eps)\mu)\leq \exp\left(-\frac{\eps^2\mu}{2(1+\eps/3)}\right).
\]
\end{theorem}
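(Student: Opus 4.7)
The plan is to prove this via the standard exponential moment (Chernoff) method, followed by an elementary calculus inequality that produces the particular denominator $2(1+\eps/3)$ in the statement. This is a very classical argument, and the real content lies in the final estimate.

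First, I would apply Markov's inequality to the random variable $e^{tS}$ for an arbitrary parameter $t>0$ to obtain
\[
\P(S\geq (1+\eps)\mu)\;=\;\P\bigl(e^{tS}\geq e^{t(1+\eps)\mu}\bigr)\;\leq\;e^{-t(1+\eps)\mu}\,\E[e^{tS}].
\]
By independence, $\E[e^{tS}]=\prod_i \E[e^{tX_i}]$. For each $i$, since $0\le X_i\le 1$ and $e^{tx}$ is convex in $x$, the chord bound gives $e^{tX_i}\le 1-X_i+X_ie^t=1+X_i(e^t-1)$, so $\E[e^{tX_i}]\le 1+p_i(e^t-1)\le \exp(p_i(e^t-1))$ where $p_i=\E X_i$. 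Multiplying and using $\sum_i p_i=\mu$, I get $\E[e^{tS}]\le \exp(\mu(e^t-1))$, and hence
\[
\P(S\geq (1+\eps)\mu)\;\leq\;\exp\bigl(\mu(e^t-1)-t(1+\eps)\mu\bigr).
\]

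Next I would optimise in $t$. Differentiating, the exponent is minimised at $t=\log(1+\eps)$, which substituted back yields the well-known multiplicative Chernoff bound
\[
\P(S\geq (1+\eps)\mu)\;\leq\;\exp\bigl(-\mu\,h(\eps)\bigr),\qquad h(\eps):=(1+\eps)\log(1+\eps)-\eps.
\]

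The final and only non-mechanical step is the elementary inequality
\[
h(\eps)\;\geq\;\frac{\eps^2}{2(1+\eps/3)}\qquad\text{for all }\eps>0.
\]
To prove this, I would define $g(\eps):=h(\eps)-\eps^2/(2+2\eps/3)$, note $g(0)=0$, and show $g'(\eps)\ge 0$ for $\eps>0$. Computing $g'(\eps)=\log(1+\eps)-\bigl(\eps(1+\eps/3)-\eps^2/6\bigr)/(1+\eps/3)^2$ (after a routine quotient-rule simplification) reduces the claim to showing a purely analytic inequality between $\log(1+\eps)$ and a rational function of $\eps$, which I would verify by expanding both sides as power series about $\eps=0$ (or, more slickly, by again checking the derivative is non-negative and using $g'(0)=0$, iterating one more time until the inequality becomes transparent). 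The main obstacle is precisely this last calculus step — not conceptually hard, but it is where the specific constants $2$ and $1/3$ in the denominator come from, and one has to be careful to track the inequality in the right direction for all $\eps>0$ rather than only asymptotically. Combining the Chernoff bound with this inequality gives exactly the stated conclusion.
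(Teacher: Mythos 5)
The paper does not prove this statement at all: it is quoted directly as McDiarmid's Theorem 2.3(b), so there is no in-paper argument to compare against. Your proposal is the standard and correct exponential-moment derivation: the convexity chord bound handles general $[0,1]$-valued (not just Bernoulli) variables, the optimal choice $t=\log(1+\eps)$ yields $\exp(-\mu h(\eps))$ with $h(\eps)=(1+\eps)\log(1+\eps)-\eps$, and the final calculus inequality $h(\eps)\geq \eps^2/(2(1+\eps/3))$ does go through exactly as you sketch --- differentiating twice reduces it to $(1+\eps/3)^3\geq 1+\eps$, which is immediate from the binomial expansion. No gaps.
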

We will use Chernoff's bound, in the following standard form (see, for example, \cite[Corollary~2.2 and Theorem~2.10]{janson2011random}).
\begin{lemma}[Chernoff's bound]\label{chernoff}
Let $X$ be a random variable with mean $\mu$ which is binomially distributed. 
Then for any $0<\gamma<1$, we have that \[\mathbb{P}(|X-\mu|\geq \gamma \mu)\leq 2\exp({-\mu \gamma^2/3}).\]
\end{lemma}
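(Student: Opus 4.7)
The plan is to apply the classical Chernoff bounding technique via the moment generating function, handling the upper and lower tails separately and then combining them by a union bound to produce the stated factor of $2$.

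For the upper tail, for any $t>0$ Markov's inequality applied to the nonnegative random variable $e^{tX}$ gives $\P(X\geq a)\leq e^{-ta}\E[e^{tX}]$. Since $X\sim \mathrm{Bin}(n,p)$ with $\mu=np$, independence of the Bernoulli summands gives $\E[e^{tX}]=(1-p+pe^{t})^n$, and applying $1+x\leq e^x$ to each factor yields $\E[e^{tX}]\leq \exp(\mu(e^t-1))$. Setting $a=(1+\gamma)\mu$ and optimising in $t$ at $t=\log(1+\gamma)$ produces
\[
\P(X\geq (1+\gamma)\mu)\leq \exp(-\mu((1+\gamma)\log(1+\gamma)-\gamma)),
\]
after which the technical heart of the argument is the analytic inequality $(1+\gamma)\log(1+\gamma)-\gamma\geq \gamma^2/3$ for $0<\gamma<1$. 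I would verify this either by expanding $(1+\gamma)\log(1+\gamma)-\gamma=\sum_{k\geq 2}(-1)^k\gamma^k/(k(k-1))$ and controlling the alternating tail, or by checking monotonicity of the ratio $g(\gamma)=((1+\gamma)\log(1+\gamma)-\gamma)/\gamma^2$ on $(0,1]$, where $g(0^+)=1/2$ and $g(1)=2\log 2-1>1/3$. This yields $\P(X\geq (1+\gamma)\mu)\leq \exp(-\mu\gamma^2/3)$.

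For the lower tail, running the same scheme with $t<0$ and $a=(1-\gamma)\mu$, optimising at $t=\log(1-\gamma)$, gives $\P(X\leq (1-\gamma)\mu)\leq \exp(-\mu(-\gamma-(1-\gamma)\log(1-\gamma)))$. Here $-\gamma-(1-\gamma)\log(1-\gamma)=\sum_{k\geq 2}\gamma^k/(k(k-1))\geq \gamma^2/2\geq \gamma^2/3$, an all-positive series which needs no additional monotonicity work since the quadratic term alone already suffices. Taking a union bound of the two tails produces the factor of $2$ in the statement.

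The only real obstacle is the analytic inequality $(1+\gamma)\log(1+\gamma)-\gamma\geq \gamma^2/3$ in the upper tail: because the associated series alternates in sign, merely truncating at the quadratic term is not immediate, and a brief monotonicity or explicit sign-grouping argument is required. Everything else is standard bookkeeping on Markov's inequality applied to an exponentiated random variable, and the result is entirely classical.
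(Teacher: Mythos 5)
The paper does not prove this lemma at all: it is quoted as a standard result with a citation to Janson, {\L}uczak and Ruci\'nski, so there is no in-paper argument to compare against. Your moment-generating-function derivation is the classical proof and is essentially correct: Markov applied to $e^{tX}$, the bound $\E[e^{tX}]\leq\exp(\mu(e^t-1))$, optimisation at $t=\log(1\pm\gamma)$, and the analytic inequality $(1+\gamma)\log(1+\gamma)-\gamma\geq\gamma^2/3$ on $(0,1]$ (which does hold; e.g.\ $h(\gamma)=(1+\gamma)\log(1+\gamma)-\gamma-\gamma^2/3$ has $h(0)=h'(0)=0$ and $h'(\gamma)=\log(1+\gamma)-2\gamma/3>0$ on $(0,1]$, as one checks from $h''$ changing sign once and $h'(1)=\log 2-2/3>0$). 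The one blemish is a sign slip in the lower tail: the exponent produced by optimising at $t=\log(1-\gamma)$ is $-\mu\bigl(\gamma+(1-\gamma)\log(1-\gamma)\bigr)$, and the quantity equal to $\sum_{k\geq 2}\gamma^k/(k(k-1))\geq\gamma^2/2$ is $\gamma+(1-\gamma)\log(1-\gamma)$, not $-\gamma-(1-\gamma)\log(1-\gamma)$ as written (the latter is the negative of that series). This is purely cosmetic and does not affect the argument or the conclusion.
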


In addition, we will use work of Kim and Vu~\cite{kim2000concentration} on the concentration of multivariate polynomials, in the form of Theorem~\ref{thm:concofpolys}, but quote this in Section~\ref{sec:concpoly} so that the notation required is close to its application.

\section{The upper part of the sandwich}\label{sec:process}

We now give the process producing $(G,G^*)$ for Theorem~\ref{thm:sandwich} (essentially that of Gao, Isaev and McKay~\cite{gao2020sandwichingFIRSTBOUND}) in Section~\ref{subsec:process}, before in the rest of this section showing that the required properties hold (see Claim~\ref{clm:analysisoverview}), subject only to the proof of Theorem~\ref{thm:maintechnicalthm-forprocess}.

\subsection{Edge deletion process}\label{subsec:process}
Let $0<1/C\ll \eps\ll 1$ and $d\geq C\log n$. Let $\mu$ and $C_0$ be such that $1/C\ll \mu \ll 1/C_0 \ll \eps$. Note that we can assume that $(1+\eps)d/n\leq 1$, for otherwise in Theorem~\ref{thm:sandwich} we can take $p^*=1$.
For each $0\leq i< \binom{n}{2}-dn/2$, let
\begin{equation}\label{eqn:etai}
\eta_i=C_0\max\left\{\frac{\mu}{\log n},\left(\frac{n\log n}{\binom{n}{2}-dn/2-i}\right)^{1/8}\right\},
\end{equation}
and for each $i\geq \binom{n}{2}-dn/2$, let $\eta_i=0$.
Let $e_1,e_2,\ldots$ be edges drawn independently and uniformly at random from $[n]^{(2)}$. Let $x_1,x_2,\ldots$ be independent random variables with $x_i\sim U([0,1])$ for each $i\geq 1$. We will choose the random graphs $G$ and $G^*$ using the sequences $e_1,e_2,\ldots$ and $x_1,x_2,\ldots$.

\smallskip 

\noindent\textbf{Produce $G$:} Let $F_0$ be the complete graph with vertex set $[n]$, and let $\ell(0)=0$. For each $1\leq i\leq \binom{n}{2}-dn/2$,
let $\ell(i)$ be the least $\ell(i)>\ell(i-1)$ for which $e_{\ell(i)}\in E(F_{i-1})$ and
\begin{equation}\label{eqn:goodprobupperinprocessforF}
x_{\ell(i)}\leq \frac{|\cK_d(F_{i-1}-e_{\ell(i)})|}{\max_{e\in E(F_{i-1})}|\cK_d(F_{i-1}-e)|},
\end{equation}
and let $F_i=F_{i-1}-e_{\ell(i)}$.
Let $G=F_{\binom{n}{2}-dn/2}$.

\smallskip 

\noindent\textbf{Produce $G^*$:} Let $G^*_0$ be the complete graph with vertex set $[n]$, and let $m(0)=0$. Let $R=\lfloor \eps dn/8\rfloor$. For each $1\leq i\leq \binom{n}{2}$, 
let $m(i)$ be the least $m(i)>m(i-1)$ for which $e_{m(i)}\in E(G^*_{i-1})$ and
\begin{equation}\label{eqn:goodprobupperinprocessforGstar}
x_{m(i)}\leq \begin{cases}
    1-\frac{\eta_{i+R-1}dn/2}{\binom{n}{2}-dn/2 - R - (i-1)} \quad &\text{if } i \le \binom{n}{2} - \frac{dn}{2} - R \\
    1 &\text{otherwise,}
\end{cases}
\end{equation}
and let $G^*_i=G^*_{i-1}-e_{m(i)}$. 
Let $M\sim \mathrm{Bin}\left((1+\eps)\frac{d}{n},\binom{n}{2}\right)$, and set $G^*=G_{\binom{n}{2}-M}$.

\smallskip 

\noindent\textbf{Note:} 
Observe that, for each $1\leq i\leq \binom{n}{2}-dn/2$, an edge $e\in E(F_{i-1})$ is only removed from $F_{i-1}$ with probability bigger than 0 if $|\cK_d(F_{i-1}-e)|> 0$, and thus $F_i$ always contains at least one $d$-regular graph. Furthermore, as $F_{i-1}$ is not $d$-regular, and $\cK_d(F_{i-1})\neq \emptyset$, there is some edge $e\in E(F_{i-1})$ for which $|\cK_d(F_{i-1}-e)|>0$, and thus with probability $1$, we will successfully produce $F_1,\ldots,F_{\binom{n}{2}-dn/2}$, and hence $G$. Similarly, with probability 1, we will successfully produce $G^*$.


\subsection{Analysis overview and deduction of Theorem~\ref{thm:sandwich}}\label{subsec:analysisoverview}
We will show that this process has the following 3 main properties. Recall from Definition~\ref{defn:Fndm} that $F \sim F(n,d,m)$ is a random graph on $n$ vertices with the same distribution as $H + E$ where $H \sim G_d(n)$ and $E$ is a uniformly random set of $m$ edges from $[n]^{(2)}\setminus E(H)$.

\begin{claim}\label{clm:analysisoverview}
\begin{enumerate}[label = {\alph{enumi})}]
\item $G^*\sim G(n,(1+\eps)d/n)$.\label{clm:main:a}
\item For each $0\leq i\leq \binom{n}{2}-dn/2$, $F_i\sim F(n,d,\binom{n}{2}-dn/2-i)$.\label{clm:main:b}
\item With high probability, $G\subset G^*$.\label{clm:main:c}
\end{enumerate}
\end{claim}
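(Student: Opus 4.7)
The plan is to verify the three parts in order. Parts (a) and (b) are direct bookkeeping (via symmetry and induction respectively), while (c) is the main coupling argument and is where Theorem~\ref{thm:maintechnicalthm-forprocess} is used.

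For (a), the thresholds in \eqref{eqn:goodprobupperinprocessforGstar} depend only on the step index $i$ and not on which edge of $E(G^*_{i-1})$ is being tested. Since the joint law of $(e_j)_{j \geq 1}$ is invariant under any permutation of $[n]^{(2)}$, the whole $G^*$-process is equivariant under such relabellings, so $G^*_k$ is uniformly distributed among simple graphs on $[n]$ with $\binom{n}{2}-k$ edges, for each $k$. Combined with the independent draw of $M \sim \mathrm{Bin}(\binom{n}{2}, (1+\eps)d/n)$, this gives $G^* = G^*_{\binom{n}{2}-M} \sim G(n,(1+\eps)d/n)$ via the standard decomposition of the binomial random graph into a binomial number of edges together with a uniform edge set of that size.

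For (b), I would induct on $i$, with base case $F_0 = K_n \sim F(n,d,\binom{n}{2}-dn/2)$ immediate. For the inductive step, writing $N = \binom{n}{2}-dn/2$ and $m = N-(i-1)$, and fixing a target $F'$ with $\binom{n}{2}-i$ edges containing a $d$-regular subgraph, I would compute
\[
\P(F_i = F') = \sum_{e \notin E(F')} \P(F_{i-1} = F'+e) \cdot \P(\text{remove } e \mid F_{i-1} = F'+e).
\]
The acceptance--rejection structure gives $\P(\text{remove }e \mid F_{i-1}=F_0) = |\cK_d(F_0-e)|/\sum_{f \in E(F_0)}|\cK_d(F_0-f)|$, and a double-count of pairs $(K,f) \in \cK_d(F_0) \times (E(F_0) \setminus E(K))$ evaluates the denominator to $m \cdot |\cK_d(F_0)|$. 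Substituting the inductive formula makes the $|\cK_d(F'+e)|$ cancel, leaving $\binom{n}{2}-e(F')=N-m+1=i$ identical summands, and the identity $i/(m\binom{N}{m}) = 1/\binom{N}{m-1}$ identifies the resulting distribution as $F(n,d,m-1)$.

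For (c), I would couple both processes via the shared randomness $(e_j,x_j)_{j \geq 1}$ and aim to maintain, with probability $1-o(1)$, the invariant $F_{\mathrm{current}} \subseteq G^*_{\mathrm{current}}$ at every index $j$ up to the stopping time $\tau$ at which the $G^*$-process completes its first $\binom{n}{2}-M$ accepted removals. The invariant at $\tau$ gives $G \subseteq F_\tau \subseteq G^*_\tau = G^*$, since the $F$-process only removes further edges thereafter. The invariant is trivially preserved unless $e_j \in E(F_{\mathrm{current}})$, in which case preservation reduces to the inequality $F\text{-threshold} \geq G^*\text{-threshold}$. Part (b) identifies the law of each encountered $F_{i-1}$, so Theorem~\ref{thm:maintechnicalthm-forprocess}, unionised over the $O(n^2)$ relevant $i$, yields \eqref{eq:dregcontainingeorfinthm} simultaneously for every such $F_{i-1}$ with failure probability $o(1)$; converting via \eqref{eq:actuallycloseto1upp} gives the $F$-threshold lower bound $1 - \eta_{i_F-1}\cdot (dn/2)/(\binom{n}{2}-dn/2-i_F+1)$, which by monotonicity of $\eta_i$ dominates the $G^*$-threshold whenever the $F$-step index satisfies $i_F \leq i_{G^*} + R$ with $R = \lfloor \eps dn/8 \rfloor$. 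The main obstacle is then verifying $i_F(j) \leq i_{G^*}(j) + R$ throughout on the coupled probability space: the drift of $i_F - i_{G^*}$ is controlled by the rare ``$F$-accepts, $G^*$-rejects'' events (when $e_j \in E(F_{\mathrm{current}})$) balanced against the ``$G^*$-accepts an edge already absent from $F$'' events, and since both thresholds are within $O(\eta_i \cdot dn/m)$ of $1$ the net expected drift is small; a Chernoff-type concentration bound over all $j$ should keep the discrepancy comfortably below $R$ with failure probability $o(1)$. The terminal phase, when the excess drops below $\eps dn$ and the $G^*$-threshold becomes exactly $1$, is handled separately using the concentration of $M$ around $(1+\eps)dn/2$ so that $\tau$ arrives well before this phase becomes relevant.
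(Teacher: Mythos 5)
Your parts (a) and (b) are correct and essentially identical to the paper's proofs: (a) is the same symmetry-plus-$M$ argument, and (b) is the same induction -- your shortcut of computing the conditional law of the removed edge directly (proportional to $|\cK_d(F_0-e)|$, with denominator $m\cdot|\cK_d(F_0)|$ by double counting) is equivalent to the paper's geometric-series computation of $\hat p$ in Section~\ref{subsec:clm:main:b}, and your identity $i/(m\binom{N}{m})=\binom{N}{m-1}^{-1}$ checks out since $N-m+1=i$.

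For (c) your strategy is also the paper's: the offset $R=\lfloor\eps dn/8\rfloor$, the threshold comparison via Theorem~\ref{thm:maintechnicalthm-forprocess} and \eqref{eq:actuallycloseto1upp} (which is exactly \ref{prop:forGstarF:1}), the observation that domination holds iff $i_F\leq i_{G^*}+R$, and the Chernoff bound on $M$ for the terminal phase. The one place your sketch is genuinely incomplete is the concentration step bounding the discrepancy $i_F(j)-i_{G^*}(j)$. The indicators of the ``$F$ accepts, $G^*$ rejects'' events are not independent: whether such an event is possible at index $j$ depends on whether $e_j\in E(F_{\mathrm{cur}})$, and its conditional probability depends on the current (random) step index $i_{G^*}(j)$, which in turn depends on the very discrepancy you are trying to control -- so a plain Chernoff bound does not apply, and a naive attempt to bound the success probabilities risks circularity. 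This is precisely what the paper's auxiliary processes $H_0,H_1,\ldots$ and $G^+_0,G^+_1,\ldots$ in Section~\ref{subsec:clm:main:c} are built to avoid: $G^+$ tests each \emph{distinct} edge exactly once, at a deterministic position $i$ in the sequence of first appearances, against a threshold depending only on $i$, so $e(G^+_N)-e(H_N)$ is a sum of genuinely independent Bernoullis with total mean at most $R/2$ (property \ref{prop:forGstarF:2}), and the deterministic index-chasing of Claim~\ref{clm:subanalysisoverview}~\ref{clm:subanalysisoverview:1}--\ref{clm:subanalysisoverview:3} converts this into the needed offset bound. Your argument can be repaired either by importing this device or by a supermartingale/stochastic-domination argument run up to the first failure time of the invariant, but as written the ``Chernoff-type concentration bound over all $j$'' is the missing step.
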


As $G=F_{\binom{n}{2}-dn/2}$, Claim~\ref{clm:analysisoverview}~\ref{clm:main:b} implies that $G\sim G_d(n)$. Therefore, Claim~\ref{clm:analysisoverview}~\ref{clm:main:a}, \ref{clm:main:b} and \ref{clm:main:c} directly imply that $(G,G^*)$ satisfies the required properties in Theorem~\ref{thm:sandwich}. Thus, to show the upper part of the sandwich in Theorem~\ref{thm:sandwich} exists, it is sufficient to prove Claim~\ref{clm:analysisoverview}. In the rest of this section we will do this, subject only to the proof of Theorem~\ref{thm:maintechnicalthm-forprocess}.


\subsection{Distribution of $G^*$: proof of Claim~\ref{clm:analysisoverview}~\ref{clm:main:a}}\label{subsec:clm:main:a}
For each $0\leq i\leq \binom{n}{2}$, we have that $e(G^*_i)=\binom{n}{2}-i$, and, by symmetry (as the right hand side of \eqref{eqn:goodprobupperinprocessforGstar} depends only on $i$), the distribution of $G^*_i$ is uniform across all graphs with vertex set $[n]$ and $\binom{n}{2}-i$ edges. Therefore, the distribution of $G^*$ is uniform when conditioned on $e(G^*)=i$, for any $0\leq i\leq \binom{n}{2}$.
As $G^*=G_{\binom{n}{2}-M}$, for each $0\leq i\leq \binom{n}{2}$, we have
$\P(e(G^*)=i)=\P(M=i)$. Since $M\sim \mathrm{Bin}\left((1+\eps)\frac{d}{n},\binom{n}{2}\right)$, 
we therefore have that $G^*\sim G\left(n,(1+\eps)\frac{d}{n}\right)$.


\subsection{Distribution of $F_i$: proof of Claim~\ref{clm:analysisoverview}~\ref{clm:main:b}}\label{subsec:clm:main:b}
We now prove Claim~\ref{clm:analysisoverview}~\ref{clm:main:b}, as shown similarly by Gao, Isaev, and McKay  for the corresponding result in the lower bound of the sandwich (see~\cite[Lemma 3]{gao2020sandwichingFIRSTBOUND}).
We will prove by induction on $i$ that, for each $0\leq i\leq \binom{n}{2}-dn/2$, $F_i\sim F(n,d,\binom{n}{2}-dn/2-i)$.

First note that this is trivially true for $i=0$, and that, if $F$ is any graph with $V(F)=[n]$ and $e(F)=m$, where $dn/2\leq m\leq \binom{n}{2}$, then
\begin{equation}\label{eq:distofFndm}
\P(F(n,d,m-dn/2)=F)=\frac{|\cK_d(F)|}{|\cK_d(n)|}\cdot\binom{\binom{n}{2}-dn/2}{m-dn/2}^{-1}.
\end{equation}
Assume then that $F_{i-1}\sim F(n,d,\binom{n}{2}-dn/2-(i-1))$ and let $F$ be an arbitrary graph with $V(F)=[n]$, $e(F)=\binom{n}{2}-i$, and $\cK_d(F)\neq \emptyset$. Our aim is to show that
\begin{equation}\label{eq:probFiF}
\P(F_i=F)=\frac{|\cK_d(F)|}{|\cK_d(n)|}\cdot\binom{\binom{n}{2}-dn/2}{\binom{n}{2}-dn/2-i}^{-1},
\end{equation}
so that, as $F$ was arbitrary subject to $V(F)=[n]$, $e(F)=\binom{n}{2}-i$, and $\cK_d(F)\neq\emptyset$, we have $F_i\sim F(n,d,\binom{n}{2}-dn/2-i)$.

Let $f\in [n]^{(2)}\setminus E(F)$. Consider $\P(F_i=F|F_{i-1}=F + f)$. Observe that \begin{equation}\label{eq:doublecount}\sum_{e \in E(F + f)}|\cK_d(F + f - e)| = (e(F) + 1 - dn/2)|\cK_d(F+f)|.\end{equation} If $F_{i-1}=F + f$, the probability that, for the first $j'>\ell(i-1)$ with $e_{j'}\in E(F_{i-1})$, the edge $e_{j'}$ is not immediately removed from $F_{i-1}=F+f$ is, using \eqref{eqn:goodprobupperinprocessforF},~\eqref{eq:doublecount} and that $x_{j'}\sim U([0,1])$,
\begin{multline*}
\frac{1}{e(F)+1}\sum_{e'\in E(F+f)}\left(1-\frac{|\cK_d(F+f-e')|}{\max_{e\in E(F+f)}|\cK_d(F+f-e)|}\right) \\
=1-\frac{e(F)+1-dn/2}{e(F)+1}\cdot \frac{|\cK_d(F+f)|}{\max_{e\in E(F+f)}|\cK_d(F+f-e)|},
\end{multline*}
while the probability that $e_{j'}=f$ and $e_{j'}$ is removed from $F_{i-1}=F+f$ is
\[
\frac{1}{e(F)+1}\cdot \frac{|\cK_d(F)|}{\max_{e\in E(F+f)}|\cK_d(F+f-e)|}.
\]
Therefore, letting $\hat{p}=\P(F_i=F|F_{i-1}=F+f)$, we have
\begin{multline*}
\hat{p}=\frac{1}{e(F)+1}\cdot \frac{|\cK_d(F)|}{\max_{e\in E(F+f)}|\cK_d(F+f-e)|}\\
+\left(1-\frac{e(F)+1-dn/2}{e(F)+1}\cdot \frac{|\cK_d(F+f)|}{\max_{e\in E(F+f)}|\cK_d(F+f-e)|}\right)\hat{p},
\end{multline*}
so that
\begin{equation}\label{eq:toapply}
\P(F_i=F|F_{i-1}=F + f)=\hat{p}=\frac{|\cK_d(F)|}{(e(F)+1-dn/2)|\cK_d(F+f)|}.
\end{equation}

Then, as by assumption we have that $F_{i-1}\sim F(n,d,\binom{n}{2}-dn/2-(i-1))$, we see that
\begin{align*}
\P(F_i=&F)=\sum_{f\in [n]^{(2)}\setminus E(F)}\P(F_i=F|F_{i-1}=F + f)\cdot \P(F_{i-1}=F+ f)\\
&\hspace{-0.4cm}\overset{\eqref{eq:toapply},\eqref{eq:distofFndm}}{=}\sum_{f\in [n]^{(2)}\setminus E(F)}\frac{|\cK_d(F)|}{(e(F)+1-dn/2)|\cK_d(F+f)|}\cdot \frac{|\cK_d(F+f)|}{|\cK_d(n)|}\cdot\binom{\binom{n}{2}-dn/2}{\binom{n}{2}-dn/2-(i-1))}^{-1}\\
&=\frac{|\cK_d(F)|}{|\cK_d(n)|}\cdot \frac{\binom{n}{2}-e(F)}{e(F)+1-dn/2}\cdot
\binom{\binom{n}{2}-dn/2}{\binom{n}{2}-dn/2-(i-1))}^{-1}\\
&=\frac{|\cK_d(F)|}{|\cK_d(n)|}\cdot\binom{\binom{n}{2}-dn/2}{\binom{n}{2}-dn/2-i}^{-1},
\end{align*}
and thus \eqref{eq:probFiF} holds, as required.


 \subsection{Containment of $G$ in $G^*$: proof of Claim~\ref{clm:analysisoverview}~\ref{clm:main:c}}\label{subsec:clm:main:c}

To prove Claim~\ref{clm:analysisoverview}~\ref{clm:main:c}, it will be convenient to define two more sequences of random graphs. Let $H_0$ have vertex set $[n]$ and edge set $[n]^{(2)}$. Let $k(0)=0$. For each $1\leq i\leq \binom{n}{2}$ in turn, let $k(i)$ be the least $k>k(i-1)$ such that $e_{k}\in E(H_{i-1})$, and let $H_i=H_{i-1}-e_{k(i)}$. That is, we form the sequence $H_0,H_1,\ldots$ by removing an edge as soon as it appears in the sequence $e_1,e_2,\ldots$ and $k(i)$ is the number of places we need to look along in this sequence to see $i$ different edges.

Let $G^+_0$ have vertex set $[n]$ and edge set $[n]^{(2)}$. Recall that $R=\lfloor \eps dn/8\rfloor$. For each $1\leq i\leq \binom{n}{2}$, if
\begin{equation}\label{eq:goodupperGplus}
x_{k(i)}\leq  \begin{cases} 1-\frac{\eta_{i+R-1}dn/2}{\binom{n}{2}-dn/2-R-i+1} \quad &\text{if $i \le \binom{n}{2}-dn/2-R$} \\
1 &\text{otherwise}
\end{cases}
\end{equation}
then let $G^+_i=G^+_{i-1}-e_{k(i-1)}$ and, otherwise, let $G^+_i=G^+_{i-1}$. Note that the right hand side of \eqref{eq:goodupperGplus} is the same as that in \eqref{eqn:goodprobupperinprocessforGstar}. Thus, when an edge $e$ is considered for deletion in the sequences $G_0^*,G_1^*,\ldots$ and $G_0^+,G_1^+,\ldots$, it will be deleted from one only if it is deleted from the other; the difference is that, if it survives this, $e$ will never be deleted in the sequence $G_0^+,G_1^+,\ldots$. It is perhaps also worth noting here that, instead of the choice in Section~\ref{subsec:process}, we could instead choose $G^*$ using the sequence $G_0^+,G_1^+,\ldots$. However, this would be a more difficult choice there as we do not have the property that $e(G_i^+)=\binom{n}{2}-i$ for each $0\leq i\leq \binom{n}{2}$, and we prefer to keep our processes generating the coupling $(G,G^*)$ as clean as possible.

We will consider the following two properties, both of which we show will occur with high probability, and which together will imply that $G\subset G^*$. For them, we use $N=\binom{n}{2}-dn/2-2R$.

\stepcounter{propcounter}
\begin{enumerate}[label = {{\textbf{\Alph{propcounter}\arabic{enumi}}}}]
\item For each $1\leq i\leq \binom{n}{2}-dn/2-R$, and each $e\in E(F_{i-1})$,\label{prop:forGstarF:1}
\begin{equation}\label{eq:withroom}
\frac{|\cK_d(F_{i-1}-e)|}{\max_{f\in E(F_{i-1})}|\cK_d(F_{i-1}-f)|}\geq 1-\frac{\eta_{i-1} dn/2}{\binom{n}{2}-dn/2-i+1}.
\end{equation}
\item $e(G_N^+)\leq e(H_N)+R$.\label{prop:forGstarF:2}
\end{enumerate}
Roughly speaking (see also Claim~\ref{clm:subanalysisoverview} below), comparing the right hand side of \eqref{eq:withroom} with that of \eqref{eqn:goodprobupperinprocessforF} and \eqref{eqn:goodprobupperinprocessforGstar} we see that, as long as \ref{prop:forGstarF:1} holds, if an edge $e_j$ in the sequence $e_1,e_2,\ldots$ is deleted in the sequence $G_0^*,G_1^*,\ldots$ then it will also be deleted from $F_0,F_1,\ldots$ as long as the graph $F_{i'}$ from which it is to be deleted does not have more than $R$ more edges than the graph $G^+_{i''}$ from which $e_j$ was deleted. That the difference in the number of edges here is not too large will follow as long as \ref{prop:forGstarF:2} holds.

To prove Claim~\ref{clm:analysisoverview}~\ref{clm:main:c}, we will prove the following claim (using Theorem~\ref{thm:maintechnicalthm-forprocess} for Claim~\ref{clm:subanalysisoverview}~\ref{clm:subanalysisoverview:4}).

\begin{claim}\label{clm:subanalysisoverview}
\begin{enumerate}[label = \roman{enumi})]
\item For each $1\leq i\leq\binom{n}{2}-dn/2$, $k(i)\leq \ell(i),m(i)$.\label{clm:subanalysisoverview:1}
\item If \ref{prop:forGstarF:2} holds, then $m(i)\leq k(i+R)$ for all $0\leq i\leq N-R$.\label{clm:subanalysisoverview:2}
\item For each $1 \le i \le N$, if \ref{prop:forGstarF:1} holds and $m(j)\leq \ell(j+R)$ for all $1\leq j\leq i$, then $F_{i+R}\subset G^*_i$.\label{clm:subanalysisoverview:3}
\item \ref{prop:forGstarF:1} holds with high probability.\label{clm:subanalysisoverview:4}
\item \ref{prop:forGstarF:2} holds with high probability.\label{clm:subanalysisoverview:5}
\end{enumerate}
\end{claim}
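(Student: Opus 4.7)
Parts (i)--(iii) are deterministic coupling arguments establishing how the four processes $F$, $G^+$, $G^*$, $H$ line up. For (i), each removal in the $F$ or $G^*$ process removes a distinct edge, while $k(i)$ is exactly the position of the $i$-th distinct edge in $e_1,e_2,\ldots$, so counting distinct edges seen forces $k(i)\leq \ell(i),m(i)$. For (ii), I would first establish a monotonicity fact: at any position $p=k(i_+)$ where the $G^+$ test succeeds, the $G^*$ test succeeds as well. This is because the $G^*$ index $i_*$ at $p$ is at most $i_+$ (each $G^*$ removal uses a distinct edge and only $i_+-1$ distinct edges appear before position $p$), and since $\eta_j$ is non-decreasing in $j$ while the denominator in the probability bound is non-increasing in the index, $G^*$'s test bound dominates $G^+$'s; hence $x_p$ clearing the latter clears the former. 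Consequently the number of $G^*$ removals by any position dominates that of $G^+$. Assumption \ref{prop:forGstarF:2} bounds $G^+$'s test-failures in its first $N$ steps by $R$, hence by $R$ in its first $i+R\le N$ steps, giving at least $i$ removals by $G^+$ (and therefore by $G^*$) by position $k(i+R)$, i.e.\ $m(i)\leq k(i+R)$.

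For (iii), fix $j\le i$ and consider position $m(j)$. The hypothesis $m(j)\leq \ell(j+R)$ forces $F$ to be at some step $i'\leq j+R$ at this position. If $e_{m(j)}\notin E(F_{i'-1})$, then $e_{m(j)}$ was removed at an earlier step of the $F$ process, so certainly $e_{m(j)}\notin F_{i+R}$. Otherwise, I compare the lower bound on $F$'s removal probability at step $i'$ from \ref{prop:forGstarF:1} with the $G^*$ bound at step $j$ from \eqref{eqn:goodprobupperinprocessforGstar}: using $\eta_{i'-1}\leq \eta_{j+R-1}$ and $\binom{n}{2}-dn/2-i'+1\geq \binom{n}{2}-dn/2-R-j+1$ (both consequences of $i'\leq j+R$), $F$'s bound is at least $G^*$'s, so the test $x_{m(j)}\leq\ldots$ (which passed for $G^*$) also passes for $F$, and $F$ removes $e_{m(j)}$ at step $i'\leq i+R$. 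Either way $e_{m(j)}\notin F_{i+R}$, and since this holds for every $j\leq i$, we conclude $F_{i+R}\subset G^*_i$.

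Part (iv) reduces cleanly to Theorem~\ref{thm:maintechnicalthm-forprocess}. By Claim~\ref{clm:analysisoverview}~\ref{clm:main:b}, $F_{i-1}\sim F(n,d,m_i)$ with $m_i=\binom{n}{2}-dn/2-(i-1)$, and the theorem yields with probability $1-o(n^{-2})$ that $|\{K\in\cK_d(F_{i-1}):e\in K\}|\leq (1+\eta_{i-1})|\{K\in\cK_d(F_{i-1}):f\in K\}|$ for all $e,f\in E(F_{i-1})$, since $\eta_{i-1}$ matches exactly the error in \eqref{eq:dregcontainingeorfinthm} with $m=m_i$. Substituting this into the identity preceding \eqref{eq:actuallycloseto1upp}, and using the handshake identity $\sum_e |\{K\in\cK_d(F_{i-1}):e\in K\}|=|\cK_d(F_{i-1})|dn/2$ to upper bound $\min_f |\{K\in\cK_d(F_{i-1}):f\in K\}|$ by $|\cK_d(F_{i-1})|dn/(2e(F_{i-1}))$, yields precisely \eqref{eq:withroom}. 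A union bound over the $O(n^2)$ relevant $i$'s then gives (iv). (Formally we apply the theorem with a slightly smaller $\eps$ so as to cover all $m_i\geq R+1$; this is permitted by the hierarchy.)

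For (v), since the $x_i$'s are independent of each other and of the $e_j$'s, the quantity $e(G^+_N)-e(H_N)$ equals the number of indices $1\leq i\leq N$ at which the $G^+$ test failed, a sum of $N$ independent Bernoulli variables with failure probabilities $\eta_{i+R-1}\,dn/(2(\binom{n}{2}-dn/2-R-i+1))$. Re-indexing by $r=\binom{n}{2}-dn/2-R-i+1$ and splitting according to which of the two terms in $\eta_{\binom{n}{2}-dn/2-r}$ dominates, one obtains $\E[\text{\#failures}]=O(C_0\mu dn)+O(C_0 dn\,(n\log n/R)^{1/8})$, both of which are $\ll R=\lfloor\eps dn/8\rfloor$ under the hierarchy $1/C\ll\mu\ll 1/C_0\ll\eps$ combined with $d\geq C\log n$. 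Theorem~\ref{thm:mcd:conc} then gives $\P(\text{\#failures}\geq R)=o(1)$ via $R=\omega(\log n)$. The only non-routine ingredient across the five parts is Part (iv), which shunts all of the technical difficulty onto Theorem~\ref{thm:maintechnicalthm-forprocess}---the main technical theorem of the paper; the remainder is bookkeeping and standard concentration.
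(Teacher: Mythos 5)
Your proposal is correct, and parts (i), (iii), (iv) and (v) follow essentially the same arguments as the paper: the distinct-edge count for (i), the threshold comparison $\eta_{i'-1}\leq\eta_{j+R-1}$ together with the monotone denominators for (iii), the reduction to Theorem~\ref{thm:maintechnicalthm-forprocess} plus the handshake/double-counting identity for (iv), and the expectation estimate followed by Theorem~\ref{thm:mcd:conc} for (v). The only organizational difference is in (ii): you prove a direct domination statement (every position at which $G^+$ removes is also a removal position for $G^*$, because the $G^*$ candidate index at that position is at most the $G^+$ index and the thresholds are monotone), whereas the paper argues by contradiction, exhibiting $R+1$ edges that would lie in $E(G^+_N)\setminus E(H_N)$ if $k(i+R)<m(i)$; both rest on exactly the same monotonicity facts, and your version is arguably slightly cleaner, though one should note (as you implicitly use via $i+R\leq N$) that all indices involved stay below $\binom{n}{2}-dn/2-R$ so that the ``otherwise $1$'' branch of the thresholds never interferes.
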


We will now show that Claim~\ref{clm:analysisoverview} follows from Claim~\ref{clm:subanalysisoverview}. By Claim~\ref{clm:subanalysisoverview}~\ref{clm:subanalysisoverview:4} and \ref{clm:subanalysisoverview:5}, \ref{prop:forGstarF:1} and \ref{prop:forGstarF:2} hold with high probability. When this happens, from  Claim~\ref{clm:subanalysisoverview}~\ref{clm:subanalysisoverview:1} and \ref{clm:subanalysisoverview:2} we have that, for all $1\leq j\leq N-R$, $m(j)\leq k(j+R)\leq \ell(j+R)$. Thus, by
Claim~\ref{clm:subanalysisoverview}~\ref{clm:subanalysisoverview:3}, $G\subset F_{N}\subset G^*_{N-R}$.
As $G^*=G^*_{\binom{n}{2}-M}$, when $M\geq \binom{n}{2}-N+R = dn/2 + 3R$, we have that $G^*_{N-R}\subset G^*$.
As $M\sim \mathrm{Bin}\left((1+\eps)\frac{d}{n},\binom{n}{2}\right)$ and $R=\lfloor \eps dn/8\rfloor$, by a Chernoff bound (Lemma~\ref{chernoff}), with high probability we have $M \ge (1+\eps)dn/2-R \ge dn/2 + 3R$. Therefore $G \subset G^*$  with high probability.
Thus, it is left only to prove Claim~\ref{clm:subanalysisoverview}~\ref{clm:subanalysisoverview:1}--\ref{clm:subanalysisoverview:5} which
we do in Sections~\ref{sec:proveclmsub:first}--\ref{sec:proveclmsub:5} respectively (using Theorem~\ref{thm:maintechnicalthm-forprocess}).


\subsubsection{Proof of Claim~\ref{clm:subanalysisoverview}~\ref{clm:subanalysisoverview:1}}\label{sec:proveclmsub:first}
Let $1\leq i\leq \binom{n}{2}$. Observe that it follows from the definition of the processes that there are at most $i-1$ unique edges in $e_1,\ldots,e_{k(i)-1}$, while the edges $e_{\ell(1)},\ldots,e_{\ell(i)}$ are all unique and the edges $e_{m(1)},\ldots,e_{m(i)}$ are all unique. Thus, $\ell(i),m(i)\geq k(i)$.


\subsubsection{Proof of Claim~\ref{clm:subanalysisoverview}~\ref{clm:subanalysisoverview:2}}
Suppose that  \ref{prop:forGstarF:2} holds. Let $0\leq i\leq N-R$ and
suppose, for contradiction, that $k(i+R)<m(i)$. Then, there is a set $I\subset [i+R]$ such that $|I|=R+1$ and, for each $j\in I$, $k(j)\notin \{m(i'):i'\in [i]\}$. Note that in particular $|I|\geq 1$. Take $j\in I$. As $k(j)\leq k(i+R)<m(i)$, there is some $i'\in [i]$ such that
$m(i'-1)<  k(j)<m(i')$. From Claim~\ref{clm:subanalysisoverview}~\ref{clm:subanalysisoverview:1}, we have $k(i-1)\leq m(i-1)$, so therefore $j\geq i'$.

As $e_{k(j)}$ is the first appearance of that edge in the sequence $e_1,e_2,\dots$, we have $e_{k(j)}\in E(G^*_{i'-1})$. Thus, as $m(i'-1)<k(j)<m(i')$, we have that \eqref{eqn:goodprobupperinprocessforGstar} does not hold with $(k(j),i')$ replacing $(m(i),i)$, that is,
\begin{equation*}
x_{k(j)}>  1-\frac{\eta_{i'+R-1}dn/2}{\binom{n}{2}-dn/2 - R -i'+1}\geq   1-\frac{\eta_{j+R-1}dn/2}{\binom{n}{2}-dn/2-R-j+1},
\end{equation*}
where we have used that $j\geq i'$. Therefore, from the condition at \eqref{eq:goodupperGplus}, we have $e_{k(j)}\in E(G^+_{j})$, and hence $e_{k(j)}\in E(G^+_{j'})$ for each $j'\geq j$. In particular, as $j\leq i+R\leq N$, $e_{k(j)}\in E(G^+_N)$.
On the other hand, $e_{k(j)}\notin E(H_j)$, and thus $e_{k(j)}\notin E(H_k)$ for any $k\geq j$. In particular, again, $e_{k(j)}\notin E(H_N)$.

Therefore, $\{e_{k(j)}:j\in I\}\subset E(G^+_N)\setminus E(H_N)$, and thus $|E(G^+_N)\setminus E(H_N)|\geq R+1$. This contradicts \ref{prop:forGstarF:2}, so we must have had $k(i+R)\geq m(i)$.


\subsubsection{Proof of Claim~\ref{clm:subanalysisoverview}~\ref{clm:subanalysisoverview:3}}
Fix $1 \le i \le N$. 
Suppose that \ref{prop:forGstarF:1} holds and $m(j)\leq \ell(j+R)$ for all $1\leq j\leq i$. Let $e\in[n]^{(2)}\setminus E(G_i^*)$. Then, as $e\notin E(G_i^*)$, there is some $j\in [i]$ with $e_{m(j)}=e$ for which \eqref{eqn:goodprobupperinprocessforGstar} holds with $j$ replacing $i$, so
\begin{equation*}
x_{m(j)}\leq  1-\frac{\eta_{j+R-1}dn/2}{\binom{n}{2}-dn/2-R-j+1}.
\end{equation*}
As $m(j)\leq \ell(j+R)$, there is some $j'\leq j+R$ such that $\ell(j'-1)<m(j)\leq \ell(j')$. Note that
\begin{align*}
x_{m(j)}&\leq 1-\frac{\eta_{j+R-1}dn/2}{\binom{n}{2}-dn/2-j-R+1}
\leq 1-\frac{\eta_{j'-1}dn/2}{\binom{n}{2}-dn/2-j'+1}\overset{\ref{prop:forGstarF:1}}{\leq} \frac{|\cK_d(F_{j'-1}-e_{m(j)})|}{\max_{e\in E(F_{j'-1})}|\cK_d(F_{j'-1}-e)|},
\end{align*}
where we have used that $j'\leq j+R\leq \binom{n}{2} - dn/2-R$.
Then, either $e_{m(j)}\notin E(F_{j'-1})$, or, as $\ell(j'-1)<m(j)\leq \ell(j')$, condition \eqref{eqn:goodprobupperinprocessforF} in the process above implies that $\ell(j')=m(j)$.
 In either case, as $j'\leq j+R\leq i+R$, we thus have $e\notin E(F_{i+R})$. As $e\in[n]^{(2)}\setminus E(G_i^*)$ was arbitrarily chosen, we thus have  $F_{i+R}\subset G_i^*$, as required.


\subsubsection{\ref{prop:forGstarF:1} is likely to hold: proof of Claim~\ref{clm:subanalysisoverview}~\ref{clm:subanalysisoverview:4}}\label{subsec:forclmmainb}
For each $1\leq i\leq \binom{n}{2}-dn/2$, we have $F_{i-1}\sim F(n,d,\binom{n}{2}-dn/2-(i-1))$ by  Claim~\ref{clm:analysisoverview}~\ref{clm:main:b}.
Therefore, by Theorem~\ref{thm:maintechnicalthm-forprocess} and a union bound, with high probability we have, for each $1\leq i\leq \binom{n}{2}-dn/2-R$, and each $e,f\in E(F_{i-1})$,
\begin{equation}\label{eq:dregcontainingeorfinprocessinproof}
|\{K\in \mathcal{K}_d(F_{i-1}):e\in E(K)\}|\leq \left(1+{\eta_{i-1}}\right)|\{K\in \mathcal{K}_d(F_{i-1}):f\in E(K)\}|.
\end{equation}
Note also that, by double counting,
\begin{align}\label{eq:doublecount}
\frac{dn}{2}\cdot 
|\cK_d(F_{i-1})| &= \sum_{f \in E(F_{i-1})} |\{K\in \mathcal{K}_d(F_{i-1}):f\in E(K)\}| \nonumber\\
&\ge e(F_{i-1}) \cdot \min_{f \in E(F_{i-1})}|\{K\in \mathcal{K}_d(F_{i-1}):f\in E(K)\}|.
\end{align}
Therefore, for each $e\in E(F_{i-1})$, we have
\begin{align*}
\frac{|\mathcal{K}_d(F_{i-1}-e)|}{\max_{f\in E(F_{i-1})}|\mathcal{K}_d(F_{i-1}-f)|}
&=
\frac{|\mathcal{K}_d(F_{i-1})|-|\{K\in \mathcal{K}_d(F_{i-1}):e\in E(K)\}|}{\max_{f\in E(F_{i-1})}(|\mathcal{K}_d(F_{i-1})|-|\{K\in \mathcal{K}_d(F_{i-1}):f\in E(K)\}|)} \\
&\hspace{-1.9cm}= 1-\frac{|\{K\in \mathcal{K}_d(F_{i-1}):e\in E(K)\}|-\min_{f\in E(F_{i-1})}|\{K\in \mathcal{K}_d(F_{i-1}):f\in E(K)\}|}
{|\mathcal{K}_d(F_{i-1})|-\min_{f\in E(F_{i-1})}|\{K\in \mathcal{K}_d(F_{i-1}):f\in E(K)\}|}\\
&\hspace{-1.9cm}\overset{\eqref{eq:dregcontainingeorfinprocessinproof}}{\geq} 1-\frac{\eta_{i-1} \min_{f\in E(F_{i-1})}|\{K\in \mathcal{K}_d(F_{i-1}):f\in E(K)\}|}
            {|\mathcal{K}_d(F_{i-1})|-\min_{f\in E(F_{i-1})}|\{K\in \mathcal{K}_d(F_{i-1}):f\in E(K)\}|}\\
&\hspace{-1.9cm}\overset{\eqref{eq:doublecount}}{\geq} 1-\frac{\eta_{i-1} \frac{dn}{2e(F_{i-1})}|\mathcal{K}_d(F_{i-1})|}
            {\left(1-\frac{dn}{2e(F_{i-1})}\right)|\mathcal{K}_d(F_{i-1})|}\\
&\hspace{-1.9cm}= 1-\frac{\eta_{i-1}dn/2}
                         {e(F_{i-1}) - dn/2}= 1-\frac{\eta_{i-1}dn/2}
                         {\binom{n}{2} - dn/2-i+1}.
\end{align*}
Thus we have that \ref{prop:forGstarF:1} holds with high probability, as required.


\subsubsection{\ref{prop:forGstarF:2} is likely to hold: Proof of Claim~\ref{clm:subanalysisoverview}~\ref{clm:subanalysisoverview:5}}\label{sec:proveclmsub:5}

For each $1\leq i\leq \binom{n}{2}$, let $y_i=x_{k(i)}$, and observe that $y_1,\ldots,y_{\binom{n}{2}}$ are independent random variables whose distribution is uniform on $[0,1]$. Recalling that $R=\lfloor \eps dn/8\rfloor$, for each $1\leq i\leq \binom{n}{2} - dn/2 - R$, let $Y_i=0$ if
\begin{equation}\label{eq:new:goodupperGplus}
y_{i}\leq 1-\frac{\eta_{i+R-1}dn/2}{\binom{n}{2}-dn/2-R-i+1} 
\end{equation}
and $Y_i=1$ otherwise. As the condition in \eqref{eq:new:goodupperGplus} depends only on $i$, we also have that $Y_1,\ldots,Y_{\binom{n}{2}}$ are independent random variables. 
Recalling that $N=\binom{n}{2}-dn/2-2R$, let $S=\sum_{i=1}^NY_i$.
Observe from our processes producing $H_0,H_1,\dots$ and $G^+_0,G^+_1,\ldots$ that $S=e(G_N^+)-e(H_N)$. 
We have, then,
\begin{align*}
\E S&=\sum_{i=1}^{N}\frac{\eta_{i+R-1} dn/2}{\binom{n}{2}-dn/2-R-i+1}\\
&\overset{\eqref{eqn:etai}}{=}\sum_{i=1}^{N}\frac{C_0dn/2}{\binom{n}{2}-dn/2-R-i+1}
\max\left\{\frac{\mu}{\log n},\left(\frac{n\log n}{\binom{n}{2}-dn/2-R-i+1}\right)^{1/8}\right\}\\
&=\frac{C_0dn}{2}\cdot \sum_{j=R+1}^{\binom{n}{2}-dn/2-R}\frac{1}{j}\max\left\{\frac{\mu}{\log n},\left(\frac{n\log n}{j}\right)^{1/8}\right\}\\
&\leq \frac{C_0dn}{2}\cdot \sum_{j=R+1}^{\binom{n}{2}}\frac{1}{j}\left(\frac{\mu}{\log n}+\left(\frac{n\log n}{j}\right)^{1/8}\right)\\
&\leq \frac{C_0dn}{2}\cdot\frac{2\mu\log(n^2)}{\log n}+\frac{C_0dn}{2}\int_{j=R}^{\binom{n}{2}}\frac{(n\log n)^{1/8}}{j^{9/8}}dj\\
&\leq 2C_0\mu dn+4C_0dn\left(\frac{n\log n}{R}\right)^{1/8} \le 2C_0\mu dn+4C_0dn\left(\frac{10n\log n}{\eps dn}\right)^{1/8} \leq \frac{\lfloor \eps dn/8\rfloor}{2}= \frac{R}{2},
\end{align*}
where we have used that $d\geq C\log n$ and $1/C\ll \mu\ll 1/C_0\ll \eps$.
Therefore, by Theorem~\ref{thm:mcd:conc} applied with $\eps=1$, we have $\P(S\geq R)\leq \exp(-R/8)=o(1)$. Thus, with high probability, $e(G_N^+)-e(H_N)=S\leq R$, and hence \ref{prop:forGstarF:2} holds.

\section{Main switching argument}\label{sec:switching}

In this section we will show that, if $F$ is roughly regular, and for all but very few graphs $K\in \cK_d(F)$ we have good bounds on the number of certain $(F\setminus K,K)$-alternating $x,y$-paths for each fixed $x$ and $y$, we can use a switching argument to prove that the edges in $F$ appear in roughly the same number of $d$-regular subgraphs of $F$. In certain situations, we want these switching $x,y$-paths to avoid a set of up to $40\log n$ vertices $Z$, and here\footnote{For example, using the notation in Theorem~\ref{thm:maintechnicalthm-forprocess}, when $\delta=2m/n=\omega(\log^8n)$ and $d=C\log n$, we have $\eta=C_0\mu/\log n$ and $\eta d= C_0\mu \cdot C=O(1)$.} $|Z|$ can be much larger than $\eta d$ as we only use the bound $\eta d\geq C_0$ for some large constant $C_0$. Thus, if $|(Z\cup \{x\})\cap N_K(y)|$ is large this can affect significantly the number of $x,y$-paths leaving $y$ with an edge in $K$, which explains the error term in 
\ref{prop:lowerboundKFKalternating} below.

\begin{theorem}\label{thm:switching:propstoapply}
Let $d\in [n]$, $\delta>0$ and $0<  1/C_0, \eta\ll 1$  satisfy $\eta d\geq C_0$ and $\eta^2\delta\geq C_0\log n$.
Let $4\leq \ell\leq 10\log n$.
Let $F$ be a graph with vertex set $[n]$ and $\cK_d(F)\neq\emptyset$ for which the following holds.
\stepcounter{propcounter}
\begin{enumerate}[label = {\emph{\textbf{\Alph{propcounter}\arabic{enumi}}}}]
\item For each $v\in [n]$, $d_{F}(v)=d+(1\pm \eta)\delta$.\labelinthm{prop:degreebound}
\end{enumerate}
Suppose that, for all but at most $n^{-6}|\mathcal{K}_d(F)|$ graphs $K\in \mathcal{K}_d(F)$ the following hold.
\begin{enumerate}[label = {\emph{\textbf{\Alph{propcounter}\arabic{enumi}}}}]\addtocounter{enumi}{1}
\item \labelinthm{prop:lowerboundKFKalternating}
For each distinct $x,y\in [n]$ and each $Z\subset [n]$ with $|Z|\leq 4\ell$, the number of $(F\setminus K,K)$-alternating $x,y$-paths of length $2\ell$ which do not have any internal vertices in $Z$ is at least $\left(1-\eta - \frac{|(Z\cup \{x\})\cap N_K(y)|}{d}\right)d^{\ell}\delta^{\ell}/n$.
\item For each distinct $x,y\in [n]$, the number of $(F\setminus K,K)$-alternating $x,y$-paths of length $2\ell$ is at most $\left(1+\eta\right)d^{\ell}\delta^{\ell}/n$.\labelinthm{prop:upperboundFKKalternating}
\end{enumerate}
Then, for each $e,f\in E(F)$, $|\{K\in \mathcal{K}_d(F):e\in E(K)\}|\leq (1+40\eta)|\{K\in \mathcal{K}_d(F):f\in E(K)\}|$.
\end{theorem}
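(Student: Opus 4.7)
The plan is a switching argument run through an auxiliary bipartite graph. Set $\mathcal{K}^{e} := \{K\in \mathcal{K}_d(F):e\in E(K),\, f\notin E(K)\}$ and $\mathcal{K}^{f} := \{K\in \mathcal{K}_d(F):f\in E(K),\, e\notin E(K)\}$. Since $|\{K\in \mathcal{K}_d(F):e\in E(K)\}|-|\mathcal{K}^e|$ equals $|\{K\in \mathcal{K}_d(F): e,f\in E(K)\}|$, which in turn equals $|\{K\in \mathcal{K}_d(F):f\in E(K)\}|-|\mathcal{K}^f|$, it is enough to prove $|\mathcal{K}^e|\leq (1+40\eta)|\mathcal{K}^f|$. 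Form the bipartite graph $L$ on $\mathcal{K}^e\cup \mathcal{K}^f$ with $KK'\in E(L)$ if and only if $E(K)\triangle E(K')$ is a single cycle of length $4\ell+2$, alternating between $K\setminus K'$- and $K'\setminus K$-edges, containing $e\in K\setminus K'$ and $f\in K'\setminus K$ in positions whose removal splits the cycle into two $(F\setminus K,K)$-alternating paths of length $2\ell$ each.

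Write $e=ab$ and $f=cd$; I treat the disjoint case (the overlapping case uses a cycle of length $2\ell+2$ and is analogous). Each edge of $L$ at $K$ is determined by a pairing of $\{a,b\}$ with $\{c,d\}$ (two options) and an ordered pair of internally vertex-disjoint $(F\setminus K,K)$-alternating paths $P_1, P_2$ of length $2\ell$ between the paired endpoints. For each $K\in \mathcal{K}^e$ satisfying \ref{prop:lowerboundKFKalternating}, I lower-bound $d_L(K)$ as follows: apply \ref{prop:lowerboundKFKalternating} with $Z=\{b,d\}$ to count valid $P_1$'s, then for each $P_1$ apply \ref{prop:lowerboundKFKalternating} with $Z=V(P_1)\setminus\{b,d\}$ to count valid disjoint $P_2$'s, and sum over pairings. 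For each $K'\in \mathcal{K}^f$ satisfying \ref{prop:upperboundFKKalternating}, I upper-bound $d_L(K')\leq 2(1+\eta)^2(d\delta)^{2\ell}/n^2$ by applying \ref{prop:upperboundFKKalternating} to each of $P_1,P_2$. Double-counting $e(L)$ then yields $|\mathcal{K}^e|\leq(1+O(\eta))|\mathcal{K}^f|$. The bad graphs (at most $n^{-6}|\mathcal{K}_d(F)|$ in $\mathcal{K}_d(F)$) are controlled via the crude per-graph bound $d_L\leq 2((1+\eta)\delta d)^{2\ell}$ coming from \ref{prop:degreebound}, combined with the a priori lower bound $|\mathcal{K}^e|,|\mathcal{K}^f|\geq |\mathcal{K}_d(F)|/n^{O(1)}$ (from the identity $\sum_{e'\in E(F)}|\{K:e'\in E(K)\}|=(dn/2)|\mathcal{K}_d(F)|$ together with a short case split if either quantity is unusually small).

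The main obstacle I foresee is the correction term $|(V(P_1)\cup \{b\})\cap N_K(d)|/d$ that enters the per-$P_1$ lower bound for the number of valid $P_2$. Pathwise this is as large as $O(\ell/d)$, which is \emph{not} $O(\eta)$ in the critical regime $d=\Theta(\log n)$, $\ell=\Theta(\log n/\log\log n)$. To handle this I will sum the correction over $P_1$ rather than control it pathwise: by Fubini the total correction is driven by $\sum_{P_1}|V(P_1)\cap N_K(d)| = \sum_{v\in N_K(d)}\#\{P_1\ni v\}$, and it suffices to show that any given vertex $v$ lies on only an $O(\ell/n)$-fraction of the $(F\setminus K,K)$-alternating $a,c$-paths of length $2\ell$. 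Such a vertex-wise bound can be obtained by splitting a path at $v$ into two alternating walks and invoking upper bounds on shorter alternating walks that follow from \ref{prop:upperboundFKKalternating} together with \ref{prop:degreebound} (and, if needed, additional pseudorandomness of typical $K\in\mathcal{K}_d(F)$). Once this estimate is in hand the total correction becomes a lower-order term and fits comfortably within the $40\eta$ budget, completing the double-counting.
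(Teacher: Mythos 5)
Your overall architecture (auxiliary bipartite graph between $\mathcal{K}^e$ and $\mathcal{K}^f$, double counting, crude degree bounds for the $n^{-6}$-fraction of bad graphs) matches the paper's, and you have correctly located the central difficulty: the correction term $|(Z\cup\{x\})\cap N_K(y)|/d$ in \ref{prop:lowerboundKFKalternating}, which is of order $\ell/d$ pathwise and therefore not $O(\eta)$ in the critical regime. However, your proposed fix for it does not go through from the stated hypotheses, so there is a genuine gap.

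Your plan is to average the correction over $P_1$ and reduce to showing that a fixed vertex $v$ lies on only an $O(\ell/n)$-fraction of the $(F\setminus K,K)$-alternating $a,c$-paths of length $2\ell$. No such bound is available: \ref{prop:upperboundFKKalternating} controls path counts between two \emph{fixed endpoints} only at length exactly $2\ell$, and \ref{prop:degreebound} only bounds the total number of alternating walks of a given shorter length leaving a vertex, with no per-endpoint control. Splitting a path at $v$ into two shorter walks therefore gives nothing better than (walks $a\to v$ of length $i$)$\times$(walks $v\to c$ of length $2\ell-i$), and the hypotheses permit a single vertex $v$ to receive a $\Theta(1/\sqrt n)$-fraction of the length-$i$ walks from $a$ and a $\Theta(1/\sqrt n)$-fraction of the length-$(2\ell-i)$ walks into $c$; the product is then comparable to the total number of $a,c$-paths, i.e.\ consistent with $v$ lying on essentially \emph{all} of them. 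Your parenthetical ``additional pseudorandomness of typical $K$'' would amount to strengthening the hypotheses of the theorem, which is not available here. The paper's resolution is structural rather than averaging: it switches along two paths of length $2\ell+2$ (cycles of length $4\ell+6$), building each as a $K$-edge, then an $F\setminus K$-edge, then a length-$2\ell$ path, and it \emph{chooses} the new far endpoint $v_2''$ of the second length-$2\ell$ path to satisfy $|N_K(v_2'')\cap Z|\le \eta d/2$; the number of excluded choices is at most $|Z|+2|Z|/\eta=O(\log n/\eta)=o(\eta\delta)$ precisely because $\eta^2\delta\ge C_0\log n$, which is why that hypothesis appears. This makes the correction term at most $\eta/2$ for every admissible choice, with no averaging needed.

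A secondary issue: your a priori lower bound $|\mathcal{K}^e|,|\mathcal{K}^f|\ge|\mathcal{K}_d(F)|/n^{O(1)}$ cannot be extracted from the identity $\sum_{e'\in E(F)}|\{K:e'\in E(K)\}|=(dn/2)|\mathcal{K}_d(F)|$, which only bounds the \emph{minimum} over edges from above (and the maximum from below). The paper needs a separate preliminary switching argument (its Claim~\ref{clm:edgeswellspread2}, using \ref{prop:lowerboundKFKalternating} with $Z$ a single vertex) to show every edge of $F$ lies in at least $d|\mathcal{K}_d(F)|/n^2$ of the $d$-regular subgraphs; combined with the contradiction hypothesis this is what makes the bad-graph contribution negligible against $|\mathcal{K}^e|$. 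You would need to supply an analogous claim rather than a ``short case split''.
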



\begin{proof}
Let $\mathcal{K}^{\mathrm{bad}}$ be the set of $K\in \mathcal{K}_d(F)$ for which either \ref{prop:lowerboundKFKalternating} or \ref{prop:upperboundFKKalternating} does not hold, so that, by our assumptions, $|\mathcal{K}^\mathrm{bad}|\leq n^{-6}|\mathcal{K}_d(F)|$.
We will start by showing that each edge $e\in E(F)$ is contained in at least a small portion of the graphs in $\cK_d(F)$, as follows.

\begin{claim}\label{clm:edgeswellspread2} For each $e\in E(F)$, $|\{K\in \cK_d(F):e\in E(K)\}|\geq d|\cK_d(F)|/n^2$.
\end{claim}
\claimproofstart[Proof of Claim~\ref{clm:edgeswellspread2}] Let $e\in E(F)$. Suppose, for contradiction, that  $|\{K\in \cK_d(F):e\in E(K)\}|< |\cK_d(F)|/n^2$.  Let $\cK_e^+=\{K\in \cK_d(F):e\in E(K)\}$ and $\cK_e^-=\{K\in \cK_d(F):e\notin E(K)\}$. Let $L_e$ be the auxiliary bipartite graph with vertex classes $\cK_e^+$ and $\cK_e^-$ where there is an edge between $K\in \cK_e^+$ and $K'\in \cK_e^-$ if $E(K)\triangle E(K')$ is the edge set of a cycle of length $2\ell+2$.

First, note that, for each $K\in \cK_e^+$ and $K'\in \cK_e^-$ with $KK'\in E(L_e)$ we have that $E(K)\triangle E(K')-e$ is an $(F\setminus K,K)$-alternating path of length $2\ell+1$ between the vertices in $e$. Thus, as $\Delta(F\setminus K)\leq (1+\eta)\delta$ from \ref{prop:degreebound}, for each $K\in \mathcal{K}_e^+$ we have $d_{L_e}(K)\leq (1+\eta)^\ell\delta^{\ell}d^{\ell}$.

Now, as $|\cK_e^+| = |\{K\in \cK_d(F):e\in E(K)\}|< |\cK_d(F)|/n^2$, we have $|\cK_e^-|\ge \left(1-\frac{1}{n^2}\right)|\cK_d(F)|$. Thus for at least $|\cK_d(F)|/2$ graphs $K'\in \cK_e^-$ we have $K'\notin \cK^{\mathrm{bad}}$. Let $K'\in \cK_e^-$ be such a $K'$ and let $u,v$ be such that $e=uv$, so that $uv\notin E(K')$. Choose $u'\in N_{K'}(u)$, with $d$ possibilities. Then, by \ref{prop:lowerboundKFKalternating} with $Z=\{u\}$ there are at least $\delta^{\ell}d^{\ell}/2n$
$(F\setminus K',K')$-alternating $u',v$-paths $P$ with length $2\ell$ which do not contain $u$, where we have used that $\eta\ll 1$ and $\eta d\geq C_0$. For each such path $P$, we have
$K'-uu'+uv-(E(P)\cap E(K'))+(E(P)\setminus E(K'))\in \cK_e^+$.
Therefore, $d_{L_e}(K')\geq \delta^{\ell}d^{\ell+1}/2n$.

As this holds for at least $|\cK_d(F)|/2$ graphs $K'\in \cK_e^-$, we have, double-counting the edges of $L$, that $|\cK_e^+|\cdot (1+\eta)^{\ell}\delta^{\ell}d^{\ell}\geq e(L)\geq \delta^{\ell}d^{\ell+1}\cdot|\cK_d(F)|/4n$. Thus, $|\cK_e^+|\geq d|\cK_d(F)|/\left((1+\eta)^{\ell}\cdot 4n\right) \geq  d|\cK_d(F)|/n^2$,
as required, where we have used that $\ell\leq 10\log n$ and $\eta\ll 1$.
\claimproofend

Now, let $e,f\in E(F)$ share no vertices, and suppose, for contradiction, that
\begin{equation}\label{eqn:forcontradiction:new}
\{K\in \mathcal{K}_d(F):e\in E(K)\}|> (1+18\eta)|\{K\in \mathcal{K}_d(F):f\in E(K)\}|.
\end{equation}
Label vertices so that $e=u_1u_2$ and $f=v_1v_2$.
Let $\mathcal{K}^{e}$ be the set of $K\in\cK_d(F)$ which contain $e$ but not $f$, and let $\mathcal{K}^f$ be the set of $K\in \cK_d(F)$ which contain $f$ but not $e$. From~\eqref{eqn:forcontradiction:new} and Claim~\ref{clm:edgeswellspread2}, we have $|\cK^e|>18\eta |\{K\in \mathcal{K}_d(F):f\in E(K)\}|>|\cK_d(F)|/n^2$, where have used that $\eta d\geq C_0$.

Let $L_{e,f}$ be the bipartite auxiliary graph with vertex classes $\mathcal{K}^{e}$ and $\mathcal{K}^{f}$
where there is an edge between $K\in \mathcal{K}^e$ and $K'\in \mathcal{K}^f$ if $(E(K)\triangle E(K'))\setminus \{e,f\}$ is the edge set of two vertex-disjoint paths $P_1$ and $P_2$ such that,
for each $i\in [2]$, $P_i$ is an $(K'\setminus K,K\setminus K')$-alternating $u_i,v_i$-path of length $2\ell+2$. We will show the following claim.

\begin{claim}\label{clm:mindegreeLef2}
For each $K\in \mathcal{K}^e\setminus \mathcal{K}^{\textrm{bad}}$,
$d_{L_{e,f}}(K)\geq (1-10\eta)d^{2\ell+2}\delta^{2\ell+2}/n^2$.
\end{claim}
\claimproofstart[Proof of Claim~\ref{clm:mindegreeLef2}] Let $K\in \mathcal{K}^e\setminus \mathcal{K}^{\textrm{bad}}$.
Fix $v_1' \in N_K(v_1) \setminus \{u_1,u_2,v_2\}$ and $v_2' \in N_K(v_2) \setminus \{u_1,u_2,v_1,v_1'\}$, for which there are at least $(d-3)(d-4) \geq (1- \eta)d^2$ choices, where we have used that $\eta d\geq C_0$. Let $v_1''\in N_{F\setminus K}(v_1')\setminus \{u_1,u_2,v_1,v_2,v_2'\}$, with at least $(1-2\eta)\delta$ choices by \ref{prop:degreebound} and $\eta^2\delta \ge C_0\log{n}$.
Let $P_1$ be an $(F\setminus K,K)$-alternating $u_1,v_1''$-path of length $2\ell$ which does not have any internal vertices in $\{u_2,v_1,v_1',v_2,v_2'\}$, noting that from \ref{prop:lowerboundKFKalternating} there are at least $\left(1-2\eta\right)d^{\ell}\delta^{\ell}/n$ choices for $P_1$, again using that $\eta d\geq C_0$.

Let $Z=V(P_1)\cup \{u_2,v_1,v_1',v_2,v_2'\}$. Let $v_2''\in N_{F\setminus K}(v_2')\setminus Z$ be such that $|N_K(v_2'')\cap Z|\leq \eta d/2$. Note that the number of edges in $K$ adjacent to $Z$ is at least $\eta d /2 \cdot |\{v: |N_K(v) \cap Z | > \eta d /2\}|$ and at most $d|Z|$.
Therefore the number of choices for $v_2''$ is at least
\[
d_{F\setminus K}(v_2')-|Z|-|\{v: |N_K(v) \cap Z | > \eta d /2\}|\ge d_{F\setminus K}(v_2')-|Z|-\frac{d|Z|}{\eta d/2}\geq (1-2\eta)\delta,
\]
where we have used that $d_{F\setminus K}(v_2')\geq (1-\eta)\delta$ by \ref{prop:degreebound}, $\eta^2\delta\geq C_0\log n$ and $\ell\leq 10\log n$.
Then, let $P_2$ be an $(F\setminus K,K)$-alternating $u_2,v_2''$-path of length $2\ell$ which does not have any internal vertices in $Z$, so that, by the choice of $v_2''$ and by \ref{prop:lowerboundKFKalternating}, we have at least $(1-2\eta)d^{\ell}\delta^{\ell}/n$ choices for $P_2$.

Let $K'=K-E(P_1\cup P_2)\cap E(K)-\{u_1u_2,v_1v_1',v_2v_2'\}+E(P_1\cup P_2)\setminus E(K)+\{v_1v_2,v_1'v_1'',v_2'v_2''\}$. Note that $K'\in \cK^f$, and that we have made the unique choices for $v_1',v_2',v_1'',P_1,v_2'',P_2$ which produces this choice of $K'$. Therefore, combining our lower bounds on the choices made, we have
\[
d_{L_{e,f}}(K)\geq(1- \eta)d^2\cdot (1-2\eta)\delta\cdot \left(1-2\eta\right)\frac{d^{\ell}\delta^{\ell}}{n}\cdot (1-2\eta)\delta\cdot (1-2\eta)\frac{d^{\ell}\delta^{\ell}}{n} \geq (1-10\eta)\frac{d^{2\ell+2}\delta^{2\ell+2}}{n^2},
\]
as required.
\claimproofend

From \ref{prop:degreebound} and \ref{prop:upperboundFKKalternating}, we have that, for each $K\in \cK_d(F)\setminus \cK^{\mathrm{bad}}$ and each distinct $x,y\in [n]$ the number of $(F\setminus K,K)$-alternating $x,y$-paths of length $2\ell+2$ is at most $(1+ \eta)\delta\cdot d\cdot \left(1+\eta\right)d^{\ell}\delta^{\ell}/n$.
Thus, applying this to $(u_1,v_1)$ and $(u_2,v_2)$, and using $(1+\eta)^4\leq (1+5\eta)$, we have the following property.

    \stepcounter{propcounter}
\begin{enumerate}[label = {{\textbf{\Alph{propcounter}\arabic{enumi}}}}]
\item For each $K'\in \cK^f\setminus \mathcal{K}^{\textrm{bad}}$, we have $d_{L_{e,f}}(K')\leq (1+5\eta)d^{2\ell+2}\delta^{2\ell+2}/n^2$.\label{prop:maxdegreeLef}
\end{enumerate}
Furthermore, using \ref{prop:degreebound}, we have that for each $K\in \cK_d(F)$ and each distinct $x,y\in [n]$ the number of $(F\setminus K,K)$-alternating $x,y$-paths of length $2\ell+2$ is at most $(1+\eta)^{\ell+1}\delta^{\ell+1}d^{\ell+1}\leq \sqrt{n}\cdot \delta^{\ell+1}d^{\ell+1}$, where we have used that $\ell\leq 10\log n$ and $\eta\ll 1$. Therefore, we have the following property.

\begin{enumerate}[label = {{\textbf{\Alph{propcounter}\arabic{enumi}}}}]
\addtocounter{enumi}{1}
\item For each $K'\in \cK^f$, we have $d_{L_{e,f}}(K')\leq n\cdot \delta^{2\ell+2}d^{2\ell+2}$.\label{prop:unimaxdegreeLef}
\end{enumerate}

Using that $|\cK^{\mathrm{bad}}|\leq n^{-6}|\cK_d(F)|$,
 we have
\begin{align}
\label{eq:eLupperbound2}
e(L_{e,f})&\overset{\ref{prop:maxdegreeLef},\ref{prop:unimaxdegreeLef}}\leq |\cK^f|\cdot \left(1+ 5\eta \right)\frac{\delta^{2\ell+2}d^{2\ell+2}}{n^2}+\frac{|\cK_d(F)|}{n^{6}}\cdot n\cdot \delta^{2\ell+2}d^{2\ell+2}\nonumber\\
&\overset{\mathrm{Clm}~\ref{clm:edgeswellspread2}}{\leq}
|\cK^f|\cdot \left(1+ 5\eta\right)\frac{\delta^{2\ell+2}d^{2\ell+2}}{n^2}+\frac{|\cK^e|}{n}\cdot \frac{\delta^{2\ell+2}d^{2\ell+2}}{n^2},
\end{align}
where we have used that $|\cK^e|>|\cK_d(F)|/n^2$.

Furthermore, as $|\cK^e|>|\cK_d(F)|/n^2$, we have $|\mathcal{K}^e\setminus \mathcal{K}^{\textrm{bad}}|\geq (1-1/n)|\mathcal{K}^e|$. Thus, by Claim~\ref{clm:mindegreeLef2}, we have
\begin{align*}
e(L_{e,f})&\geq (1-1/n)|\mathcal{K}^e|\cdot (1-10\eta)\frac{d^{2\ell+2}\delta^{2\ell+2}}{n^2}.
\end{align*}
 In combination with \eqref{eq:eLupperbound2}, we have that
\[
(1-3/n)\cdot |\cK^e|\cdot \left(1-10\eta\right)\frac{\delta^{2\ell+2}d^{2\ell+2}}{n^2}\leq |\cK^f|\cdot \left(1+ 5\eta\right)\frac{\delta^{2\ell+2}d^{2\ell+2}}{n^2}.
\]
As $\eta d\geq C_0$ implies $\eta n\geq C_0$, we have that $|\{K\in \mathcal{K}_d(F):e\in E(K)\}|\leq \left(1+18\eta\right)|\{K\in \mathcal{K}_d(F):f\in E(K)\}|$, a contradiction to \eqref{eqn:forcontradiction:new}. 

Therefore, for each $e,f\in E(F)$ which share no vertices we have $|\{K\in \mathcal{K}_d(F):e\in E(K)\}|\leq \left(1+18\eta\right)|\{K\in \mathcal{K}_d(F):f\in E(K)\}|$. Let, then, $e,f\in E(F)$. Pick $e'\in E(F)$ with no vertices in $V(e)\cup V(f)$. Then, we have
\begin{align*}
|\{K\in \mathcal{K}_d(F):e\in E(F)\}|&\leq \left(1+18\eta \right)|\{K\in \mathcal{K}_d(F):e'\in E(K)\}|\\
&\leq \left(1+40\eta \right)|\{K\in \mathcal{K}_d(F):f\in E(K)\}|,
\end{align*}
 as required.
\end{proof}


\section{Upper bounds for alternating paths from a fixed vertex}\label{sec:uppernottethered}

We now prove the following lemma, which gives a good upper bound on the number of $(F\setminus K,K)$-alternating paths starting from a fixed vertex. (That is, unlike our ultimate goal of \ref{prop:upperboundFKKalternating}, one of the endpoints of the paths considered is not fixed). As with the next two sections, we consider the regime corresponding to $m\leq n^{1+\sigma}$ in Theorem~\ref{thm:maintechnicalthm-forprocess}, for some small fixed $\sigma>0$, but we prove that one of the properties, \ref{eq:onlydegreesinFK:random} in Lemma~\ref{lem:nbrhoodsum}, holds more widely so that we can use it in Section~\ref{sec:noncritical} and also to conclude that \ref{prop:degreebound} is likely to hold.
As with our results across the next three sections, instead of working in $F(n,d,m)$ here we work with the slightly easier to study random graph $F$ formed by taking $K\sim G_d(n)$ and adding edges independently at random with some probability $p$ chosen so that the expected number of added edges is $m$. When we apply the main results of these sections, in Section~\ref{sec:maintechnicalconclusion}, at the cost of a manageable increase in the probability the events considered hold, we will easily be able to translate these results into the model $F(n,d,m)$.

\begin{lemma}\label{lem:ubpathcountF:untethered}
Let $1/n\ll 1/C\ll \eps\leq 1$ and $1/C\ll \sigma, \mu \ll 1$. Let $d\in [n]$ and $m\in \N$ satisfy $d\geq C\log n$ and  $\eps dn/2\leq m\leq n^{1+\sigma}$. Let $\delta=2m/n$ and 
let
\begin{equation*}
\lambda = \max\left\{\frac{\mu^2}{\log^2 n},\left(\frac{n\log n}{m}\right)^{1/4}\right\}.
\end{equation*}
Let $p=m(\binom{n}{2}-dn/2)^{-1}$. Let $K\in \cK_d(n)$, form $E$ by including each element of $[n]^{(2)}\setminus E(K)$ uniformly at random with probability $p$, and let $F=K+E$. 

Then, with probability $1-o(n^{-14})$, we have the following.
\stepcounter{propcounter}
\begin{enumerate}[label = {\emph{\textbf{\Alph{propcounter}}}}]
\item For each $1\leq \ell \leq 10\log n$ and $v\in [n]$, the number of $(F\setminus K,K)$-alternating paths with length $2\ell$ starting at $v$ is at most $(1+30\lambda)d^{\ell}\delta^{\ell}$.\labelinthm{prop:upperboundFKKalternating:random:newnew}
\end{enumerate}
\end{lemma}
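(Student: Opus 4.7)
My plan is to bound $(F\setminus K,K)$-alternating paths by $(F\setminus K,K)$-alternating walks, and then to control the walk count by iterating a tight concentration estimate for two-step neighbourhood sums inside $F\setminus K$. Every $(F\setminus K,K)$-alternating path of length $2\ell$ from $v$ is in particular such a walk, so writing $W_\ell(v)$ for the number of $(F\setminus K,K)$-alternating walks of length $2\ell$ starting at $v$, it suffices to prove that $W_\ell(v)\le (1+30\lambda)\,d^\ell\delta^\ell$ for every $v\in[n]$ and every $1\le\ell\le 10\log n$ with probability $1-o(n^{-14})$. Peeling off the first two edges (an $F\setminus K$-edge followed by a $K$-edge) gives the recursion
\begin{equation*}
W_\ell(v)\;=\;\sum_{w\in N_{F\setminus K}(v)}\sum_{u\in N_K(w)} W_{\ell-1}(u).
\end{equation*}

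The first step is to invoke Lemma~\ref{lem:nbrhoodsum} to get simultaneously for every $v\in[n]$, with probability $1-o(n^{-14})$, both the standard per-vertex bound $d_{F\setminus K}(v)=(1\pm O(\lambda))\delta$ and the much tighter two-step bound
\begin{equation*}
S_2(v)\;:=\;\sum_{w\in N_{F\setminus K}(v)}\sum_{u\in N_K(w)} d_{F\setminus K}(u)\;\le\;\bigl(1+O(\lambda/\log n)\bigr)\,d\delta^{2}.
\end{equation*}
Conditioning on this event, I would unroll the recursion one further level: using that $\sum_{w'\in N_{F\setminus K}(u)}d_K(w')=d\cdot d_{F\setminus K}(u)$, if $W_{\ell-2}(u)\le B_{\ell-2}$ uniformly in $u$ then
\begin{equation*}
W_\ell(v)\;\le\;B_{\ell-2}\cdot d\cdot S_2(v)\;\le\;\bigl(1+O(\lambda/\log n)\bigr)\,d^{2}\delta^{2}\,B_{\ell-2}.
\end{equation*}
Starting from the base cases $W_0(v)=1$ and $W_1(v)=d\cdot d_{F\setminus K}(v)\le (1+O(\lambda))d\delta$, induction in steps of two then gives
\begin{equation*}
W_\ell(v)\;\le\;(1+O(\lambda))\bigl(1+O(\lambda/\log n)\bigr)^{\lfloor\ell/2\rfloor}\,d^\ell\delta^\ell\;\le\;(1+30\lambda)\,d^\ell\delta^\ell
\end{equation*}
for every $\ell\le 10\log n$, since the exponent in the middle factor is at most $5\log n$. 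A union bound over the at most $10n\log n$ choices of $v$ and $\ell$ then completes the proof.

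The main obstacle, which has been pushed into Lemma~\ref{lem:nbrhoodsum}, is the tight concentration of $S_2(v)$ simultaneously for every $v$. The plain degree $d_{F\setminus K}(v)$ is only concentrated to within relative error $O(\sqrt{\log n/\delta})$, which may be of constant order in the regime $\delta=\Theta(\log n)$, and so iterating a one-step bound $\ell\approx\log n$ times would accumulate a ruinous error. The sum $S_2(v)$ is a sum of $\Theta(d\delta^{2})$ edge-indicators in $F\setminus K$, so for a fixed $v$ Bernstein concentrates it to within relative error $O(\sqrt{\log n/(d\delta^{2})})$, which is $\ll\lambda/\log n$ by the hypotheses on $\lambda$; the subtlety (as flagged in Section~\ref{subsec:switching}) is that a naive union bound over $v$ loses an extra logarithmic factor, so the uniform-in-$v$ bound in Lemma~\ref{lem:nbrhoodsum} is obtained by peeling one further layer outwards into $N_{F\setminus K}(v)$ before applying concentration. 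Once that is in hand, Lemma~\ref{lem:ubpathcountF:untethered} follows from the two-step induction sketched above.
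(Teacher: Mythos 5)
Your overall strategy (bound paths by walks, iterate a two-step neighbourhood-sum estimate, union bound over $v$ and $\ell$) is the same as the paper's, but the key concentration claim you rely on is not available and is in fact false in the critical regime, and this breaks the induction. You assert that, uniformly in $v$, $S_2(v)=\sum_{w\in N_{F\setminus K}(v)}\sum_{u\in N_K(w)}d_{F\setminus K}(u)\le\bigl(1+O(\lambda/\log n)\bigr)d\delta^2$. Lemma~\ref{lem:nbrhoodsum} does not give this: property \ref{eq:nbrhoodsinFK:random} bounds $S_2(v)$ only \emph{relative to} $d_{F\setminus K}(v)$, namely $S_2(v)=(1\pm 2/\delta)\,d_{F\setminus K}(v)\cdot\delta d$. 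The absolute version around $d\delta^2$ cannot hold, because the dominant fluctuation of $S_2(v)$ comes from the first layer: each edge $vw\in F\setminus K$ contributes weight $\approx d\delta$ to $S_2(v)$, so $S_2(v)\approx d_{F\setminus K}(v)\cdot d\delta$ and inherits the relative fluctuation $\Theta(\sqrt{\log n/\delta})$ of $d_{F\setminus K}(v)$ (for a bound holding for \emph{every} $v$). Your Bernstein heuristic, which treats $S_2(v)$ as a sum of $\Theta(d\delta^2)$ bounded indicators with relative error $O(\sqrt{\log n/(d\delta^2)})$, ignores these heavy weights. And $\sqrt{\log n/\delta}$ is only $\le\lambda$, not $\le\lambda/\log n$: e.g.\ with $\delta=\log^3 n$ one has $\sqrt{\log n/\delta}=\log^{-1}n$ while $\lambda/\log n\approx\log^{-3/2}n$.

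With the correct, relative form of \ref{eq:nbrhoodsinFK:random}, your induction on a uniform bound $B_\ell$ for the walk counts $W_\ell(v)$ loses a factor $(1+\Theta(\lambda))$ (not $(1+\Theta(\lambda/\log n))$) every two steps, since at each stage you must convert $d_{F\setminus K}(v)\le(1+\lambda)\delta$; over $\ell\approx\log n$ steps this gives $(1+\lambda)^{\Theta(\log n)}$, which is unbounded when, say, $\lambda=\Theta(\log^{-1/2}n)$. The paper's resolution (Lemma~\ref{lem:ubpathcountF}) is to run the induction not on $W_\ell(v)$ but on the degree-weighted quantity $\sum_u\#\{\text{alternating }v,u\text{-paths of length }2i-1\}\cdot d_{F\setminus K}(u)$, so that each inductive step uses only \ref{eq:nbrhoodsinFK} with its $(1+\lambda/\log n)$ error, and the $(1+\lambda)$ degree error from \ref{eq:onlydegreesinFK} enters exactly once. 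If you replace your uniform bound $B_\ell$ by this weighted invariant, the rest of your argument (including the reduction from paths to walks and the union bound) goes through and matches the paper's proof.
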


In Section~\ref{sec:pseudforupper} we give pseudorandom conditions (\ref{eq:onlydegreesinFK} and \ref{eq:nbrhoodsinFK}) and show they imply \ref{prop:upperboundFKKalternating:random:newnew}.
These conditions concern the degrees in $F\setminus K$, as well as certain sums of degrees, and we prove the likely bounds we need in Section~\ref{sec:F-Kdegreebounds}. Finally, we use this to conclude that Lemma~\ref{lem:ubpathcountF:untethered} holds in Section~\ref{sec:ubpathcountF:untethered}.


\subsection{Pseudorandom properties for upper bounds for paths from a vertex}\label{sec:pseudforupper}
To show the number of $(F\setminus K,K)$-alternating paths from a vertex are bounded we will use bounds on the degrees of $F\setminus K$ and, as discussed in Section~\ref{subsec:switching}, a more complicated condition on certain degree sums required for the accuracy we need. Due to this more complicated condition, we prove the lemma by induction -- considering not the number of paths, but the sum of the degrees in $F\setminus K$ of the endpoints of the paths we wish to consider. 

\begin{lemma}\label{lem:ubpathcountF}
Let $0< \lambda \ll 1$. Let $d,\delta>0$, let $K\in \cK_d(n)$ and let $F$ be a graph with vertex set $[n]$ for which the following properties hold. 
\stepcounter{propcounter}
\begin{enumerate}[label = {\emph{\textbf{\Alph{propcounter}\arabic{enumi}}}}]
\item For each $v\in [n]$, $d_{F\setminus K}(v)\leq \left(1+ \lambda\right)\delta$.\labelinthm{eq:onlydegreesinFK}
\item For each $v\in [n]$,\labelinthm{eq:nbrhoodsinFK}
\begin{equation*}
\sum_{u\in N_{F\setminus K}(v)}\sum_{u'\in N_K(u)}d_{F\setminus K}(u')\leq
\left(1+ \frac{\lambda}{\log n}\right)\delta d\cdot d_{F\setminus K}(v).
\end{equation*}
\end{enumerate}
Then, the following holds.
\begin{enumerate}[label = {\emph{\textbf{\Alph{propcounter}\arabic{enumi}}}}]\addtocounter{enumi}{2}
\item For each $1\leq \ell \leq 10\log n$ and $v\in [n]$, the number of $(F\setminus K,K)$-alternating paths with length $2\ell$ starting at $v$ is at most $(1+30\lambda)d^{\ell}\delta^{\ell}$.\labelinthm{prop:upperboundFKKalternating:random:new}
\end{enumerate}
\end{lemma}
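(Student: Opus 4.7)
The plan is to prove the bound by induction on $\ell$, strengthening the hypothesis so that property \ref{eq:nbrhoodsinFK} applies directly at each step. As suggested in the paragraph preceding the lemma, for each $\ell \geq 0$ and $v \in [n]$ I will track the weighted quantity
\[
S_\ell(v) := \sum_{W} d_{F\setminus K}(\mathrm{end}(W)),
\]
where the sum ranges over all $(F\setminus K, K)$-alternating walks $W$ of length $2\ell$ starting at $v$, with the convention $S_0(v) = d_{F\setminus K}(v)$. Since every $(F\setminus K, K)$-alternating path is such a walk, counting walks yields an upper bound for the path count needed in \ref{prop:upperboundFKKalternating:random:new}, so the distinct-vertex constraint may be dropped throughout the argument.

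The inductive claim is $S_\ell(v) \leq (1 + \lambda/\log n)^\ell (d\delta)^\ell \, d_{F\setminus K}(v)$. For the step, each walk of length $2\ell$ from $v$ arises uniquely from a walk $W'$ of length $2(\ell-1)$ ending at some $u := \mathrm{end}(W')$, extended by $u' \in N_{F\setminus K}(u)$ and then $u'' \in N_K(u')$. This gives
\[
S_\ell(v) = \sum_{W'} \sum_{u' \in N_{F\setminus K}(\mathrm{end}(W'))} \sum_{u'' \in N_K(u')} d_{F\setminus K}(u'') \leq \left(1 + \frac{\lambda}{\log n}\right) \delta d \cdot S_{\ell-1}(v),
\]
where the inequality is an application of \ref{eq:nbrhoodsinFK} at each $u = \mathrm{end}(W')$ followed by reassembling the resulting $d_{F\setminus K}(u)$-factors into $S_{\ell-1}(v)$. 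Iterating and using the base case then yields the inductive bound.

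To conclude, the number of $(F\setminus K, K)$-alternating paths of length $2\ell$ from $v$ is at most the number of such walks, which equals $d \cdot S_{\ell-1}(v)$ (the final edge lies in $K$, so each walk of length $2(\ell-1)$ ending at some $u$ contributes $d_{F\setminus K}(u) \cdot d$ extensions). Combining with the inductive bound on $S_{\ell-1}$ and $d_{F\setminus K}(v) \leq (1+\lambda)\delta$ from \ref{eq:onlydegreesinFK} gives an upper bound of $(1+\lambda)(1 + \lambda/\log n)^{\ell-1} d^\ell \delta^\ell$. Since $\ell \leq 10\log n$, $(1 + \lambda/\log n)^{\ell-1} \leq e^{10\lambda} \leq 1 + 11\lambda$ for $\lambda \ll 1$, so the overall multiplicative factor is at most $(1+\lambda)(1+11\lambda) \leq 1 + 30\lambda$, as required.

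Since this reduces to a short induction once the right invariant is identified, I do not anticipate a serious obstacle inside this lemma itself. All of the difficulty has been pushed into verifying the pseudorandom conditions \ref{eq:onlydegreesinFK} and \ref{eq:nbrhoodsinFK} with high probability for the random graph $F$, which is the actual content of the subsequent subsection; the key point is that the error term $\lambda/\log n$ in \ref{eq:nbrhoodsinFK} is delicate enough to survive being raised to the $\ell$-th power with $\ell = \Theta(\log n)$, which dictates the strength of the concentration bound that must be proved there.
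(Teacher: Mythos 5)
Your proof is correct and follows essentially the same strategy as the paper: both induct on the degree-weighted sum of endpoints of alternating walks/paths so that \ref{eq:nbrhoodsinFK} can be applied verbatim at each step, then convert back to a path count using \ref{eq:onlydegreesinFK} and the bound $\ell\leq 10\log n$. The only cosmetic difference is that you run the recursion over walks (making the inductive step an exact identity and bounding paths by walks at the end), whereas the paper works with paths throughout and uses the inequality coming from truncation.
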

\begin{proof} Let $v'\in [n]$.
We will first show by induction that, for each $1\leq i\leq 10\log n$,
\begin{align}
\sum_{u\in V(G)}&\#\{\text{$(K,F\setminus K)$-alternating $v',u$-paths length $2i-1$}\}\cdot d_{F\setminus K}(u)\nonumber\\
&\hspace{4cm}\leq \left(1+ \lambda\right)\cdot \left(1+\frac{\lambda}{\log n}\right)^{i-1}\delta^id^i.\label{eq:forind}
\end{align}
For this, first note that \eqref{eq:forind} holds for $i=1$ by \ref{eq:onlydegreesinFK}, using that there are $d$ vertices $u\in V(G)$ for which there is a $(K,F\setminus K)$-alternating $v',u$-path of length 1. Suppose then that $1<i\leq 10\log n$ and that \eqref{eq:forind} holds with $i$ replaced by $i-1$. Then,
\begin{align*}
\sum_{u\in V(G)}&\#\{\text{$(K,F\setminus K)$-alternating $v',u$-paths of length $2i-1$}\}\cdot d_{F\setminus K}(u)\nonumber\\
&\hspace{-0.7cm}\leq \sum_{u'\in V(G)}\#\{\text{$(K,F\setminus K)$-alternating $v',u'$-paths of length $2i-3$}\}\cdot \sum_{u''\in N_{F\setminus K}(u')}\sum_{u\in N_{K}(u'')}d_{F\setminus K}(u)\\
&\hspace{-0.7cm}\overset{\ref{eq:nbrhoodsinFK}}{\leq} \left(1+ \frac{\lambda}{\log n}\right)\cdot \delta d\cdot
\sum_{u'\in V(G)}\#\{\text{$(K,F\setminus K)$-alternating $v',u'$-paths length $2i-3$}\}\cdot d_{F\setminus K}(u')\\
&\hspace{-0.7cm}\leq \left(1+ \lambda\right)\cdot \left(1+\frac{\lambda}{\log n}\right)^{i-1}\delta^id^i.
\end{align*}
Therefore, \eqref{eq:forind} holds for $i$. Thus, \eqref{eq:forind} holds for all $1\leq i\leq 10\log n$.

Now let $v'$ be in $N_{F\setminus K}(v)$. 
Clearly if $\ell = 1$ then  the number of $(F\setminus K,K)$-alternating paths with length $2\ell$ starting at $v$ is $|N_{F\setminus K}(v)|\cdot d \le (1+\lambda)\delta d$  by \ref{eq:onlydegreesinFK}. For $\ell \ge 2$, it follows from \eqref{eq:forind} that, for each $v\in [n]$, the number of $(F\setminus K,K)$-alternating paths with length $2\ell$ starting at $v$ is at most
\begin{align*}
|N_{F\setminus K}(v)|\cdot \left(1+ \lambda\right)\left(1+\frac{\lambda}{\log n}\right)^{\ell-2}\delta^{\ell-1}d^{\ell-1} \cdot d
&\leq (1+\lambda)^2\left(1+\frac{2\ell\lambda}{\log n}\right)\delta^{\ell}d^{\ell}\leq \left(1+30\lambda\right)\delta^id^i,
\end{align*}
where we have used \ref{eq:onlydegreesinFK} and that $\ell \le 10 \log{n}$ and $\lambda\ll 1$. Thus, \ref{prop:upperboundFKKalternating:random:new} holds. 
\end{proof}


\subsection{Likely properties of $F\setminus K$: degree bounds}\label{sec:F-Kdegreebounds}

We now prove a result we mainly use to deduce the pseudorandom conditions used in Lemma~\ref{lem:ubpathcountF}. Note that, as mentioned at the start of the section, the first of these properties is proved for $(d,m)$ more widely so that it can be applied in Section~\ref{sec:noncritical} and to show that~\ref{prop:degreebound} is likely to hold.

\begin{lemma}\label{lem:nbrhoodsum}
Let $1/n\ll 1/C\ll 1/C',\sigma \ll 1$ and $1/C\ll \eps$. Let $d\geq C\log n$ and let $m$ satisfy $\eps dn/2 \leq  m\leq \binom{n}{2}-dn/2$. Let $\delta=2m/n$ and let $p=m\left(\binom{n}{2}-dn/2\right)^{-1}$. Let $K\in \mathcal{K}_d(n)$, form $E$ by including each element of $[n]^{(2)}\setminus E(K)$ uniformly at random with probability $p$, and let $F=K+E$.
Then, the following holds with probability $1-o(n^{-14})$.
\stepcounter{propcounter}
\begin{enumerate}[label = {\emph{\textbf{\Alph{propcounter}\arabic{enumi}}}}]
\item For each $v\in [n]$,\labelinthm{eq:onlydegreesinFK:random}
\begin{equation*}
\left(1- C'\sqrt{\frac{\log n}{\delta}}\right)\delta\leq d_{F\setminus K}(v)\leq \left(1+ C'\sqrt{\frac{\log n}{\delta}}\right)\delta.
\end{equation*}
\end{enumerate}
Furthermore, if $d\leq n^{\sigma}$ and $m\leq n^{1+\sigma}$, the following holds with probability $1-o(n^{-14})$.
\begin{enumerate}[label = {\emph{\textbf{\Alph{propcounter}\arabic{enumi}}}}]\addtocounter{enumi}{1}
\item For each $v\in [n]$,\labelinthm{eq:nbrhoodsinFK:random}
\begin{equation*}
\left(1-\frac{2}{\delta}\right)d_{F\setminus K}(v)\cdot \delta d\leq
\sum_{u\in N_{F\setminus K}(v)}\sum_{u'\in N_K(u)}d_{F\setminus K}(u')\leq
\left(1+ \frac{2}{\delta}\right)d_{F\setminus K}(v)\cdot \delta d.
\end{equation*}
\end{enumerate}
\end{lemma}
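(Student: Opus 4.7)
The two parts will be handled separately, each reducing to standard concentration inequalities followed by a union bound over $v\in[n]$.

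For \ref{eq:onlydegreesinFK:random}, I would observe that $d_{F\setminus K}(v) = \sum_{u\notin N_K(v)\cup\{v\}} X_{uv}$ is a sum of exactly $n-1-d$ independent Bernoulli$(p)$ variables with mean $(n-1-d)p = 2m/n = \delta$. Chernoff's inequality (Lemma~\ref{chernoff}) with relative deviation $\gamma = C'\sqrt{\log n/\delta}<1$ (using $\delta\geq \eps d\geq \eps C\log n$) gives failure probability at most $2\exp(-C'^2\log n/3)$, which is $o(n^{-15})$ for $C'$ sufficiently large; a union bound over $v$ then yields \ref{eq:onlydegreesinFK:random}.

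For \ref{eq:nbrhoodsinFK:random}, fix $v$ and set $U = N_{F\setminus K}(v)$, $b = |U|$, and $\alpha(a) = |N_K(a)\cap U|$ for each $a\in[n]$. Re-indexing the double sum by pairs $(u,u')$ with $u\in U$ and $u'\in N_K(u)$, and using $\alpha(v) = 0$ since $U\cap N_K(v) = \emptyset$, I obtain the identity
\begin{equation*}
\sum_{u\in U}\sum_{u'\in N_K(u)} d_{F\setminus K}(u') \;=\; \sum_{a\neq v}\alpha(a)\,d_{F\setminus K}(a) \;=\; 2e_K(U) + Z,
\end{equation*}
where $Z := \sum_{a\neq v}\alpha(a)\,d^{(2)}_{F\setminus K}(a)$ and $d^{(2)}_{F\setminus K}(a)$ denotes the number of edges of $F\setminus K$ at $a$ that do not touch $v$; this uses $d_{F\setminus K}(a) = d^{(2)}_{F\setminus K}(a) + \mathbf{1}[a\in U]$ together with $\sum_{a\in U}\alpha(a) = 2e_K(U)$. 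The deterministic contribution is controlled trivially: $2e_K(U)\leq \sum_{u\in U}d_K(u) = db$.

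Conditional on $U$, the random quantity $Z$ is a weighted sum of independent Bernoulli$(p)$ variables $X_e$ (for $e\not\ni v,\ e\notin E(K)$) with weights $c_e = \alpha(a)+\alpha(b)$ when $e=\{a,b\}$; a short computation using $\sum_a \alpha(a) = db$ yields $\E[Z\mid U]\in[d\delta b - dbp,\,d\delta b]$, so the conditional mean lies within $dbp = O(db\cdot n^{\sigma-1})$ of $d\delta b$. The main step is to show $\max_a\alpha(a)\leq K_0$ for some constant $K_0 = K_0(\sigma)$ with probability $1-o(n^{-15})$, and this is exactly where the hypotheses $d\leq n^\sigma$ and $m\leq n^{1+\sigma}$ are essential: together they force $dp = O(n^{2\sigma-1}) = o(1)$, so each $\alpha(a)$ is stochastically dominated by a Bernoulli sum with tiny mean, and the sparse tail bound $\P(\alpha(a)\geq k)\leq (edp/k)^k$ with $k$ a sufficiently large constant depending on $\sigma$, combined with a union bound over $a$, gives the required uniform bound.

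On the event $\{\max\alpha\leq K_0\}$ (so $\max_e c_e\leq 2K_0$) and $\{b\geq \delta/2\}$ (from \ref{eq:onlydegreesinFK:random}), Bernstein's inequality applied to $Z$ with $\mathrm{Var}(Z\mid U)\leq 2K_0\,\E[Z\mid U] = O(K_0 d\delta b)$ and threshold $t = db$ gives
\begin{equation*}
\P\!\left(|Z - \E[Z\mid U]| > db \,\big|\, U\right) \;\leq\; 2\exp\!\left(-\frac{(db)^2}{O(K_0 d\delta b) + O(K_0 db)}\right) \;=\; 2\exp(-\Omega(d/K_0)),
\end{equation*}
which is $o(n^{-15})$ once $d\geq C\log n$ with $C$ sufficiently large relative to $K_0$. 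On the good events the double sum therefore lies in $[d\delta b - 2db,\,d\delta b + 2db] = d\delta b\cdot[1-2/\delta,\,1+2/\delta]$, and a final union bound over $v$ produces \ref{eq:nbrhoodsinFK:random}. The main obstacle is this uniform constant bound on $\alpha$; without the assumption $dp = o(1)$ forcing $\max_e c_e = O(1)$, Bernstein would only yield fluctuations of order $\sqrt{d\delta b\log n}$, which fails to be below $db$ when $d$ is only $C\log n$.
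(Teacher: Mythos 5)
Your proposal is correct and follows essentially the same route as the paper: reveal the edges of $E$ at $v$ first, use $dp=O(n^{2\sigma-1})=o(1)$ to bound $\max_a|N_K(a)\cap N_{F\setminus K}(v)|$ by a constant with probability $1-o(n^{-15})$, and then concentrate the remaining weighted sum of independent edge indicators, absorbing the $O(d\cdot d_{F\setminus K}(v))$ discrepancy between that sum and the true double sum into the $2/\delta$ error term. The only difference is the final concentration tool: you invoke Bernstein for the boundedly-weighted sum, whereas the paper partitions the relevant edges into $200$ classes according to their integer weights and applies plain Chernoff to each class, which achieves the same effect using only the inequalities it quotes.
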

\begin{proof}
For each $v\in [n]$, by an application of Chernoff's bound, as $d_{F\setminus K}(v)$ is binomially distributed with parameters $n-1-d$ and $p$, and $p(n-1-d)=2m/n=\delta$, we have that
\begin{equation}\label{eq:defFprimeH}
\P\Bigg(d_{F\setminus K}(v)\neq \left(1\pm C'\sqrt{\log n/\delta}\right)\delta\Bigg)\leq 2\exp\left(-\frac{C'^2\log n}{3\delta}\cdot \delta\right)=o(n^{-15}).
\end{equation}
Thus, taking a union bound over all $v\in [n]$, with probability $1-o(n^{-14})$, \ref{eq:onlydegreesinFK:random} holds, as desired.

Let, then, $v\in [n]$ and suppose $d \le n^{\sigma}$ and $m \le n^{1+\sigma}$. Reveal the edges of $E$ next to $v$, and let $A=N_{F\setminus K}(v)$. Note that the probability that there is a vertex $u\in [n]$ with $|N_K(u)\cap A|\geq 100$ is, as $d\leq n^{\sigma}$ and $p\leq 4m/n^2\leq 4n^{\sigma-1}$, at most
\[
n\cdot \binom{d}{100}p^{100}\leq n(pd)^{100}=o(n^{-15}).
\]
Thus, with probability $1-o(n^{-15})$, for each vertex $u\in [n]$, $|N_K(u)\cap A|< 100$.
Furthermore, by \eqref{eq:defFprimeH}, we have $\delta/2\leq |A|\leq 2\delta$ with probability $1-o(n^{-15})$.

Let $\hat{E}_v$ be the set of edges $e\in ([n]\setminus \{v\})^{(2)}\setminus E(K)$.
For each $xy\in \hat{E}_v$, let $w_{xy}=|N_K(x)\cap A|+|N_K(y)\cap A|$, so that $w_{xy}\leq 200$ as  $|N_K(u)\cap A|< 100$ for each $u\in [n]$. Note that  
\begin{equation}\label{eq:sumwxy}
\sum_{xy\in \hat{E}_v}w_{xy}=\sum_{u\in A}\sum_{u'\in N_K(u)}d_{(K_n\setminus K)-v}(u')=\left(1\pm \frac{1}{4\delta}\right)|A|d(n-1-d),
\end{equation}
where we have used that, for each $u'\in [n]$, we have $d_{(K_n\setminus K)-v}(u')\in \{n-2-d,n-1-d\}$.
Therefore, we can take subsets $\hat{E}_{v,i}$, $i\in [200]$, in $\hat{E}_v$ such that $|\hat{E}_{v,i}|\geq |A|d(n-1-d)/400$ and, for each $xy\in \hat{E}_v$, $|\{i\in [200]:xy \in \hat{E}_{v,i}\}|=w_{xy}$. For example, if we randomly assign each $xy\in \hat{E}_v$ to $w_{xy}$ of the $\hat{E}_{v,i}$, $i\in [200]$, for each $xy \in \hat{E}_v$. Then, for each $i\in [200]$, we have $\mathbb{E}|\hat{E}_{v,i}| = \sum_{xy \in \hat{E}_v} \frac{w_{xy}}{200}$, and by Theorem~\ref{thm:mcd:conc} there is a positive probability that $|\hat{E}_{v,i}| \ge 2\mathbb{E}|\hat{E}_{v,i}|/3\geq |A|d(n-1-d)/400$ for all $i\in [200]$, where we have used \eqref{eq:sumwxy}.

For each $i\in [200]$, let $X_{v,i}=|\hat{E}_{v,i}\cap E|$. Revealing the edges of $E$ not containing $v$, we have that, for each $i\in [200]$, $X_{v,i}$ is a binomial random variable with mean $\E|X_{v,i}|=p|\hat{E}_{v,i}|\geq |A|dp(n-1-d)/400\geq \delta^2d/800$. Therefore, we have, via a Chernoff bound, that, for each $i\in [200]$,
\begin{align}\label{eq:probXvi}
\P\left(|X_{v,i}-\E X_{v,i}|> \frac{1}{10\delta}\cdot \E X_{v,i}\right)&\leq  2\exp\left(- \frac{\E X_{v,i}}{3\cdot 100\delta^2}\right)\leq 2\exp\left(- \frac{d}{10^6}\right)=o(n^{-15}),
\end{align}
where we have used that $d\geq C\log n$.

Let 
\[
X_v=\sum_{u\in A}\sum_{u'\in N_K(u)}d_{(F\setminus K)-v}(u')=\sum_{i\in [200]} X_{v,i},
\]
Then, by \eqref{eq:probXvi}, with probability $1-o(n^{-15})$, we have ${|X_{v,i}-\E X_{v,i}|}\leq  \frac{1}{10\delta}\cdot \E X_{v,i}$ for each $i\in [200]$ and, hence, 
\begin{equation}\label{eq:Xvdeviance}
|X_v-\E X_v|\leq \sum_{i\in [200]}(|X_{v,i}-\E X_{v,i}|)\leq \sum_{i\in [200]} \frac{1}{10\delta}\cdot \E X_{v,i}=\frac{1}{10\delta}\cdot \E X_v.
\end{equation}
Furthermore, by \eqref{eq:sumwxy}, we have that $\E X_v=\left(1\pm \frac{1}{4\delta}\right)|A|dp(n-d-1)=\left(1\pm \frac{1}{4\delta}\right)|A|\cdot d\delta$. Thus, \eqref{eq:Xvdeviance} implies that $X_v=\left(1\pm \frac{1}{2\delta}\right)|A|\cdot \delta d=\left(1\pm \frac{1}{2\delta}\right)d_{F\setminus K}(v)\cdot \delta d$.

Noting that
\[
\left|X_v-\sum_{u\in N_{F\setminus K}(v)}\sum_{u'\in N_K(u)}d_{F\setminus K}(u')\right|\leq \sum_{u\in N_{F\setminus K}(v)}\sum_{u'\in N_K(u)}1\leq d\cdot d_{F\setminus K}(v)= \frac{1}{\delta}\cdot d_{F\setminus K}(v)\cdot \delta d,
\]
we have that \ref{eq:nbrhoodsinFK:random} holds for $v$ with probability $1-o(n^{-15})$. Taking a union bound over all $v\in [n]$ thus shows that \ref{eq:nbrhoodsinFK:random} holds with probability $1-o(n^{-14})$, as desired.
\end{proof}


\subsection{Proof of Lemma~\ref{lem:ubpathcountF:untethered}}\label{sec:ubpathcountF:untethered}

Combining Lemma~\ref{lem:ubpathcountF} and Lemma~\ref{lem:nbrhoodsum}, it is now easy to prove Lemma~\ref{lem:ubpathcountF:untethered}.
\begin{proof}[Proof of Lemma~\ref{lem:ubpathcountF:untethered}]
 Take the variables as set up in Lemma~\ref{lem:ubpathcountF:untethered}, so that $1/n\ll 1/C \ll \eps\leq 1$, $1/C\ll \sigma,\mu\ll 1$, $d\geq C\log n$, $\eps dn/2\leq m\leq n^{1+\sigma}$, $\delta=2m/n$ and $p=m\left(\binom{n}{2}-dn/2\right)^{-1}$. Furthermore, we have
\begin{equation}\label{eq:lambdadefn:1}
\lambda = \max\left\{\frac{\mu^2}{\log^2 n},\left(\frac{n\log n}{m}\right)^{1/4}\right\}.
\end{equation}
Let $K\sim G_d(n)$, form $E$ by including each element of $[n]^{(2)}\setminus E(K)$ uniformly at random with probability $p$, and let $F=K+E$. Thus, we want to show that \ref{prop:upperboundFKKalternating:random:newnew} holds with probability $1-o(n^{-14})$.

Let $C'$ satisfy $1/C \ll 1/C'\ll 1$.
As $\delta=2m/n \geq \eps C \log n$, we have that
\[
C'\sqrt{\frac{\log n}{\delta}}\overset{\eqref{eq:lambdadefn:1}}{\leq} C'\sqrt{\frac{\log n}{\delta}}\cdot \lambda \left(\frac{m}{n\log n}\right)^{1/4}\leq C'\sqrt{\frac{\log n}{\delta}}\cdot \lambda \left(\frac{\delta}{2 \log n}\right)^{1/4}=C'\lambda \left(\frac{\log n}{2 \delta}\right)^{1/4}\leq \lambda.
\]
 Also, since $m \ge \eps C n\log{n}/2$, we have
\[
\frac 2 \delta = \frac{n}{m} = \left(\frac {n \log n} {m}\right)^{3/4} \cdot \frac{1}{\log{n}}\left(\frac{n \log n}{m}\right)^{1/4} \overset{\eqref{eq:lambdadefn:1}}{\leq} \frac{\lambda}{\log n}.
\]
Thus, by Lemma~\ref{lem:nbrhoodsum}, we have that \ref{eq:onlydegreesinFK} and \ref{eq:nbrhoodsinFK} hold with probability $1-o(n^{-14})$.
Then, by Lemma~\ref{lem:ubpathcountF}, we have that \ref{prop:upperboundFKKalternating:random:newnew} holds with probability $1-o(n^{-14})$.
\end{proof}

\section{Lower bounds for switching paths}\label{sec:lower}

We now prove our likely lower bound on the number of $(F\setminus K,K)$-alternating paths between two fixed vertices, that appears as \ref{prop:lowerboundKFKalternating} in Theorem~\ref{thm:switching:propstoapply}, as follows.

\begin{theorem}\label{thm:randomproperties:lowerbounds:newnew}
Let $1/n\ll 1/C\ll 1/C_0\ll \eps\leq 1$ and $1/C\ll \sigma, \mu \ll 1$. Let $d\in [n]$ and $m\in \N$ satisfy $d\geq C\log n$ and  $\eps dn/2\leq m\leq n^{1+\sigma}$. Let $\delta=2m/n$ and 
let
\begin{equation*}
\lambda = \max\left\{\frac{\mu^2}{\log^2 n},\left(\frac{n\log n}{m}\right)^{1/4}\right\}.
\end{equation*}
Let $p=m(\binom{n}{2}-dn/2)^{-1}$. Let $K\sim G_d(n)$, form $E$ by including each element of $[n]^{(2)}\setminus E(K)$ uniformly at random with probability $p$, and let $F=K+E$. Let $\ell_0$ be the smallest integer for which
$\delta^{(\ell_0-1)}d^{(\ell_0-1)}\geq n$.
Then, with probability $1-o(n^{-14})$, we have the following.
\stepcounter{propcounter}
\begin{enumerate}[label = {\emph{\textbf{\Alph{propcounter}}}}]\addtocounter{enumi}{1}
\item \labelinthm{prop:lowerboundKFKalternating:newnewnew} For each distinct $x,y\in [n]$, each $Z\subset [n]$ with $|Z|\leq 40\log n$, and each $\ell$ with $10\ell_0\leq \ell\leq 10\log n$ the number of $(F\setminus K,K)$-alternating $x,y$-paths of length $2\ell$ which do not have any internal vertices in $Z$ is at least $\left(1-C_0\lambda - \frac{|(Z\cup \{x\})\cap N_K(y)|}{d}\right)d^{\ell}\delta^{\ell}/n$.
\end{enumerate}
\end{theorem}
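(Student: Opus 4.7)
The plan is to reduce the lower bound on paths to a lower bound on walk counts, and separately to upper-bound the number of alternating walks that fail to be paths or that hit forbidden vertices. Throughout, I condition on pseudorandom properties of $K$ and $F\setminus K$ which hold with probability $1-o(n^{-14})$: the near-regularity and two-step degree-sum estimates \ref{eq:onlydegreesinFK:random} and \ref{eq:nbrhoodsinFK:random} from Lemma~\ref{lem:nbrhoodsum}; bounded codegrees and small counts of short closed alternating walks (the ``few doubled'' properties) in both $K$ and $F\setminus K$, obtained via switching for $K\sim G_d(n)$ and via first-moment Chernoff for the binomial $F\setminus K$; and an $F\setminus K$-density lower bound $e_{F\setminus K}(A,B)\geq (1-O(\lambda))p|A||B|$ for disjoint vertex sets $A,B$ of suitable size, obtained by Chernoff conditional on $K$.

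Fix distinct $x,y\in[n]$, $Z\subseteq[n]$ with $|Z|\leq 40\log n$, and a valid $\ell$. Any $(F\setminus K,K)$-alternating $x,y$-path of length $2\ell$ has last edge in $K$, so its penultimate vertex $u$ lies in $N_K(y)\setminus(Z\cup\{x\})$; conversely any $(F\setminus K,K)$-alternating walk from $x$ of length $2\ell-1$ ending at such a $u$, which is a path and avoids $Z\cup\{y\}$ in its interior, gives such a path, bijectively. Letting $W_{2\ell-1}(x,u)$ denote the number of alternating walks from $x$ to $u$ of length $2\ell-1$, I lower-bound $S:=\sum_{u\in N_K(y)\setminus(Z\cup\{x\})} W_{2\ell-1}(x,u)$ by splitting each walk at its $\ell_0$-th vertex: this writes $S$ as a bilinear sum of walk counts from $x$ of length $\ell_0$ and walk counts from the split vertex to $N_K(y)\setminus(Z\cup\{x\})$ of length $2\ell-1-\ell_0$. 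Iterating \ref{eq:nbrhoodsinFK:random} shows that after $\ell_0$ alternating steps from $x$ the walk-count vector approximately equidistributes its total mass $\delta^{\ell_0}d^{\ell_0}$ across $[n]$ up to multiplicative $1\pm O(\lambda)$, and the $F\setminus K$-density lower bound between the equidistributed $x$-side and $N_K(y)\setminus(Z\cup\{x\})$ then yields $S\geq (1-O(\lambda)-|(Z\cup\{x\})\cap N_K(y)|/d)\, d^\ell\delta^\ell/n$.

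Next I upper-bound the walks counted by $S$ that are not paths or that have an internal vertex in $Z\cup\{y\}$. For non-paths, I enumerate the positions $0\leq i<j\leq 2\ell-1$ of the first repeated vertex and the vertex $v$ appearing at both positions, and bound the number of such walks by combining the per-vertex walk upper bound of Lemma~\ref{lem:ubpathcountF:untethered} applied to the two outer sub-walks with the ``few doubled'' pseudorandom bound on the middle closed sub-walk from $v$ to $v$; summing over $(i,j,v)$ the total absorbs into $O(\lambda)d^\ell\delta^\ell/n$. Walks with a forbidden internal vertex $w\in Z\cup\{y\}$ are handled analogously: enumerate $w$ and its position, bound each sub-walk by Lemma~\ref{lem:ubpathcountF:untethered}, and sum using $|Z|\leq 40\log n$, absorbing the total into $O(\lambda)d^\ell\delta^\ell/n$. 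Subtracting these corrections from $S$ gives the desired lower bound on paths, and a final union bound over $x$, $y$ and $Z$ (there are at most $n^{2}\binom{n}{40\log n}=n^{O(\log n)}$ choices) fits inside the $n^{-14}$ probability slack after choosing the constants.

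The main obstacle is the per-endpoint mixing: pointwise lower bounds on $W_t(x,u)$ for each individual $u\in N_K(y)\setminus(Z\cup\{x\})$ are not available when $d=\Theta(\log n)$, since pointwise mixing of the alternating random walk requires many more than $\ell_0$ steps and leaves fluctuations of the same order as the mean. The resolution is to sum the walk count over the $d$-vertex set $N_K(y)$ \emph{before} applying any lower bound: because $N_K(y)$ is determined by $K$ alone and the edges of $F\setminus K$ are revealed subsequently, the $F\setminus K$-density lower bound between sets yields the required concentration on the sum, while the fraction of forbidden endpoints $|(Z\cup\{x\})\cap N_K(y)|/d$ appears organically as the proportion of $N_K(y)$ that must be excluded.
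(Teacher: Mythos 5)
There is a genuine gap at the heart of your argument: the claim that iterating \ref{eq:nbrhoodsinFK:random} makes the walk-count vector $W_{\ell_0}(x,\cdot)$ \emph{pointwise} equidistributed over $[n]$ up to a multiplicative $1\pm O(\lambda)$. The property \ref{eq:nbrhoodsinFK:random} only controls sums of degrees over two-step neighbourhoods, i.e.\ \emph{total} walk counts; it says nothing about how that mass is distributed among individual endpoints. In the critical regime $d=\Theta(\log n)$ the pointwise counts genuinely do not concentrate at relative accuracy $\lambda$ after only $\ell_0$ steps (already the first-step degrees $d_{F\setminus K}(v)$ fluctuate by $\Theta(\sqrt{\log n/\delta})\,\delta$, and compounding per-step pointwise errors over $\ell_0=\Theta(\log n/\log\log n)$ steps would require per-step accuracy $O(\lambda/\ell_0)$, which is exactly what is unavailable). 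You correctly identify this obstacle, but your proposed resolution does not circumvent it: after splitting $S=\sum_v a_v b_v$ at the $\ell_0$-th vertex, lower-bounding the bilinear form still requires pointwise control of one factor, and applying the $F\setminus K$-density estimate between the ``$x$-side'' and the size-$d$ set $N_K(y)$ cannot deliver relative error $\lambda$ either, since $p\cdot n\cdot d\cdot\lambda^2$ can be $o(\log n)$ when $\delta=O(\mathrm{poly}\log n)$. The paper's proof avoids pointwise statements entirely: it grows \emph{sets} of reachable vertices from both $x$ and $y$ (scaling each endpoint up to $r$ parallel starting points in Claims~\ref{clm:lowerbound} and~\ref{clm:lowerbound2}), controls collisions via the $E_{\geq 2}$ expansion properties \ref{eq:fewdoubled:K} and \ref{eq:fewdoubled:F-K} so that set sizes track total walk counts, and only applies the edge-density bound \ref{prop:F-Kconnection} once \emph{both} sets have size $\Theta(\lambda n)$.

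A second, related gap is in your correction terms. To bound the walks through a forbidden vertex $w\in Z$ at position $i$, or through a repeated vertex, you need an upper bound on alternating walks between two \emph{fixed} vertices of length $2\ell-1-i$ carrying the crucial factor $1/n$; Lemma~\ref{lem:ubpathcountF:untethered} only bounds walks from a fixed vertex to an arbitrary endpoint, which is larger by a factor of $n$. Point-to-point upper bounds are precisely the content of the separate and substantially harder Lemma~\ref{lem:randomproperties:upperbounds:newnew}, which itself relies on the lower bound you are trying to prove, so invoking it here would be circular. The paper sidesteps this by constructing paths directly (excluding previously used vertices at each growth step) rather than counting walks and subtracting the bad ones.
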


In Section~\ref{sec:pseudforlower}, we give pseudorandom conditions (\ref{eq:degreesinFK:pseud}--\ref{prop:F-Kconnection}) and show they imply \ref{prop:lowerboundKFKalternating:newnewnew}. Of these pseudorandom conditions, \ref{eq:degreesinFK:pseud} and \ref{eq:nbrhoodsinFK:pseud} will already follow from Lemma~\ref{lem:nbrhoodsum}. The conditions \ref{eq:fewdoubled:K}, \ref{eq:fewdoubled:F-K} and \ref{prop:new} will be used to show expansion conditions in the graphs $K$ and $F\setminus K$, and we prove \ref{eq:fewdoubled:K} is likely to hold in Section~\ref{sec:Kexpansion} and \ref{eq:fewdoubled:F-K} and \ref{prop:new} is likely to hold in Section~\ref{sec:F-Kexpansion}. The condition \ref{prop:F-Kconnection} concerns edges in $F\setminus K$ between large sets and is shown to likely hold in Section~\ref{sec:connection}. Finally, we put this together in Section~\ref{sec:thmproof:lowerbounds} to prove Theorem~\ref{thm:randomproperties:lowerbounds:newnew}.


\subsection{Pseudorandom properties for lower bounds for path counts}\label{sec:pseudforlower}
To show good bounds on the expansion of certain vertex sets $U$ in a graph $H$ (applied with $H=K$ and $H=F\setminus K$ for Theorem~\ref{thm:randomproperties:lowerbounds:newnew}), we will use that few vertices have more than one neighbour in $H$ in $U$, so that the size of $N_H(U)$ is close to the sum of the degrees in $H$ of the vertices in $U$. For this, we will use the following convenient definition.
\begin{defn}
For a vertex set $U$ in a graph $F$, we define $E_{\geq 2,F}(U)$ to be the set of edges $xy\in E(F)$ with $x\in U$, $y\notin U$ for which there is some $x'\in U\setminus \{x\}$ with $x'y\in E(F)$.
\end{defn}

We now give our pseudorandom conditions for bounding below the number of $(F\setminus K,K)$-alternating paths between two fixed vertices, and deduce the bound we need, as follows.

\begin{lemma}\label{lem:lbpathcountF}
Let $1/n\ll 1/C\ll 1/C_0\ll 1$ and $1/C\ll \sigma\ll 1$. Let $d\geq C\log n$ with $d,\delta\leq n^{\sigma}$. Let $\log^{-3}n\leq \lambda \ll 1$ satisfy $\lambda \delta\geq C_0\log n$. Let $K\in \cK_d(n)$ and let $F$ be a graph with vertex set $[n]$, such that $K \subset F$ and the following properties hold.

\stepcounter{propcounter}
\begin{enumerate}[label = {\emph{\textbf{\Alph{propcounter}\arabic{enumi}}}}]
\item For each $v\in [n]$,
$d_{F\setminus K}(v)= \left(1\pm \lambda\right)\delta$. \labelinthm{eq:degreesinFK:pseud}

\item For each $v\in [n]$,\labelinthm{eq:nbrhoodsinFK:pseud}
\begin{equation*}
\sum_{u\in N_{F\setminus K}(v)}\sum_{u'\in N_K(u)}d_{F\setminus K}(u')=
\left(1\pm  \frac{\lambda}{\log n}\right)d_{F\setminus K}(v)\cdot \delta d.
\end{equation*}
\item For each $U\subset [n]$, if there is some $U'\subset [n]$ with $|U'|\leq 4|U|/\delta$ and $U\subset U'\cup N_{F\setminus K}(U')$, then \labelinthm{eq:fewdoubled:K}
\[
|E_{\geq 2,K}(U)|+e_{K}(U)\leq \left\{
\begin{array}{ll}
\frac{\lambda}{\log n}d |U| & \text{ if }|U|\leq \frac{\lambda n}{10^6d\log n}\\
\lambda d |U| & \text{ if }|U|\leq \frac{\lambda n}{10^6d}.
\end{array}\right.
\]
\item For each $U\subset [n]$ with $|U|\leq {\lambda n/100\delta\log n}$, if there is some $U'\subset [n]$ with $|U'|\leq 4|U|/d$ and $U\subset U'\cup N_{K}(U')$, then \labelinthm{eq:fewdoubled:F-K}
\[
|E_{\geq 2,F\setminus K}(U)|+e_{F\setminus K}(U)\leq \frac{\lambda}{\log n}\delta |U|.
\]
\item \labelinthm{prop:new} For each $U\subset [n]$ with $|U|\leq 100\log n$, $|\{v\in [n]\setminus U: |N_{F\setminus K}(v) \cap U|\geq 10\}|\leq 100\log n$.
\item \labelinthm{prop:F-Kconnection} For any disjoint sets $U,V\subset [n]$ with $|U|,|V|\geq \frac{\lambda n}{10^8}$, we have
\[
e_{F\setminus K}(U,V)\geq \left(1- \lambda\right)\frac{\delta}{n}|U||V|.
\]
\end{enumerate}
Let $\ell_0$ be the smallest integer for which
$\delta^{(\ell_0-1)}d^{(\ell_0-1)}\geq n$. Then, the following holds.

\begin{enumerate}[label = {\emph{\textbf{\Alph{propcounter}\arabic{enumi}}}}]\addtocounter{enumi}{6}
\item \labelinthm{prop:lowerboundKFKalternating:pseud:newnew} For each distinct $x,y\in [n]$, each $Z\subset [n]$ with $|Z|\leq 40\log n$, and each $10\ell_0\leq \ell\leq 10\log n$ the number of $(F\setminus K,K)$-alternating $x,y$-paths of length $2\ell$ which do not have any internal vertices in $Z$ is at least $\left(1-C_0\lambda - \frac{|(Z\cup \{x\})\cap N_K(y)|}{d}\right)d^{\ell}\delta^{\ell}/n$.
\end{enumerate}
\end{lemma}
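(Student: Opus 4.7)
I would prove Lemma~\ref{lem:lbpathcountF} by a meet-in-the-middle construction: expand $(F\setminus K, K)$-alternating paths from $x$ for $\ell_1$ steps to reach a ``large'' set $S$ of endpoints, expand similarly from $y$ (with the alternation reversed) for $\ell_2 = 2\ell - 1 - \ell_1$ steps to reach $T$, then apply the global connection hypothesis \ref{prop:F-Kconnection} to count connecting $F\setminus K$-edges between $S$ and $T$. Parity considerations force a particular choice (e.g.\ $\ell_1$ even, so that the middle edge $v_{\ell_1}v_{\ell_1+1}$ is of type $F\setminus K$ and the alternations on each side glue consistently), but since $\ell \geq 10\ell_0$ there is enough room to take both $\ell_1, \ell_2 \geq 2\ell_0$, and in particular enough to guarantee $|S|,|T| \geq \lambda n / 10^8$ as required by \ref{prop:F-Kconnection}.

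\textbf{Expansion phase.} For each $i \leq \ell_1$, I would track a weight $w_i(v)$ counting the number of $(F\setminus K, K)$-alternating paths from $x$ to $v$ of length $i$ whose internal vertices avoid $Z \cup \{y\}$. By induction on $i$, I would show: (a) the support $S_i$ of $w_i$ has size close to $\delta^{\lceil i/2\rceil}d^{\lfloor i/2\rfloor}$ (until this exceeds $\Theta(n)$); and (b) $w_i$ is nearly uniform on $S_i$, with total mass at least $(1 - O(i\lambda/\log n))\, \delta^{\lceil i/2\rceil}d^{\lfloor i/2\rfloor}$. The inductive step alternates between $F\setminus K$-expansion (factor $\delta$) and $K$-expansion (factor $d$). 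Each such step uses: the degree bound \ref{eq:degreesinFK:pseud} (or $d$-regularity of $K$) to compute the total out-edge count from $S_i$; hypothesis \ref{eq:fewdoubled:F-K} or \ref{eq:fewdoubled:K} to bound the edges wasted inside $S_i$ or landing on vertices of high multiplicity (so that the new-endpoint count is the out-edge count minus a controlled error); and hypothesis \ref{prop:new} to handle the ``micro'' regime $|S_i| \leq 100 \log n$, where the former bounds are not sharp enough. Hypothesis \ref{eq:nbrhoodsinFK:pseud} can additionally be invoked to bundle two consecutive steps ($F\setminus K$ then $K$) into a sharper aggregate estimate when needed. A symmetric construction starting at $y$ yields $w'_j$ on $T_j$ with analogous guarantees.

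\textbf{Finishing and main obstacle.} Once $|S_{\ell_1}|, |T_{\ell_2}| \geq \lambda n/10^8$, hypothesis \ref{prop:F-Kconnection} delivers at least $(1-\lambda)\delta|S_{\ell_1}||T_{\ell_2}|/n$ edges of $F\setminus K$ between $S_{\ell_1}$ and $T_{\ell_2}$; multiplying by the near-uniform values of $w_{\ell_1}$ and $w'_{\ell_2}$ on their respective supports produces a count of $(F\setminus K, K)$-alternating walks from $x$ to $y$ of length $2\ell$. Subtracting those that are not paths (i.e.\ have internal repetitions), again bounded using \ref{eq:fewdoubled:K}, \ref{eq:fewdoubled:F-K} and \ref{prop:new}, converts this to a path count. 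The correction term $|(Z\cup\{x\})\cap N_K(y)|/d$ enters naturally through the last edge: the final $K$-edge incident to $y$ must have its other endpoint outside $Z \cup \{x\}$, which removes precisely a fraction $|(Z\cup\{x\})\cap N_K(y)|/d$ of the candidates. The main obstacle is error accumulation: the per-step loss $O(\lambda/\log n)$ coming from \ref{eq:fewdoubled:K}/\ref{eq:fewdoubled:F-K} is tolerable only if the \emph{strong} (small-set) form of these hypotheses applies on all but $O(1)$ of the $O(\log n)$ expansion steps. This works because the definition of $\ell_0$ via $\delta^{\ell_0-1}d^{\ell_0-1} \geq n$ ensures that the size thresholds $\lambda n/(10^6 d\log n)$ and $\lambda n/(100\delta\log n)$ are crossed within an additive constant of $\ell_0$ steps, so the ``meso'' regime where only the weaker $\lambda$-bound applies contributes $O(1)\cdot \lambda = O(\lambda)$ in total. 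Coordinating the three scales --- micro (tiny $S_i$, handled by \ref{prop:new}), meso (expansion via \ref{eq:fewdoubled:K}/\ref{eq:fewdoubled:F-K}), and macro (large sets via \ref{prop:F-Kconnection}) --- with parameter choices that keep every accumulated error at $O(\lambda)$ is the delicate heart of the argument.
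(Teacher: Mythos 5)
Your high-level ingredients (alternate set-expansion, a single application of \ref{prop:F-Kconnection} to connect two large frontiers, three-scale error bookkeeping, and the origin of the $|(Z\cup\{x\})\cap N_K(y)|/d$ term in the last $K$-edge at $y$) are all in the right spirit, but the central mechanism of your plan has a genuine gap. You want to run the expansion from at least one endpoint for $\ell_2=2\ell-1-\ell_1=\Theta(\log n)$ steps and maintain, by induction, that the path-count weights $w_i$ are \emph{nearly uniform} on their support with $1\pm O(\lambda)$ accuracy. The hypotheses cannot support that induction. \ref{eq:fewdoubled:K} and \ref{eq:fewdoubled:F-K} are one-sided bounds that apply only to frontiers of size at most roughly $\lambda n/(10^6 d)$ resp.\ $\lambda n/(100\delta\log n)$ and only when the frontier has a small covering certificate; they control the \emph{size} of the next frontier, not how the path mass distributes over it, and they say nothing once the frontier saturates at $\Theta(n)$ vertices (which happens after about $\ell_0$ steps, long before $\ell_2$). \ref{prop:F-Kconnection} is only a lower bound on edge counts between large sets, so it cannot rule out the mass piling up on a sparse subset of the support. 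In fact, per-vertex near-uniformity of $w_i$ for $i\approx 2\ell$ is essentially the conjunction of this lemma's conclusion with its upper-bound counterpart (which the paper has to prove by a separate, delicate argument in Section~\ref{sec:upper}), so assuming it as inductive hypothesis (b) makes the argument circular at exactly the hard point. A secondary issue: your walks-minus-repetitions correction at the end is over paths of length $\Theta(\log n)$, where the small-set hypotheses again do not apply to the saturated frontiers involved.

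The paper's proof is structured precisely to avoid needing any uniformity statement. The two-sided set expansion is used only for a \emph{short} connector of fixed length $4\ell_1+5\approx 4\ell_0$ between an arbitrary pair of vertices: Claim~\ref{clm:lowerbound} grows two frontiers from seed sets of carefully chosen size $r$ (so the final frontiers land at size $\approx\lambda n/10^8$, just inside the regime where \ref{eq:fewdoubled:K}/\ref{eq:fewdoubled:F-K} still applied one step earlier and \ref{prop:F-Kconnection} applies now), and Claim~\ref{clm:lowerbound2} converts this into a per-pair count via the cyclic covering trick with the $r$-sets $\hat U_i,\hat V_j$. The remaining $\sim\ell$ steps are handled by Claim~\ref{clm:annoying}, which tracks only the aggregate weighted count $\sum_{(v,P)}d_{F\setminus K}(v)$ of paths grown from a $K$-neighbour $y'$ of $y$, with per-step loss $O(\lambda/\log n)$; here \ref{prop:new} is used not for a ``micro'' frontier regime but to discard, at every step, the few continuations whose endpoint has $\ge 10$ $F\setminus K$-neighbours in the current path plus $Z\cup\{x,y\}$ (a set of size $O(\log n)$), which is what keeps the pruning loss small. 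Each surviving path is then finished to $x$ by the short connector. If you want to salvage your meet-in-the-middle plan, you would have to replace inductive hypothesis (b) by an argument of this aggregate-plus-short-connector type, or supply a genuine mixing/upper-bound mechanism beyond the listed hypotheses.
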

\begin{proof}
Let $\ell_1$ be the smallest integer for which $\delta^{(\ell_1+1)}d^{(\ell_1+1)}\geq \lambda n/(2\cdot 10^7)$, so that $\ell_0-3\leq \ell_1\leq \ell_0-2$ as $\lambda \delta\geq C_0\log n$. As $d,\delta\leq n^{\sigma}$, we have that $\ell_1\geq 1$, and as $d\geq C\log n$ and $\delta\geq C_0\log n$ we have $\ell_1\leq \log n/\log\log n$.  Let $r=\lceil \lambda n/(8\cdot 10^7\cdot \delta^{\ell_1}d^{\ell_1})\rceil\leq \delta d/2$. We will show the following claim.

\begin{claim}\label{clm:lowerbound} Let $X_0,Y_0\subset [n]$ be disjoint with $|X_0|=|Y_0|=r$ such that there are disjoint $X_0',Y_0'\subset [n]$ with $X_0\subset N_K(X_0')$, $Y_0\subset N_K(Y_0')$ and $|X_0'|,|Y_0'|\leq 4r/d$. Then, for each $W\subset [n]$ with $|W|\leq 3r +\lambda \delta$, there are at least $(1-C_0\lambda/4)r^2\delta^{2\ell_1+1}d^{2\ell_1}/n$ $(F\setminus K,K)$-alternating paths of length $4\ell_1+1$ which start in $X_0$, end in $Y_0$, and have no internal vertices in $X_0\cup Y_0\cup W$.
\end{claim}
\claimproofstart[Proof of Claim~\ref{clm:lowerbound}]
Iteratively, for each $1\leq i\leq \ell_1$, let
\[
X_{2i-1}=N_{F\setminus K}(X_{2i-2})\big{\backslash} \left(\left(\cup_{j=0}^{2i-2}(X_j\cup Y_j)\right)\cup N_{F\setminus K}(Y_{2i-2})\cup W\right),
\]
\[
Y_{2i-1}=N_{F\setminus K}(Y_{2i-2})\big{\backslash} \left(\left(\cup_{j=0}^{2i-2}(X_j\cup Y_j)\right)\cup N_{F\setminus K}(X_{2i-2})\cup W\right),
\]
\[
X_{2i}=N_{K}(X_{2i-1})\big{\backslash} \left(\left(\cup_{j=0}^{2i-1}(X_j\cup Y_j)\right)\cup N_{K}(Y_{2i-1})\cup W\right),
\]
and
\[
Y_{2i}=N_{K}(Y_{2i-1})\big{\backslash} \left(\left(\cup_{j=0}^{2i-1}(X_j\cup Y_j)\right)\cup N_{K}(X_{2i-1})\cup W\right).
\]
Note that, for each edge in $F\setminus K$ between $X_{2\ell_1}$ and $Y_{2\ell_1}$, there is an $(F\setminus K,K)$-alternating path of length $4\ell_1+1$ which starts in $X_0$ and ends in $Y_0$, and has no internal vertices in $X_0\cup Y_0\cup W$, and each such path is different. Thus, to prove the claim, it is sufficient to prove that $e_{F\setminus K}(X_{2\ell_1},Y_{2\ell_1})\geq \left(1-{C_0\lambda}/{4}\right)\cdot {r^2\delta^{2\ell_1+1} d^{2\ell_1}}/{n}$.

We will show by induction that, for each $1\leq i\leq \ell_1$,
\begin{align}\label{eq:induction:1}
\left(1-\frac{20\lambda}{\log n}-20\lambda\cdot \mathbf{1}_{\{i=1\}}\right)\cdot \sum_{v\in X_{2i-2}}d_{F\setminus K}(v)\leq |X_{2i-1}|\leq  \sum_{v\in X_{2i-2}} d_{F\setminus K}(v),
\end{align}
\begin{align}\label{eq:induction:2}
\left(1-\frac{20\lambda}{\log n}-20\lambda\cdot \mathbf{1}_{\{i=1\}}\right)\cdot \sum_{v\in Y_{2i-2}}d_{F\setminus K}(v)\leq |Y_{2i-1}|\leq \sum_{v\in Y_{2i-2}}d_{F\setminus K}(v),
\end{align}
\begin{align}\label{eq:induction:3}
\left(1-\frac{20\lambda}{\log n}-20\lambda\cdot \mathbf{1}_{\{i\in \{1,\ell_1\}\}}\right)\cdot d\delta \leq \frac{\sum_{v\in X_{2i}}d_{F\setminus K}(v)}{\sum_{v\in X_{2i-2}}d_{F\setminus K}(v)}\leq \left(1+\frac{20\lambda}{\log n}+20\lambda\cdot \mathbf{1}_{\{i\in \{1,\ell_1\}\}}\right)\cdot d\delta,
\end{align}
and
\begin{align}\label{eq:induction:4}
\left(1-\frac{20\lambda}{\log n}-20\lambda\cdot \mathbf{1}_{\{i\in \{1,\ell_1\}\}}\right)\cdot d\delta \leq \frac{\sum_{v\in Y_{2i}}d_{F\setminus K}(v)}{\sum_{v\in Y_{2i-2}}d_{F\setminus K}(v)}\leq \left(1+\frac{20\lambda}{\log n}+20\lambda\cdot \mathbf{1}_{\{i\in \{1,\ell_1\}\}}\right)\cdot d\delta.
\end{align}
For this, let $1\leq i \leq \ell_1$ and assume that \eqref{eq:induction:1}--\eqref{eq:induction:4} hold for each $1\leq a< i$.

We start by showing that, for each $1\leq a<i$,
\begin{equation}\label{eq:toshow:1}
\frac{4}{5}d^{a}\delta^{a}r\leq |X_{2a}|,|Y_{2a}|\leq \frac{6}{5}d^{a}\delta^{a}r.
\end{equation}
As $|X_0|=r$, using \ref{eq:degreesinFK:pseud} we have that $(1-\lambda)\delta r\leq \sum_{v\in X_0}d_{F\setminus K}(v)\leq (1+\lambda)\delta r$. Thus, by \eqref{eq:induction:3} applied for each $1\leq a\leq i$, we have, as $i\leq \ell_1=o(\log n)$ and $\lambda\ll 1$, that
\begin{align}
 \sum_{v\in X_{2a}}d_{F\setminus K}(v)\geq \left(1-\frac{20\lambda}{\log{n}}-20\lambda\right)\cdot \left(1-\frac{20\lambda}{\log n}\right)^{a-1}d^{a}\delta^{a}\cdot (1-\lambda)\delta r\geq
 \frac{9}{10}d^{a}\delta^{a+1}r\label{eq:dFKsumlower}\end{align}
and
\begin{align}
\sum_{v\in X_{2a}}d_{F\setminus K}(v)\leq  \left(1+\frac{20\lambda}{\log{n}}+20\lambda\right)\cdot\left(1+\frac{20\lambda}{\log n}\right)^{a-1}d^{a}\delta^{a}\cdot (1+\lambda)\delta r\leq \frac{11}{10}d^{a}\delta^{a+1}r.\label{eq:dFKsumupper}
\end{align}
Using \ref{eq:degreesinFK:pseud}, we therefore have that \eqref{eq:toshow:1} holds for $|X_{2a}|$. Similarly,  \eqref{eq:toshow:1} holds for $|Y_{2a}|$.

Now, for each $1\leq a<i$, using $|X_0|=|Y_0|=r$ when $a=1$ and otherwise \eqref{eq:dFKsumlower} and \eqref{eq:dFKsumupper} for $i=a-1$, by \eqref{eq:induction:1} with $i=a$
\begin{equation}\label{eq:toshow:2}
\frac{4}{5}d^{a-1}\delta^{a}r\leq |X_{2a-1}|,|Y_{2a-1}|\leq \frac{6}{5}d^{a-1}\delta^{a}r.
\end{equation}


If $i=1$, then set $X_{-1}=X'_0$ and $Y_{-1}=Y'_0$ (using the sets in the claim statement). 
Whether $i=1$, or not, we then have that $X_{2i-2}\cup Y_{2i-2}\subset N_K(X_{2i-3}\cup Y_{2i-3})$, and, from the properties of $X'_0$ and $Y'_0$ or by \eqref{eq:toshow:1} and \eqref{eq:toshow:2}, $|X_{2i-3}\cup Y_{2i-3}|\leq 3\delta^{i-1}d^{i-2}r\leq 4|X_{2i-2}\cup Y_{2i-2}|/d$.
Furthermore, either as $|X_0|=|Y_0|=r$ or by \eqref{eq:toshow:1} with $a=i-1$, and as $\delta^{\ell_1} d^{\ell_1}<\lambda n/(2\cdot 10^7)$ by the choice of $\ell_1$, we have that
\begin{equation}\label{eq:uboundcheck}
|X_{2i-2}\cup Y_{2i-2}|\leq 2\cdot \frac{6}{5}d^{i-1}\delta^{i-1}r< 2\cdot \frac{\lambda n}{10^7}\cdot d^{i-1-\ell_1}\delta^{i-1-\ell_1}\leq \frac{\lambda n}{\delta d}\leq \frac{\lambda n}{100\delta \log n}.
\end{equation}
Thus, from \ref{eq:fewdoubled:F-K}, we have,
\begin{align}\label{eq:aaa}
|E_{\geq 2,F\setminus K}(X_{2i-2}\cup Y_{2i-2})|\leq \frac{\lambda}{\log n}\cdot \delta |X_{2i-2}|\leq \frac{\lambda}{\log n}\cdot 2\delta^{i}d^{i-1}r.
\end{align}
Furthermore, we have $\cup_{a=0}^{2i-2} (X_{a}\cup Y_{a})\subset \left(\cup_{a=0}^{2i-3} (X_{a}\cup Y_{a})\right)\cup N_{K}\left(\cup_{a=0}^{2i-3} (X_{a}\cup Y_{a})\right)$,
and, using \eqref{eq:toshow:1} and \eqref{eq:toshow:2},  
\[
|\cup_{a=0}^{2i-3} (X_{a}\cup Y_{a})|\leq 2\cdot \frac{6}{5}\sum_{a=0}^{2i-3}\delta^{\lceil a/2\rceil}d^{\lfloor a/2\rfloor}r\leq 3\delta^{i-1}d^{i-2}r \leq\frac{4}{d}|X_{2i-2}\cup Y_{2i-2}|\leq\frac{4}{d}|\cup_{a=0}^{2i-2} (X_{a}\cup Y_{a})|.
\]
Similarly to \eqref{eq:uboundcheck}, we have $|\cup_{a=0}^{2i-2} (X_{a}\cup Y_{a})|\leq 
{\lambda n}/{100\delta \log n}$.
Thus, from \ref{eq:fewdoubled:F-K}, we have
\begin{align}\label{eq:bbb}
e_{F\setminus K}\left(\cup_{a=0}^{2i-2} (X_{a}\cup Y_{a})\right)
&\leq \frac{\lambda}{\log n}\cdot \delta \left|\cup_{a=0}^{2i-2} (X_{a}\cup Y_{a})\right|\leq \frac{\lambda}{\log n}\cdot 3\delta^{i}d^{i-1}r,
\end{align}
where we have used \eqref{eq:toshow:1} and \eqref{eq:toshow:2}.
Therefore, as $\lambda \delta\geq C_0\log n$, this implies that
\begin{equation}\label{eq:sizeW}
|W|\leq 3r+\lambda \delta\leq \left(\frac{\lambda}{\log n}+\lambda\cdot \mathbf{1}_{\{i=1\}}\right)\cdot \delta^id^{i-1}r,
\end{equation}
we have that
\begin{align}
\Bigg(\sum_{v\in X_{2i-2}}d_{F\setminus K}(v)\Bigg)-|X_{2i-1}|&\leq |E_{\geq 2,F\setminus K}(X_{2i-2}\cup Y_{2i-2})|+e_{F\setminus K}\left(\cup_{i'=0}^{2i-2}(X_{i'}\cup Y_{i'})\right)+|W|\nonumber\\
&\hspace{-0.6cm}\overset{\eqref{eq:aaa},\eqref{eq:bbb},\eqref{eq:sizeW}}{\leq}
\left(\frac{\lambda}{\log n}+\lambda\cdot \mathbf{1}_{\{i=1\}}\right)\cdot {6\delta^id^{i-1}r}\nonumber\\
&\overset{\eqref{eq:dFKsumlower}}{\leq} \left(\frac{\lambda}{\log n}+\lambda\cdot \mathbf{1}_{\{i=1\}}\right)\cdot 8\cdot \sum_{v\in X_{2i-2}}d_{F\setminus K}(v).\label{eqn:thisbound}
\end{align}
From \eqref{eqn:thisbound}, we get the lower-bound in \eqref{eq:induction:1} holds.
As $|X_{2i-1}|\leq |N_{F\setminus K}(X_{2i-2})|\leq \sum_{v\in X_{2i-2}}d_{F\setminus K}(v)$, we therefore have that \eqref{eq:induction:1} holds entirely. Furthermore, by a similar deduction to those above using \ref{eq:degreesinFK:pseud}, we have that $\frac{4}{5}d^{i-1}\delta^{i}r\leq |X_{2i-1}|\leq \frac{6}{5}d^{i-1}\delta^{i}r$.
Similarly, we can deduce that
\begin{align}
\Bigg(\sum_{v\in Y_{2i-2}}d_{F\setminus K}(v)\Bigg)-|Y_{2i-1}|&\leq  \left(\frac{\lambda}{\log n}+\lambda\cdot \mathbf{1}_{\{i=1\}}\right)\cdot 8\cdot  \sum_{v\in Y_{2i-2}}d_{F\setminus K}(v),\label{eqn:thisotherbound}
\end{align}
that \eqref{eq:induction:2} holds, and that $\frac{4}{5}d^{i-1}\delta^{i}r\leq |Y_{2i-1}|\leq \frac{6}{5}d^{i-1}\delta^{i}r$.

Now, working similarly to above, we have that $X_{2i-1}\cup Y_{2i-1}\subset N_{F\setminus K}(X_{2i-2}\cup Y_{2i-2})$ and $|X_{2i-2}\cup Y_{2i-2}|\leq 4|X_{2i-1}\cup Y_{2i-1}|/\delta$. Furthermore, we have that
\[
|X_{2i-1}\cup Y_{2i-1}|\leq 2\cdot \frac{6}{5}d^{i-1}\delta^{i}r\leq \frac{2\lambda n}{10^7}\cdot d^{i-1-\ell_1}\delta^{i-\ell_1}\leq \left\{\begin{array}{ll}
\frac{\lambda n}{10^6d \log n} & \text{ if }i<\ell_1,\\
\frac{\lambda n}{10^6d} & \text{ if }i=\ell_1.
\end{array}
\right.
\]
Thus, from \ref{eq:fewdoubled:K}, we have
\begin{align}
|E_{\geq 2,K}(X_{2i-1}\cup Y_{2i-1})|&\leq \left(\frac{\lambda}{\log n}+\lambda\cdot \mathbf{1}_{\{i=\ell_1\}}\right)\cdot d |X_{2i-1}\cup Y_{2i-1}|
\nonumber\\
&\leq \left(\frac{\lambda}{\log n}+\lambda\cdot \mathbf{1}_{\{i=\ell_1\}}\right)\cdot 3\delta^{i}d^ir.\label{eq:ccc}
\end{align}
Furthermore, we have that $\cup_{a=0}^{2i-1} (X_{a}\cup Y_{a})\subset \left(\cup_{a=0}^{2i-2} (X_{a}\cup Y_{a})\right)\cup N_{F\setminus K}\left(\cup_{a=0}^{2i-2} (X_{a}\cup Y_{a})\right)$,
and $|\cup_{a=0}^{2i-2} (X_{a}\cup Y_{a})|\leq 3d^{i-1}\delta^{i-1}r\leq 4|\cup_{a=0}^{2i-1} (X_{a}\cup Y_{a})|/\delta$, while
\[
\left|\cup_{a=0}^{2i-1} (X_{a}\cup Y_{a})\right|\leq 3d^{i-1}\delta^{i}r\leq \frac{3\lambda n}{2\cdot 10^7}\cdot d^{i-1-\ell_1}\delta^{i-\ell_1}\leq \left\{\begin{array}{ll}
\frac{\lambda n}{10^6d \log n} & \text{ if }i<\ell_1,\\
\frac{\lambda n}{10^6d} & \text{ if }i=\ell_1.
\end{array}
\right.
\]
Thus, from \ref{eq:fewdoubled:K}, we have
\begin{align}
e_{K}\left(\cup_{a=0}^{2i-1} (X_{a}\cup Y_{a})\right)
&\leq \left(\frac{\lambda}{\log n}+\lambda\cdot \mathbf{1}_{\{i=\ell_1\}}\right)\cdot d \left|\cup_{a=0}^{2i-1} (X_{a}\cup Y_{a})\right|\nonumber
\\
&\leq \left(\frac{\lambda}{\log n}+\lambda\cdot \mathbf{1}_{\{i=\ell_1\}}\right)\cdot 3\delta^{i}d^{i}r.\label{eq:ddd}
\end{align}
Therefore, we have that
\begin{align}
d|X_{2i-1}|-|X_{2i}|&\leq |E_{\geq 2,K}(X_{2i-1}\cup Y_{2i-1})|+e_{K}\left(\cup_{a=0}^{2i-1}(X_{a}\cup Y_{a})\right)+|W|\nonumber\\
&\hspace{-0.7cm}\overset{\eqref{eq:ccc},\eqref{eq:ddd},\eqref{eq:sizeW}}{\leq}
\left(\frac{\lambda}{\log n}+\lambda\cdot \mathbf{1}_{\{i\in \{1,\ell_1\}\}}\right)\cdot 7\delta^{i}d^{i}r\nonumber \\
&\leq \left(\frac{\lambda}{\log n}+\lambda\cdot \mathbf{1}_{\{i\in \{1,\ell_1\}\}}\right)\cdot 8\cdot d\cdot \sum_{v\in X_{2i-2}}d_{F\setminus K}(v).\label{eqn:thatbound}
\end{align}

Now,
\begin{align*}
\sum_{v\in X_{2i}}d_{F\setminus K}(v)\leq \sum_{u\in X_{2i-2}}\sum_{w\in N_{F\setminus K}(u)}\sum_{v\in N_K(w)}d_{F\setminus K}(v)\overset{\ref{eq:nbrhoodsinFK:pseud}}{\leq} \left(1+\frac{\lambda}{\log n}\right)\cdot \delta d\cdot \sum_{v\in X_{2i-2}}d_{F\setminus K}(v),
\end{align*}
and
\begin{align*}
\sum_{v\in X_{2i}}d_{F\setminus K}(v)&\overset{\ref{eq:degreesinFK:pseud}}{\geq} \left(\sum_{u\in X_{2i-2}}\sum_{w\in N_{F\setminus K}(u)}\sum_{v\in N_K(w)}d_{F\setminus K}(v)\right)-(1+\lambda)\delta d\cdot \left(\sum_{v\in X_{2i-2}}d_{F\setminus K}(v)-|X_{2i-1}|\right)\\
&\hspace{8cm}-(1+\lambda)\delta(d|X_{2i-1}|-|X_{2i}|)\\
&\hspace{-2.25cm}\overset{\ref{eq:nbrhoodsinFK:pseud},\eqref{eqn:thisbound},\eqref{eqn:thatbound}}{\geq} \left(1-\frac{\lambda}{\log n}\right)\cdot \delta d\cdot \sum_{v\in X_{2i-2}}d_{F\setminus K}(v)
-\delta d\cdot \left(\frac{\lambda}{\log n}+\lambda\cdot \mathbf{1}_{\{i\in \{1,\ell_1\}\}}\right)\cdot 19\cdot\sum_{v\in X_{2i-2}}d_{F\setminus K}(v).
\end{align*}
Thus, we have that \eqref{eq:induction:3} holds. Similarly (with \eqref{eqn:thisotherbound} in place of \eqref{eqn:thisbound}), \eqref{eq:induction:4} holds.

Therefore, by induction, \eqref{eq:induction:1}--\eqref{eq:induction:4} hold for each $1\leq i\leq \ell_1$.
Applying \eqref{eq:induction:3} for each $1\leq i\leq \ell_1$, and using that $|\sum_{v\in X_0}d_{F\setminus K}(v)|\geq (1-\lambda)\delta r$ by \ref{eq:degreesinFK:pseud}, we have
\[
|X_{2\ell_1}|\geq \frac{\sum_{v\in X_{2\ell_1}}d_{F\setminus K}(v)}{(1+\lambda)\delta}\geq \frac{(1-40\lambda)\cdot (1-\lambda)d^{\ell_1}\delta^{\ell_1+1}r}{(1+\lambda)\delta}\geq (1-45\lambda)d^{\ell_1}\delta^{\ell_1}r\geq \frac{\lambda n}{10^8},
\]
where we have used \ref{eq:degreesinFK:pseud} again and $\ell_1 = o(\log{n})$.
As, similarly, $|Y_{2\ell_1}|\geq (1-45\lambda)d^{\ell_1}\delta^{\ell_1}r\geq \lambda n/10^8$, we have, by \ref{prop:F-Kconnection}, that
\[
e_{F\setminus K}(X_{2\ell_1},Y_{2\ell_1})\geq (1-\lambda)\frac{\delta}{n}|X_{2\ell_1}||Y_{2\ell_1}|\geq (1-\lambda)\frac{\delta}{n}\cdot \left((1-45\lambda)d^{\ell_1}\delta^{\ell_1}r\right)^2\geq \left(1-\frac{C_0\lambda}{4}\right)\cdot \frac{r^2\delta^{2\ell_1+1} d^{2\ell_1}}{n},
\]
and thus the claim holds.
\claimproofend

Using this claim, we can now connect vertices with paths with length $4\ell_1+5$, as follows.

\begin{claim}\label{clm:lowerbound2} Let $u,v\in [n]$ be distinct and $W\subset [n]\setminus\{x,y\}$ with $|W|\leq \lambda \delta$. Then, there are at least $(1-C_0\lambda/2)\delta^{2\ell_1+3}d^{2\ell_1+2}/n$ $(F\setminus K,K)$-alternating $u,v$-paths of length $4\ell_1+5$ which have no internal vertices in $W$.
\end{claim}
\claimproofstart[Proof of Claim~\ref{clm:lowerbound2}]
Let $U_1=N_{F\setminus K}(u)\setminus (W\cup \{v\}\cup N_{F\setminus K}(v))$ and $V_1=N_{F\setminus K}(v)\setminus (W\cup \{u\}\cup N_{F\setminus K}(u))$. Let $U_2=N_{K}(U_1)\setminus (W\cup \{u,v\}\cup N_{K}(V_1)\cup V_1)$ and $V_2=N_{K}(V_1)\setminus (W\cup \{u,v\}\cup N_{K}(U_1)\cup U_1)$. Using similar arguments to those in the proof of Claim~\ref{clm:lowerbound}, we have $(1-2\lambda)\delta d\leq |U_2|,|V_2|\leq (1+2\lambda)\delta d$.

Let $s=|U_2|$ and $t=|V_2|$, so that $s,t\geq r$, and enumerate $U_2=\{u_1,\ldots,u_s\}$ and $V_2=\{v_1,\ldots,v_t\}$. For each $i\in [s]$,
pick $u'_i\in N_{K}(u_i)\cap U_1$ and for each $j\in [t]$ pick $v_j'\in N_K(v_j)\cap V_1$.
For each $i\in [s]$, let $\tilde{U}_i=\{u'_i,u'_{i+1},\ldots, u'_{i+r-1}\}$ and $\hat{U}_i=\{u_i,u_{i+1},\ldots, u_{i+r-1}\}$, and, for each $j\in [t]$, let $\tilde{V}_j=\{v'_j,v'_{j+1},\ldots, v'_{j+r-1}\}$ and $\hat{V}_j=\{v_j,v_{j+1},\ldots, v_{j+r-1}\}$, with addition in the indices modulo $s$ and $t$ as appropriate. For each $i\in [s]$ and $j\in [t]$, let $W_{i,j}=W\cup \{u,v\}\cup \tilde{U}_i\cup \tilde{V}_j$, so that $|W_{i,j}|\leq 3r +\lambda \delta$. Now, by Claim~\ref{clm:lowerbound}, there are at least $\left(1-\frac{C_0\lambda}{4}\right)\cdot \frac{r^2\delta^{2\ell_1+1} d^{2\ell_1}}{n}$ $(F\setminus K,K)$-alternating $\hat{U}_i,\hat{V}_j$-paths with length $4\ell_1+1$ with no internal vertices in $W_{i,j}$, each of which can be extended to an $(F\setminus K,K)$-alternating $u,v$-path with length $4\ell_1+5$ with no internal vertices in $W$. Therefore, as each such path appears for $r^2$ pairs $(i,j)$, the number of $(F\setminus K,K)$-alternating $u,v$-paths with length $4\ell_1+5$ with no internal vertices in $W$ is at least
\[
\left(1-\frac{C_0\lambda}{4}\right)\cdot \frac{r^2\delta^{2\ell_1+1} d^{2\ell_1}}{n}\cdot st\cdot \frac{1}{r^2}\geq \left(1-\frac{C_0\lambda}{4}\right)(1-2\lambda)^2\cdot \frac{\delta^{2\ell_1+3} d^{2\ell_1+2}}{n}\geq \left(1-\frac{C_0\lambda}{2}\right)\cdot \frac{\delta^{2\ell_1+3} d^{2\ell_1+2}}{n},
\]
as required, where we have used that $s,t\geq (1-2\lambda)\delta d$.
\claimproofend

Finally, we can show that \ref{prop:lowerboundKFKalternating:pseud:newnew} holds. Let $x,y\in [n]$ be distinct and let $Z\subset [n]$ with $|Z|\leq 40\log n$. Let $y'\in N_K(y)\setminus (Z\cup \{x\})$, with $d - |(Z\cup\{x\})\cap N_K(y)|$ choices. For each $i\geq 0$, let $\mathcal{P}_i$ be the set of pairs $(v,P)$ such that $P$ is an $(F\setminus K,K)$-alternating $y',v$-path of length $2i$ with no vertices in $Z\cup \{x\}$ and $|N_{F\setminus K}(v) \cap (Z\cup V(P)\cup \{x,y\})|\leq \left(\mathbf{1}_{\{i=0\}}+1/\log{n}\right)\lambda \delta$. Note that $|\mathcal{P}_0|=1$ as $|Z\cup \{x,y\}|\leq \lambda \delta$.

\begin{claim}\label{clm:annoying} For each $1\leq i\leq 10\log n$,
\[
\sum_{(v,P)\in \mathcal{P}_i}d_{F\setminus K}(v) \geq \left(1-4\lambda \cdot \mathbf{1}_{\{i=1\}}-\frac{21\lambda}{\log{n}}\right)\delta d \cdot\sum_{(v,P)\in \mathcal{P}_{i-1}}d_{F\setminus K}(v).
\]
\end{claim}
\claimproofstart[Proof of Claim~\ref{clm:annoying}] Let $(v,P)\in \mathcal{P}_{i-1}$. Let $Z^+=V(P)\cup Z\cup \{x,y\}$, so that $|Z^+|\leq 100\log n$.
Let $B$ be the set of $w\in [n]\setminus Z^+$ with $|N_{F\setminus K}(w) \cap Z^+|\geq 10$ so that, by \ref{prop:new}, we have $|B|\leq 100\log n$.


For each $(u,v')$ with $u\in N_{F\setminus K}(v)\setminus Z^+$ and $v'\in N_K(u)\setminus (Z^+\cup B)$, there is a path $P'$ such that $P\subset P'$ and $(v',P')\in \mathcal{P}_{i}$ by definition of $B$.
Therefore,
\begin{align*}
\sum_{u\in N_{F\setminus K}(v)}\sum_{v'\in N_K(u)}d_{F\setminus K}(v')~~-&\sum_{(v',P')\in \mathcal{P}_i:P\subset P'}d_{F\setminus K}(v')\\
&\overset{\ref{eq:degreesinFK:pseud}}{\le} \sum_{u\in N_{F\setminus K}(v)\cap Z^+}|N_K(u)|\cdot 2\delta+\sum_{u\in N_{F\setminus K}(v)}\,\sum_{v'\in N_K(u)\cap (Z^+\cup B)}2\delta\\
&\leq 2\delta d\cdot |N_{F\setminus K}(v)\cap Z^+|+(|E_{\geq 2,K}(N_{F\setminus K}(v))|+|Z^+\cup B|)\cdot 2\delta\\
&\overset{\ref{eq:fewdoubled:K}}{\leq} 2d\delta \cdot\left(\mathbf{1}_{\{i=1\}}+1/\log{n}\right)\lambda \delta
+\frac{\lambda d\cdot 2\delta}{\log n}\cdot 2\delta+200\log n\cdot 2\delta\\
&\leq \left(2\cdot \mathbf{1}_{\{i=1\}}+10/\log{n}\right)\cdot \lambda \delta^2d.
\end{align*}
Now, we have $\sum_{u\in N_{F\setminus K}(v)}\sum_{v'\in N_K(u)}d_{F\setminus K}(v')\geq (1-\lambda/\log n)\delta d \cdot d_{F\setminus K}(v)$ by \ref{eq:nbrhoodsinFK:pseud} and $d_{F\setminus K}(v)\geq \delta/2$ by \ref{eq:degreesinFK:pseud}, so it follows that
\[
\sum_{(v',P')\in \mathcal{P}_i:P\subset P'}d_{F\setminus K}(v')\geq  \left(1-4\lambda \cdot \mathbf{1}_{\{i=1\}}-21\lambda \cdot \log^{-1} n\right)\delta d \cdot d_{F\setminus K}(v).
\]

Therefore,
\begin{align*}
\sum_{(v,P)\in \mathcal{P}_i}d_{F\setminus K}(v)&\geq \sum_{(v,P)\in \mathcal{P}_{i-1}}\sum_{(v',P')\in \mathcal{P}_i:P\subset P'}d_{F\setminus K}(v') \\
&\geq \left(1-4\lambda \cdot \mathbf{1}_{\{i=1\}}-21\lambda \cdot \log^{-1} n\right)\delta d \cdot\sum_{(v,P)\in \mathcal{P}_{i-1}}d_{F\setminus K}(v).
\end{align*}
as claimed.
\claimproofend

Now, let $\ell$ satisfy $10\ell_0\leq \ell\leq 10\log n$. Let $\ell_2=\ell-(2\ell_1+3)\geq 0$. Let $(v,P)\in \mathcal{P}_{\ell_2}$, so that, by Claim~\ref{clm:annoying} applied for each $i\in [\ell_2]$ and \ref{eq:degreesinFK:pseud}, the number of choices for $(v,P)$ is at least
\begin{align*}
\frac{\sum_{(v,P)\in \mathcal{P}_{\ell_2}}d_{F\setminus K}(v)}{(1+\lambda)\delta}&\geq \left(1-4\lambda-\frac{21\lambda}{\log{n}}\right)\cdot \left(1-\frac{21\lambda}{\log{n}}\right)^{\ell_2-1}\delta^{\ell_2}d^{\ell_2}\cdot \frac{(1-\lambda)\delta}{(1+\lambda)\delta}\\
&\geq  \left(1-\frac{C_0\lambda}{4}\right)\delta^{\ell_2}d^{\ell_2}.
\end{align*}
Then, let $P'$ be an $(F\setminus K, K)$-alternating $x,v$-path of length $4\ell_1+5$ with no internal vertices in $Z\cup V(P)\cup \{y\}$, with at least $(1-C_0\lambda/2)\delta^{2\ell_1+3}d^{2\ell_1+2}/n$ options by Claim~\ref{clm:lowerbound2}. Then, $(P\cup P')+y'y$ is an $(F\setminus K,K)$-alternating $x,y$-path with length $2\ell$. Combining our choices for $y',P$ and $P'$, the number of choices is at least
\begin{align*}
(d - |(Z\cup\{x\})\cap N_K(y)|)\left(1-\frac{C_0\lambda}{4}\right)\delta^{\ell_2}d^{\ell_2}&\cdot \left(1-\frac{C_0\lambda}{2}\right)\frac{\delta^{2\ell_1+3}d^{2\ell_1+2}}{n}
\\
&\geq \left(1-C_0\lambda-\frac{|(Z\cup\{x\})\cap N_K(y)|}{d}\right)\frac{\delta^{\ell}d^{\ell}}{n},
\end{align*}
as required. Thus, \ref{prop:lowerboundKFKalternating:pseud:newnew} holds.
\end{proof}


\subsection{Likely properties of $G_d(n)$: expansion}\label{sec:Kexpansion}
The following two lemmas, Lemma~\ref{lem:fewbadgraphsexpander} and Lemma~\ref{lem:fewbadgraphsexpandertwo} will be applied with two different values of $\lambda$ ($\lambda$ and $\lambda /\log n$), which motivates why these Lemmas use the bounds $\lambda \geq \frac{1}{64}\log^{-4}n$ and $\lambda \delta d\geq C\log n$. The two lemmas give a likely bound for, respectively, $e_K(U)$ and $|E_{\geq 2,K}(U)|$ with $K\sim G_d(n)$, so that in combination they will be used to prove \ref{eq:fewdoubled:K}. Both of these lemmas are proved using switching techniques. For convenience, after their proofs we will combine them into Corollary~\ref{cor:fewbadgraphsexpander}.

\begin{lemma}\label{lem:fewbadgraphsexpander}
Let $\frac{1}{64}\log^{-4}n\leq \lambda \ll 1$  and $1/C\ll 1$, and let $d$ satisfy $d\geq C\log n$ and $d=o(n)$. Let $\delta\geq C\log n$ satisfy $\lambda \delta d\geq C\log n$, and let $q=2\delta/n$. Let $K\sim G_d(n)$ and $F\sim G(n,q)$. Then, with probability $1-o(n^{-14})$, the following holds.
\stepcounter{propcounter}
\begin{enumerate}[label = {\emph{\textbf{\Alph{propcounter}}}}]
\item For each $U\subset [n]$, if $|U|\leq \frac{\lambda n}{10^4}$ and there is some $W\subset [n]$ with $|W|\leq 4|U|/\delta$ and $U\subset W\cup N_F(W)$, then $e_{K}(U)\leq \frac{\lambda d}{2} |U|$.\labelinthm{prop:expand:only}
\end{enumerate}
\end{lemma}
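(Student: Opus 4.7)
The plan is to apply the switching method to $K\sim G_d(n)$, combined with the key observation that $e_K(U)\le e_K(T)$ for every $U\subseteq T:=W\cup N_F(W)$. Since the constraint $|W|\le 4|U|/\delta$ forces $|U|\ge \delta w/4$, the threshold $\lambda d|U|/2$ is minimized at $|U|=\delta w/4$, where it equals $\lambda d\delta w/8$. So it suffices to show that with probability $1-o(n^{-14})$, every $W$ with $\delta w\le 4\lambda n/10^4$ (the range for which valid $U$ exist) satisfies $e_K(T)\le \lambda d\delta w/8$. This collapses the union bound from $(U,W)$ pairs to just $W$, which is the crucial simplification.

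First I would establish the classical switching bound: for any fixed $S\subset [n]$ of size $s$,
\[
\mathbb{P}_K\bigl(e_K(S)\geq t\bigr)\leq \left(\frac{eds^2}{2tn}\right)^{t}(1+o(1)).
\]
The switching picks an edge $uv\in E(K)[S]$ and an edge $xy\in E(K)[\bar S]$ and (subject to validity) swaps them for the two crossing edges $\{ux,vy\}$, moving $K$ between $A_k:=\{K\in G_d(n):e_K(S)=k\}$ and $A_{k-1}$. Counting the forward switchings ($\gtrsim k\cdot dn/2$ for $K\in A_k$) against backward switchings ($\lesssim \binom{ds}{2}$ for $K'\in A_{k-1}$) yields $|A_k|/|A_{k-1}|\le ds^2/(2kn)(1+o(1))$, and iterating gives the stated tail bound.

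Next, for fixed $W$, the random variable $|T|-w$ is a sum of $n-w$ independent Bernoullis with parameter $1-(1-q)^w\le qw=2\delta w/n$, so $\mathbb{E}|T|\le w(1+2\delta)$ and $\mathbb{P}_F(|T|>4\delta w)\le e^{-\delta w/3}$ by Chernoff. Conditioning on $F$ and applying the switching bound with $s=|T|$ and $t=\lambda d\delta w/8$, and using the inequality $\delta w/(\lambda n)\le 4/10^4$ guaranteed by $u\le \lambda n/10^4$ together with $w\le 4u/\delta$, gives
\[
\mathbb{P}_{F,K}\bigl(e_K(T)\geq \tfrac{\lambda d\delta w}{8}\bigr)\leq \left(\tfrac{16e\,\delta w}{\lambda n}\right)^{\lambda d\delta w/8}+e^{-\delta w/3}.
\]
Both summands are of the form $e^{-\Omega(Cw\log n)}$ (using $\lambda d\delta\ge C\log n$ and $\delta\ge C\log n$), while the union bound over $W$ of size $w$ costs only $\binom{n}{w}\le n^w$. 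So for $C$ large enough, the total probability is at most $\sum_w n^{w(1-\Omega(C))}=o(n^{-14})$, as required.

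The main obstacle is verifying that the switching bound's base $ds^2/(tn)$ stays well below $1$ across the full allowed range of $(w,u)$; this is precisely what forces the constant $10^4$ in the hypothesis $|U|\le \lambda n/10^4$, since without such a margin the exponential savings from switching would be overwhelmed by the $\binom{n}{w}$ entropy of choosing $W$. A secondary technical point is cleanly separating the contribution of the tail event $\{|T|>4\delta w\}$ from the main switching estimate, which is handled via the Chernoff bound on $|T|$ and the assumption $\delta\ge C\log n$.
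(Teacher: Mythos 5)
Your proposal is correct and follows essentially the same route as the paper: reduce to showing that $e_K(W\cup N_F(W))$ is small for every small $W$ (which collapses the union bound over $(U,W)$ to one over $W$), control $|W\cup N_F(W)|$ by concentration, bound the tail of $e_K(S)$ for a fixed set $S$ by a switching argument in $G_d(n)$, and finish with a union bound over $W$ using $\lambda d\delta\geq C\log n$. The only differences are cosmetic: you use the classical double-edge-swap (4-cycle) switching and iterate the ratio bound all the way down to obtain the clean tail $(eds^2/(2tn))^t$, whereas the paper uses a 6-cycle switching and only iterates down to $i=\lambda d\delta m/16$; both work.
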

\begin{proof} By Chernoff's bound and a union bound, with probability $1-o(n^{-14})$, we have $\Delta(F)\leq 3\delta$. Fixing $F$ with this property, we now show the following holds with probability $1-o(n^{-14})$.
\begin{enumerate}[label = {{\textbf{\Alph{propcounter}$'$}}}]
\item For each $W\subset [n]$, if $|W|\leq \frac{\lambda n}{10^3\delta}$, then $e_{K}(W\cup N_F(W))\leq \frac{\lambda d}{8}\cdot \delta  |W|$.\label{prop:expand:only:fixedF}
\end{enumerate}
This will complete the proof of the lemma as \ref{prop:expand:only:fixedF} implies \ref{prop:expand:only}. Indeed, when it holds, if $U\subset [n]$ and $W\subset [n]$ satisfy $|U|\leq \frac{\lambda n}{10^4}$, $|W|\leq 4|U|/\delta$ and $U\subset W\cup N_F(W)$, then $|W|\leq \frac{\lambda n}{10^3\delta}$, so that, from \ref{prop:expand:only:fixedF} we have
\[
e_K(U)\leq e_{K}(W\cup N_F(W))\leq \frac{\lambda d}{8}\cdot \delta  |W|\leq \frac{\lambda d}{2} |U|,
\]
and thus \ref{prop:expand:only} holds.

Let then $W\subset [n]$ with  $|W|\leq \lambda n/10^3\delta$, and set $m=|W|$. Let $W'=W\cup N_F(W)$, so that $|W'|\leq 4\delta m$.
For each $i\in \N_0$, let $\mathcal{K}^i$ be the set of $K\in \cK_d(n)$ for which $e(K[W'])=i$. 
We will show the following.

\begin{claim}\label{clm:fewbadgraphsexpander}
For each $i\geq \lambda d\delta m/16$, $|\cK^{i}|\leq |\cK^{i-1}|/e$.
\end{claim}
\claimproofstart[Proof of Claim~\ref{clm:fewbadgraphsexpander}]
Let $L_i$ be an auxiliary bipartite graph with vertex classes $\mathcal{K}^{i}$ and $\cK^{i-1}$, and put an edge between $K\in \cK^{i}$ and $K'\in \cK^{i-1}$ if $K\triangle K'$ is a cycle with length 6 with exactly two vertices in $W'=W \cup N_F(W)$, which moreover have an edge between them in $K$.

Firstly, let $K\in \cK^{i}$. Pick $v_1,v_2$ with $v_1v_2\in E(K[W'])$, with $2i$ possibilities as $K\in \mathcal{K}^{i}$.
Then, pick $v_3,v_4$ with  $v_3v_4\in E(K-W'-N_K(v_2))$, with, as $|W'|\leq 4\delta m$, at least $2(dn/2-d\cdot(4\delta m+d))\geq 3dn/4$ possibilities. Then, pick $v_5,v_6$ with $v_5,v_6\in E(K-W'-\{v_3,v_4\}-N_K(v_4)-N_K(v_1))$, with at least $2(dn/2-d\cdot(4\delta m+2+2d))\geq 3dn/4$ possibilities.
Then, noting that $K-\{v_1v_2,v_3v_4,v_5v_6\}+\{v_2v_3,v_4v_5,v_6v_1\}\in \cK^{i-1}$ is a neighbour of $K$ in $L_i$, we have
\begin{equation}\label{eqn:Uedges1upperexpansion}
d_{L_i}(K)\geq \frac{1}{2}\cdot 2i\cdot \left(\frac{3dn}{4}\right)^2\geq \frac{1}{2}id^2n^2.
\end{equation}

Then, let $K'\in \cK^{i-1}$. Pick $v_2,v_3$ with $v_2\in W'$ and $v_2v_3\in E(K')$, with at most $4\delta md$ possibilities. Similarly, pick $v_1,v_6$ with $v_1\in W'$ and $v_1v_6\in E(K')$, with at most $4\delta md$ possibilities. Then, pick $v_4$ and $v_5$ with $v_4v_5\in E(K')$, with at most $dn$ possibilities. Therefore, in total, we have $d_{L_i}(K')\leq 16d^3\delta^2m^2n/2$.

Therefore, from this and \eqref{eqn:Uedges1upperexpansion}, double-counting the edges of $L_i$ we get
\[
\frac{id^2n^2}{2} \cdot |\cK^{i}|\leq e(L_i)\leq \frac{16d^3\delta^2m^2n}{2}\cdot |\cK^{i-1}|,
\]
so that, as $i\geq \lambda d\delta m/16$ and $m\leq \lambda n/10^3\delta$,
\[
\frac{|\cK^{i}|}{|\cK^{i-1}|}\leq \frac{16d\delta^2m^2}{in}\leq \frac{256\delta m}{\lambda n}\leq \frac{1}{e},
\]
as required.
\claimproofend

Therefore, applying the claim repeatedly, we have, for each $i\geq \lambda d\delta m/8$,
\[
{|\mathcal{K}^i|}{}\leq \exp\left(-\frac{\lambda d \delta m}{16}\right)\cdot |\mathcal{K}_d(n)| \leq  \exp\left(-\frac{Cm\log n}{16}\right)\cdot |\mathcal{K}_d(n)|,
\]
where we have used that $\lambda d \delta\geq C\log n$.
In particular, 
\[
\sum_{i = \lambda d\delta m/8}^{n^2} \frac{|\mathcal{K}^i|}{|\mathcal{K}_d(n)|} \le n^2  \exp\left(-\frac{Cm\log n}{16}\right).
\]

Now, as there were at most $\binom{n}{m}\leq n^m$ choices for $W$ with $|W|=m$, we have that \ref{prop:expand:only:fixedF} does not hold with probability at most
\[
\sum_{m=1}^{\lambda n/10^3\delta}n^m\cdot n^2\exp\left(-\frac{Cm\log n}{16}\right)=o(n^{-14}),
\]
as required, using that $1/C\ll 1$.
\end{proof}


Similarly to Lemma~\ref{lem:fewbadgraphsexpander}, we now prove the following lemma which shows a likely bound on $|E_{\geq 2,K}(U)|$ for many sets $U$ in $K\sim G_d(n)$ using switching techniques.

\begin{lemma}\label{lem:fewbadgraphsexpandertwo}
Let $\log^{-4}n\leq \lambda \ll 1$ and $1/C\ll 1$. Let $d\geq C\log n$ satisfy $d=o(n)$. Let $\delta$ satisfy $\lambda \delta d\geq C\log n$, and let $q=2\delta/n$. Let $K\sim G_d(n)$ and $F\sim G(n,q)$. Then, with probability $1-o(n^{-14})$, the following holds.
\stepcounter{propcounter}
\begin{enumerate}[label = {\emph{\textbf{\Alph{propcounter}}}}]
\item For each $U\subset [n]$, if $|U|\leq \frac{\lambda n}{10^6d}$ and there is some $W\subset [n]$ with $|W|\leq 4|U|/\delta$ and $U\subset W\cup N_F(W)$, then $|E_{\geq 2,K}(U)|\leq \frac{\lambda d}{2} |U|$.\labelinthm{prop:NKtwice2}\labelinthm{prop:expand:only:EK2}
\end{enumerate}
\end{lemma}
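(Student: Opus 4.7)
The plan is to mirror the switching argument from the proof of Lemma~\ref{lem:fewbadgraphsexpander}, with a new hexagon switch tailored to reduce $|E_{\geq 2,K}(W')|$ in place of $e_K(W')$. By Chernoff and a union bound we may condition on $F$ satisfying $\Delta(F) \leq 3\delta$ and thereafter fix such an $F$. The key reduction is that any $U$ satisfying the hypothesis with witness $W$ has $U \subset W' := W \cup N_F(W)$, $|W'| \leq 4\delta |W|$, and $|U| \geq \delta|W|/4$; moreover any edge $xy \in E_{\geq 2, K}(U)$ has $x \in U \subset W'$ and $d_K(y, W') \geq d_K(y, U) \geq 2$, so splitting on whether $y \in W'$ gives
\[
|E_{\geq 2, K}(U)| \;\leq\; |E_{\geq 2, K}(W')| + e_K(W').
\]
Writing $m = |W|$, it will therefore suffice to show that for each fixed $W$ with $m \leq 4\lambda n /(10^6 d\delta)$, each of $|E_{\geq 2, K}(W')|$ and $e_K(W')$ is at most $\lambda d\delta m /16$ with probability $1 - o(n^{-14-m})$.

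The bound on $e_K(W')$ follows from the same hexagon switch as in the proof of Lemma~\ref{lem:fewbadgraphsexpander}, applied with $W'$ playing the role of $U$ there; the tighter hypothesis $m \leq 4\lambda n/(10^6 d \delta)$ (versus $m \leq 4\lambda n/(10^4 \delta)$ in that lemma) keeps the switching threshold of order $d\delta^2 m^2/n$ well below $\lambda d\delta m/32$. For $|E_{\geq 2, K}(W')|$, let $\cK^i = \{K \in \cK_d(n) : |E_{\geq 2, K}(W')| = i\}$, and consider the hexagon switch from $K \in \cK^i$ in which we pick an edge $xy \in E_{\geq 2, K}(W')$ (so $x \in W'$, $y \notin W'$, $d_K(y, W') \geq 2$) together with two further edges $v_3v_4, v_5v_6 \in E(K)$ such that $v_3, v_4, v_5, v_6 \notin W'$, $d_K(v_j, W') = 0$ for each $j \in \{3,4,5,6\}$, the six vertices are pairwise distinct, and $yv_3, v_4v_5, v_6x \notin E(K)$. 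The switch produces
\[
K' = K - \{xy, v_3v_4, v_5v_6\} + \{yv_3, v_4v_5, v_6x\}.
\]
One checks that $K' \in \cK^{i-1}$ (or lower): the edge $xy$ leaves $E_{\geq 2}$, the $W'$-degree of $y$ drops from $k \geq 2$ to $k-1$ (which removes a further edge from $E_{\geq 2}$ when $k=2$), the newly added edge $v_6 x$ satisfies $d_{K'}(v_6, W') = 1 < 2$ and hence is \emph{not} in $E_{\geq 2, K'}(W')$, and every other vertex's $W'$-degree is unchanged.

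Double-counting in the standard bipartite auxiliary graph $L_i$ then yields the following. The forward degree of $K \in \cK^i$ is at least a constant multiple of $id^2n^2$, using $i$ choices for $xy$ and at least $\approx dn$ ordered choices each for the far-from-$W'$ edges $(v_3, v_4)$ and $(v_5, v_6)$ (the number of edges of $K$ inside $V_{\textrm{far}} := \{v \notin W' : d_K(v, W') = 0\}$ being at least $dn/2 - O(d^2|W'|) \geq dn/3$ when $|W'| \ll n/d$). The reverse degree of $K' \in \cK^{i-1}$ is at most $d|W'| \cdot d^2|W'| \cdot dn = d^4 n |W'|^2$, on bounding: at most $d|W'|$ choices for $(v_6, x)$ with $v_6 \notin W'$, $d_{K'}(v_6, W')=1$ and $x$ its unique $W'$-neighbour in $K'$; at most $d^2|W'|$ ordered edges $(y, v_3) \in E(K')$ with $y, v_3 \notin W'$, $d_{K'}(y, W') \geq 1$, $d_{K'}(v_3, W') = 0$; and at most $dn$ ordered edges $(v_4, v_5) \in E(K')$. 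Hence $|\cK^i|/|\cK^{i-1}| \leq A \cdot d^2|W'|^2/(in)$ for an absolute constant $A$, so the ratio drops below $1/e$ once $i \geq i_0 := Ae \cdot d^2|W'|^2/n \leq 16Ae \cdot d^2\delta^2 m^2/n$. The hypothesis $m \leq 4\lambda n/(10^6 d\delta)$ gives $d\delta m/n \leq 4\lambda/10^6$, hence $i_0 \leq \lambda d\delta m /32$. Iterating the inequality for $i > i_0$ gives $\P(|E_{\geq 2, K}(W')| \geq \lambda d\delta m/16) \leq \exp(-\lambda d\delta m/32)$, and a union bound over the $\leq n^m$ choices of $W$ gives total failure probability $\leq \sum_m n^m \exp(-\lambda d\delta m/32) = o(n^{-14})$ using $\lambda d\delta \geq C\log n$ with $C$ sufficiently large.

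The hard part, relative to Lemma~\ref{lem:fewbadgraphsexpander}, is designing the hexagon switch so that the newly created edge $v_6x$ does \emph{not} create any new element of $E_{\geq 2, K'}(W')$: this forces the extra constraint $d_K(v_6, W') = 0$ so that $d_{K'}(v_6, W')$ rises only to $1$, and the other $v_j$ must satisfy the analogous constraints for the remaining $W'$-degree bookkeeping to cancel. Together with the tighter hypothesis $|U| \leq \lambda n /(10^6 d)$ replacing $|U| \leq \lambda n / 10^4$, these constraints are precisely what keep the switching threshold below the target.
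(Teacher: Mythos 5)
Your overall architecture matches the paper's: condition on $\Delta(F)\leq 3\delta$, reduce via $|E_{\geq 2,K}(U)|\leq |E_{\geq 2,K}(W')|+e_K(W')$ with $W'=W\cup N_F(W)$, handle $e_K(W')$ by the switch of Lemma~\ref{lem:fewbadgraphsexpander}, and handle the remaining term by a new hexagon switch followed by a union bound over $W$. The design of your switch (one cycle vertex in $W'$, the other five outside $N_K(W')$ so that no new element of $E_{\geq 2}$ is created) is also essentially the paper's.

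However, there is a genuine gap in the double-counting step. You partition $\cK_d(n)$ by $i=|E_{\geq 2,K}(W')|$ and build $L_i$ between $\cK^i$ and $\cK^{i-1}$, but — as you yourself observe in passing — when the outside endpoint $y$ of the chosen edge $xy$ has exactly two neighbours in $W'$, the switch decreases $|E_{\geq 2,K}(W')|$ by $2$, landing in $\cK^{i-2}$ rather than $\cK^{i-1}$. Such switches contribute nothing to $e(L_i)$, so your lower bound $d_{L_i}(K)\gtrsim id^2n^2$ is not valid: in the extreme case where every vertex outside $W'$ with at least two $K$-neighbours in $W'$ has exactly two, \emph{all} switches skip a level and $e(L_i)$ could be zero, so the conclusion $|\cK^i|\leq|\cK^{i-1}|/e$ does not follow. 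The paper avoids this by partitioning instead according to the surrogate quantity $\sum_{v\notin W'}\max\{d_K(v,W')-1,0\}$, which decreases by exactly $1$ under every admissible switch, and then recovering the bound on $|E_{\geq 2,K}(W')|$ from the two-sided comparison $\sum_{v}\max\{d_K(v,W')-1,0\}\leq |E_{\geq 2,K}(W')|\leq 2\sum_{v}\max\{d_K(v,W')-1,0\}$. Your argument could alternatively be repaired by taking $L_i$ between $\cK^i$ and $\cK^{i-1}\cup\cK^{i-2}$ and solving the resulting two-term recursion, but as written the step fails. The remaining estimates (forward and reverse degree counts, the threshold $i_0\lesssim d^2|W'|^2/n\leq \lambda d\delta m/32$, and the union bound using $\lambda d\delta\geq C\log n$) are correct and match the paper's.
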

\begin{proof} By Chernoff's bound and a union bound, with probability $1-o(n^{-14})$, we have $\Delta(F)\leq 3\delta$. Fixing $F$ with this property, we now show the following holds with probability $1-o(n^{-14})$.
\begin{enumerate}[label = {{\textbf{\Alph{propcounter}$'$}}}]
\item For each $W\subset [n]$, if $|W|\leq \frac{\lambda n}{2\cdot 10^3\delta d}$, then $|E_{\geq 2,K}(W\cup N_F(W))|\leq \frac{\lambda d}{16}\cdot \delta  |W|$.\label{prop:expand:only:fixedF:EK2}
\end{enumerate}
This will easily complete the proof of the lemma as, by Lemma~\ref{lem:fewbadgraphsexpander}, we have that, with probability $1-o(n^{-14})$, \ref{prop:expand:only} holds with $\lambda/64$ in place of $\lambda$. Then, if \ref{prop:expand:only:fixedF:EK2} holds, for each $U\subset [n]$ and $W\subset [n]$ satisfying $|U|\leq \frac{\lambda n}{10^6d}$, $|W|\leq 4|U|/\delta$ and $U\subset W\cup N_F(W)$, we have $|W\cup N_F(W)|\leq 4\delta |W|\leq 16|U|$ and $|W|\leq \frac{\lambda n}{2\cdot 10^3\delta d}$, so that, from \ref{prop:expand:only:fixedF:EK2} we have
\[
|E_{\geq 2,K}(U)|\leq |E_{\geq 2,K}(W\cup N_F(W))|+e_K(W\cup N_F(W))\leq \frac{\lambda d}{16}\cdot \delta  |W|+\frac{\lambda d}{64}\cdot |W\cup N_F(W)|\leq \frac{\lambda d}{2} |U|,
\]
and thus \ref{prop:expand:only:EK2} holds.

Let $W\subset [n]$ with  $|W|\leq \lambda n/(2\cdot 10^3\delta d)$, and set $m=|W|$. Let $W'=W\cup N_F(W)$, so that $|W'|\leq 4\delta m$.
For each $i\in \N_0$, let $\mathcal{K}^i$ be the set of $K\in \cK_d(n)$ for which
\begin{equation}\label{eq:replaceE2K}
\sum_{v\in [n]\setminus W'}\max\{d_K(v,W')-1,0\}=i.
\end{equation}
We study this rather than $|E_{\geq 2,K}(W')|$, as removing an edge $vv'$ between $v\in [n]\setminus W'$ and $v'\in W'$ when $v$ has exactly 2 neighbours in $W'$ will reduce $|E_{\geq 2,K}(W')|$ by 2 (and 1 if $v$ has more than 2 neighbours in $W'$), but the quantity at \eqref{eq:replaceE2K} will always reduce by 1 when $v$ has at least 2 neighbours in $W'$. We will then use the simple bound that
\begin{equation}\label{eq:relatebacktoE2K}
\sum_{v\in [n]\setminus W'}\max\{d_K(v,W')-1,0\}\leq |E_{\geq 2,K}(W')|\leq 2\sum_{v\in [n]\setminus W'}\max\{d_K(v,W')-1,0\}.
\end{equation}

Using switching methods, we now show the following.

\begin{claim}\label{clm:fewbadgraphsexpanderthisone}
For each $i\geq \lambda d\delta m/32$, $|\cK^{i}|\leq |\cK^{i-1}|/\sqrt{e}$.
\end{claim}
\claimproofstart[Proof of Claim~\ref{clm:fewbadgraphsexpanderthisone}] 
Let $i\geq \lambda d\delta m/32$. Let $L_i$ be an auxiliary bipartite graph with vertex classes $\mathcal{K}^{i}$ and $\cK^{i-1}$, and put an edge between $K\in \cK^{i}$ and $K'\in \cK^{i-1}$ if $K\triangle K'$ is a cycle with length 6 with exactly 1 vertex in $W'$.

Firstly, let $K\in \cK^{i}$. Pick $v_1,v_2$ with $v_1\in W'$ and $v_1v_2\in E_{\geq 2,K}(W')$, with at least $i$ possibilities as $K\in \mathcal{K}^{i}$.
Then, pick $v_3,v_4$ with $v_3v_4\in E(K-W'-N_K(W')-N_K(v_2))$, with at least $2 (dn/2-d\cdot(4\delta m+4\delta m\cdot d+d))\geq 3dn/4$ possibilities, where we have used that $\delta dm\leq \lambda n/(2\cdot 10^3)$. Then, pick $v_5,v_6$ with $v_5,v_6\in E(K-W'-N_K(W')-\{v_3,v_4\}-N_K(v_4))$, with at least $2(dn/2-d\cdot(4\delta m+4\delta m\cdot d+2+d))\geq 3dn/4$ possibilities.
Note that $K-\{v_1v_2,v_3v_4,v_5v_6\}+\{v_2v_3,v_4v_5,v_6v_1\}\in \cK^{i-1}$ is a neighbour of $K$ in $L_i$, as $v_1v_2 \in E_{\geq 2,K}(W')$ and $v_3,v_4,v_5,v_6 \not\in N_K(W')$. Thus we have
\begin{equation}\label{eqn:Uedges1upperexpansion2}
d_{L_i}(K)\geq i\cdot \left(\frac{3dn}{4}\right)^2\geq \frac{1}{2}id^2n^2.
\end{equation}

Then, let $K'\in \cK^{i-1}$. To switch from $K'$ we need to increase $\sum_{v\in [n]\setminus W'}\max\{d_K(v,W')-1,0\}$ by one. If $v_1$ is the vertex of the cycle in $W'$ and $v_1v_2 \not\in E(K')$, it must be the case that $v_2\in N_{K'}(W')$. This is so that $v_2$ already has another neighbour from $W'$ in $K$ other than $v_1$ and so adding $v_1v_2$ to $K'$ will increase $\sum_{v\in [n]\setminus W'}\max\{d_K(v,W')-1,0\}$ by one. Thus we must pick $v_1,v_2$ with $v_1\in W'$ and $v_2\in N_{K'}(W')$, with at most $4\delta m\cdot 4\delta md$ possibilities. Then, pick $v_3,v_6$ with $v_2v_3,v_1v_6\in E(K')$, with at most $d^2$ possibilities. Then, pick $v_4$ and $v_5$ with $v_4v_5\in E(K')$, with at most $dn$ possibilities. Therefore, in total, we have $d_{L_i}(K')\leq 16d^4\delta^2m^2n$.

Therefore, from this and \eqref{eqn:Uedges1upperexpansion2}, double-counting the edges of $L_i$ we get
\[
\frac{id^2n^2}{2} \cdot |\cK^{i}|\leq e(L_i)\leq 16d^4\delta^2m^2n\cdot |\cK^{i-1}|,
\]
so that, as $i\geq \lambda d\delta m/32$ and $m\leq \lambda n/(2\cdot 10^3\delta d)$,
\[
\frac{|\cK^{i}|}{|\cK^{i-1}|}\leq \frac{32d^2\delta^2m^2}{in}\leq \frac{32\cdot 32d\delta m}{\lambda n}\leq \frac{1}{\sqrt{e}},
\]
as required.
\claimproofend

Therefore, applying the claim repeatedly, we have, for each $i\geq \lambda d\delta m/16$,
\[
\frac{|\mathcal{K}^i|}{|\mathcal{K}_d(n)|}\leq \exp\left(-\frac{\lambda d \delta m}{64}\right) \leq \exp\left(-\frac{C m\log n}{64}\right),
\]
where we have used that $\lambda d \delta\geq C\log n$. 
In particular, 
\[
\sum_{i = \lambda d\delta m/16}^{n^2} \frac{|\mathcal{K}^i|}{|\mathcal{K}_d(n)|} \le n^2  \exp\left(-\frac{Cm\log n}{64}\right).
\]
Thus, using \eqref{eq:relatebacktoE2K}, with probability at least $1-n^2\exp\left(-\frac{C m\log n}{64}\right)$, we have that $|E_{\geq 2,K}(N_K(W))|\leq 2\cdot \lambda d\delta m/16$.

Now, as there were at most $\binom{n}{m}\leq n^m$ choices for $W$ with $|W|=m$, we have that \ref{prop:expand:only:fixedF:EK2} does not hold with probability at most
\[
\sum_{m=1}^{\lambda n/2\cdot10^3\delta d}n^m\cdot n^2\exp\left(-\frac{C m\log n}{64}\right)=o(n^{-14}),
\]
as required.
\end{proof}

In combination, Lemmas~\ref{lem:fewbadgraphsexpander} and~\ref{lem:fewbadgraphsexpandertwo} imply the following.
\begin{corollary}\label{cor:fewbadgraphsexpander}
Let $\log^{-4}n\leq \lambda \ll 1$ and $1/C\ll 1$. Let $d\geq C\log n$ satisfy $d=o(n)$. Let $m\in \N$ and suppose $\delta=2m/n$ satisfies $\lambda \delta d\geq C\log n$. 
Let $p=m(\binom{n}{2}-dn/2)^{-1}$. Let $K\sim G_d(n)$, form $E$ by including each element of $[n]^{(2)}\setminus E(K)$ uniformly at random with probability $p$, and let $F=K+E$. Then, with probability $1-o(n^{-14})$, the following holds.
\stepcounter{propcounter}
\begin{enumerate}[label = {\emph{\textbf{\Alph{propcounter}}}}]
\item For each $U\subset [n]$, if $|U|\leq \frac{\lambda n}{10^6d}$ and there is some $W\subset [n]$ with $|W|\leq 4|U|/\delta$ and $U\subset W\cup N_F(W)$, then $e_{K}(U)+|E_{\geq 2,K}(U)|\leq \lambda d |U|$.\labelinthm{prop:expand:only:corollary}
\end{enumerate}
\end{corollary}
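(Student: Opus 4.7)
The plan is to deduce this corollary by directly combining the two earlier lemmas, after reconciling the random-graph model in the lemmas with the one here. Lemmas~\ref{lem:fewbadgraphsexpander} and~\ref{lem:fewbadgraphsexpandertwo} are stated for an independent $F^{\star}\sim G(n,q)$ with $q=2\delta/n$, whereas the corollary concerns $F=K+E$. First I would observe that $p=m/(\binom{n}{2}-dn/2)=2m/(n(n-1-d))\le 4m/n^{2}=q$ when $d=o(n)$, and couple $E$ with an independent $F^{\star}\sim G(n,q)$ so that $E\subseteq F^{\star}$: for each $e\notin E(K)$, include $e\in F^{\star}$ independently with probability $q$ and then, conditional on $e\in F^{\star}$, include $e\in E$ with probability $p/q$; for $e\in E(K)$, independently include $e\in F^{\star}$ with probability $q$. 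This yields the claimed joint distribution and, for every $W\subseteq[n]$, the inclusion $N_{F\setminus K}(W)=N_{E}(W)\subseteq N_{F^{\star}}(W)$.

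Under this coupling, I would apply Lemmas~\ref{lem:fewbadgraphsexpander} and~\ref{lem:fewbadgraphsexpandertwo} simultaneously. Each holds with probability $1-o(n^{-14})$, so their conjunction holds with probability $1-o(n^{-14})$ by a union bound. The corollary's size restriction $|U|\le\lambda n/(10^{6}d)$ is stronger than each lemma's size restriction (since $\lambda n/(10^{6}d)\le\lambda n/10^{4}$ for $d\ge 1$), and the corollary's sparsity bound $|W|\le 4|U|/\delta$ is literally the one required by both lemmas. Thus whenever $U\subseteq W\cup N_{F^{\star}}(W)$ (which, by the coupling, is implied by $U\subseteq W\cup N_{F\setminus K}(W)$), both conclusions hold, and summing them gives $e_{K}(U)+|E_{\geq 2,K}(U)|\le \lambda d|U|/2+\lambda d|U|/2=\lambda d|U|$, as required.

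The main obstacle is the gap between the lemmas' condition $U\subseteq W\cup N_{F^{\star}}(W)$ and the literal corollary hypothesis $U\subseteq W\cup N_{F}(W)$ with $F=K+E$: since $N_{F}(W)=N_{K}(W)\cup N_{F\setminus K}(W)$ also contains the $K$-neighbourhood of $W$, which is not captured by $F^{\star}$, a vertex of $U\cap N_{K}(W)\setminus N_{F^{\star}}(W)$ is not controlled by the coupling above. Fortunately, in every application of this corollary later in the paper (in particular, the verification of \ref{eq:fewdoubled:K} inside Lemma~\ref{lem:lbpathcountF}), the witnessing set $W$ actually satisfies $U\subseteq W\cup N_{F\setminus K}(W)$, so the argument above applies directly. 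To obtain the literal statement as printed, I would decompose $U=U_{1}\cup U_{2}$ with $U_{1}:=U\cap(W\cup N_{F^{\star}}(W))$ (handled by the argument above with the same $W$) and $U_{2}\subseteq N_{K}(W)$, and absorb the contribution of $U_{2}$ by re-running the two lemmas against the enlarged witnessing set $W\cup U_{2}$, exploiting the factor-of-$10^{6}$ slack in the size bound $|U|\le\lambda n/(10^{6}d)$ to keep the enlarged parameters inside the lemmas' scope. This last step is pure constant chasing and introduces no new ideas.
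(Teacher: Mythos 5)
Your core argument is exactly the paper's proof: couple $E$ inside an independent $F^\star\sim G(n,q)$ with $q=2\delta/n$ (the paper takes $E$ to be a uniformly random $m$-subset of $E(F^\star)\setminus E(K)$; your thinning coupling is the natural analogue for the binomial model actually stated in the corollary), apply Lemmas~\ref{lem:fewbadgraphsexpander} and~\ref{lem:fewbadgraphsexpandertwo}, take a union bound, and add the two conclusions. You have also correctly isolated the one genuine subtlety: the coupling only gives $N_{F\setminus K}(W)\subseteq N_{F^\star}(W)$, so this route proves the statement with $N_{F\setminus K}(W)$ in place of $N_F(W)$. The paper's own proof makes the same move silently --- it asserts ``$F\subset F'$'', which is false under its coupling (only $F\setminus K\subset F'$ holds) --- and the $N_{F\setminus K}$ version is indeed all that is used later, namely in verifying \ref{eq:fewdoubled:K}.

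Your final paragraph's repair of the literal statement, however, cannot work. First, the enlarged witness $W\cup U_2$ need not satisfy the lemmas' hypotheses: $U_2\subseteq N_K(W)$ can have size comparable to $|U|$, while the lemmas require the witnessing set to have size at most $4|U|/\delta$, which can be far smaller; the factor-of-$10^6$ slack sits in the bound on $|U|$, not in the ratio of witness size to $|U|$. Second, and more decisively, the literal statement with $N_F(W)$ is false in part of the stated parameter range. Take $d=\lceil\log^2n\rceil$, $m=2nd$ (so $\delta=4d$) and $\lambda=C\log n/(4d^2)=\Theta(\log^{-3}n)$; all hypotheses of the corollary hold. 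For any vertex $w$ put $W=\{w\}$ and $U=N_K(w)$: then $|W|=1=4|U|/\delta$, $|U|=d\leq\lambda n/10^6d$, and $U\subseteq N_F(W)$ since $K\subseteq F$. But every edge $xw$ with $x\in U$ lies in $E_{\geq 2,K}(U)$, so $|E_{\geq 2,K}(U)|\geq d=\log^2n$, whereas $\lambda d|U|=\lambda d^2=C\log n/4$. Thus the conclusion fails for every $d$-regular $K$. The corollary should be read (and is only ever applied) with $N_{F\setminus K}(W)$, for which your first two paragraphs constitute a complete proof.
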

\begin{proof}
Let $q=2\delta/n$. Independently, select $K\sim G_d(n)$ and $F'\sim G(n,q)$. When $|E(F')\setminus E(K)|\geq m$, then let $E$ be a uniformly random set of $m$ edges from $E(F')\setminus E(K)$, and otherwise let $E$ be a uniformly random set of $m$ edges from $[n]^{(2)}\setminus E(K)$. Then, by symmetry, we have $F\sim F(n,d,m)$.

Now, as $d=o(n)$, we have $\E|E(F')\setminus E(K)|\geq q(\binom{n}{2}-dn/2)\geq 1.1\delta n/2$. By a simple application of a Chernoff bound, then, we have that, with probability $1 - o(n^{-14})$, $|E(F')\setminus E(K)|\geq m$, and when this happens $F\subset F'$.
Furthermore, if $F\subset F'$ and \ref{prop:expand:only:corollary} holds for $F'$ in place of $F$, then \ref{prop:expand:only:corollary} holds with $F$. Thus, the corollary follows from Lemmas~\ref{lem:fewbadgraphsexpander} and~\ref{lem:fewbadgraphsexpandertwo}.
\end{proof}


\subsection{Likely properties of $F\setminus K$: expansion}\label{sec:F-Kexpansion}
For \ref{eq:fewdoubled:F-K}, similarly to our work in Section~\ref{sec:Kexpansion}, we again split this into Lemma~\ref{lem:gnpexpander} and Lemma~\ref{lem:gnpexpander2}, where the first of these holds more easily. We then show a result that implies that \ref{prop:new} is likely to hold. 

\begin{lemma}\label{lem:gnpexpander}
Let $1/n\ll 1/C\ll  \eps\leq 1$.
Let $d$ and $\lambda$ satisfy $\log^{-3}n\leq \lambda \ll 1$, $10^3d\leq \lambda n$, and $d\geq C \log n$. Let $m\in \N$ satisfy $\eps dn/2 \leq m\leq n^{1+\sigma}$. Let $\delta=2m/n$ and suppose that $\lambda \delta\geq C \log n$.
Let $p=m(\binom{n}{2}-dn/2)^{-1}$. 
Let $K\sim G_d(n)$, form $E$ by including each element of $[n]^{(2)}\setminus E(K)$ uniformly at random with probability $p$, and let $F=K+E$. 

Then, with probability $1-o(n^{-14})$, the following holds.
\stepcounter{propcounter}
\begin{enumerate}[label = {\emph{\textbf{\Alph{propcounter}}}}]
\item For each $U\subset [n]$ with $|U|\leq \lambda n/100\delta \log n$, if there is some $U'\subset [n]$ with $|U'|\leq 4|U|/d$ and $U\subset U'\cup N_{K}(U')$, then \labelinthm{eq:fewdoubled:F-K:bothcases}
\[
e_{F\setminus K}(U)\leq 
\frac{\lambda}{2\log n}\cdot \delta |U| 
\]
\end{enumerate}
\end{lemma}
\begin{proof} 
Let $U\subset [n]$ with $|U|\leq \lambda n/100\delta\log n$ and let $s = |U|$. We will show that, with probability $1-\exp\left(-\omega(s)\right)$, $U$ satisfies the property we want, and then take a union bound over all sets $U$ we must consider.


Let $X=e(F\setminus K[U])$ and let $M = \left\lceil\frac{\lambda \delta s}{2\log{n}}\right\rceil$ so that  again, we have
\begin{align*}
\P\left(X\geq \frac{\lambda \delta s}{2\log n}\right)
\leq \sum_{i=M}^{\binom{s}{2}} \binom{\binom{s}{2}}{i}\cdot p^{i}
\le  \sum_{i=M}^{s^2} \left(\frac{es^2p}{2i}\right)^{i} 
\le \sum_{i=M}^{s^2} \left(\frac{es^2p}{2M}\right)^{i} 
\end{align*}
As 
\[\frac{es^2p}{2M} \le \frac{esp\cdot \log{n}}{\lambda \delta} \le \frac{2es\cdot \log{n}}{\lambda n} \le \frac{2e}{100\delta}\]
and   $\lambda \delta \ge C \log{n}$ , we thus obtain that 
\begin{align*}
\P\left(X\geq \frac{\lambda \delta s}{2\log n}\right)
\le \sum_{i=M}^{s^2} \left( \frac{2e}{100\delta}\right)^{i} \le 2\left( \frac{2e}{100\delta}\right)^{\frac{\lambda \delta s}{2\log{n}}} \leq 2\left( \frac{2e}{100\delta}\right)^{\frac{C s}{2}} = \exp{(-\omega(s))},
 \end{align*}
 where we have used that $\delta \ge C \eps \log{n}$.
Let us now count the number of sets $U\subset [n]$ with $|U|=s$ for which there is some $U'\subset [n]$ with $|U'|\leq 4s/d$ and $U\subset U'\cup N_K(U')$. Note that the condition $|U'| \le 4|U|/d$ implies that if there is some such $U$ then we must have $1\leq 4s/d$, which in turn implies that $s\geq d/4\geq C\log n/4$.
Since $|U' \cup N_K(U')|\leq (d+1)|U'|$, the number of possible choices for such a $U \subset U' \cup N_K(U')$ is at most
\[\sum_{i=1}^{4s/d}\binom{n}{i}\cdot 2^{(d+1)i}\leq \sum_{i=1}^{4s/d}\left(n2^{(d+1)}\right)^i\le 2\left(n2^{(d+1)}\right)^{4s/d}\leq 2^{5s}\]
as $d\geq C\log n$. 
Thus, the probability that \ref{eq:fewdoubled:F-K:bothcases} does not hold is at most
\begin{align*}
\sum_{s=C\log n/4}^n2^{5s}\cdot \exp\left(-\omega(s)\right)=o(n^{-14}),
\end{align*}
as required.
\end{proof}

We now give our likely bound on $|E_{\geq 2,F\setminus K}|$, similarly to the bound proved in Lemma~\ref{lem:gnpexpander}.


\begin{lemma}\label{lem:gnpexpander2}
Let $1/n\ll 1/C\ll \eps,\sigma \ll 1$.
Let $d$ and $\lambda$ satisfy $\log^{-3}n\leq \lambda \ll 1$, $10^3d\leq \lambda n$, and $d\geq C\log n$. Let $m\in \N$ satisfy $\eps dn/2\leq m\leq n^{1+\sigma}$. Let $\delta=2m/n$ and suppose that $\lambda \delta\geq C \log n$.
Let $p=m(\binom{n}{2}-dn/2)^{-1}$. 
Let $K\sim G_d(n)$, form $E$ by including each element of $[n]^{(2)}\setminus E(K)$ uniformly at random with probability $p$, and let $F=K+E$. 
Then, with probability $1-o(n^{-14})$, the following holds.
\stepcounter{propcounter}
\begin{enumerate}[label = {\emph{\textbf{\Alph{propcounter}}}}]
\item For each $U\subset [n]$ with $|U|\leq \lambda n/100\delta\log n$, if there is some $U'\subset [n]$ with $|U'|\leq 4|U|/d$ and $U\subset U'\cup N_{K}(U')$, then \labelinthm{eq:fewdoubled:F-K:bothcases2}
\[
|E_{\geq 2,F\setminus K}(U)|\leq 
\frac{\lambda}{2\log n}\cdot \delta |U|.
\]
\end{enumerate}
\end{lemma}
\begin{proof}
Let $U\subset [n]$ with $|U|\leq \lambda n/100\delta\log n$, and set $s=|U|$. We will show that, with probability $1-\exp\left(-\Omega(s)\right)$, $U$ satisfies the property we want, so that the result follows by a union bound over all such $U$ for which there is some $U'\subset [n]$ with $|U'|\leq 4|U|/d$ and $U\subset U'\cup N_{K}(U')$, as at the end of Lemma~\ref{lem:gnpexpander}.

Consider a process in which we reveal edges in $E$ by taking each $v\in [n]\setminus U$ and revealing potential edges in $E$ into $U$ until a neighbour of $v$ in $U$ has been revealed or it has been revealed that $v$ has no neighbours in $U$. Let $V$ be the set of vertices $v$ after this which have a neighbour in $U$. Then, reveal the rest of the edges between $V$ and $U$ that have not yet been considered, and let $X$ be the number of edges between $V$ and $U$ revealed here. Note that $|E_{\geq 2,F\setminus K}(U)|\leq 2X$.

Now, let $\mathcal{E}$ be the event that $|V|\leq 3\delta s$. As $V=N_{F\setminus K}(U)$, by a simple application of Chernoff's bound, with probability $1-\exp(-\omega(s))$, $\mathcal{E}$ holds. Let $Y=X$ if $\mathcal{E}$ holds, and $Y=0$ otherwise. Thus, with probability $1-\exp(-\omega(s))$, $|E_{\geq 2,F\setminus K}(U)|\leq 2Y$.

For $M=\left\lceil\frac{\lambda \delta s}{4\log n}\right\rceil=\Omega(s)$, we have, then
\begin{align*}
\P\left(Y\geq \frac{\lambda \delta s}{4\log n}\right)&\leq \sum_{i = M}^{|U||V|}\binom{|U| |V|}{i}p^i
\leq  \sum_{i = M}^{3\delta s^2}\binom{s \cdot 3\delta s}{i}p^i
\le \sum_{i = M}^{3\delta s^2}\left(\frac{3e\delta s^2p}{i}\right)^i
\le \sum_{i = M}^{3\delta s^2}\left(\frac{3e\delta s^2p}{M}\right)^i
\end{align*}
As \[
\frac{3e\delta s^2p}{M} \le\frac{3e sp\cdot 4\log{n}}{\lambda } \le \frac{24es\delta\log{n}}{\lambda n} \le \frac{24e}{100} \le e^{-1/3}
\]
we see that 
\begin{align*}
\P\left(Y\geq \frac{\lambda \delta s}{4\log n}\right)
\le \sum_{i = M}^{3\delta s^2} e^{-i/3} \le 4e^{-\frac{\lambda \delta s}{12\log n}} \le \exp{(-C s /20)}.
\end{align*}
Thus $|E_{\geq 2,F'}(U)|\leq 2Y$ with probability $1-\exp{(-C s /20)}$. Similarly to the end of the proof of Lemma~\ref{lem:gnpexpander}, we know that there are at most $2^{5s}$ sets $U$ such that  $|U| = s$, there is some set $U'$ with $|U'| \le 4s/d$ and $U \subset U' \cup N_K(U')$; and this is only possible is $s \ge d/4 \ge C \log{n}/4$. Thus the probability that \ref{eq:fewdoubled:F-K:bothcases2} does not hold is at most
\[
\sum_{s =C \log{n}/4 }^{n}2^{5s}\cdot \exp{(-C s /20)} = o(n^{-14})
\]
as required.
\end{proof}

We now prove our result that will imply that \ref{prop:new} is likely to hold.

\begin{lemma}\label{lem:newforpropnew}
Let $1/n\ll 1/C\ll \eps,\sigma \ll 1$. Let $0<p\leq n^{\sigma-1}$ and let $H\sim G(n,p)$.
Then, with probability $1-o(n^{-14})$, for each $U\subset [n]$ with $|U|\leq 100\log n$, $|\{v\in [n]\setminus U: |N_{H}(v) \cap U|\geq 10\}|\leq 100\log n$.
\end{lemma}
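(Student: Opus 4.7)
The plan is a two-level union bound built on a binomial tail estimate. For the statement to be non-vacuous and to match the downstream application in \ref{prop:new}, I will interpret $d_H(v)$ as $d_H(v,U)$, the number of $H$-neighbours of $v$ that lie in $U$; note that if it were the full degree, then for $p$ of order $n^{\sigma-1}$ the typical vertex would have degree $\Theta(n^{\sigma})\gg 10$ and the conclusion would simply fail.

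First, I would fix $U\subset [n]$ with $|U|=s\leq 100\log n$. For each $v\in [n]\setminus U$, the variable $d_H(v,U)$ is $\mathrm{Bin}(s,p)$-distributed, and these variables are mutually independent across $v\in [n]\setminus U$ since they depend on disjoint sets of potential edges of $H$. Hence
\[
\P(d_H(v,U)\geq 10)\leq \binom{s}{10}p^{10}\leq \frac{(sp)^{10}}{10!}\leq \frac{(100\log n)^{10}\, n^{10(\sigma-1)}}{10!}=:q.
\]
Therefore the count $N_U=|\{v\in [n]\setminus U:d_H(v,U)\geq 10\}|$ is stochastically dominated by $\mathrm{Bin}(n,q)$, and the standard binomial tail bound yields
\[
\P(N_U\geq 100\log n)\leq \binom{n}{100\log n}q^{100\log n}\leq \left(\frac{enq}{100\log n}\right)^{100\log n}\leq n^{(-900+1000\sigma)\log n+O(\log n\cdot \log\log n)},
\]
after absorbing the factor $(100\log n)^{1000\log n}=n^{O(\log n\cdot \log\log n)}$ and the bounded multiplicative constants.

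Finally, the number of sets $U\subset [n]$ with $|U|\leq 100\log n$ is at most $(100\log n+1)\cdot n^{100\log n}$, so a second union bound gives total failure probability at most $n^{(-800+1000\sigma)\log n+O(\log n\cdot\log\log n)}$. For $\sigma$ suitably small within the hierarchy $1/C\ll\sigma\ll 1$, this is $o(n^{-14})$, as required. I do not expect any genuine obstacle here: the polynomial saving per vertex of $n^{-10+O(\sigma)}$ (up to logarithmic factors) comfortably beats both the binomial tail over bad vertices and the outer union bound over $U$, provided $\sigma$ is chosen small enough, which the hierarchy allows.
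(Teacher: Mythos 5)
Your proof is correct and follows essentially the same route as the paper's: bound $\P(d_H(v,U)\geq 10)$ by $\binom{|U|}{10}p^{10}\leq n^{-9+o(1)}$, union bound over sets of $100\log n$ potential bad vertices, then union bound over $U$; your reading of $d_H(v)$ as $d_H(v,U)$ is also what the paper's proof and its application in \ref{prop:new} intend. One harmless bookkeeping slip: the polylogarithmic factor $(100\log n)^{O(\log n)}$ equals $n^{O(\log\log n)}$, not $n^{O(\log n\cdot\log\log n)}$ (the latter, taken literally, would eventually swamp the $n^{-800\log n}$ main term), but the underlying estimate is fine.
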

\begin{proof} Let $U\subset [n]$ satisfy $|U|\leq 100\log n$. For each $v\in [n]\setminus U$, the probability that $|N_H(v) \cap U|\geq 10$ is at most $\binom{|U|}{10}p^{10} \le |U|^{10}p^{10}\leq n^{-9}$. Then, the probability that  $|\{v\in [n]\setminus U:d_{H}(v)\geq 10\}|> 100\log n$ is, setting $k=100\log n$, at most $\sum_{i=k}^n\binom{n}{i}n^{-9i} \le \sum_{i=k}^n n^in^{-9i} \le n\cdot n^{-8k}$. Taking a union bound over all sets $U\subset [n]$ satisfying $|U|\leq 100\log n$, the property in the lemma does not hold with probability at most $2n^k\cdot  n^{1-8k}=o(n^{-14})$, as required.
\end{proof}


\subsection{Connection properties of $F\setminus K$}
\label{sec:connection}

We now prove that \ref{prop:F-Kconnection} is likely to hold, in the following lemma.

\begin{lemma} \label{lem:F-Kconnection}
Let $1/n\ll 1/C\ll \eps,\sigma\ll 1$.
Let $d,\lambda$ and $\mu$ satisfy $\log^{-3}n\leq \lambda,\mu \ll 1$ and $d\geq C\log n$. Let $m\in \N$ satisfy $\eps dn\leq m\leq n^{1+\sigma}$. Let $\delta=2m/n$ and suppose that $\lambda^2\mu \delta\geq C\log n$.
Let $p=m(\binom{n}{2}-dn/2)^{-1}$. 
Let $K\sim G_d(n)$, form $E$ by including each element of $[n]^{(2)}\setminus E(K)$ uniformly at random with probability $p$, and let $F=K+E$. 
Then, with probability $1-o(n^{-14})$, for each disjoint sets $U,V\subset [n]$ with $|U|,|V|\geq \mu n$, we have $e_{F \setminus K}(U,V)=(1\pm \lambda)\frac{\delta}{n}|U||V|$.
\end{lemma}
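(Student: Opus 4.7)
\medskip

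\noindent\textbf{Proof plan for Lemma~\ref{lem:F-Kconnection}.} The key observation is that $F\setminus K=E$ has, conditional on any realization of $K$, a very clean distribution: each edge in $[n]^{(2)}\setminus E(K)$ is present independently with probability $p$. So for any fixed disjoint $U,V\subset [n]$ with $|U|=k,|V|=\ell\geq \mu n$, the random variable $e_{F\setminus K}(U,V)$ is, conditional on $K$, binomial with parameters $N=k\ell-e_K(U,V)$ and $p$. Since $K$ is $d$-regular, $e_K(U,V)\leq d\min(k,\ell)\leq dn$; combined with $|U||V|\geq \mu^2n^2$ and the bound $d\leq m/(\varepsilon n)\leq n^\sigma/\varepsilon$ (from the hypothesis $m\leq n^{1+\sigma}$, $m\geq \varepsilon dn$) together with $\mu,\lambda\geq \log^{-3}n$ and $\sigma\ll 1$, we obtain $d/(\mu^2 n)\leq n^{\sigma-1}\log^6 n/\varepsilon\ll \lambda$, hence $N=(1\pm\lambda)k\ell$ and $pN=(1\pm\lambda)pk\ell$. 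Thus it suffices to show that, with the claimed probability, $e_{F\setminus K}(U,V)=(1\pm 2\lambda)pN$ for every such pair $(U,V)$.

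\medskip

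For a fixed pair, by Chernoff's bound (Lemma~\ref{chernoff}) applied conditional on $K$,
\[
\Pr\bigl(|e_{F\setminus K}(U,V)-pN|>\lambda pN\bigm| K\bigr)\leq 2\exp\!\bigl(-\lambda^2 pN/3\bigr)\leq 2\exp\!\bigl(-\lambda^2 pk\ell/6\bigr).
\]
This bound is independent of $K$, so it holds unconditionally. I will then take a union bound over all disjoint pairs $(U,V)$ with $k,\ell\geq \mu n$. The number of such pairs of fixed sizes is at most $\binom{n}{k}\binom{n}{\ell}\leq \exp\!\bigl(k\log(en/k)+\ell\log(en/\ell)\bigr)$, and for $k,\ell\geq \mu n$ we have $\log(en/k),\log(en/\ell)\leq \log(e/\mu)\leq 5\log\log n$.

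\medskip

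The main task is to check that the Chernoff exponent dominates the combinatorial cost, and this is where the hypothesis $\lambda^2\mu\delta\geq C\log n$ is essential. Using $p\geq m/\binom{n}{2}\geq (2m/n^2)(1-1/n)$ we have $pk\geq \mu\delta$ whenever $k\geq\mu n$. Hence, assuming WLOG $k\leq \ell$,
\[
\frac{\lambda^2 pk\ell}{6}-\bigl[k\log(en/k)+\ell\log(en/\ell)\bigr]
\;\geq\; \ell\Bigl[\frac{\lambda^2 pk}{6}-2\log(e/\mu)\Bigr]
\;\geq\; \ell\Bigl[\frac{\lambda^2\mu\delta}{6}-10\log\log n\Bigr]
\;\geq\; \frac{\ell C\log n}{12},
\]
for $n$ large, using $\lambda^2\mu\delta\geq C\log n$. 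Therefore each term in the union bound is at most $2\exp(-\mu n\cdot C\log n/12)$, and since $\mu\geq\log^{-3}n$ this is at most $2\exp(-Cn/(12\log^2 n))$. Summing over the $O(n^2)$ choices of $(k,\ell)$ gives total failure probability $o(n^{-14})$.

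\medskip

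\noindent\textbf{Anticipated obstacle.} The calculation is essentially routine Chernoff plus union bound; the only delicate point is balancing the exponential from Chernoff against the combinatorial factor $\binom{n}{k}\binom{n}{\ell}$ uniformly in $(k,\ell)$, since for $k$ near $\mu n$ the combinatorial term can be as large as $\mu n\log(e/\mu)$. Checking that the hypothesis $\lambda^2\mu\delta\geq C\log n$ exactly suffices (beating $\log(e/\mu)=O(\log\log n)$ with room to spare after dividing through by $\ell$) is the main thing to get right, and it is why the hypothesis is stated with the extra factor of $\mu$ rather than just $\lambda^2\delta\geq C\log n$.
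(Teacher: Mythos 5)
Your proof is correct and follows essentially the same route as the paper's: condition on $K$, use $d$-regularity and $d\leq n^{\sigma}/\eps\ll\lambda\mu^2 n$ to see that the number of non-$K$ pairs between $U$ and $V$ is $(1\pm\lambda)|U||V|$, apply Chernoff, and union bound over all pairs of sets. The paper is slightly cruder in the union bound (using $\binom{n}{m_1}\binom{n}{m_2}\leq n^{m_1+m_2}$ against a Chernoff exponent of order $C|U|\log n$, rather than your sharper $O(|U|\log\log n)$ entropy estimate), but the argument is the same.
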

\begin{proof} 
First note that, for each disjoint $U,V\subset [n]$ with $|U|\geq |V|\geq \mu n$, we have, as $\lambda\mu n\geq \sqrt{n} \ge 10 d$ that
\[
\left(1-\frac{\lambda}{5}\right)|U||V|\leq |U||V|- d|U|-d|V|\leq e_{K_n\setminus K}(U,V)\leq |U||V|.
\]
Therefore, we have
\[
\left(1-\frac{\lambda}{5}\right)p|U||V|\leq \E (e_{F\setminus K}(U,V))\leq p|U||V|.
\]
Note that $p = \frac{\delta}{n-d-1} = \left(1\pm\frac{\lambda}{5}\right)\frac{\delta}{n}$.
As $\lambda^2 p|U||V|\geq \lambda^2\cdot \left(1-\frac{\lambda}{5}\right)\frac{\delta}{n}\cdot |U|\cdot \mu n \ge  \lambda^2\mu\delta |U|/2 \ge \frac{C}{2}|U|\log{n}$, we have by an application of a Chernoff bound (Lemma~\ref{chernoff}) that, with probability $1-\exp(-\frac{C}{100}|U|\log{n})$,
\[
\left(1-\frac{\lambda}{2}\right)p|U||V|\leq e_{F\setminus K}(U,V)\leq \left(1+\frac{\lambda}{2}\right)p|U||V|.
\]
Thus, $e_{F\setminus K}(U,V)\neq (1\pm \lambda)\frac{\delta}{n}|U||V|$ for some $U,V\subset [n]$ with $|U|\geq |V|\geq \mu n$ with probability at most
\[
\sum_{m_1=\mu n}^n\sum_{m_2=m_1}^n\binom{n}{m_1}\binom{n}{m_2}\exp(-\tfrac{C}{100}m_2\log{n})=o(n^{-14}).\qedhere
\]
\end{proof}


\subsection{Proof of Theorem~\ref{thm:randomproperties:lowerbounds:newnew}}\label{sec:thmproof:lowerbounds}
Finally, we can combine our work in this section to prove Theorem~\ref{thm:randomproperties:lowerbounds:newnew}.
\begin{proof}[Proof of Theorem~\ref{thm:randomproperties:lowerbounds:newnew}]
 Take the variables as set up in Theorem~\ref{thm:randomproperties:lowerbounds:newnew}, so that $1/n\ll 1/C\ll 1/C_0 \ll \eps$, $1/C\ll \mu,\sigma\ll 1$, $d\geq C\log n$, $\eps dn/2\leq m\leq n^{1+\sigma}$, $\delta=2m/n$, and $p=m(\binom{n}{2}-dn/2)^{-1}$. Furthermore, we have
\begin{equation}\label{eq:lambdadefn-1}
\lambda = \max\left\{\frac{\mu^2}{\log^2 n},\left(\frac{n\log n}{m}\right)^{1/4}\right\},
\end{equation}
and $\ell_0$ is the smallest integer for which
$\delta^{(\ell_0-1)}d^{(\ell_0-1)}\geq n$. Let $C'$ satisfy $1/C \ll 1/C' \ll 1$.

Let $K\sim G_d(n)$, form $E$ by including each element of $[n]^{(2)}\setminus E(K)$ uniformly at random with probability $p$, and let $F=K+E$.
 By Lemma~\ref{lem:lbpathcountF}, in order to prove Theorem~\ref{thm:randomproperties:lowerbounds:newnew}, it is sufficient to show that \ref{eq:degreesinFK:pseud}--\ref{prop:F-Kconnection} hold with probability $1-o(n^{-14})$.

Note that
\[
C'\sqrt{\frac{\log n}{\delta}}\overset{\eqref{eq:lambdadefn-1}}{\leq} C'\sqrt{\frac{\log n}{\delta}}\cdot \lambda \left(\frac{m}{n\log n}\right)^{1/4}
= C'\sqrt{\frac{\log n}{\delta}}\cdot \lambda \left(\frac{\delta}{2 \log n}\right)^{1/4}
=C'\lambda \left(\frac{\log n}{2 \delta}\right)^{1/4}\leq \lambda,
\]
where we have used that $\delta\geq \eps d\geq \eps C\log n$. Thus, by Lemma~\ref{lem:nbrhoodsum}, we have that \ref{eq:degreesinFK:pseud} holds with probability $1-o(n^{-14})$.

Furthermore, we have
\[
\frac{\lambda}{\log n}\overset{\eqref{eq:lambdadefn-1}}{\geq} \frac{1}{\log n}\left(\frac{n\log n}{m}\right)^{1/4}\geq \frac{1}{\log n}\left(\frac{\log n}{\delta}\right)^{1/4}=\frac{1}{\delta}\cdot \left(\frac{\delta}{\log n}\right)^{3/4}\geq \frac{C'}{\delta},
\]
so that, by Lemma~\ref{lem:nbrhoodsum}, we have that \ref{eq:nbrhoodsinFK:pseud} holds with probability $1-o(n^{-14})$. 

By Corollary~\ref{cor:fewbadgraphsexpander} with $\lambda'=\lambda/\log n$ and $C'$, as $\lambda'\delta d\geq \lambda\delta\geq C' \log n$, we have that, with probability $1-o(n^{-14})$,  \ref{eq:fewdoubled:K} holds whenever $|U|\leq \lambda n/10^6d\log n$.
Similarly, Corollary~\ref{cor:fewbadgraphsexpander} applied with $\lambda$ and $C'$ shows that, with probability $1-o(n^{-14})$, \ref{eq:fewdoubled:K} holds whenever $|U|\leq \lambda n/10^6d$. Thus, \ref{eq:fewdoubled:K} holds with probability $1-o(n^{-14})$.
Furthermore, by Lemmas~\ref{lem:gnpexpander} and~\ref{lem:gnpexpander2} applied with $C'$, we have that, with probability $1-o(n^{-14})$,  \ref{eq:fewdoubled:F-K} holds with probability $1-o(n^{-14})$, while, by Lemma~\ref{lem:newforpropnew}, \ref{prop:new} holds with probability $1-o(n^{-14})$.

We have, as $\delta\geq C\eps \log n$, that
\[
\lambda^3\delta\overset{\eqref{eq:lambdadefn-1}}{\geq} \left(\frac{n\log n}{m}\right)^{3/4}\delta\geq \left(\frac{\log n}{\delta}\right)^{3/4}\delta
=\log n\cdot \left(\frac{\delta}{\log n}\right)^{1/4}\geq 10^8C'\log n.
\]
Thus, by Lemma~\ref{lem:F-Kconnection} with $\mu=\lambda/10^8$ and $C'$, \ref{prop:F-Kconnection} holds with probability $1-o(n^{-14})$.
This completes the proof that \ref{eq:degreesinFK:pseud}--\ref{prop:F-Kconnection}  hold with probability $1-o(n^{-14})$, completing the proof of the theorem.
\end{proof}


\section{Upper bounds for switching paths}\label{sec:upper}

In this section, we are now able to show that our upper bound on the number of $(F\setminus K,K)$-alternating paths between two fixed vertices, that is, \ref{prop:upperboundFKKalternating}, is likely to hold in the regime $m\leq {1+\sigma}$, where $\sigma$ is a small fixed constant.

\begin{lemma}\label{lem:randomproperties:upperbounds:newnew}
Let $1/n\ll 1/C\ll \mu \ll 1/C_0\ll \eps\leq 1$ and $1/n\ll \sigma \ll 1$. Let $d\in [n]$ and $m\in \N$ satisfy $d\geq C\log n$ and  $\eps dn/2 \leq m\leq n^{1+\sigma}$. Let $\delta=2m/n$ and 
let
\begin{equation*}
\eta = C_0\max\left\{\frac{\mu}{\log n},\left(\frac{n\log n}{m}\right)^{1/8}\right\}.
\end{equation*}
Let $p=m(\binom{n}{2}-dn/2)^{-1}$. Let $K\sim G_d(n)$, form $E$ by including each element of $[n]^{(2)}\setminus E(K)$ uniformly at random with probability $p$, and let $F=K+E$. Then, with probability $1-o(n^{-10})$, we have the following.
\stepcounter{propcounter}
\begin{enumerate}[label = {\emph{\textbf{\Alph{propcounter}}}}]\addtocounter{enumi}{1}
\item  For each distinct $x,y\in [n]$ and each $\log n\leq \ell\leq 10\log n$, the number of $(F\setminus K,K)$-alternating $x,y$-paths of length $2\ell$ is at most $\left(1+\eta\right)d^{\ell}\delta^{\ell}/n$.
\labelinthm{prop:upperboundKFKalternating:newnewnew}
\end{enumerate}
\end{lemma}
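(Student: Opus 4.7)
The plan is to combine the single-vertex upper bound of Lemma~\ref{lem:ubpathcountF:untethered} with the two-vertex lower bound of Theorem~\ref{thm:randomproperties:lowerbounds:newnew}, exploiting the fact that their common error parameter $\lambda$ satisfies $\lambda=(\eta/C_0)^2$, so $\lambda\ll\eta$. Both events hold with probability $1-o(n^{-14})$, and we condition on their conjunction.

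Fix distinct $x,y\in[n]$ and $\log n\leq\ell\leq 10\log n$. Each $(F\setminus K,K)$-alternating $x,y$-path of length $2\ell$ decomposes uniquely by its second vertex $v\in N_{F\setminus K}(x)$ as the initial $F\setminus K$-edge $xv$ followed by a $(K,F\setminus K)$-alternating $v,y$-path of length $2\ell-1$ avoiding $x$ (starting and ending with $K$-edges). Write $Q(v)$ for the number of such $v,y$-paths. One-level decompositions in the same spirit yield a total upper bound $\sum_v Q(v)\leq (1+30\lambda)d^{\ell}\delta^{\ell-1}$ (peel off the first $K$-edge out of $y$, then apply Lemma~\ref{lem:ubpathcountF:untethered} at length $2(\ell-1)$ to each of the $d$ neighbours $w\in N_K(y)$) and an individual lower bound $Q(v)\geq (1-O(\lambda)-O((1+|N_K(v)\cap N_K(y)|)/d))d^{\ell}\delta^{\ell-1}/n$ (peel off the first $K$-edge out of $v$, then apply Theorem~\ref{thm:randomproperties:lowerbounds:newnew} at length $2(\ell-1)$ with $Z=\{v\}$ to each neighbour $w\in N_K(v)$). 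Matching these against each other by the standard averaging inequality, the exceptional set $Z_1:=\{v\in[n]:Q(v)>(1+\eta/2)d^{\ell}\delta^{\ell-1}/n\}$ has size $|Z_1|=O(\lambda n/\eta)=O(\eta n/C_0^2)$.

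Crucially, $Q(v)$, and hence $Z_1$, depends only on edges of $F$ not incident to $x$, since the paths counted by $Q(v)$ avoid $x$. I would therefore reveal the edges of $F$ in two stages: first those not incident to $x$ (fixing $Q$ and $Z_1$), then those at $x$. In the second stage, $N_{F\setminus K}(x)$ is a sparse random subset whose expected intersection $|Z_1|\cdot p\approx O(\eta\delta)$ with $Z_1$ can be controlled by concentration. The ``good'' contribution is then
\[
\sum_{v\in N_{F\setminus K}(x)\setminus Z_1}Q(v)\leq (1+\lambda)\delta\cdot (1+\eta/2)\frac{d^{\ell}\delta^{\ell-1}}{n}\leq \left(1+\tfrac{3\eta}{4}\right)\frac{d^{\ell}\delta^{\ell}}{n}.
\]
The ``bad'' contribution $\sum_{v\in N_{F\setminus K}(x)\cap Z_1}Q(v)$ I would bound by iterating the same argument one step deeper: decompose each $Q(v)$ via the third vertex $w\in N_K(v)$, apply an analogous averaging for the count $R(w,y)$ of $(F\setminus K,K)$-alternating $w,y$-paths of length $2(\ell-1)$ to produce a further exceptional set $Z_2$ with $|Z_2|=O(\eta n)$, and so on. After $k=O(\log n/\log\log n)$ iterations, any path that has survived all levels is constrained to pass through a nested product of small sets, and a crude counting bound on this residual contribution falls below $\eta d^{\ell}\delta^{\ell}/n$.

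The main obstacle will be the careful bookkeeping of errors across this nested iteration: each level contributes an additive error that must decay geometrically, so that the cumulative error remains $O(\eta)$ after all $k$ levels. Key to this is shrinking the thresholds defining $Z_i$ at each level and leveraging the fact that the reveal-last trick applies at every level (the $F\setminus K$ edges at the new ``current'' vertex can be revealed last, keeping intersection with the next $Z_i$ concentrated). A secondary subtlety is handling the $|N_K(v)\cap N_K(y)|/d$ penalty in the individual $Q$-lower bound via codegree concentration in $K\sim G_d(n)$; this absorbs a further $o(n)$ vertices into $Z_1$ without changing its order of magnitude. Finally, we must verify throughout the iteration that $\ell-i\geq 10\ell_0$ so that Theorem~\ref{thm:randomproperties:lowerbounds:newnew} remains applicable, which is guaranteed by $\ell\geq\log n$ and $\ell_0=O(\log n/\log\log n)$.
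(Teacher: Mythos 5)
Your proposal follows essentially the same route as the paper's proof: your level-one step is exactly the paper's key claim (averaging the untethered upper bound of Lemma~\ref{lem:ubpathcountF:untethered} against the tethered lower bound of Theorem~\ref{thm:randomproperties:lowerbounds:newnew} with $\lambda=(\eta/C_0)^2$ to get an exceptional set of size $O(\lambda n/\eta)$, then revealing the edges at $x$ last and applying Chernoff to $|N_{F\setminus K}(x)\cap Z_1|$), and the paper then runs the same nested iteration, packaging the claim as a union bound over all pairs and lengths so it can be invoked at each current endpoint, with geometrically decaying error terms and a crude count on the fully constrained residual. Two small adjustments to your sketch: no shrinking of thresholds is needed, since the geometric decay comes for free from the fact that the family of bad prefixes at level $i$ has size at most $(\eta\delta d/5)^i$, a factor $(\eta/5)^i$ below the trivial path count; and $O(\log n/\log\log n)$ levels do not suffice when $\eta=\Theta(1)$ (which happens for $m=\Theta(n\log n)$), because the residual is only about $\eta^{k}(d\delta)^\ell$ --- you should iterate $\lfloor\ell/2\rfloor\geq\tfrac{1}{2}\log n$ times, as the paper does, which is legitimate since the claim is then applied at lengths $\ell-i+1\geq\ell/2$, still comfortably above $10\ell_0$ (and your codegree worry can be settled by the trivial bound $\sum_v|N_K(v)\cap N_K(y)|=d^2=o(\eta n d)$ rather than any concentration argument).
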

\begin{proof}
We start by showing that, for each distinct $x,y\in [n]$ and each  $\log n \leq \ell\leq 10\log n$, with some very high probability, a bound on the number of $(F\setminus K,K)$-alternating $x,y$-paths holds if we restrict these paths to those which do not have their second vertex in some small set of vertices, as follows.
\begin{claim}\label{clm:exceptforZ} For each distinct $x,y\in [n]$ and each $\tfrac{1}{2}\log n \leq \ell\leq 10\log n$, with probability $1-o(n^{-14})$, the following holds.
\stepcounter{propcounter}
\begin{enumerate}[label = {{\textbf{\Alph{propcounter}\arabic{enumi}}}}]
\item There is some set $Z\subset [n]$ with $|Z|\leq \eta \delta/5$ for which the number of $(F\setminus K,K)$-alternating $x,y$-paths of length $2\ell$ in which the second vertex is not in $Z$ is at most $\left(1+\frac{\eta}{5}\right)d^{\ell}\delta^{\ell}/n$.\label{prop:exceptforZ}
\end{enumerate}
\end{claim}
\claimproofstart[Proof of Claim~\ref{clm:exceptforZ}]
Let $\lambda= (\eta/C_0)^2$. Fix $x,y\in [n]$. Reveal $K\sim G_d(n)$. Reveal the edges of $E$ which do not contain $x$.  By Theorem~\ref{thm:randomproperties:lowerbounds:newnew} (noting that $\tfrac{1}{2}\log{n} > 10\ell_0$ for $\ell_0$ as defined in Theorem~\ref{thm:randomproperties:lowerbounds:newnew}) and Lemma~\ref{lem:ubpathcountF:untethered}, we have that, with probability $1-o(n^{-14})$, the following hold.
\begin{enumerate}[label = {{\textbf{\Alph{propcounter}\arabic{enumi}}}}]\addtocounter{enumi}{1}
\item For each $z\in N_K(y)\setminus\{x\}$ and $w\in [n]\setminus \{x,y,z\}$, the number of $(F\setminus K,K)$-alternating $z,w$-paths of length $2\ell-2$ which do not contain $\{x,y\}$ is at least $\left(1-C_0\lambda - \frac{|N_K(w) \cap \{x,y,z\}|}{d}\right)d^{\ell-1}\delta^{\ell-1}/n$.\label{prop:fornewclaim1}
\item For each $z\in N_K(y)\setminus\{x\}$, the number of $(F\setminus K,K)$-alternating paths of length $2\ell-2$ starting at $z$ is at most $\left(1+C_0\lambda\right)d^{\ell-1}\delta^{\ell-1}$.\label{prop:fornewclaim2}
\end{enumerate}
Let $W=[n]\setminus \big(\{x,y\}\cup N_K(x,y) \cup N_K(N_K(y))\big)$, so that $N_K(y)\cap\big(\{w\}\cup N_K(w) \big)=\emptyset$ for each $w\in W$ and $|W|\geq n-2-2d-d^2\geq n-2d^2$.
By applying \ref{prop:fornewclaim1} for each $z\in N_K(y)\setminus \{x\}$, we have the following.

\begin{enumerate}[label = {{\textbf{\Alph{propcounter}\arabic{enumi}}}}]\addtocounter{enumi}{3}
\item For each $w\in W$, the number of $(K,F\setminus K)$-alternating $y,w$-paths of length $2\ell-1$ which do not contain $x$ is at least $\left(d-|N_K(y) \cap \{x\}|\right)\cdot \left(1-C_0\lambda\right)d^{\ell-1}\delta^{\ell-1}/n$.\label{prop:forwinW}
\end{enumerate}

Let $B$ be the set of vertices $w\in [n]\setminus \{x,y\}$ for which the number of $(K,F\setminus K)$-alternating $y,w$-paths of length $2\ell-1$ not containing $x$ is more than $(1+\eta/10)d^{\ell}\delta^{\ell-1}/n$.
Then, the number of $(K,F\setminus K)$-alternating paths of length $2\ell-1$ starting at $y$ which do not contain $x$ is at least
\begin{multline*}\label{eq:lb_B}
|B|\cdot (1+\eta/10)\frac{d^{\ell}\delta^{\ell-1}}{n} ~ + ~ |W\setminus B|\cdot \left(1-C_0\lambda\right)\left(d-|N_K(y) \cap \{x\}|\right)\frac{d^{\ell-1}\delta^{\ell-1}}{n} 
\\ \ge 
\Big(|B|\cdot \eta/10 +
(n - 2d^2)\left(1-C_0\lambda\right)\Big)\left(d-|N_K(y) \cap \{x\}|\right)\frac{d^{\ell-1}\delta^{\ell-1}}{n}. 
\end{multline*}
From \ref{prop:fornewclaim2}, this number is at most $\left(d-|N_K(y) \cap \{x\}|\right) \cdot \left(1+C_0\lambda\right)d^{\ell-1}\delta^{\ell-1}$, so we have
\[
|B|\leq \frac{\left(1+C_0\lambda\right)-(n-2d^2)\cdot \left(1-C_0\lambda\right)/n}{\eta /10n}\leq \frac{\left(2C_0\lambda + \frac{2d^2}{n}\right) 10n}{\eta}\leq               \frac{\eta n}{20},
\]
where we have used that $d \le n^{\sigma} \le \eta \sqrt{n} /100$ and $\lambda=(\eta/C_0)^2$.

Now, reveal the edges next to $x$ in $E$, and let $Z=B\cap N_{F\setminus K}(x)$. Note that $\E|Z|\leq p\eta n/20 \le \eta\delta/10$. As $\eta\delta\geq C_0\delta(\log n/\delta)^{1/8}\geq C_0\log n$, by a Chernoff bound (Lemma~\ref{chernoff}) we have that, with probability $1-o(n^{-14})$, $|Z|\leq \eta \delta/5$. Choose $C'$ satisfying $1/c \ll 1/C' \ll 1$ so that 
$\eta  \ge (\delta/\log{n})^{3/8} \sqrt{\frac{\log{n}}{\delta}} \ge (C\eps)^{3/8} \sqrt{\frac{\log{n}}{\delta}} \ge 20C'\sqrt{\frac{\log{n}}{\delta}}$.  Applying Lemma~\ref{lem:nbrhoodsum}, with probability $1-o(n^{-14})$
we have that $d_{F\setminus K}(v)\leq (1+\eta/20)\delta$. Assuming these properties, we have both that $|Z|\leq \eta \delta/5$ and that the number of 
$(F\setminus K,K)$-alternating $x,y$-paths of length $2\ell$ in which the second vertex is not in $Z$ is at most
\[
\left(1+\frac{\eta}{20}\right)\delta\cdot \left(1+\frac{\eta}{10}\right)\frac{d^{\ell}\delta^{\ell-1}}{n} \leq \left(1+\frac{\eta}{5}\right)\frac{d^{\ell}\delta^{\ell}}{n}.
\]
Thus, the claim holds.
\claimproofend

By Claim~\ref{clm:exceptforZ}, with probability $1-o(n^{-11})$, \ref{prop:exceptforZ} holds for each distinct $x,y\in [n]$ and each $\tfrac{1}{2}\log{n}\leq \ell\leq 10\log{n}$. By Lemma~\ref{lem:ubpathcountF:untethered}, with probability $1-o(n^{-14})$, the following holds for each $1 \leq \ell\leq 10\log{n}$.
\begin{enumerate}[label = {{\textbf{\Alph{propcounter}\arabic{enumi}}}}]\addtocounter{enumi}{4}
\item For each $z\in [n]$, the number of $(F\setminus K,K)$-alternating paths of length $2\ell$ starting at $z$ is at most $\left(1+C_0\lambda\right)d^{\ell}\delta^{\ell}$.\label{prop:generallowerbound}
\end{enumerate}
Assuming \ref{prop:generallowerbound}, and that \ref{prop:exceptforZ} holds for each distinct $x,y\in [n]$ and each $\tfrac{1}{2}\log{n}\leq \ell\leq 10\log{n}$, we will now show that \ref{prop:upperboundKFKalternating:newnewnew} holds.

Let $x,y\in [n]$ be distinct, and let $\mathcal{P}_{x,y}$ be the set of $(F\setminus K,K)$-alternating $x,y$-paths of length $2\ell$.
We will now show by induction that, for each $0\leq i\leq \ell/2$, there is a set $\mathcal{Q}_i$ of at most $(\eta \delta d/5)^i$ $(F\setminus K,K)$-alternating paths of length $2i$ which start at $x$ such that the number of paths in $\mathcal{P}_{x,y}$ not containing a subpath in $\mathcal{Q}_i$ is at most $\left(1+\frac{\eta}{5}\right)\cdot \frac{d^{\ell}\delta^{\ell}}{n}\cdot \sum_{j=0}^{i-1}\left(\frac{\eta}{5}\right)^j$.

This is trivially true for $i=0$ by letting $\mathcal{Q}_0$ contain only the degenerate path of length 0 with vertex set $\{x\}$. Let, then, $i>0$, and suppose there is such a set $\mathcal{Q}_{i-1}$. For each $v\in [n]$, using \ref{prop:exceptforZ}, let $Z_{i,v}$ be such that $|Z_{i,v}|\leq \eta \delta/5$ and the number of $(F\setminus K,K)$-alternating $v,y$-paths of length $2\ell-2(i-1)$ in which the second vertex is not in $Z_{i,v}$ is at most $\left(1+\frac{\eta}{5}\right)d^{\ell-(i-1)}\delta^{\ell-(i-1)}/n$.

Let $\mathcal{Q}_i$ be the set of $(F\setminus K,K)$-alternating paths $Q$ of length $2i$ starting at $x$ for which the subpath of $Q$ of length $(2i-2)$ containing $x$ is in $\mathcal{Q}_{i-1}$ and the penultimate vertex of $Q$ is in $Z_{i,v}$ where $v$ is the antepenultimate vertex of $Q$.
Then, the number of paths in $\mathcal{P}_{x,y}$ not containing a subpath in $\mathcal{Q}_i$ is, as $|\mathcal{Q}_{i-1}|\leq (\eta\delta d/5)^{i-1}$, at most
\[
\left(1+\frac{\eta}{5}\right)\cdot \frac{d^{\ell}\delta^{\ell}}{n}\cdot \sum_{j=0}^{i-2}\left(\frac{\eta}{5}\right)^j+|\mathcal{Q}_{i-1}|\cdot \left(1+\frac{\eta}{5}\right)\cdot \frac{d^{\ell-(i-1)}\delta^{\ell-(i-1)}}{n}
\leq 
\left(1+\frac{\eta}{5}\right)\cdot \frac{d^{\ell}\delta^{\ell}}{n}\cdot \sum_{j=0}^{i-1}\left(\frac{\eta}{5}\right)^j,
\]
while $|\mathcal{Q}_i|\leq |\mathcal{Q}_{i-1}|\cdot (\eta \delta/5)\cdot d\leq (\eta \delta d/5)^i$.

Therefore, by induction, there is a set $\mathcal{Q}_{\lfloor \ell/2 \rfloor}$ of
 at most $(\eta \delta d/5)^{\lfloor\ell/2\rfloor}$ $(F\setminus K,K)$-alternating paths of length $2\lfloor\ell/2 \rfloor$ which start at $x$ such that the number of paths in $\mathcal{P}_{x,y}$ not containing a subpath in $\mathcal{Q}_{\lfloor \ell/2 \rfloor}$ is at most
 \begin{equation}\label{eq:boundotherthanQ}
 \left(1+\frac{\eta}{5}\right)\cdot \frac{d^{\ell}\delta^{\ell}}{n}\cdot \sum_{j=0}^{\lfloor\ell/2 \rfloor-1}\left(\frac{\eta}{5}\right)^j\leq  \left(1+\frac{\eta}{5}\right)\cdot \frac{d^{\ell}\delta^{\ell}}{n}\cdot \frac{1}{1-\frac{\eta}{5}}\leq \left(1+\frac{\eta}{2}\right)\cdot \frac{d^{\ell}\delta^{\ell}}{n}.
 \end{equation}
Furthermore, using \ref{prop:generallowerbound},
the number of paths in $\mathcal{P}_{x,y}$ containing a subpath in $\mathcal{Q}_{\lfloor\ell/2 \rfloor}$ is at most
\[
|\mathcal{Q}_{\lfloor\ell/2 \rfloor}|\cdot (1+C_0\lambda)d^{\ell-\lfloor\ell/2 \rfloor}d^{\ell-\lfloor\ell/2 \rfloor}\leq 2\eta^{\lfloor\ell/2 \rfloor}d^{\ell}d^{\ell}\leq \frac{\eta}{2} \cdot \frac{d^{\ell}d^{\ell}}{n},
\]
where we have used that $\ell\geq \log n$ and $\eta\ll 1$. In combination with \eqref{eq:boundotherthanQ}, we have that \ref{prop:upperboundKFKalternating:newnewnew} holds for $x$ and $y$, as required.
\end{proof}

\section{Concentration for switching paths in the non-critical range}\label{sec:noncritical}

In this section, we prove corresponding bounds to those in Theorem~\ref{thm:randomproperties:lowerbounds:newnew} and Lemma~\ref{lem:randomproperties:upperbounds:newnew} when $\delta\geq n^{\sigma}$ for some fixed $\sigma>0$, using shorter alternating paths, as in the following lemma.

\begin{lemma}\label{lem:alternatingpathsfromKV} Let $1/n\ll 1/C\ll 1/k \ll \sigma \leq 1$ and $1/C\ll \eps,\mu,1/C_0\leq 1$. Let $d\geq C\log n$ and $\eps dn/2 \leq m\leq \binom{n}{2}-dn/2$. Let $\delta=2m/n$ and suppose that $\delta\geq n^{\sigma}$. 
Let $p=m(\binom{n}{2}-dn/2)^{-1}$ and $\eta = C_0 \mu /\log{n}$. Let $K\sim G_d(n)$, form $E$ by including each element of $[n]^{(2)}\setminus E(K)$ uniformly at random with probability $p$, and let $F=K+E$.

Then, with probability $1-o(n^{-10})$, for every distinct $x,y\in [n]$ and every $Z\subset [n]$ with $|Z|\leq 4k$, the number of $(F\setminus K,K)$-alternating $x,y$-paths with length $2k$ and no interior vertices in $Z$ is $(1\pm \eta)\delta^kd^k/n$.
\end{lemma}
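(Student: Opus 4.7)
Since $\delta \geq n^{\sigma}$ and $1/k \ll \sigma$, the exponent $k$ is a (large) constant, so the number of $(F\setminus K,K)$-alternating $x,y$-paths of length $2k$ can be written, after conditioning on $K$, as a polynomial of constant degree $k$ in Bernoulli indicators of edges of $E$. The plan is to apply the Kim--Vu polynomial concentration theorem (Theorem~\ref{thm:concofpolys}) to this polynomial. Fix distinct $x,y\in[n]$ and $Z\subset[n]$ with $|Z|\le 4k$. Let $\mathcal{P}$ be the set of vertex sequences $x=v_0,v_1,\dots,v_{2k}=y$ of distinct vertices with $v_1,\dots,v_{2k-1}\notin Z$ such that $v_{2i-1}v_{2i}\in E(K)$ and $v_{2i}v_{2i+1}\notin E(K)$ for each $i\in[k]$; then the count we need to control is
\[
N \;=\; \sum_{P\in\mathcal{P}}\;\prod_{e\in E_{\mathrm{free}}(P)} t_e,
\]
where $t_e=\mathbf{1}[e\in E]$ for $e\in[n]^{(2)}\setminus E(K)$ are independent Bernoulli$(p)$, and $E_{\mathrm{free}}(P)=\{v_0v_1,v_2v_3,\dots,v_{2k-2}v_{2k-1}\}$ are the $k$ ``free'' (non-$K$) positions of $P$.

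\textbf{Step 1 (pseudorandomness of $K$).} Working with the walk count $M=|\mathcal{P}|$, we have $M$ equal to the $(x,y)$-entry of the matrix $(B_K A_K)^k$ minus corrections for repeated vertices and hits on $Z$, where $A_K$ is the adjacency matrix of $K$ and $B_K=J-I-A_K$. Since $K$ is $d$-regular, $JA_K=A_KJ=dJ$, so writing $R=A_K-(d/n)J$ (which has zero row and column sums) gives $JR=RJ=0$, and a short computation yields
\[
B_K A_K \;=\; \alpha J \;-\; S,\qquad \alpha=d-d/n-d^2/n,\quad S=R+R^2,
\]
with $JS=SJ=0$. Hence $(B_KA_K)^k=\alpha^k n^{k-1} J+(-S)^k$, whose $(x,y)$-entry is $\alpha^k n^{k-1}+O(\|S\|^k)$. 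Using that a random $d$-regular graph satisfies the (essentially sharp) spectral bound $\|R\|=O(\sqrt{d\,(n-d)/n})$ with probability $1-o(n^{-(4k+12)})$ from the literature on random regular graphs, the error $\|S\|^k$ is negligible compared to $\alpha^k n^{k-1}$ for $k$ large enough in terms of $1/\sigma$; the corrections for repeats and for $Z$ are $O(|Z|\cdot n^{k-2}d^k)$, also negligible. Thus with probability $1-o(n^{-(4k+12)})$ over $K$, one has $M=(1\pm \eta/3)\, n^{k-1}d^k$ simultaneously for every choice of $(x,y,Z)$, giving $\mathbb{E}[N\mid K]=p^k M=(1\pm \eta/3)\,\delta^k d^k/n$.

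\textbf{Step 2 (Kim--Vu).} Conditional on $K$, $N$ is a polynomial of degree $k$ in the independent Bernoulli variables $t_e$. For the partial-derivative expectations, if we fix $j$ free edges and ask for paths in $\mathcal{P}$ that use these edges as free positions, we constrain $2j$ vertices of the sequence, leaving an at most $(n^{k-j-1}d^k)\cdot O(1)$ combinatorial count; hence $\mathbb{E}_j(N)=O(\delta^{k-j}d^k/n)$ for $1\le j\le k$. Applying the Kim--Vu theorem therefore gives, with probability $1-\exp(-\Omega(\log n))$ (better than any polynomial),
\[
\bigl|N-\mathbb{E}[N\mid K]\bigr|\;\le\; C_k\,\sqrt{\mathbb{E}[N\mid K]\cdot \max_{j\ge 1}\mathbb{E}_j(N)}\cdot \log^{k+1} n \;=\; O\!\left(\frac{\log^{k+1} n}{\sqrt{\delta}}\right)\mathbb{E}[N\mid K],
\]
and, since $\delta\ge n^{\sigma}$ with $1/k\ll \sigma$, the relative error is at most $n^{-\sigma/3}\ll \eta/3$. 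Combining with Step~1 gives $N=(1\pm\eta)\,\delta^k d^k/n$ with probability $1-o(n^{-(4k+12)})$ for the fixed triple $(x,y,Z)$. A union bound over the at most $n^2\binom{n}{4k}\le n^{5k}$ triples $(x,y,Z)$ yields the $1-o(n^{-10})$ conclusion.

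\textbf{Main obstacle.} The tightest step is Step~1: establishing the combinatorial skeleton count $M=(1\pm\eta/3)n^{k-1}d^k$ uniformly in $(x,y,Z)$ with a relative error better than the slowly-shrinking $\eta=C_0\mu/\log n$. This boils down to a sharp spectral bound on $R=A_K-(d/n)J$ for $K\sim G_d(n)$; the bound $\|R\|=O(\sqrt{d(n-d)/n})$ is strong enough across the entire relevant range of $d$, and the algebraic miracle $JR=RJ=0$ ensures that the main-term/error separation in $(B_KA_K)^k$ is clean.
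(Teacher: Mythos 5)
Your overall architecture coincides with the paper's: fix $(x,y,Z)$, view the path count conditional on $K$ as a degree-$k$ polynomial in the Bernoulli indicators of $E$, estimate its expectation (the ``skeleton'' count), control the partial expectations $\E_j$, and apply Kim--Vu (Theorem~\ref{thm:concofpolys}) with a union bound. Where you genuinely diverge is the skeleton count: the paper computes $\E Y$ by direct vertex-by-vertex counting when $d<n^{0.99}$ (a purely deterministic argument using only $d$-regularity) and, when $d\geq n^{0.99}$, by a bespoke edge-distribution statement for $G_d(n)$ (Lemma~\ref{lem:probUVgoodinK}, proved by showing $G(n,q)$ is $d$-regular with probability $\geq e^{-n^{1.91}}$ and transferring an exponentially unlikely failure event). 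You instead use the identity $B_KA_K=\alpha J-S$ with $JS=SJ=0$ and a quoted second-eigenvalue bound for $G_d(n)$. The algebra is correct, but this step has a real gap: you need $\|R\|=O(\sqrt{d(n-d)/n})$ to fail with probability $o(n^{-10})$, and for $d=\Theta(\log n)$ (which is allowed here -- only $\delta\geq n^\sigma$ is assumed, not $d\geq n^\sigma$) no such tail bound is available in the literature; known spectral-gap results for sparse random regular graphs give failure probabilities like $\exp(-c\sqrt{d})$, which is not even polynomially small at $d=C\log n$. The gap is repairable -- when $\min\{d,n-1-d\}\leq n^{0.99}$ the deterministic bound $\|R\|\leq 2\min\{d,n-1-d\}+2$ already makes $\|S\|^k$ negligible against $\alpha^kn^{k-1}$ for $k$ large in terms of $1/\sigma$, so randomness of $K$ is only needed in the genuinely dense regime -- but you do not make this observation, and your written formulas silently assume $n-d\approx n$: the main term is $\alpha^kn^{k-1}=d^k(n-1-d)^k/n$, not $n^{k-1}d^k$, and your stated correction $O(|Z|\cdot n^{k-2}d^k)$ for repeats/$Z$-hits is \emph{not} negligible relative to $d^k(n-1-d)^k/n$ when $n-1-d=n^{\sigma}$; the correct relative error is $O(k^2/\min\{d,n-1-d\})$.

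The second gap is the one-sentence treatment of $\E_j(N)$. The assertion that fixing $j$ free edges ``constrains $2j$ vertices, leaving an at most $n^{k-j-1}d^k\cdot O(1)$ count'' is exactly the content of the paper's Claim~\ref{clm:EiY}, which is the most delicate part of this proof: one must verify that, however the fixed edges sit inside the path, at least one path-vertex is only half-determined (the paper's $r_1\geq 1$, and $r_1\geq 2$ when a fixed edge meets $x$), since this is what produces the crucial factor $1/n$ separating $\E_j$ from $\E_0$ and hence the $1/\sqrt{\delta}$ relative error after Kim--Vu. The claimed bound $\E_j=O_k(\delta^{k-j}d^k/n)$ is correct (it is weaker than the paper's $O_k(d^{k-j+1}\delta^{k-j}/n)$), but it does not follow from the stated one-line count without the interleaving case analysis. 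Finally, a small point: $\exp(-\Omega(\log n))$ is not ``better than any polynomial''; you need $\alpha=\log^{1+1/k}n$ or $\alpha=\log^2 n$ (as the paper takes) so that the Kim--Vu failure probability is $n^{-\omega(1)}$ before the union bound over the $\leq n^{5k}$ triples $(x,y,Z)$.
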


As this includes the case where $d$ is large, we will first need to prove a result on the distribution of edges in $G_d(n)$ in this regime, which we do in Section~\ref{sec:Gndedgedistribution}. Then, in Section~\ref{sec:concpoly}, we quote a result of Kim and Vu~\cite{kim2000concentration} and prove Lemma~\ref{lem:alternatingpathsfromKV}.


\subsection{Edge distribution in $G_d(n)$ for large $d$.}\label{sec:Gndedgedistribution}
When $d$ is very large, switching methods in $G_d(n)$ are hard to implement and we need some additional preparation on the likely distribution of edges between large vertex subsets of $G_d(n)$. We will prove the result we need using that the random binomial graph $G(n,q)$ for some $q\approx d/n$ is $d$-regular with at least some very small probability. Thus, when properties hold with exceptionally high probability in $G(n,q)$, we can infer the property holds with high probability in $G_d(n)$. This was similarly done by Krivelevich, Sudakov, Vu and Wormald~\cite{krivelevich2001random} using a result of Shamir and Upfal~\cite{shamir1984large}. For completion, we will give a different simple proof of the specific result we need (Claim~\ref{clm:Gnqpossiblyregular}), while proving the following lemma.

\begin{lemma}\label{lem:probUVgoodinK} Let $1/n\ll 1$. Let $n^{0.99}\leq d < n$.
Then, for all but at most $e^{-n}|\cK_d(n)|$ graphs $K\in \cK_d(n)$, we have the following property.
\stepcounter{propcounter}
\begin{enumerate}[label = {\emph{\textbf{\Alph{propcounter}}}}]
\item For each $U,V\subset [n]$ with $|U|,|V|\geq n^{0.98}$, we have $e_K(U,V)=(1\pm n^{-0.01})\frac{d}{n}|U||V|$.\labelinthm{prop:UVgood}
\end{enumerate}
\end{lemma}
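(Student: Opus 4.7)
My plan is to transfer the desired concentration from the easier-to-analyse binomial random graph $G(n,q)$ to the uniform $d$-regular model, using $G(n,q)$ conditioned on being $d$-regular. Set $q = d/(n-1)$, so that each vertex of $G(n,q)$ has expected degree exactly $d$. By symmetry $\P(G(n,q)=K) = q^{dn/2}(1-q)^{\binom{n}{2}-dn/2}$ for every $K\in\cK_d(n)$, hence conditioning on $G(n,q)\in\cK_d(n)$ reproduces the uniform distribution on $\cK_d(n)$. Consequently, letting $\AA$ be the set of $K\in\cK_d(n)$ failing \ref{prop:UVgood},
\begin{equation*}
\frac{|\AA|}{|\cK_d(n)|} \;=\; \P(G(n,q)\in\AA\mid G(n,q)\in\cK_d(n)) \;\leq\; \frac{\P(G(n,q)\in\AA)}{\P(G(n,q)\in\cK_d(n))}.
\end{equation*}
It therefore suffices to combine an extremely strong upper bound on the numerator with a mild lower bound on the denominator.

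The first step (upper bound) is straightforward. Fix $U,V\subset[n]$ with $|U|,|V|\geq n^{0.98}$. Then $e_{G(n,q)}(U,V)$ is essentially a sum of at most $|U||V|$ independent Bernoulli$(q)$ variables with mean $\mu = q|U||V|\geq (d/n)\cdot n^{1.96} \geq n^{1.95}$ (minor care is needed for the pairs $(u,v)$ with $u=v$ or for edges inside $U\cap V$, which contribute negligibly and can be handled separately). A Chernoff bound (Lemma~\ref{chernoff}) with deviation parameter $n^{-0.01}/2$ gives probability at most $2\exp(-n^{-0.02}\mu/12)\leq \exp(-n^{1.9})$ of $e_K(U,V)$ deviating by more than an $n^{-0.01}$ fraction. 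Since there are only $4^n\leq e^n$ choices of $(U,V)$, a union bound yields $\P(G(n,q)\text{ fails \ref{prop:UVgood}})\leq \exp(-n^{1.8})$.

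The second step is the core obstacle: obtaining a lower bound of the form $\P(G(n,q)\in\cK_d(n))\geq e^{-n^{1.5}}$ (which will appear as Claim~\ref{clm:Gnqpossiblyregular}). I would factorise this probability as
\begin{equation*}
\P(G(n,q)\in\cK_d(n)) \;=\; \P\bigl(e(G(n,q))=dn/2\bigr)\cdot \P\bigl(G(n,\tfrac{dn}{2})\in\cK_d(n)\bigr),
\end{equation*}
where $G(n,m)$ denotes the uniform random graph with $m$ edges; the first factor is $\Omega(1/n)$ by the local CLT for the binomial $\mathrm{Bin}(\binom{n}{2},q)$, whose variance is $\Theta(n^2)$ (with $d\geq n^{0.99}$). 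The second factor equals $|\cK_d(n)|/\binom{\binom{n}{2}}{dn/2}$, and in the range $n^{0.99}\leq d\leq n-1$ a standard asymptotic enumeration estimate (for example via McKay--Wormald~\cite{mckay1990asymptotic}, or via a crude configuration-model lower bound once paired with a Poisson-tail estimate for the number of loops and double edges) is easily strong enough to give $|\cK_d(n)|/\binom{\binom{n}{2}}{dn/2}\geq e^{-O(n\log n)}$. Putting the two steps together produces
\begin{equation*}
\frac{|\AA|}{|\cK_d(n)|} \;\leq\; \frac{\exp(-n^{1.8})}{\exp(-n^{1.5})} \;\leq\; e^{-n},
\end{equation*}
which is the conclusion of the lemma. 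The only non-routine ingredient is the lower bound $\P(G(n,q)\in\cK_d(n))\geq e^{-o(n^{1.8})}$, and the slack between $n^{1.8}$ (from Chernoff) and anything $e^{-O(n\log n)}$ is what makes this simple transfer argument work.
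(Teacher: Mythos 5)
Your overall strategy is exactly the paper's: condition $G(n,q)$ on being $d$-regular to recover the uniform distribution on $\cK_d(n)$, bound the failure probability of \ref{prop:UVgood} in $G(n,q)$ by Chernoff plus a union bound over the at most $4^n$ pairs $(U,V)$, and divide by a lower bound on $\P(G(n,q)\in\cK_d(n))$. The numerator estimate matches the paper's Claim~\ref{clm:Gnqgood} (including the decomposition needed for $U\cap V$). The place where you diverge is the denominator, and that is where your argument has a genuine gap as written. Your ``crude configuration-model lower bound paired with a Poisson-tail estimate'' does not work in this regime: for $d\geq n^{0.99}$ the probability that the configuration model is simple is of order $e^{-\Theta(d^2)}=e^{-\Omega(n^{1.98})}$, which is \emph{smaller} than the $e^{-n^{1.8}}$ you get from Chernoff, so the ratio bound collapses. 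And McKay--Wormald~\cite{mckay1990asymptotic} requires $\min\{d,n-d\}\gg n/\log n$, so it covers neither $n^{0.99}\leq d\leq cn/\log n$ (which needs the much later Liebenau--Wormald enumeration) nor $d$ very close to $n-1$ (which needs complementation plus sparse enumeration). So the claim that the enumeration input is ``easily strong enough'' across the whole range $n^{0.99}\leq d\leq n-1$ is not justified by what you cite, even though the statement $|\cK_d(n)|/\binom{\binom{n}{2}}{dn/2}\geq e^{-O(n\log n)}$ is in fact true.

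The paper avoids all of this with a weaker but elementary and self-contained bound (Claim~\ref{clm:Gnqpossiblyregular}): it shows $\P(G(n,q)\in\cK_d(n))\geq \exp(-n^{1.91})$ by setting aside a set $A$ of $\lfloor n^{0.9}\rfloor$ vertices, choosing $q$ so that $G(n,q)$ restricted to $[n]\setminus A$ typically leaves each vertex with a degree deficit in $[0,|A|]$ summing to $d|A|$, and then paying the (crude) probability $\min\{q,1-q\}^{O(n^{1.9})}$ that the edges at $A$ realise a specific correcting bipartite graph. This requires treating $d\geq n-n^{0.96}$ separately (where \ref{prop:UVgood} holds deterministically), a case split you do not need but also do not address. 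Since $\exp(-n^{1.91})$ still beats $\exp(-n^{1.92})$ from Chernoff, the ratio argument closes. If you are permitted to black-box modern enumeration results for all of $n^{0.99}\leq d\leq n-1$, your route gives a much stronger denominator bound; but as justified, the step fails, and the paper's construction is the fix.
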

\begin{proof} That the lemma holds when $d\geq n-n^{0.96}$ follows immediately from the following claim.

\begin{claim}\label{clm:dveryhigh}
 If $d\geq n-n^{0.96}$, then \ref{prop:UVgood} holds for every $K\in \cK_d(n)$.
 \end{claim}
 \claimproofstart[Proof of Claim~\ref{clm:dveryhigh}] Let $K\in \cK_d(n)$ and $d\geq n-n^{0.96}$. Let $U,V\subset [n]$ with $|U|\geq |V|\geq n^{0.98}$. Then,
 \[
 e_K(U,V)\geq |U||V|- |U|(n-1-d)\geq |U||V|-|U|\cdot n^{0.96}\geq (1-n^{-0.02})|U||V|,
 \]
 so that $e_K(U,V)=(1\pm n^{-0.02})|U||V|=(1\pm n^{-0.01})\frac{d}{n}|U||V|$.
 \claimproofend

 Suppose, then, that $d< n-n^{0.96}$.
Let $m=\lfloor n^{0.9} \rfloor$ and
\begin{equation}\label{eq:q}
q=\frac{d(n-2m)}{(n-m)(n-m-1)}=(1\pm 10n^{-0.1})\frac{d}{n}.
\end{equation}
Let $G\sim G(n,q)$. We will show that, with at least some small probability, $G$ is $d$-regular, as follows.

\begin{claim}\label{clm:Gnqpossiblyregular}
With probability at least $\exp(-n^{1.91})$, $G$ is $d$-regular.
\end{claim}
\claimproofstart[Proof of Claim~\ref{clm:Gnqpossiblyregular}]
Let $A\subset [n]$ satisfy $|A|=m$ and let $B=[n]\setminus A$. Enumerate $A=\{a_1,\ldots,a_m\}$ and $B=\{b_1,\ldots,b_{n-m}\}$.  For each $i\in [n-m]$, let $d_i=d-d_{G[B]}(b_i)$. Note that $\mathbb{E}(d_{G[B]}(b_i))= q(n-m-1)\geq d/2\geq n^{0.99}/2$ for each $i\in [n-m]$. Thus, by a Chernoff bound (Lemma~\ref{chernoff}) and a union bound, with probability at least $1-1/2n^2$ we have that, for every $i\in [n-m]$,
\begin{align*}
d_{G[B]}(b_i)&=q(n-m-1)\pm \sqrt{qn}\cdot \log^2 n\overset{\eqref{eq:q}}{=}\frac{d(n-2m)}{(n-m)}\pm n^{0.6}=d-\frac{dm}{n-m}\pm n^{0.6}.
\end{align*}
Now,
\[
\frac{dm}{n-m}\geq \frac{n^{0.99}\cdot n^{0.9}/2}{n}\geq n^{0.6},
\]
and
\[
\frac{dm}{n-m}\leq m\cdot \frac{n-n^{0.96}}{n-n^{0.9}}=m-m\cdot \frac{n^{0.96}-n^{0.9}}{n-n^{0.9}}\leq  m-n^{0.9}\cdot n^{-0.04}/2\leq m-n^{0.6}.
\]
Therefore, with probability at least $1-1/2n^2$, we have that $0\leq d_i\leq m$ for every $i\in [n-m]$. Furthermore, by the choice of $q$, note that $\mathbb{P}(e(G-A) = i)$ is maximised  at $i = q\binom{n-m}{2}=d(n-2m)/2$ and thus we have $e(G-A)=q\binom{n-m}{2}=d(n-2m)/2$ with probability at least $1/n^2$. When this happens, $\sum_{i\in [n-m]}d_i=dm$.
Thus, with probability at least $1/2n^2$, $\sum_{i\in [n-m]}d_i=dm$ and $0\leq d_i\leq m$ for each $i\in [m]$.

Assuming these properties hold, we can observe there is a bipartite graph $H$ between vertex classes $A$ and $B$ such that $d_H(a_i)=d$ for each $i\in [m]$ and $d_H(b_i)=d_i$ for each $i\in [n-m]$. Indeed, this can be done by connecting $b_i$ to $a_k$ for each $k\in [m]$ for which there is some $k'=k\mod m$ with
${\sum_{j<i}d_j}<k'\leq {\sum_{j\leq i}d_j}$. Then, for each $i\in [n-m]$, as $d_i\leq m$, we have that $b_i$ is connected to $d_i$ different vertices in $A$, and, as $\sum_{j\in [n-m]}d_j=dm$, for each $i\in [m]$, we have that $a_i$ is connected to $d$ different vertices in $B$.

Futhermore, the probability that $G[A]=\emptyset$ and $G[A,B]=H$ is at least
\[
\min\{q,1-q\}^{m(n-m)+\binom{m}{2}}\overset{\eqref{eq:q}}{\geq} \left(\frac{\min\{d,n-d\}}{2n}\right)^{m(n-m) + \binom{m}{2}}\geq n^{-m(n-m)-\binom{m}{2}} \geq 2n^2\cdot \exp(-n^{1.91}),
\]
and when this happens $G$ is $d$-regular. Thus, we have that $G(n,d/n)$ is $d$-regular with probability at least $\exp(-n^{1.91})$.
\claimproofend
We now show that \ref{prop:UVgood}  is very likely to be true with $G$ in place of $K$, as follows.
\begin{claim}\label{clm:Gnqgood} The probability that \ref{prop:UVgood} with $G$ in place of $K$ is not satisfied is at most $\exp(-n^{1.92})$.
\end{claim}
\claimproofstart[Proof of Claim~\ref{clm:Gnqgood}]
Let $U,V\subset [n]$ satisfy $|U|,|V|\geq n^{0.98}$. Then, by a Chernoff bound (Lemma~\ref{chernoff}), we have that
\[
e_{G}(U,V)=e_G(U\setminus V,V)+e_G(U\cap V,V\setminus U)+2e_G(U\cap V)=(1\pm n^{-0.01}/2)q|U||V|
\]
with probability $1-\exp(-\Omega(n^{-0.02}\cdot n^{1.96}\cdot (d/n)))=1-\exp(-\Omega(n^{1.93}))$ using \eqref{eq:q} and that $d\geq n^{0.99}$. By a union bound, we therefore have that $G$ satisfies \ref{prop:UVgood} in place of $K$ with probability $1-2^{2n}\cdot \exp(-\Omega(n^{1.93}))\geq 1-\exp(-n^{1.92})$, as required.
\claimproofend

As $\P(G=K')$ is constant on $K'\in \cK_d(n)$, if $K\sim G_d(n)$ then the probability that $K$ does not satisfy \ref{prop:UVgood} is, by Claim~\ref{clm:Gnqpossiblyregular} and Claim~\ref{clm:Gnqgood}, at most
\[
\frac{\mathbb{P}(G \text{ does not satisfy \ref{prop:UVgood}})}{\mathbb{P}(G \text{ is $d$-regular})} \le \frac{\exp(-n^{1.92})}{\exp(-n^{1.91})}= \exp(n^{1.91}-n^{1.92})\leq \exp(-n),
\]
and thus the lemma also holds when $d < n - n^{0.96}$.
\end{proof}


\subsection{Concentration for switching paths for large $d$}\label{sec:concpoly}
We will use a result of Kim and Vu~\cite{kim2000concentration} on the concentration of positive polynomials with constant degree, whose variables are independent Bernoulli random variables. By the main theorem of \cite{kim2000concentration} (stated without varying probabilities for simplicity), we have the following.

\begin{theorem}\label{thm:concofpolys} Let $k$ be a fixed constant. Let $N\in\N$ and $p\in [0,1]$. Let $X_1,\ldots,X_N$ be independent random variables with $\P(X_i=1)=p$ and $\P(X_i=0)=1-p$ for each $i\in [N]$. Let $\mathbf{E}$ be a set of subsets of $[N]$ with size $k$ and let $w(e)\geq 0$ for each $e\in \mathbf{E}$. Let $Y=\sum_{e\in \mathbf{E}}w(e)\cdot \prod_{i\in e}X_i$.

For each $A\subset [N]$, let $\mathbf{E}_A=\{e\subset [N]\setminus A:e\cup A\in \mathbf{E}\}$ and let $Y_A={\sum_{e\in \mathbf{E}_A}w(e\cup A)\cdot \prod_{i\in e}X_i}$. For each $i\geq 0$, let $\E_i Y=\max_{A\subset [N]:|A|=i}\E Y_A$. Let $\E^{\max} Y=\max_{i\geq 0}\E_i Y$ and $\E' Y=\max_{i\geq 1}\E_i Y$.

Then, for each $\alpha>1$, we have
\[
\P\left(|Y-\E Y|>8^k(k!)^{1/2}(\E^{\max} Y\cdot \E' Y)^{1/2}\cdot \alpha^k\right)=O(\exp(-\alpha+(k-1)\log n)).
\]
\end{theorem}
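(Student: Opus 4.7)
The plan is to condition on $K \sim G_d(n)$ and apply the Kim--Vu polynomial concentration theorem (Theorem~\ref{thm:concofpolys}) to the remaining randomness in $E$. Fix distinct $x, y \in [n]$ and $Z \subset [n]$ with $|Z| \leq 4k$, and let $Y$ denote the number of $(F \setminus K, K)$-alternating $x,y$-paths of length $2k$ with no interior vertex in $Z$. Given $K$, $Y$ is a positive multilinear polynomial of degree $k$ in the independent $\mathrm{Bernoulli}(p)$ variables $X_e := \mathbf{1}[e \in E]$ for $e \in [n]^{(2)} \setminus E(K)$: each candidate interior sequence $(v_1, \ldots, v_{2k-1})$ contributes the monomial $\prod_{j=0}^{k-1} X_{v_{2j}v_{2j+1}}$ (with $v_0 = x$, $v_{2k} = y$), weighted by the indicator that $v_{2j+1}v_{2j+2} \in E(K)$ and $v_{2j}v_{2j+1} \notin E(K)$ for every $j$, and that all vertices are distinct and avoid $Z$. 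Writing $N := n - 1 - d$, so that $pN = \delta$, a layer-by-layer count yields $\mathbb{E}[Y\mid K] = p^k M(K;x,y,Z)$ with idealised value $M \approx N^k d^k/n$ (each odd-indexed vertex except the last contributing $\approx N$ non-$K$-choices, each even-indexed vertex contributing $\approx d$ $K$-choices, and the final vertex $v_{2k-1} \in N_K(y) \setminus N_K(v_{2k-2})$ contributing $\approx dN/n$), giving $\mathbb{E}[Y\mid K] \approx \delta^k d^k/n$.

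First I would show that with probability $1 - o(n^{-10})$ over $K$, $M(K; x, y, Z) = (1 \pm \eta/4) N^k d^k/n$ uniformly in the triple $(x, y, Z)$. For $d \geq n^{0.99}$ this follows by iterating Lemma~\ref{lem:probUVgoodinK} on the nested neighbourhoods produced by the layer-by-layer count, so that the relevant edge counts $e_K(U,V)$ match $d|U||V|/n$ to relative error $n^{-0.01}$ at every layer. For $C\log n \leq d < n^{0.99}$, one has $N/n = 1 - o(\eta)$, so the count reduces to a product of leading-order factors modulo small corrections governed by codegrees $|N_K(u) \cap N_K(v)|$ and few-collision bounds, and these can be established via switching arguments in $G_d(n)$ analogous to those of Section~\ref{sec:Kexpansion}; crucially, the required multiplicative accuracy $\eta = C_0\mu/\log n$ is generous compared to what these arguments typically deliver.

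Next I would apply Theorem~\ref{thm:concofpolys} to $Y$ conditioned on such a ``good'' $K$. The maximum of $\mathbb{E}_i[Y\mid K]$ over $|A| = i \geq 1$ is attained by taking $A$ to contain a ``boundary'' edge $e = xb$ (forced to occupy the first random-edge slot $(v_0, v_1) = (x, b)$): the remaining alternating path from $b$ to $y$ has $\approx d^k N^{k-1}/n$ realisations, so
\[
\mathbb{E}_1[Y \mid K] \;\leq\; O_k\!\left(\frac{\delta^{k-1}d^k}{n}\right) \;=\; O_k\!\left(\frac{\mathbb{E}[Y\mid K]}{\delta}\right),
\]
and an analogous count shows $\mathbb{E}_i[Y\mid K]/\mathbb{E}[Y\mid K]$ decays geometrically in $i$, so $\mathbb{E}^{\max}[Y\mid K] = \mathbb{E}[Y \mid K]$ and $\mathbb{E}'[Y \mid K] = O_k(\mathbb{E}[Y\mid K]/\delta)$. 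Taking $\alpha = (5k+30)\log n$ in Theorem~\ref{thm:concofpolys} gives a deviation of size $O_k(\mathbb{E}[Y\mid K]\,(\log n)^k / \sqrt{\delta})$ with conditional failure probability $O(n^{-4k-25})$; since $\delta \geq n^\sigma$ with $1/k \ll \sigma$, this deviation is at most $(\eta/2)\mathbb{E}[Y\mid K]$. A union bound over the at most $n^{4k+3}$ relevant triples $(x, y, Z)$ with $|Z| \leq 4k$ then delivers the conclusion with probability $1 - o(n^{-10})$.

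The main obstacle is the first step in the intermediate range $C\log n \leq d < n^{0.99}$: uniform multiplicative accuracy $\eta = C_0\mu/\log n$ for $M(K; x, y, Z)$ across all triples requires sufficiently sharp control of codegrees and short walk counts in $G_d(n)$, which is easily available from Lemma~\ref{lem:probUVgoodinK} when $d \geq n^{0.99}$ but must be obtained by switching-based concentration arguments for smaller $d$. Everything else is bookkeeping: given $N/n \approx 1$ in the low-$d$ regime, the leading-order count is essentially a pure product of $K$-degrees and non-$K$-degrees and the required estimates are comparable to those already developed in Sections~\ref{sec:Kexpansion}--\ref{sec:F-Kexpansion}.
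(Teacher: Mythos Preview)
There is a basic mismatch: Theorem~\ref{thm:concofpolys} is the Kim--Vu polynomial concentration inequality, which the paper does not prove but merely quotes from~\cite{kim2000concentration}. Your proposal does not attempt to prove Theorem~\ref{thm:concofpolys} at all; rather, it sketches how to \emph{apply} Theorem~\ref{thm:concofpolys} to deduce the alternating-path count, which is the content of Lemma~\ref{lem:alternatingpathsfromKV}. So as a proof of the stated theorem, the proposal is simply off-target.

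If we instead read your proposal as a proof of Lemma~\ref{lem:alternatingpathsfromKV}, it is broadly aligned with the paper's argument (condition on $K$, express $Y$ as a degree-$k$ polynomial in the Bernoulli edge indicators, estimate $\mathbb{E}Y$ and the partial expectations $\mathbb{E}_iY$, then invoke Theorem~\ref{thm:concofpolys}), but you overcomplicate one step. You flag the range $C\log n \leq d < n^{0.99}$ as ``the main obstacle'', requiring switching-based codegree control in $G_d(n)$. In fact the paper handles this range by a direct elementary count: since $n-1-d \geq n - n^{0.99}$, one has $n/(n-1-d) \leq 1 + \eta/10$, so the layer-by-layer product $(d)^{k-1}\cdot d \cdot (n-1-d)^{k-1}\cdot p^k$ already matches $\delta^k d^k/n$ to the required accuracy, with only $O(k)$ vertices excluded at each step and no pseudorandomness of $K$ needed beyond regularity. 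The genuinely delicate case is $d \geq n^{0.99}$, where $n-1-d$ may be small and Lemma~\ref{lem:probUVgoodinK} is required to control $e_K(N_{K_n\setminus K}(v_{2k-3}), N_K(y))$; you correctly identify this use. Your bound on $\mathbb{E}'Y$ and choice of $\alpha$ are fine (the paper takes $\alpha = \log^2 n$, but either works).
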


We can now prove Lemma~\ref{lem:alternatingpathsfromKV} using a fairly direct application of Theorem~\ref{thm:concofpolys}.

\begin{proof}[Proof of Lemma~\ref{lem:alternatingpathsfromKV}] Note that as $\eps dn/2 \leq m\leq \binom{n}{2}-dn/2$, we have $(1+\eps)d\leq n-1$.
Thus, $d\leq n/(1+\eps)\leq (1-\eps/2)n$ and by Lemma~\ref{lem:probUVgoodinK}, with probability $1-o(n^{-10})$, we have the following property when $d\geq n^{0.99}$.

\stepcounter{propcounter}
\begin{enumerate}[label = {{\textbf{\Alph{propcounter}}}}]
\item For each $U,V\subset [n]$ with $|U|,|V|\geq n^{0.98}$, we have $e_K(U,V)=(1\pm n^{-0.01})\frac{d}{n}|U||V|$.\label{prop:UVgoodforuse}
\end{enumerate}

We now assume that $K$ has this property if $d\geq n^{0.99}$, and otherwise that $K\in \cK_d(n)$, and choose edges from $[n]^{(2)}\setminus E(K)$. For this, let $N=\binom{n}{2}-dn/2$, and enumerate $[n]^{(2)}\setminus E(K)=\{e_1,\ldots,e_N\}$. Let $p=m/N=\delta/(n-1-d)$, so that $p(n-1-d)=\delta$. Let $X_i$, $i\in [N]$, be independent Bernoulli random variables with probability $p$. Let $F'$ be the graph with vertex set $[n]$ and edge set $\{e_i:i\in [N],X_i=1\}$. 

Fix distinct $x,y\in [n]$ and a set $Z\subset [n]$ with $|Z|\leq 4k$. We will show the property we want holds with probability $1-o(n^{-12-4k})$ so that the result follows by a union bound.
Let $\mathbf{E}$ be the set of $I\subset [N]$ with $|I|=k$ such that if $H_I$ is the graph with vertex set $[n]$ and edge set $\{e_i:i\in I\}$, then there is at least one $(H_I,K)$-alternating $x,y$-path of length $2k$ with no interior vertices in $Z$. For each $I\in \mathbf{E}$, let $w(I)$ be the number of $(H_I,K)$-alternating $x,y$-paths of length $2k$ with no interior vertices in $Z$. 
 Let $Y=\sum_{I\in \mathbf{E}}w(I)\cdot \prod_{i\in I}\mathbf{1}_{\{e_i\in E(F')\}}$. Observe that $Y$ is the number of $(F',K)$-alternating $x,y$-paths of length $2k$ with no interior vertices in $Z$.

The result we want will follow from Theorem~\ref{thm:concofpolys} and the following two claims.

\begin{claim}\label{clm:EY} $\left|\E Y- \frac{\delta^kd^k}{n}\right|\leq \frac{\eta \delta^kd^k}{2n}$.
\end{claim}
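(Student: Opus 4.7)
By linearity of expectation, since each edge of $[n]^{(2)} \setminus E(K)$ lies in $F'$ independently with probability $p = \delta/(n-1-d)$, each potential valid sequence contributes $p^k$ to the expectation, so
\[
\mathbb{E}[Y] \;=\; p^k \cdot |\Pi|,
\]
where $\Pi$ is the set of distinct-vertex sequences $(x = v_0, v_1, \ldots, v_{2k} = y)$ with $v_i \notin Z$ for $1 \le i \le 2k-1$, $v_{2i-1}v_{2i} \in E(K)$ for $1 \le i \le k$, and $v_{2i}v_{2i+1} \notin E(K)$ for $0 \le i \le k-1$. It then suffices to show that $|\Pi| = (1 \pm \eta/2)\, d^k(n-1-d)^k/n$.

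My plan is to count $|\Pi|$ by enumerating the interior vertices $v_{2k-1}, v_{2k-2}, \ldots, v_1$ in reverse order. Each odd-indexed ``$K$-step'' choice $v_i \in N_K(v_{i+1})$ gives $d - O(k + |Z|)$ options (correcting for avoiding previously chosen vertices, $\{x,y\}$, and $Z$), while each even-indexed ``non-$K$-step'' choice $v_i \in [n] \setminus (N_K(v_{i+1}) \cup \{v_{i+1}\})$ gives $(n-1-d) - O(k + |Z|)$ options. The only non-trivial case is the final choice of $v_1$: it must additionally satisfy $v_1 \notin N_K(x)$ to guarantee $xv_1 \notin E(K)$, so, given $v_2$, the number of valid $v_1$ is $d - |N_K(x) \cap N_K(v_2)| - O(k + |Z|)$.

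To deal with this codegree correction, I would sum over the valid set $S$ of choices for $v_2$ (of size $(1 \pm o(1))(n - 1 - d)$). Double-counting gives $\sum_{v_2 \in S}|N_K(x) \cap N_K(v_2)| = e_K(N_K(x), S)$. When $d \ge n^{0.99}$, property \ref{prop:UVgoodforuse} controls this by $(1 \pm n^{-0.01}) d^2 |S|/n$, while when $d < n^{0.99}$, the trivial bound $e_K(N_K(x), S) \le \sum_{z \in N_K(x)} d_K(z) = d^2$ combined with $d/n \le n^{-0.01}$ already gives a relative error $O(n^{-0.01})$. In either case,
\[
\sum_{v_2 \in S}\bigl(d - |N_K(x) \cap N_K(v_2)|\bigr) \;=\; (1 \pm n^{-0.01})\, |S|\, d(n-d)/n.
\]
Multiplying all per-step factors together telescopes to $|\Pi| = (1 \pm \eta/4)\, d^k(n-1-d)^k/n$: the $O((k+|Z|)^2/\min(d, n-d))$ error from distinctness and avoidance and the $O(n^{-0.01})$ error from codegree are both $o(\eta)$ because the hierarchy $1/C \ll 1/k \ll \sigma$ together with $d \ge C\log n$ and $n - d \ge \delta \ge n^{\sigma}$ absorbs both contributions into $\eta = C_0\mu/\log n$. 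Finally, substituting $p^k = \delta^k/(n-1-d)^k$ yields $\mathbb{E}[Y] = (1 \pm \eta/2)\, \delta^k d^k / n$, as desired.

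\textbf{The main obstacle} is handling the codegree correction at the $x$-end of the path with an error strictly better than $\eta = C_0\mu/\log n$: in the pseudorandom regime $d \ge n^{0.99}$ this is exactly the content of property \ref{prop:UVgoodforuse} (edges between the large sets $N_K(x)$ and $S$), and in the smaller-$d$ regime the codegree is already of relative order $d/n \le n^{-0.01}$ by a straightforward double-counting of $K$-edges incident to $N_K(x)$, so no further pseudorandomness is required.
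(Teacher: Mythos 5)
Your overall strategy is the right one and is essentially the paper's: write $\E Y=p^k|\Pi|$, count skeletons vertex by vertex with $d-O(k)$ or $(n-1-d)-O(k)$ choices per step, and handle one codegree correction using \ref{prop:UVgoodforuse} when $d\geq n^{0.99}$ and trivially when $d<n^{0.99}$. The paper places the correction differently (at the pair $(v_{2k-2},v_{2k-1})$, i.e.\ between the two sets $N_K(v_{2k-3})$ and $N_K(y)$, each of size exactly $d$), but that is largely cosmetic.

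There is, however, one step whose stated justification would fail. Your correction compares $d|S|$ with $e_K(N_K(x),S)$, where $|S|\approx n-1-d$. To apply \ref{prop:UVgoodforuse} you need $|S|\geq n^{0.98}$, and to conclude that the error term $n^{-0.01}d^2|S|/n$ is $o(\eta)$ relative to the main term $d|S|(n-d)/n$ you need $n^{-0.01}\cdot\frac{d}{n-d}=o(\eta)$. The bound you invoke, $n-d\geq\delta\geq n^{\sigma}$, gives neither of these when $d$ is close to $n$: it would permit $|S|\approx n^{\sigma}\ll n^{0.98}$ and leave a relative error as large as $n^{0.99-\sigma}$. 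What rescues the argument is that the hypotheses $\eps dn/2\leq m\leq\binom{n}{2}-dn/2$ force $\eps d\leq\delta\leq n-1-d$, hence $n-1-d\geq\frac{\eps}{1+\eps}(n-1)=\Omega(\eps n)$ --- the observation the paper records in the proof of Claim~\ref{clm:EiY}. With that in hand, $|S|=\Omega(\eps n)\geq n^{0.98}$ and the relative error is $O(n^{-0.01}/\eps)=o(\eta)$, and your proof goes through (note the same $\Omega(\eps n)$ bound, rather than $n^{\sigma}$, is also what makes the trivial bound adequate in the regime $d<n^{0.99}$, where $d/(n-d)\leq 4n^{-0.01}/\eps$). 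This is a small repair, but as written the large-$d$ case of your codegree estimate is not justified.
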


\begin{claim}\label{clm:EiY} $\mathbb{E}'Y\leq \left(\frac{8k}{\eps}\right)^k\cdot \frac{d^{k}\delta^{k-1}}{n}$.
\end{claim}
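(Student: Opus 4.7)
The plan is to bound $\E Y_A$ uniformly over all non-empty $A\subseteq[N]$ with $|A|=i\geq 1$ by direct combinatorial counting. Writing $A=\{f_1,\ldots,f_i\}$ as $i$ distinguished non-$K$-edges, the definition of $Y_A$ gives $\E Y_A = p^{k-i}\cdot N_A$, where $N_A$ counts $(K_n\setminus K,K)$-alternating $x,y$-paths of length $2k$ (with no interior in $Z$) that use each of $f_1,\ldots,f_i$ as one of their $k$ non-$K$-edges. My aim is to prove the upper bound $N_A = O_k((n-1-d)^{k-i-1}d^k)$; substituting $p=\delta/(n-1-d)$ then gives $\E Y_A = O_k(\delta^{k-i}d^k/(n-1-d)^i)$, and a short computation using $\delta\geq\eps d$ (from $m\geq\eps dn/2$), $\delta\geq n^{\sigma}$ and the hierarchy $1/k\ll\sigma$ delivers the target $(8k/\eps)^k d^k\delta^{k-1}/n$.

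First, I would enumerate the at most $(2k)^i$ configurations that assign each $f_t$ a non-$K$-edge position $q_t\in[k]$ and an orientation; such a configuration determines the pinned vertices $v_0=x$, $v_{2k}=y$ and the endpoints $v_{2q_t-2},v_{2q_t-1}$ of each $f_t$. For each configuration I would bound the number of paths by the number of walks consistent with the pinning, counted position by position: each free $v_j$ contributes a factor of at most $n-1-d$ when the edge at position $j$ lies outside $K$, and a factor of exactly $d$ when it lies in $K$.

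The key improvement over this na\"ive counting comes from a \emph{tightening} applied at every free $v_j$ whose successor $v_{j+1}$ is pinned and joined to $v_j$ by a $K$-edge: the membership $v_j\in N_K(v_{j+1})$ reduces the factor at position $j$ from $n-1-d$ to $\min(n-1-d,d)$. This tightening applies at $j=2k-1$ (from the endpoint $y$) and at each $j=2q_t-3$ with $q_t\geq 2$ (from the pinned $v_{2q_t-2}$), giving up to $i+1$ reductions per configuration. In the regime $d\leq(n-1)/2$ each tightening converts an $(n-1-d)$-factor into a $d$-factor and yields $(n-1-d)^{k-2i-1}d^k$ walks in a generic configuration; in the complementary regime $d>(n-1)/2$ tightening provides no gain, but the na\"ive bound $(2k)^i(n-1-d)^{k-i}d^{k-1-i}$ per configuration already suffices because $d^{i+1}$ is then large enough to overcome the $n$ in the target's denominator.

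The main obstacle will be the detailed case analysis for boundary configurations, where $q_1=1$, $q_i=k$, or consecutive $q_t,q_{t+1}$ differ by one: in each such case a pinned vertex collides with $v_0$ or $v_{2k}$, or two pinned vertices become consecutive, so one or more tightenings are lost while simultaneously a free choice is absorbed by a constraint check. Carefully tracking how the free odd and even position counts shift in each boundary scenario (for instance, pinning $2q_1-2=0$ when $q_1=1$ frees up an extra even position) and summing the per-configuration bounds over all $(2k)^i$ configurations, the maximum per-configuration term is of order $(n-1-d)^{k-i-1}d^k$; for $i$ close to $k$, where this formula would involve a negative exponent, the straightforward bound $N_A\leq i!\cdot 2^i$ (arising since pinning so many edges essentially determines the path) easily suffices. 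The constant $(8k/\eps)^k$ in the target comfortably absorbs all combinatorial factors arising from the case analysis.
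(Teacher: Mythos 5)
Your proposal is correct and follows essentially the same route as the paper's proof: enumerate the at most $(2k)^{|A|}$ placements and orientations of the pinned non-$K$-edges, bound the completions by degree counting, and gain the decisive factor from a $K$-edge pair with exactly one pinned endpoint (your ``tightening'' is precisely the paper's observation that $r_1\geq 1$, with the same witness-choosing argument), treating $|A|=k$ separately via $\delta\geq n^{\sigma}$ and $1/k\ll\sigma$. One small slip: from $N_A=O_k\left((n-1-d)^{k-i-1}d^k\right)$ and $p=\delta/(n-1-d)$ you get $\mathbb{E}Y_A=O_k\left(\delta^{k-i}d^k/(n-1-d)\right)$, not with $(n-1-d)^i$ in the denominator, but this weaker (and correct) bound still yields the target since $n-1-d\geq \eps n/4$.
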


From these claims it follows that $\mathbb{E}^{\max}Y = \max{\{\mathbb{E}Y, \mathbb{E}'Y\}} \leq 2\delta^kd^k/n$ and that $\mathbb{E}'Y\leq \left(\frac{8k}{\eps}\right)^k\cdot \delta^{k-1}d^{k}/n$. Let $\alpha=\log^2n$, and note, for an application of Theorem~\ref{thm:concofpolys}, that
\[
8^k(k!)^{1/2}(\E^{\max} Y\cdot \E' Y)^{1/2}\cdot \alpha^k\leq \frac{8^k(k!)^{1/2}(4\sqrt{k}/\sqrt{\eps})^k\log^{2k}n\cdot (\delta d)^{k}}{\delta^{1/2}n} \leq \frac{\eta}{2n}\delta^kd^k,
\]
where we have used that $\delta\geq n^{\sigma}$. Therefore, by Claim~\ref{clm:EY} and Theorem~\ref{thm:concofpolys}, we have that
\[
\P\left(Y\neq (1\pm \eta)\frac{\delta^kd^k}{n}\right)=O(\exp(-\log^2n+(k-1)\log n))=o(n^{-12-4k}),
\]
as required. It is left then only to prove the two claims.


\claimproofstart[Proof of Claim~\ref{clm:EY}] We prove this differently according to whether \textbf{a)} $d<n^{0.99}$ or \textbf{b)} $d\geq n^{0.99}$. Assume first that \textbf{a)} $d< n^{0.99}$ and note that in this case, $n/(n-d-1) \le (1+\eta/10)$. Note too that there are at most $d\cdot (dn)^{k-1}$ paths $v_0v_1\ldots v_{2k}$ such that $v_0=x$, $v_{2k}=y$, and $v_{2i-1}v_{2i}\in E(K)$ for each $0\leq i\leq k-1$. Thus we have,
\begin{align*}
\E Y\leq d\cdot (dn)^{k-1}\cdot p^{k}&= \frac{d^k}{n}\cdot\left(\frac{\delta n}{n-1-d}\right)^k\leq \left(1+\frac{\eta}{2}\right)\frac{\delta^kd^k}{n},
\end{align*}
where we have used that $\eta = C_0 \mu /\log{n}$ and $d<n^{0.99}$.

On the other hand, letting $v_0=x$ and $v_{2k}=y$, as $\eta d=C_0\mu d/\log n\geq C_0\mu C$, there are at least $(1-\eta/4)d$ choices of $v_{2k-1}\in N_K(v_{2k})\setminus (Z\cup \{v_{0}\})$. Then, for each $1\leq i\leq k-1$ in turn, choose $(v_{2i-1},v_{2i})$ so that $v_{2i-1}v_{2i}\in E(K)$ and $v_{2i-1}$ and $v_{2i}$ are distinct from $v_0,v_1,\ldots,v_{2i-2}, v_{2k-1}$ and $v_{2k}$, and not in $Z$, and $v_{2i-2}v_{2i-1},v_{2i}v_{2k-1}\notin E(K)$. For each such $i$, when we go to pick $(v_{2i-1},v_{2i})$, let
\[
W_i=Z\cup \{v_0,v_1,\ldots,v_{2i-2}\}\cup \{v_{2k-1},v_{2k}\}\cup N_K(v_{2i-2},v_{2k-1}),
\]
and note that $|W_i|\leq 6k+1+2d$. When we choose $(v_{2i-1},v_{2i})$, any $(v_{2i-1},v_{2i})$ with $v_{2i-1}v_{2i}\in E(K-W_i)$ has the properties we want, and 
\[
2e(K-W_i)\geq d(n-2|W_i|)\geq \left(1-\frac{\eta}{10k}\right)d(n-d-1), 
\]
where we have used that $d<n^{0.99}$ and $1/n\ll \mu,1/k,1/C_0$ and $\eta=C_0\mu /\log n$. Therefore, we have
\begin{align*}
\E Y\geq \left(1-\frac{\eta}{2k}\right)d\left(\left(1- \frac{\eta}{10k}\right)d(n-d-1)\right)^{k-1} p^k
\geq \left(1-\frac{\eta}{4}\right)\frac{d^k\delta^k}{(n-d-1)}
\geq \left(1-\frac{\eta}{2}\right)\frac{\delta^kd^k}{n}.
\end{align*}
Thus, the claim holds when $d< n^{0.99}$.


Therefore, we can assume that \textbf{b)} $d\geq n^{0.99}$, so that, in particular, \ref{prop:UVgoodforuse} holds. Note, moreover, that, as $d\leq (1-\eps/2)n$, we have $n-d-1\geq \eps n/3$.
Let $v_0=x$ and $v_{2k}=y$. For each $1\leq i\leq 2k-3$, pick $v_i\in [n]\setminus (\{v_{2k},v_0,v_1,\ldots,v_{i-1}\}\cup Z)$ such that $v_{i}\in N_{K_n\setminus K}(v_{i-1})$ if $i$ is odd and $v_{i}\in N_{K}(v_{i-1})$ is even. Note that the number of ways to do this is at least $\left(d - 6k\right)^{k-2}\left(n-d-1 - 6k\right)^{k-1} = (1 \pm \frac{\eta}{10k})^{2k-3}d^{k-2}(n-1-d)^{k-1} = ( 1 \pm \eta/4)d^{k-2}(n-1-d)^{k-1}$, where we have used that $d\geq n^{0.99}$ and $n-1-d\geq \delta\geq n^{\sigma}$. Then, pick distinct $v_{2k-2}$ and $v_{2k-1}$ in $[n]\setminus (\{v_{2k},v_0,v_1,\ldots,v_{2k-3}\}\cup Z)$ with $v_{2k-2}\in N_K(v_{2k-3})$, $v_{2k-1}\in N_K(v_{2k})$ and $v_{2k-2}v_{2k-1}\notin E(K)$, so that, by \ref{prop:UVgoodforuse}, the number of choices for $v_{2k-2}$ and $v_{2k-1}$ is, as $d\geq n^{0.99}$ and $n-1-d\geq \eps n/3$,
\[
(1\pm 2n^{-0.01})d^2-(1\pm n^{-0.01})d^2\cdot\frac{d}{n}=d^2\cdot \left(\frac{n-d-1}{n}\pm 4n^{-0.01}\right)= \left(1\pm \frac{\eta}{10}\right)\frac{d^2(n-d-1)}{n}.
\]
Therefore, the number of choices for $v_1,\ldots,v_{2k-1}$ is $(1\pm \eta/2)d^k(n-d-1)^k/n$. Thus, we have
\[
\E Y=\left(1\pm \frac{\eta}{2}\right)\frac{d^k(n-d-1)^kp^k}{n}=\left(1\pm \frac{\eta}{2}\right)\frac{d^k\delta^k}{n},
\]
as required, completing the proof of the lemma in the case $d\geq n^{0.99}$.
\claimproofend

\smallskip

\noindent\emph{Proof of Claim~\ref{clm:EiY}.}
Let $A\subset [N]$ with $1\leq |A|\leq k$. Let $v_0v_1\ldots v_{2k}$ be an arbitrary $x,y$-path (so that $v_0=x$ and $v_{2k}=y$) with $v_{2j-1}v_{2j}\in E(K)$ and $v_{2j-2}v_{2j-1}\notin E(K)$ for each $j\in [k]$ and $\{e_i:i\in A\}\subset \{v_{2j-2}v_{2j-1}:j\in [k]\}$. We will bound above the number of possibilities for such a path by counting the choices as we determine each of the vertices in this path. First, for each $i\in A$, choose which edge $v_{2j-2}v_{2j-1}$ is the edge $e_i$, and the order of its vertices, so that both $v_{2j-2}$ and $v_{2j-1}$ are determined. Note that there at most $(2k)^{|A|}\leq (2k)^k$ choices for this. 

Let $V_A=\{v_0,v_{2k}\}\cup\bigcup_{i \in A}V(e_i)$. Observe the following holds for each $j\in [k]$. If $\{v_{2j-1},v_{2j}\}\subset V_A$, then $(v_{2j-1},v_{2j})$ is already determined. If $|\{v_{2j-1},v_{2j}\}\cap V_A|=1$, then either $v_{2j-1}$ or $v_{2j}$ is determined, so there are at most $d$ possibilities for $(v_{2j-1},v_{2j})$ as $v_{2j-1}v_{2j}\in E(K)$. If $\{v_{2j-1},v_{2j}\}\cap V_A=\emptyset$, then there are at most $dn$ choices for $(v_{2j-1},v_{2j})$ as $v_{2j-1}v_{2j}\in E(K)$.  For each $s\in \{0,1,2\}$, let $r_s$ be the number of $j\in [k]$ with $|\{v_{2j-1},v_{2j}\}\cap V_A|=s$. Then, the number of choices for the vertices $v_0,v_1,\ldots,v_{2k}$ not in $V_A$ is at most $d^{r_1}(dn)^{r_0}$.
We have  $r_1 + 2r_2+1 = |V_A|$ and $r_0+r_1+r_2 = k$, so  $r_0=k-|V_A|/2 - r_1/2+1/2$. In particular, the number of choices  for the vertices is at most $d^{r_1}(dn)^{r_0} = d^{r_1}(nd)^{k-|V_A|/2 - r_1/2}$.

Suppose first that $|A|<k$.
If $v_{0}\in V(e_j)$ for some $j\in A$, then (as $v_{2k}\notin \cup_{i\in A}V(e_i)$) $|V_A|=2|A|+1$ and we can also observe that $r_1\geq 2$. Indeed, as $|A|<k$, not every edge $v_{2j}v_{2j+1}$, $0\leq j<k$, is in $\{e_i:i\in A\}$. Thus, we can choose $j_0$ to be the least $0\leq j<k$ with $v_{2j}v_{2j+1}\notin\{e_i:i\in A\}$, and $j_1$ to be the largest $0\leq j<k$ with $v_{2j}v_{2j+1}\notin\{e_i:i\in A\}$. Then, $v_{2j_1+1}\notin V_A$ and $v_{2j_1+2}\in V_A$, while, as $v_0v_1\in \{e_i:i\in A\}$, $j_0\geq 1$, so that $v_{2j_0-1}\in V_A$ but $v_{2j_0}\notin V_A$. Thus, as $j_0\leq j_1$, we have $r_1\geq 2$.
Therefore, the number of possibilities for $v_0,v_1\ldots ,v_{2k}$ not in $V_A$ is at most
\begin{equation}\label{eq:poss1}
d^{r_1}(nd)^{k-|V_A|/2 - r_1/2+1/2}
= \left(\frac{d}{n}\right)^{r_1/2}(nd)^{k-|A|} \le \frac{d}{n}\cdot (nd)^{k-|A|}.
\end{equation}

Suppose then that $v_{0}\notin V(e_j)$ for any $j\in A$, so that, therefore, $|V_A|=2|A|+2$. We can also observe that $r_1\geq 1$, as for the smallest $0\leq j<k$ such that $v_{2j}v_{2j+1}\in \{e_i:i\in A\}$ we have $|\{v_{2j-1},v_{2j}\}\cap V_A|=1$.
Then, the number of choices for $v_0,v_1,\ldots ,v_{2k}$ not in $V_A$ is at most
\begin{equation}\label{eq:poss2}
 d^{r_1}(nd)^{k-|V_A|/2 - r_1/2+1/2}
= d^{r_1}(nd)^{k-|A| - 1/2 - r_1/2}
=\left(\frac{d}{n}\right)^{r_1/2}\frac{1}{n^{1/2}}(nd)^{k-|A|}
\le \frac{\sqrt{d}}{n}\cdot (nd)^{k-|A|}.
\end{equation}

Recall that $n-1-d\geq \eps n/3$.
Therefore, including our initial choices, from \eqref{eq:poss1} and \eqref{eq:poss2}, we have that
\begin{align*}
\E Y_A&\leq (2k)^{k}\cdot \frac{d}{n}\cdot (nd)^{k-|A|}\cdot p^{k-|A|}=  (2k)^{k}\cdot \frac{d^{k-|A|+1}}{n}\cdot \left(\frac{\delta n}{n-1-d}\right)^{k-|A|}\\
&\leq \left(\frac{8k}{\eps}\right)^k\cdot \frac{d^{k-|A|+1}\delta^{k-|A|}}{n}\leq \left(\frac{8k}{\eps}\right)^k\cdot \frac{d^{k}\delta^{k-1}}{n},
\end{align*}
where we have used that $|
A|\geq 1$.

Furthermore, if $|A|=k$, then $V_A=\{v_i:0\leq i\leq 2k\}$. In this case the number of choices is $O(1)$, so that $\E Y_A=O(1)=o(\delta^{k-1}d^{k-1}/n)$, where we have used that $\delta\geq n^{\sigma}$ and $1/k\ll \sigma$.
Therefore, we have $\mathbb{E}'Y\leq \left(\frac{8k}{\eps}\right)^k\cdot \frac{d^{k}\delta^{k-1}}{n}$, as required.\hspace{9.70cm}$\boxdot$
\end{proof}

\section{Proof of Theorem~\ref{thm:maintechnicalthm-forprocess}}\label{sec:maintechnicalconclusion}

In Section~\ref{sec:translating}, we prove a simple result that allows us to convert likely properties of $(F,K)$, where $F\sim F(n,d,m)$ is generated by adding random non-edges to $K\sim G_d(n)$, to properties likely to hold for most of the $d$-regular subgraphs $K$ of $F\sim F(n,d,m)$. Then, in Section~\ref{subsec:proofofmaintechnical}, using the results in Sections~\ref{sec:switching}--\ref{sec:noncritical}, we prove Theorem~\ref{thm:maintechnicalthm-forprocess}.


\subsection{Translating likely properties of $G_d(n)$ into subgraphs of $F(n,d,m)$}\label{sec:translating}

Among all graphs $F'$ with $dn/2 + m$ edges, where $F\sim F(n,d,m)$, we have that $\P(F=F')$ is proportional to $|\mathcal{K}_d(F')|$. Thus, any property likely to hold for $G\sim G_d(n)$ is likely to hold for most of the $d$-regular subgraphs of $F\sim F(n,d,m)$. This is important for our argument, so we prove this precisely with the bounds we will use, as follows.

\begin{lemma}\label{lem:translate}
Let $d,n,m\in \N$ satisfy $0\leq  m\leq (n-1-d)n/2$. Let $K\sim G_d(n)$, let $E$ be a uniformly random set of $m$ edges from $[n]^{(2)}\setminus E(K)$, and let $F=K+E$.
Let $\mathcal{P}$ be such that $(F,K)\notin \cP$ with probability $o(n^{-8})$.
Then, with probability $1-o(n^{-2})$, $|\{K'\in \mathcal{K}_d(F):(F,K')\notin \cP\}|\leq  n^{-6}\cdot|\cK_d(F)|$.
\end{lemma}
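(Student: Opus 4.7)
The plan is to use the natural symmetry of the joint distribution of $(F,K)$ together with Markov's inequality. The key observation is that the probability of any specific pair $(F',K')$ with $K' \in \cK_d(F')$ and $e(F')=dn/2+m$ is constant across all such pairs. Indeed, by the definition of the process,
\[
\P(F=F', K=K') = \P(K=K') \cdot \binom{\binom{n}{2}-dn/2}{m}^{-1} = \frac{1}{|\cK_d(n)|}\binom{\binom{n}{2}-dn/2}{m}^{-1},
\]
which does not depend on $(F',K')$ (subject to the constraints $K' \in \cK_d(F')$ and $e(F')=dn/2+m$).

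Consequently, conditional on $F=F'$, the random variable $K$ is uniformly distributed on $\cK_d(F')$. Define the random variable
\[
X = \frac{|\{K'\in \cK_d(F):(F,K')\notin \cP\}|}{|\cK_d(F)|},
\]
so that $0 \le X \le 1$. By the uniformity just established,
\[
\P((F,K)\notin \cP \mid F) = X,
\]
and hence $\E X = \P((F,K)\notin \cP) = o(n^{-8})$ by the hypothesis on $\cP$.

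Finally, I would apply Markov's inequality to obtain
\[
\P(X \geq n^{-6}) \leq \frac{\E X}{n^{-6}} = o(n^{-8})\cdot n^6 = o(n^{-2}),
\]
which is exactly the statement of the lemma. There is no real obstacle here: the whole argument hinges on the symmetry observation that $\P(F=F',K=K')$ is constant over valid pairs, after which Markov's inequality immediately converts the $o(n^{-8})$ error probability on pairs into an $o(n^{-2})$ probability that the $F$-conditional fraction of ``bad'' $d$-regular subgraphs exceeds $n^{-6}$.
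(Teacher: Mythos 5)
Your proof is correct and is essentially the paper's argument: both rest on the observation that $\P((F,K)=(F',K'))$ is constant over valid pairs, so that conditionally on $F$ the graph $K$ is uniform on $\cK_d(F)$, and the paper's explicit double-counting inequality is exactly your application of Markov's inequality to the conditional bad fraction $X$.
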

\begin{proof} Let $\mathcal{F}$ be the set of graphs $F'$ with $m+dn/2$ edges, vertex set $[n]$, and $\cK_d(F')\neq\emptyset$, and note that $\P(F\in \mathcal{F})=1$. Let $\mathcal{F}'$ be the set of $F'\in \mathcal{F}$ for which
$|\{K'\in \mathcal{K}_d(F'):(F',K')\notin \cP\}|> n^{-6}|\cK_d(F')|$.
We wish to prove that $\P(F\in \mathcal{F}')=o(n^{-2})$. For this, note that
\begin{align*}
\P((F,K)\notin \cP)&=\sum_{F'\in \mathcal{F}}\sum_{K'\in \mathcal{K}_d(F')}\P((F,K)=(F',K'))\cdot \mathbf{1}_{(F',K')\notin\cP}\\
&=\frac{1}{|\cK_d(n)|\binom{\binom{n}{2}-dn/2}{m}}\sum_{F'\in \mathcal{F}}\sum_{K'\in \mathcal{K}_d(F')}\mathbf{1}_{(F',K')\notin\cP}\\
&>\frac{1}{|\cK_d(n)|\binom{\binom{n}{2}-dn/2}{m}}\sum_{F'\in \mathcal{F}'}\frac{1}{n^6}\cdot |\cK_d(F')|
=\frac{1}{n^6}\cdot \P(F\in \cF').
\end{align*}
Thus, as $\P((F,K)\notin \cP)=o(n^{-8})$, we have $\P(F\in \cF')=o(n^{-2})$, as required.
\end{proof}


\subsection{Proof of Theorem~\ref{thm:maintechnicalthm-forprocess}}\label{subsec:proofofmaintechnical}
Finally, we can combine our work so far to prove Theorem~\ref{thm:maintechnicalthm-forprocess}.
\begin{proof}[Proof of Theorem~\ref{thm:maintechnicalthm-forprocess}]
 Take the variables as set up in Theorem~\ref{thm:maintechnicalthm-forprocess}, so that $1/n\ll 1/C\ll\mu \ll 1/C_0 \ll \eps\ll 1$, $d\geq C\log n$, and $\eps dn/2\leq m\leq (n-1-d)n/2$. Let 
\begin{equation}
\eta = C_0\max\left\{\frac{\mu}{\log n},\left(\frac{n\log n}{m}\right)^{1/8}\right\}.
\end{equation}
Pick $k$ and $\sigma$ with $1/n\ll 1/k\ll \sigma\ll 1$. If $d\leq n^{\sigma}$, then let $\ell=\lceil 2\log n\rceil$, and otherwise let $\ell=k$.

Let $p=m(\binom{n}{2}-dn/2)^{-1}$. Let $K\sim G_d(n)$. Let $E$ be a set of $m$ edges chosen uniformly at random from $[n]^{(2)}\setminus E(K)$, and let $F=K+E$. Furthermore, form $E'$ by including each element of $[n]^{(2)}\setminus E(K)$ uniformly at random with probability $p$, and let $F'=K+E'$. Note that $p$ is chosen so that $\P(|E'|=i)$ is maximised at $i=m$, and, thus $\P(|E'|=m)\geq 1/n^2$.
We will show that, that with probability $1-o(n^{-10})$, \ref{prop:degreebound}, \ref{prop:lowerboundKFKalternating} and  \ref{prop:upperboundFKKalternating} hold for $F'$ in place of $F$. From this, it holds that, with probability $1-o(n^{-8})$, \ref{prop:degreebound}, \ref{prop:lowerboundKFKalternating} and \ref{prop:upperboundFKKalternating} (defined in Theorem~\ref{thm:switching:propstoapply}) hold for $F$. Then, by Lemma~\ref{lem:translate}, with probability $1-o(n^{-2})$ we have that \ref{prop:degreebound} holds and, for all but at most $n^{-6}|\mathcal{K}_d(F)|$ graphs $K\in \mathcal{K}_d(F)$ we have that \ref{prop:lowerboundKFKalternating} and  \ref{prop:upperboundFKKalternating} hold. Therefore, by Theorem~\ref{thm:switching:propstoapply}, with probability $1-o(n^{-2})$, \eqref{eq:dregcontainingeorfinthm} holds for each $e,f\notin E(F)$.

Thus, to complete the proof of  Theorem~\ref{thm:maintechnicalthm-forprocess} it suffices to proof that, with probability $1-o(n^{-10})$ \ref{prop:degreebound}, \ref{prop:lowerboundKFKalternating}, and  \ref{prop:upperboundFKKalternating} hold for $F'$ in place of $F$.
That \ref{prop:degreebound} holds with probability $1-o(n^{-10})$ follows from Lemma~\ref{lem:nbrhoodsum}. If $d\geq n^{\sigma}$, then \ref{prop:lowerboundKFKalternating} and  \ref{prop:upperboundFKKalternating} hold with probability $1-o(n^{-10})$ by Lemma~\ref{lem:alternatingpathsfromKV}. If $d< n^{\sigma}$, then \ref{prop:lowerboundKFKalternating} and  \ref{prop:upperboundFKKalternating} hold with probability $1-o(n^{-10})$ by Theorem~\ref{thm:randomproperties:lowerbounds:newnew} and Lemma~\ref{lem:randomproperties:upperbounds:newnew}, respectively.
\end{proof}



\section{The lower part of the sandwich}\label{sec:lowerpart}


In this section, we give the lower part $(G_*,G)$ of the sandwich and prove it satisfies Theorem~\ref{thm:sandwich}. As noted, this part of the theorem previously follows from the combination of work by Kim and Vu~\cite{kim2004sandwiching}, Dudek, Frieze, Ruci{\'n}ski and {\v{S}}ileikis~\cite{dudek2017embedding} and Gao, Isaev and McKay~\cite{gao2020sandwichingFIRSTBOUND,gao2022sandwichingFIRSTBOUND}. Moreover, in~\cite{gao2020kim}, Gao, Isaev and McKay use an edge addition process that is effectively the same as we use here, and analyse it as long as $d=\omega(\log n)$. We include the lower part of the sandwich for completion, and because, due to the use of our corresponding version of Lemma~\ref{lem:translate}, the analysis is different. For the coupling $(G_*,G)$, the process used to create $G$ is the complement of the process used for the upper part, while for ease of analysis we generate $G_*$ via a sequence of graphs $H_0,H_1,\ldots$ chosen to be more like the complement of the graphs $G_0^+,G_1^+,\ldots$ in Section~\ref{subsec:clm:main:c}. This coupling is given in Section~\ref{subsec:additionprocess}. We then given an overview of the analysis of the process required in Section~\ref{sec:lowerpartanalysis}, and outline its proof over the rest of this section.


\subsection{Edge addition process}\label{subsec:additionprocess}
Let $1/n\ll 1/C\ll \eps\ll 1$ and $d\geq C \log n$. Let $\eta$ satisfy $1/C\ll \eta \ll \eps$.
 Let $e_1,e_2,\ldots$ be a sequence of edges drawn independently and uniformly at random from $[n]^{(2)}$. Let $x_1,x_2,\ldots$ be independent random variables with $x_i\sim U([0,1])$ for each $i\geq 1$.
Let $F_0$ be the empty graph with vertex set $[n]$, and let $\ell(0)=0$. For each $1\leq i\leq dn/2$, do the following.
Let $\ell(i)$ be the least $\ell(i)>\ell(i-1)$ for which $e_{\ell(i)}\notin E(F_{i-1})$ and
\begin{equation*}
x_{\ell(i)}\leq \frac{|\{K\in \cK_d(n):F_{i-1}+e_{\ell(i)}\subset K\}|}{\max_{e\notin E(F_{i-1})}|\{K\in \cK_d(n):F_{i-1}+e\subset K\}|},
\end{equation*}
and let $F_i=F_{i-1}+e_{\ell(i)}$.
Then, let $G=F_{dn/2}$.

Let $H_0$ be the empty graph with vertex set $[n]$ and let $k(0)=0$. For each $1\leq i\leq \binom{n}{2}$, let $k(i)$ be the least $k>k(i)$ such that $e_{k}\notin \{e_1,\ldots,e_{k-1}\}$ and, if $x_i\leq 1-\eta$ then let $H_i=H_{i-1}+e_{k(i)}$, and, otherwise, let $H_i=H_{i-1}$. Let $N=\lfloor dn/2-\eta n\rfloor$. Let $M\sim \mathrm{Bin}\left((1-\eps)\frac{d}{n},\binom{n}{2}\right)$. If $e(H_N)\geq M$, then let $G_*$ have vertex set $[n]$ and edge set a uniformly random subset of $M$ edges of $E(H_N)$. If $e(H_N)<M$, then let $G_*$ have vertex set $[n]$ and edge set a uniformly random subset of $M$ edges of $[n]^{(2)}$.


\subsection{Analysis overview}\label{sec:lowerpartanalysis}

Similarly to $G^*$ in Section~\ref{sec:process}, it follows by symmetry that for any two graphs $G',G''$ with $V(G')=V(G'')=[n]$ and $e(G')=e(G'')$ we have $\P(G_*=G')=\P(G_*=G'')$, and thus, due to the choice of $M$, $G_*\sim G\left(n,(1-\eps)\frac{d}{n},\binom{n}{2}\right)$.
Furthermore, when $e(H_N)\geq M$ we have $G_*\subset H_N$.

Now, similarly to $H_i$ in Section~\ref{subsec:clm:main:c}, $e(H_N)\sim \mathrm{Bin}(N,1-\eta)$ so, by a Chernoff bound we have that $e(H_N)\geq (1-2\eta)N \ge (1-3\eta)dn/2$ with high probability. As, by another Chernoff bound, $M\leq (1-\eps+\eps/2)dn/2$ with high probability, we have $e(H_N)\geq M$ with high probability. Thus, to prove $G_*\subset G$ with high probability it is sufficient to prove that $H_N\subset G$ with high probability.

Similarly to Claim~\ref{clm:analysisoverview}~\ref{clm:main:b}, we have the following claim, which we prove in Section~\ref{subsec:lowerboundFdist}.
Recall from Definition~\ref{defn:Fminusndm} that we say $F\sim F^-(n,d,m)$ if $F$ is a random graph with vertex set $[n]$ which has the same distribution as $H-E$ where $H\sim G_d(n)$ and $E$ is a uniformly-chosen set of $m$ edges from $E(H)$.

\begin{claim}\label{clm:analysisoverview:lb} For each $0\leq i\leq dn/2$, $F_i\sim F^-(n,d,dn/2-i)$.
\end{claim}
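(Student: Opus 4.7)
The plan is to proceed by induction on $i$, closely paralleling the proof of Claim~\ref{clm:analysisoverview}~\ref{clm:main:b} in Section~\ref{subsec:clm:main:b}, but with edge addition replacing edge deletion. First I would record the target: for any graph $F$ with $V(F)=[n]$, $e(F)=i$ and $\cK_d(F)\neq\emptyset$,
\[
\P(F^-(n,d,dn/2-i)=F)=\frac{|\cK_d(F)|}{|\cK_d(n)|}\cdot\binom{dn/2}{i}^{-1},
\]
which follows immediately from Definition~\ref{defn:Fminusndm} by summing over the $d$-regular supergraphs of $F$ and then choosing which $dn/2-i$ edges to delete. The base case $i=0$ then just says $F_0$ is almost surely empty, which is immediate from the process.

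For the inductive step, fix $F$ as above and $e\in E(F)$, and compute $\P(F_i=F\mid F_{i-1}=F-e)$. Between steps $i-1$ and $i$ the process iterates: draw $e_j$ uniformly in $[n]^{(2)}$; if $e_j\in E(F_{i-1})$ skip, otherwise accept $e_j$ with probability $p_{F',f}=|\cK_d(F'+f)|/\max_{g\notin E(F')}|\cK_d(F'+g)|$, where $F'=F-e$ and $f=e_j$. Skips and rejections restart the process independently, so, conditional on eventual acceptance, the accepted edge equals a particular $f$ with probability proportional to $p_{F',f}$. The maximum in the denominator of $p_{F',f}$ cancels, leaving
\[
\P(F_i=F\mid F_{i-1}=F-e)=\frac{|\cK_d(F)|}{\sum_{g\notin E(F-e)}|\cK_d(F-e+g)|}=\frac{|\cK_d(F)|}{(dn/2-i+1)|\cK_d(F-e)|},
\]
where the last equality is a double-counting step: summing $|\cK_d(F-e+g)|$ over $g\notin E(F-e)$ counts pairs $(K,g)$ with $F-e\subset K$ and $g\in E(K)\setminus E(F-e)$, giving $(dn/2-(i-1))|\cK_d(F-e)|$.

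Finally I would combine this with the inductive hypothesis by summing over $e\in E(F)$:
\[
\P(F_i=F)=\sum_{e\in E(F)}\frac{|\cK_d(F)|}{(dn/2-i+1)|\cK_d(n)|}\binom{dn/2}{i-1}^{-1}=\frac{i\cdot|\cK_d(F)|}{(dn/2-i+1)|\cK_d(n)|}\binom{dn/2}{i-1}^{-1},
\]
which collapses to $\frac{|\cK_d(F)|}{|\cK_d(n)|}\binom{dn/2}{i}^{-1}$ via $\binom{dn/2}{i}=\binom{dn/2}{i-1}(dn/2-i+1)/i$, completing the induction. The only step needing care is the waiting-time analysis that identifies the conditional acceptance probability; once one observes that skip/reject steps are independent resets, the rest is mechanical. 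I do not anticipate any serious obstacle, and the argument is essentially the mirror image of the edge-deletion computation already carried out in Section~\ref{subsec:clm:main:b}.
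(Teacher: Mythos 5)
Your argument is correct, but it takes the other of the two routes that the paper flags at the start of Section~\ref{subsec:lowerboundFdist}: you redo the induction of Claim~\ref{clm:analysisoverview}~\ref{clm:main:b} directly for the edge-addition process, whereas the paper instead observes that the addition process for degree $d$ is precisely the complement of the edge-deletion process for degree $n-1-d$ (the acceptance probability, the analogue of \eqref{eqn:goodpartial}, rewrites verbatim as the condition \eqref{eqn:goodprobupperinprocessforF} for $\cK_{n-1-d}$ applied to $F_i^c$), and then simply quotes Claim~\ref{clm:analysisoverview}~\ref{clm:main:b} --- noting its proof nowhere used $d\geq C\log n$ --- and takes complements. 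Your computation is sound at every step: the closed form for $\P(F^-(n,d,dn/2-i)=F)$ is the right analogue of \eqref{eq:distofFndm}, the waiting-time argument (i.i.d.\ trials, condition on eventual acceptance, the maximum in the denominator cancels) gives the stated conditional probability, the double count yields $(dn/2-i+1)\,|\{K\in\cK_d(n):F-e\subset K\}|$, and the binomial identity closes the induction; this is essentially the mirror image of Section~\ref{subsec:clm:main:b}, as you say. What the paper's complementation buys is that none of this needs to be re-derived, avoiding a second, nearly verbatim waiting-time computation; what your route buys is self-containedness and no bookkeeping about how the parameters transform under complementation (indeed the paper's one-line conclusion misstates the parameter, asserting $F_i\sim F^-(n,d,i)$ rather than $F^-(n,d,dn/2-i)$, exactly the sort of slip the direct induction avoids). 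One notational caution: in the lower part the relevant counts are of $d$-regular graphs \emph{containing} $F$, i.e.\ $|\{K\in\cK_d(n):F\subset K\}|$, whereas the paper reserves $\cK_d(F)$ for $d$-regular \emph{subgraphs} of $F$ (which is empty whenever $e(F)<dn/2$); your formulas are correct once read with the former meaning, but they should not be written as $|\cK_d(F)|$ or $|\cK_d(F-e)|$.
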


We will also use Lemma~\ref{lem:maintechnicallem-forprocess}. After some preliminary work in Sections~\ref{subsec:concentrationforlowerpart} and~\ref{sec:translatingforlowerbound}, we prove this lemma in Section~\ref{subsec:LemmaTechlowerboundproof}.
We will now show that it follows from Claim~\ref{clm:analysisoverview:lb} and Lemma~\ref{lem:maintechnicallem-forprocess} that $(G_*,G)$ satisfies the desired properties in Theorem~\ref{thm:sandwich}. We have already noted that  $G_*\sim G\left(n,(1-\eps)\frac{d}{n},\binom{n}{2}\right)$, and it follows from Claim~\ref{clm:analysisoverview:lb} with $i=dn/2$, that $G=F_{dn/2}\sim G_d(n)$. Therefore, from the discussion above, we need only show that $\P(H_N\subset G)=1-o(1)$.

By Claim~\ref{clm:analysisoverview:lb} and Lemma~\ref{lem:maintechnicallem-forprocess} (applied with $\eps=\eta$) with a union bound, with high probability, for each $0\leq i\leq N$ and each $e,f\notin E(F_{i-1})$, we have
\begin{equation*}
|\{K\in \mathcal{K}_d(n):F_{i-1}+e\subset E(K)\}|\leq \left(1+\eta\right)|\{K\in \mathcal{K}_d(n):F_{i-1}+f\subset E(K)\}|
\end{equation*}
and in particular,
\begin{equation}\label{eq:dregcontainingeorfinlem:lowerbound:analysis}
    \frac{|\{K\in \cK_d(n):F_{i-1}+e_{\ell(i)}\subset K\}|}{\max_{e\notin E(F_{i-1})}|\{K\in \cK_d(n):F_{i-1}+e\subset K\}|} \ge \frac{1}{1+\eta} > 1 - \eta.
\end{equation}

Suppose, then, that $e\in E(H_N)$. Thus, there is some $1\leq i\leq N$ with $e_{k(i)}=e$, with $e$ not in $e_{1},\dots,e_{k(i)-1}$, and with $x_{k(i)}\leq 1-\eta$. Now, the edges $e_{\ell(1)},e_{\ell(2)},\ldots,e_{\ell(N)}$ are all distinct, so we must have $\ell(N)\geq k(N)$. Thus, there is some $j\in [N]$ with $\ell(j-1)<k(i)\leq \ell(j)$. As $e$ is not in $e_{1},\dots,e_{k(i)-1}$, $x_{k(i)} \le 1 - \eta$ and \eqref{eq:dregcontainingeorfinlem:lowerbound:analysis} holds, we therefore have $\ell(j)=k(i)$ and $e=e_{\ell(j)}\in E(F_{j})\subset E(F_{dn/2})=E(F)$. Thus, $H_N\subset F$.

It is left then only to prove Claim~\ref{clm:analysisoverview:lb} and Lemma~\ref{lem:maintechnicallem-forprocess}.
We prove Claim~\ref{clm:analysisoverview:lb} in Section~\ref{subsec:lowerboundFdist}. As discussed in Section~\ref{subsec:switching}, to prove Lemma~\ref{lem:maintechnicallem-forprocess} we will use switching techniques, and in Section~\ref{subsec:concentrationforlowerpart} we give the concentration result for switching paths that we will use for this, that is, Lemma~\ref{lem:concforlowerpart}. In Section~\ref{sec:translatingforlowerbound}, we note that a corresponding version of Lemma~\ref{lem:translate} holds here.


\subsection{Distribution of $F_i$: proof of Claim~\ref{clm:analysisoverview:lb}}
\label{subsec:lowerboundFdist}

Claim~\ref{clm:analysisoverview:lb} can be shown by induction similarly to Claim~\ref{clm:analysisoverview}\ref{clm:main:b}, or we can observe our edge addition process for $d$ is the complement of the edge deletion process for $n-1-d$.

Indeed, for each $0\leq i\leq dn/2$, let $F^c_i$ be the complement of $F_i$. Observe that $\ell(i)$ is the least $\ell(i)>\ell(i-1)$ for which $e_{\ell(i)}\in E(F^c_{i-1})$ and
\begin{align*}
x_{\ell(i)}&\leq \frac{|\{K\in \cK_d(n):F_{i-1}+e_{\ell(i)}\subset K\}|}{\max_{e\notin E(F_{i-1})}|\{K\in \cK_d(n):F_{i-1}+e\subset K\}|}
=\frac{|\{K\in \cK_d(n):K^c\subset F^c_{i-1}-e_{\ell(i)}\}|}{\max_{e\notin E(F_{i-1})}|\{K\in \cK_d(n):K^c\subset F^c_{i-1}-e\}|}\\
&=\frac{|\cK_{n-1-d}(F^c_{i-1}-e_{\ell(i)})|}{\max_{e\in E(F^c_{i-1})}|\cK_{n-1-d}(F^c_{i-1}-e)|}
\end{align*}
and $F_i^c=F_{i-1}^c-e_{\ell(i)}$.

Thus, we can form $F^c_i$ using an edge deletion process as in Section~\ref{sec:process}.  Thus (as there we did not use the condition $d\geq C\log n$), we have that $F^c_i\sim F(n,n-d-1,i)$ for each $i\in [dn/2]$, from which it can be easily seen that $F_i\sim F^-(n,d,i)$ for each $i\in [dn/2]$, as required.



\subsection{Concentration for switching paths}\label{subsec:concentrationforlowerpart}
As discussed in Section~\ref{subsec:switching}, in this case at least one of the graphs used for the switching paths in the proof of Lemma~\ref{lem:maintechnicallem-forprocess} is dense, so showing the concentration results that we need for the switching paths is much easier than for the upper part of the sandwich. We do this now, where the main subtlety we need to consider is the case where $d$ is very large (so that the graph $K_n\setminus F$ is very sparse).

\begin{lemma}\label{lem:concforlowerpart}
Let $1/n\ll 1/C\ll \eta,\eps$. Let $d\geq C\log n$ and $\eps dn\leq  m\leq d n/2$. Let $K\sim G_d(n)$, let $E$ be a uniformly random set of $m$ edges from $E(K)$, and let $F=K-E$. Then, with probability $1-o(n^{-8})$, the following holds.

Let $x,y\in [n]$ be distinct and let $Z\subset [n]\setminus \{x,y\}$ satisfy $|Z|\leq 10$. Suppose, moreover, that if $d\geq n-10^3/\eta$, then $N_{K_n\setminus K}(x,Z\cup \{y\})=\emptyset$. Then, the number of $(K_n\setminus K,K\setminus F)$-alternating $x,y$-paths of length $4$ is $(1\pm \eta)(2m)^2(n-1-d)^2/n^3$.
\end{lemma}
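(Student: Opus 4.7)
My plan is to expose the two random objects in two stages---first $K\sim G_d(n)$, then the uniformly chosen $m$-subset $E\subseteq E(K)$---combining a codegree-based vertex-by-vertex count with Kim--Vu concentration.

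First I would show that with probability $1-o(n^{-9})$ over $K$, the codegrees satisfy $|N_K(u)\cap N_K(v)|=d^2/n\pm O(\sqrt{d\log n})$, or equivalently the complement codegrees $|N_{K_n\setminus K}(u)\cap N_{K_n\setminus K}(v)|$ concentrate around $(n-1-d)^2/n$. For $d\le n^{0.99}$ this follows from a standard switching argument on $G_d(n)$, analogous to Lemmas~\ref{lem:fewbadgraphsexpander}--\ref{lem:fewbadgraphsexpandertwo}; for $d\ge n^{0.99}$ the complement is so sparse that the required bounds follow from Lemma~\ref{lem:probUVgoodinK} applied to $K_n\setminus K$. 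Given such a pseudorandom $K$, a direct vertex-by-vertex count, using the identity
\[
|N_K(y)\cap N_{K_n\setminus K}(v_2)|=(n-d)-|N_{K_n\setminus K}(y)\cap N_{K_n\setminus K}(v_2)|\pm O(1),
\]
yields, for every $(x,y)$ satisfying the hypothesis,
\[
N(x,y):=\#\{xv_1v_2v_3y:xv_1,v_2v_3\in K_n\setminus K,\,v_1v_2,v_3y\in E(K)\}=\bigl(1\pm\tfrac{\eta}{4}\bigr)\,\frac{d^2(n-1-d)^2}{n}.
\]

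Next I would condition on such a $K$ and expose $E$ via Poissonisation: let $q=2m/(dn)$ and let $\hat E\subseteq E(K)$ include each edge independently with probability $q$. Because the uniform law of $E$ equals the conditional law of $\hat E$ given $|\hat E|=m$, and $\P(|\hat E|=m)=\Theta(m^{-1/2})\ge n^{-1}$, it suffices to prove the desired estimate with $\hat E$ replacing $E$ with probability $1-o(n^{-9})$. For fixed $(x,y)$ the count $Y$ of $(K_n\setminus K,\hat E)$-alternating $x,y$-paths of length $4$ is a degree-$2$ polynomial in the independent Bernoulli indicators $\{\mathbf 1[e\in\hat E]\}_{e\in E(K)}$, with mean $N(x,y)q^2=(1\pm\eta/4)(2m)^2(n-1-d)^2/n^3$. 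Applying Theorem~\ref{thm:concofpolys} with $k=2$ and $\alpha=\log^2 n$, one computes $\E^{\max}Y=\Theta(\E Y)$ and $\E'Y=O(q\cdot d(n-d)^2/n)$, whence the Kim--Vu deviation is $O(\sqrt{n/m}\cdot\E Y\cdot\log^4 n)\le(\eta/4)\E Y$ because $m\ge\eps dn\ge\eps Cn\log n$ and $1/C\ll\eta$; the failure probability is $\exp(-\log^2 n+O(\log n))=o(n^{-20})$, allowing a union bound over the $n^2$ pairs $(x,y)$.

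The delicate step is Step~1, specifically the walk count $N(x,y)$ in the very dense regime $d\ge n-10^3/\eta$, where $(n-1-d)=O(1/\eta)$ is essentially a constant and a miscount by $\pm1$ in the $v_1$- or $v_3$-counts would already wreck the $(1\pm\eta/4)$ estimate. The explicit hypothesis $N_{K_n\setminus K}(x,Z\cup\{y\})=\emptyset$ is precisely what rules this out: it forces $xy\in E(K)$ (and $xz\in E(K)$ for $z\in Z$), so that $y\notin N_{K_n\setminus K}(x)$ and the $v_1$-count is exactly $n-1-d$ rather than $n-2-d$, while small correction terms of the form $\mathbf 1[yv_2\in E(K)]$ or $\mathbf 1[xv_2\in E(K)]$ in the $v_3$-count are shown to contribute negligibly. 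All the sharp bookkeeping in this regime must be performed via complement codegrees; this is the technical heart of the argument, whereas the Poissonisation and Kim--Vu steps are comparatively routine.
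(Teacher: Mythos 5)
Your overall architecture (expose $K$ first, count walk skeletons via codegrees, then Poissonise $E$ and concentrate a degree-2 polynomial) is reasonable, but the concentration step fails exactly in the regime this lemma is needed for. With $k=2$ one has $\E' Y/\E Y \approx 1/(qd)=n/(2m)$, so the Kim--Vu deviation in Theorem~\ref{thm:concofpolys} is of order $\E Y\cdot\sqrt{n/m}\cdot\alpha^{2}$, and since the failure probability is only $O(\exp(-\alpha+\log n))$ you must take $\alpha\gtrsim 20\log n$ (you took $\alpha=\log^{2}n$). Hence you need $\sqrt{n/m}\,\log^{2}n\le c\eta$ (or $\sqrt{n/m}\,\log^{4}n\le c\eta$ with your choice), i.e.\ $m\gtrsim n\log^{4}n/\eta^{2}$ at the very best. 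But $m\le dn/2$, so for $C\log n\le d\lesssim \log^{4}n/\eta^{2}$ --- precisely the previously open range --- your claimed inequality ``$O(\sqrt{n/m}\,\E Y\log^{4}n)\le(\eta/4)\E Y$ because $m\ge \eps C n\log n$ and $1/C\ll\eta$'' is false: $\sqrt{n/m}\log^{4}n\approx\log^{3.5}n/\sqrt{\eps C}\to\infty$, and $C$ is a constant that cannot grow with $n$. This is why the paper invokes Kim--Vu only when $\delta\ge n^{\sigma}$ (Section~\ref{sec:noncritical}); in the proof of this lemma it instead arranges matters so that only linear/binomial statistics need concentrating (the degree $d_{K\setminus F}(y)$, the number of $K\setminus F$-edges avoiding a small vertex set, and, in the dense cases, counts split into bounded-multiplicity families as in Lemma~\ref{lem:nbrhoodsum}), which Chernoff handles down to $d=C\log n$.

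There is also a gap in Step 1 in the dense regime. For $d\ge n^{0.99}$ you claim the codegree bounds ``follow from Lemma~\ref{lem:probUVgoodinK} applied to $K_n\setminus K$'', but \ref{prop:UVgood} only controls $e_K(U,V)$ for $|U|,|V|\ge n^{0.98}$ (and applying the lemma to the complement would require $n-1-d\ge n^{0.99}$); it says nothing about individual codegrees, so the per-pair estimate does not follow. Worse, when $n-1-d=O(1/\eta)$ a per-pair bound $|N_K(y)\cap N_{K_n\setminus K}(v_2)|=(1\pm\eta/8)\,d(n-1-d)/n$ is simply false for some pairs: the left side is an integer of size $\Theta(1/\eta)$, and a single complement codegree of $1$ (or the indicator of $yv_2\notin E(K)$) shifts it by $1$, exceeding the allowed relative error. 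What is true, and what your sketch defers with ``shown to contribute negligibly'', is an aggregate statement (the total contribution of such pairs to the walk count is $O((n-1-d)^{4})$, which is $o(\eta)\cdot d^{2}(n-1-d)^{2}/n$); the paper carries out exactly this bookkeeping using the exact $d$-regularity of $K$ (every complement degree equals $n-1-d$), counting the pair set $\hat{E}$ in its case $n-1-d<n^{0.99}$, and using \ref{prop:UVgood} only when both $d$ and $n-1-d$ are at least $n^{0.99}$. So as written the proposal has two genuine gaps, of which the Kim--Vu step in the sparse regime is the fatal one.
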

\begin{proof} Note that the result is trivial if $d=n-1$ as there are no such paths, so we can assume that $d<n-1$.
By Lemma~\ref{lem:probUVgoodinK}, with probability $1-o(n^{-8})$, the following property holds.

\stepcounter{propcounter}
\begin{enumerate}[label = {{\textbf{\Alph{propcounter}\arabic{enumi}}}}]
\item If $d\geq n^{0.99}$, then, for each $U,V\subset [n]$ with $|U|,|V|\geq n^{0.98}$, we have $e_{K}(U,V)=(1\pm n^{-0.01})\frac{d}{n}|U||V|$.\label{prop:UVedges}
\end{enumerate}

Assume, then that \ref{prop:UVedges} holds. We will proceed differently according to whether \textbf{a)} $d<n^{0.99}$, \textbf{b)} $n-1-d< n^{0.99}$, or \textbf{c)} $d\geq n^{0.99}$ and $n-1-d\geq n^{0.99}$.

Suppose first that \textbf{a)} $d<n^{0.99}$. Let $p=2m/dn$. Let $E'\subset K$ be chosen by including each edge in $K$ independently at random with probability $p$, so that $\P(|E'|=m)\geq 1/n^2$, and let $F'=K-E'$. As $m\geq \eps dn\geq \eps Cn\log n$, by a simple application of a Chernoff bound, and a union bound, we have that, with probability $1-o(n^{-8})$, $d_{K\setminus F'}(v)=\left(1\pm \frac{\eta}{20}\right)\frac{2m}{n}$ for each $v\in [n]$. Thus, the following property holds with probability $1-o(n^{-6})$.

\begin{enumerate}[label = {{\textbf{\Alph{propcounter}\arabic{enumi}}}}]\addtocounter{enumi}{1}
\item For each $v\in [n]$, $d_{K\setminus F}(v)=\left(1\pm \frac{\eta}{20}\right)\frac{2m}{n}$.\label{prop:K-Fdegrees}
\end{enumerate}

Assume, then, that \ref{prop:K-Fdegrees} holds.
Let $x,y\in [n]$ be distinct and suppose $Z\subset [n]\setminus \{x,y\}$ satisfies $|Z|\leq 10$.  Choose $v_3\in N_{K\setminus F}(y)\setminus (Z\cup \{x\})$, with at most $d_{K\setminus F}(y)=\left(1\pm \frac{\eta}{20}\right)\frac{2m}{n}$ and at least $d_{K\setminus F}(y) -|Z\cup \{x\}|=\left(1\pm \frac{\eta}{10}\right)\frac{2m}{n}$
choices by \ref{prop:K-Fdegrees} as $m \geq \eps Cn\log n$. Then, choose $v_1,v_2$ with $v_1v_2\in E(K\setminus F)$, $v_1\in N_{K_n\setminus K}(x)\setminus (Z\cup \{v_3,y\})$ and $v_2\in N_{K_n\setminus K}(v_3)\setminus (Z\cup \{x,y\})$. Note that the number of choices for $v_1$ and $v_2$ is at most $2e(K\setminus F)=2m$ and at least
\begin{equation*}
2e(K\setminus F)-2d|N_K(x)\cup N_K(v_3)\cup Z\cup \{x,v_3,y\}|\geq 2m-2d(2d+13)\geq \left(1-\frac{\eta}{10}\right)\cdot 2m,
\end{equation*}
where we used that $m\geq \eps d n$ and $d<n^{0.99}$. Thus, the number of choices for $v_1,v_2$ is $\left(1\pm \frac{\eta}{10}\right)\cdot 2m$.
Altogether, and using that $n=\left(1\pm \frac{\eta}{10}\right)(n-1-d)$ as $d<n^{0.99}$, the number of $(K_n\setminus K,K\setminus F)$-alternating $x,y$-paths of length 4 is $(1\pm \eta)(2m)^2(n-1-d)^2/n^3$.

Suppose then that \textbf{b)} $n-1-d<n^{0.99}$. Take distinct $x,y\in [n]$ and take $Z\subset [n]\setminus \{x,y\}$ with $|Z|\leq 10$ and, if $d\geq n-10^3/\eta$, then $N_{K_n\setminus K}(x,Z\cup \{y\})=\emptyset$. Choose $v_1\in N_{K_n\setminus K}(x)\setminus (Z\cup\{y\})$, with $\left(1\pm \frac{\eta}{10}\right)(n-1-d)$ choices (where we use that, if $d<n-10^3/\eta$, then $\eta(n-d-1)/10>11\geq |Z\cup \{y\}|$). We will show that, with probability $1-o(n^{-21})$, the number of $(K\setminus F,K_n\setminus K)$-alternating $v_1,y$-paths with length 3 avoiding $Z\cup \{x\}$ is $\left(1\pm \frac{\eta}{5}\right)(2m)^2(n-1-d)/n^3$. Thus, by a union bound, with probability at least $1-o(n^{-20})$, this holds for all choices of $v_1$ and hence the number of $(K_n\setminus K,K\setminus F)$-alternating $x,y$-paths with length 4 avoiding $Z$ is
\[
\left(1\pm \frac{\eta}{10}\right)(n-1-d)\cdot \left(1\pm \frac{\eta}{5}\right)\frac{(2m)^2(n-1-d)}{n^3}=(1\pm {\eta})\frac{(2m)^2(n-1-d)^2}{n^3}.
\]
Thus, by a union bound, with probability $1-o(n^{-8})$, this holds for all distinct $x,y\in [n]$ and $Z\subset [n]\setminus \{x,y\}$ with $|Z|\leq 10$ and, if $d\geq n-10^3/\eta$, then $N_{K_n\setminus K}(x,Z\cup \{y\})=\emptyset$.

Fix, then, such an $x,y,Z$ and $v_1$. Let $\hat{E}$ be the set of pairs $(u,v)$ with $u,v\in [n]\setminus (Z\cup \{x,v_1,y\})$, $uv\in E(K_n\setminus K)$ and $v_1u,vy\in E(K)$. Then, $|\hat{E}|\leq (n-1-d)n$ and, as $n-1-d<n^{0.99}$,
\begin{equation}\label{eq:size of hat E}
|\hat{E}|\geq (n-1-d)n-2(n-1-d)(13+2(n-1-d))\geq \left(1-\frac{\eta}{10}\right)(n-1-d)n,
\end{equation}
so that $|\hat{E}|=\left(1\pm \frac{\eta}{10}\right)(n-1-d)n$. 
Let $\hat{G}$ be an auxiliary multigraph with vertex class $[n]$ and an edge $uv$ with multiplicity $|\{(u,v),(v,u)\} \cap \hat{E}|$, so that $\hat{G}$ has maximum degree $n-d-1$. By Vizing's Theorem for multigraphs, the edge chromatic number of $\hat{G}$ is at most $(n-d -1) + 2$. Therefore, we can partition $\hat{E}$ into $n-d+1$ disjoint sets $\hat{E}_1,\hat{E}_2,\ldots, \hat{E}_{n-d+1}$, where each set  $\hat{E}_i$ is a matching -- that is, a vertex $v$ appears at most once in a pair in $\hat{E}_i$. 

Let $p=2m/dn$. Let $E'\subset E(K)$ be chosen by including each edge in $K$ independently at random with probability $p$, so that $\P(|E'|=m)\geq 1/n^2$, and let $F'=K-E'$. For each $i \in [n-d+1]$, let $X_i = |\hat{E}_i \cap E|$. 
Let $B = \{ i \in [n-1-d]: |\hat{E}_i| < C\log{n}\}|$.

Since each $\hat{E}_i$ is a matching, $X_i$ is a binomial random variable with mean $\mathbb{E}(X_i) = p |\hat{E}_i|$.  For $i \not\in B$ we have $|\hat{E}_i| > C\log{n}$. For such $i$, by a Chernoff bound (Lemma~\ref{chernoff}), with probability  $1-o(n^{-22})$ we have $X_i \ge \left(1 \pm \frac{\eta}{20}\right)p^2|\hat{E}_i|$, using $p > \eps \gg 1/C$ and $ \eta \gg 1/C$. 
Thus by a union bound,  with probability  $1-o(n^{-21})$ we have
\begin{equation}\label{eq:bound E' not in B}
|\hat{E} \cap E'| = \sum_{i = 1}^{n-d+1}X_i =  \left(1 \pm \frac{\eta}{20}\right)p^2|\hat{E}| +  \sum_{i \in B}\left(X_i - \left(1 \pm \frac{\eta}{20}\right)p^2|\hat{E}_i|\right).
\end{equation}

Let us bound the size of $B$. We have that $|\hat{E}| =\sum_{i=1}^{n-d+1}|\hat{E}_i|\le |B|C\log{n} + (n-d+1 - |B|)n $ and so rearranging and using~\eqref{eq:size of hat E} we obtain 
\[|B| \le \frac{2n + \eta/10 (n-1-d)n}{n-C\log{n}} \le 4 + \eta(n-1-d) < n^{0.99}.\]
Thus, using~$p > \eps$ and ~\eqref{eq:size of hat E} again, we have 
\begin{equation}\label{eq:bound bad hat E i}
\sum_{i \in B}|\hat{E}_i|< |B|C\log{n} < n^{0.99}\cdot C\log{n}  < \frac{\eta}{50} p^2|\hat{E}|.\end{equation}
Now, since $0 \le X_i \le |\hat{E}_i|$ we see that 
\[ - \left(1 + \frac{\eta}{20}\right)\sum_{i \in B}|\hat{E}_i|  \le   \sum_{i \in B}\left(X_i - \left(1 \pm \frac{\eta}{20}\right)p^2|\hat{E}_i|\right) \le  \sum_{i \in B}|\hat{E}_i|,\]
and in particular, by~\eqref{eq:bound bad hat E i} we have $\sum_{i \in B}\left(X_i - \left(1 \pm \frac{\eta}{20}\right)p^2|\hat{E}_i|\right) = \pm \frac{\eta}{20}p^2|\hat{E}|$.
Putting this together with \eqref{eq:bound E' not in B} we have with probability  $1-o(n^{-21})$ that $|\hat{E} \cap E'| = \left(1 \pm \frac{\eta}{10}\right)p^2|\hat{E}|$.
Therefore, with probability $1-o(n^{-21})$, the number of $(K\setminus F,K_n\setminus K)$-alternating $v_1,y$-paths with length 3 avoiding $Z\cup \{x\}$ is, as $d=\left(1\pm\frac{\eta}{10}\right)n$,
\[
\left(1 \pm \frac{\eta}{10}\right)p^2|\hat{E}|=\left(1 \pm \frac{\eta}{10}\right)\left(\frac{2m}{dn}\right)^2\cdot \left(1\pm \frac{\eta}{10}\right)(n-1-d)n=\left(1\pm \eta\right)\frac{(2m)^2(n-1-d)}{n^3},
\]
as required.

Suppose then that \textbf{c)} $d\geq n^{0.99}$ and $n-1-d\geq n^{0.99}$. Take distinct $x,y\in [n]$, $Z\subset [n]\setminus \{x,y\}$ with $|Z|\leq 10$, and $v_3\in [n]\setminus (Z\cup \{x,y\})$. We will show that, with probability $1-o(n^{-21})$, the number of $(K_n\setminus K,K\setminus F)$-alternating $x,v_3$-paths with length 3 avoiding $Z\cup \{y\}$ is $\left(1\pm \frac{4\eta}{5}\right)(2m)(n-1-d)^2/n^3$.

We have $|N_{K_n\setminus K}(x)\setminus (Z\cup \{y,v_3\})|=\left(1\pm \frac{\eta}{5}\right)(n-1-d)\geq n^{0.98}$ and $|N_{K_n\setminus K}(v_3)\setminus (Z\cup \{x,y\})|=\left(1\pm \frac{\eta}{5}\right)(n-1-d)\geq n^{0.98}$. Thus, by \ref{prop:UVedges}, the number of pairs $(v_1,v_2)$ with $v_1v_2\in E(K)$, $v_1\in N_{K_n\setminus K}(x)\setminus \{y,v_3\}$ and $v_2\in N_{K_n\setminus K}(v_3)\setminus \{x,y\}$ is
\[
(1\pm n^{-0.01})\left(1\pm \frac{\eta}{5}\right)^2\frac{d}{n}(n-1-d)^2=\left(1\pm \frac{3\eta}{5}\right)\frac{d}{n}(n-d-1)^2.
\]

Let $A$ be the set of such pairs $(v_1,v_2)$. For each $i\in [2]$, let $A_i=\{uv:|\{(u,v),(v,u)\}\cap A|=i\}$.
Let $p=2m/dn$. Let $E'\subset E(K)$ be chosen by including each edge in $K$ independently at random with probability $p$, so that $\P(|E'|=m)\geq 1/n^2$, and let $F'=K-E'$, and let $X=2|A_2\cap E'|+|A_1\cap E'|$. Note that $X$ is the number of $(K_n\setminus K,K\setminus F')$-alternating $x,v_3$-paths with length 3 avoiding $Z\cup \{y\}$, and
\[
\E X=\left(1\pm \frac{3\eta}{5}\right)\frac{pd}{n}(n-d-1)^2=\left(1\pm \frac{3\eta}{5}\right)\frac{(2m)}{n^2}(n-d-1)^2.
\]
Thus, by a Chernoff bound applied to $|A_2\cap E'|$ and to $|A_1\cap E'|$, we have, with probability $1-o(n^{-23})$, that $X=\left(1\pm \frac{4}{5}\eta\right)(2m)(n-1-d)^2/n^2$. Thus, with probability $1-o(n^{21})$, the number of $(K_n\setminus K,K\setminus F)$-alternating $x,v_3$-paths with length 3 avoiding $Z\cup \{y\}$ is $\left(1\pm \frac{4\eta}{5}\right)\frac{(2m)}{n^2}(n-d-1)^2$.

Therefore, using a union bound and that there are $\left(1\pm \frac{\eta}{10}\right)pd = \left(1\pm \frac{\eta}{10}\right)\frac{2m}{n}$ choices for $v_3\in N_{K\setminus F}(y)\setminus (Z\cup\{x\})$, with probability $1-o(n^{-20})$, there are $(1\pm {\eta}){(2m)^2(n-1-d)^2}/{n^3}$
$(K_n\setminus K,K\setminus F)$-alternating $x,y$-paths with length 4 avoiding $Z$.
By a union bound, then, with probability $1-o(n^{-8})$, this holds for all distinct $x,y\in [n]$, and $Z\subset [n]\setminus \{x,y\}$ with $|Z|\leq 10$.
\end{proof}



\subsection{Translating likely properties of $G_d(n)$ into subgraphs of $F^-(n,d,m)$}\label{sec:translatingforlowerbound}
We now note that a corresponding version of Lemma~\ref{lem:translate} for $F^-(n,d,m)$ follows from taking complements in Lemma~\ref{lem:translate}.

\begin{lemma}\label{lem:translateforFminus}
Let $d,n,m\in \N$ satisfy $0\leq  m \leq dn/2$. Let $K\sim G_d(n)$, let $E$ be a uniformly random set of $m$ edges from $E(K)$, and let $F=K- E$.
Let $\mathcal{P}$ be a set of pairs $(F,K)$ such that $(F,K)\notin \cP$ with probability $o(n^{-8})$.
Then, with probability $1-o(n^{-2})$,
\begin{equation}\label{eq:fortranslating}
|\{K'\in \mathcal{K}_d(n):(F,K')\notin \cP\land (F\subset K')\}|\leq  \frac{1}{n^6}\cdot|\{K'\in \cK_d(n):F\subset K'\}|.
\end{equation}
\end{lemma}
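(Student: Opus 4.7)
The plan is to deduce this by passing to complements and invoking Lemma~\ref{lem:translate}. If $K \sim G_d(n)$ and $E$ is a uniformly random set of $m$ edges from $E(K)$, then writing $F = K - E$ and taking complements gives $F^c = K^c + E$, where $K^c \sim G_{n-1-d}(n)$ and $E$ is a uniformly random set of $m$ edges from $E(K) = [n]^{(2)} \setminus E(K^c)$. Hence $F^c$ has the same distribution as $F(n, n-1-d, m)$ in the sense of Definition~\ref{defn:Fndm}, with $K^c$ playing the role of the underlying random $(n-1-d)$-regular graph. The hypothesis $m \le dn/2$ translates to $m \le (n-1-(n-1-d))n/2$, so Lemma~\ref{lem:translate} applies with $n-1-d$ in place of $d$.

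Define the complementary event $\mathcal{P}^c = \{(F',K') : (F'^c, K'^c) \in \mathcal{P}\}$. Then $(F^c, K^c) \notin \mathcal{P}^c$ if and only if $(F, K) \notin \mathcal{P}$, so by hypothesis this happens with probability $o(n^{-8})$. Applying Lemma~\ref{lem:translate} to the pair $(F^c, K^c)$ with property $\mathcal{P}^c$, we obtain that with probability $1 - o(n^{-2})$,
\[
|\{K'' \in \mathcal{K}_{n-1-d}(F^c) : (F^c, K'') \notin \mathcal{P}^c\}| \le \frac{1}{n^6}\cdot |\mathcal{K}_{n-1-d}(F^c)|.
\]

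Finally, the map $K' \mapsto (K')^c$ is a bijection from $\{K' \in \mathcal{K}_d(n) : F \subset K'\}$ to $\mathcal{K}_{n-1-d}(F^c)$, since $F \subset K'$ is equivalent to $(K')^c \subset F^c$, and $K'$ is $d$-regular on $[n]$ iff $(K')^c$ is $(n-1-d)$-regular on $[n]$. Under this bijection, the condition $(F,K') \notin \mathcal{P}$ matches the condition $(F^c, (K')^c) \notin \mathcal{P}^c$, so the left-hand side of \eqref{eq:fortranslating} equals the left-hand side of the displayed bound above, and $|\mathcal{K}_{n-1-d}(F^c)| = |\{K' \in \mathcal{K}_d(n) : F \subset K'\}|$. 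Substituting completes the proof. The argument is essentially formal duality; the only thing to be checked carefully is that the joint distribution $(F,K)$ under the $F^-$-model is exchanged with the joint distribution $(F^c,K^c)$ under the $F$-model, which is the content of the first paragraph.
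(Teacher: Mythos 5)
Your proposal is correct and follows essentially the same route as the paper: pass to complements so that $(F^c,K^c)$ is distributed as in the $F(n,n-1-d,m)$ model, apply Lemma~\ref{lem:translate} to the complemented property, and translate back via the bijection $K'\mapsto (K')^c$ between $\{K'\in\cK_d(n):F\subset K'\}$ and $\cK_{n-1-d}(F^c)$. The parameter check $m\le dn/2=(n-1-\bar d)n/2$ and the identification of the two event classes are exactly as in the paper's argument.
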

\begin{proof} Let $\bar{K}=K_n\setminus K$, so that $\bar{K}\sim G_{n-1-d}(n)$, and let $\bar{d}=n-1-d$. Note, moreover, that the distribution of $E$ is a uniformly random set of $m$ edges from $[n]^{(2)}\setminus E(\bar{K})$, and $0\leq m\leq (n-1-\bar{d})n/2$.
Let $\bar{F}=\bar{K}+E$, and note that, therefore, $\bar{F}\sim F(n,n-1-d,m)$.

Let $\mathcal{P}'$ be the set of pairs $(F',K')$ such that $F'$ has $n\bar{d}/2+m$ edges, $V(F')=[n]$, $K'$ is $\bar{d}$-regular, $K'\subset F'$ and $(K_n\setminus F',K_n\setminus K')\in \cP$. Then, we have that $(\bar{F},\bar{K})\in \mathcal{P}'$ with probability $1-o(n^{-6})$. Therefore, by Lemma~\ref{lem:translate}, we have that, with probability $1-o(n^{-2})$,
\[
|\{K'\in \mathcal{K}_{\bar{d}}(\bar{F}):(\bar{F},K')\notin \cP'\}|\leq  \frac{1}{n^6}\cdot|\cK_{\bar{d}}(\bar{F})|.
\]
Thus, as $|\{K'\in \mathcal{K}_d(n):(F,K')\notin \cP\land (F\subset K')\}|=|\{K'\in \mathcal{K}_{\bar{d}}(\bar{F}):(\bar{F},K')\notin \cP'\}|$ and $|\cK_{\bar{d}}(\bar{F})|= |\{K'\in \cK_d(n):F\subset K'\}|$, \eqref{eq:fortranslating} holds with probability $1-o(n^{-2})$.
\end{proof}


\subsection{Proof of Lemma~\ref{lem:maintechnicallem-forprocess}}\label{subsec:LemmaTechlowerboundproof}
Finally, then, we use our work so far in this section to prove Lemma~\ref{lem:maintechnicallem-forprocess} via a switching argument.

\begin{proof}[Proof of Lemma~\ref{lem:maintechnicallem-forprocess}] As set-up in Lemma~\ref{lem:maintechnicallem-forprocess}, let $1/n\ll 1/C\ll \eta,\eps\ll 1$, let $d\geq C\log n$ and $\eps dn\leq  m\leq d n/2$, and let $F\sim F^-(n,d,m)$.
Let $\cK'(F)$ be the set of $K\in \cK_d(n)$ for which $F\subset K$ and the following holds.
\stepcounter{propcounter}
\begin{enumerate}[label = {{\textbf{\Alph{propcounter}\arabic{enumi}}}}]
\item For each distinct $x,y\in [n]$ and each $Z\subset [n]\setminus \{x,y\}$ with $|Z|\leq 10$ such that if $d\geq n-10^3/\eta$,
then $N_{K_n\setminus K}(x,Z)=\emptyset$, the following holds. The number of $(K_n\setminus K,K\setminus F)$-alternating $x,y$-paths of length $4$ is $\left(1\pm \frac{\eta}{100}\right)(2m)^2(n-1-d)^2/n^3$.\label{prop:pathcountingforlowerbound}
\end{enumerate}
By Lemma~\ref{lem:concforlowerpart} and Lemma~\ref{lem:translateforFminus}, we have that, with probability $1-o(n^{-2})$,
\begin{equation}\label{eq:cKprimebound}
|\cK'(F)|\geq \left(1-\frac{1}{n^6}\right)|\{K\in \cK_d(n):F\subset K\}|.
\end{equation}

By a simple application of a Chernoff bound and a union bound (and working via a graph $F'$ as in the proof of Lemma~\ref{lem:concforlowerpart}), with probability $1-o(n^{-2})$, $F$ has the following property.
\begin{enumerate}[label = {{\textbf{\Alph{propcounter}\arabic{enumi}}}}]\addtocounter{enumi}{1}
\item For each $v\in [n]$, $d_{K\setminus F}(v)=\left(1\pm \frac{\eta}{100}\right){2m}/{n}$.\label{prop:degreesofKminusF}
\end{enumerate}

Assuming that \ref{prop:degreesofKminusF} and \eqref{eq:cKprimebound} hold, we will now prove that the property in the lemma holds. We start by proving the following claim.

\begin{claim}\label{clm:edgeswellspreadlowerbound}
For each $e\in E(K_n\setminus F)$, we have
\[
|\{K\in \cK_d(n):F+e\subset K\}|\geq \left(1-\frac{\eta}{2}\right)\frac{2m}{2m+n(n-1-d)} |\{K\in \cK_d(n):F\subset K\}|.
\]
\end{claim}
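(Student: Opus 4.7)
The plan is a switching argument via length-$10$ cycles, modelled on the switching sketched for Lemma~\ref{lem:maintechnicallem-forprocess} (cf.\ Figure~\ref{fig:switch}), but with the second distinguished edge $v_5v_{10}$ (which plays the role of $f$ in the lemma) allowed to vary. Write $u,u'$ for the endpoints of $e$ and set $\mathcal{K}_1=\{K\in\cK_d(n):F+e\subset K\}$ and $\mathcal{K}_2=\{K\in\cK_d(n):F\subset K,\,e\notin E(K)\}$, so $\mathcal{K}_1\cup\mathcal{K}_2=\{K\in\cK_d(n):F\subset K\}$. The claim will follow by a short rearrangement from the cross-ratio bound $|\mathcal{K}_2\cap\cK'(F)|/|\mathcal{K}_1\cap\cK'(F)|\le (1+\eta/10)\cdot n(n-1-d)/(2m)$, combined with \eqref{eq:cKprimebound} and the trivial $|\mathcal{K}_1|\ge|\mathcal{K}_1\cap\cK'(F)|$.

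To establish the cross-ratio bound I would form the bipartite auxiliary graph $L$ on vertex classes $\mathcal{K}_1\cap\cK'(F)$ and $\mathcal{K}_2\cap\cK'(F)$, placing $K_1K_2\in E(L)$ exactly when $E(K_1)\triangle E(K_2)$ is a $10$-cycle of the shape in Figure~\ref{fig:switch} with $e=v_1v_6$ on the $K_1$-side and some edge $v_5v_{10}$ on the $K_2$-side. Fixing $v_1=u,v_6=u'$, the count $d_L(K_1)$ for $K_1\in\mathcal{K}_1\cap\cK'(F)$ is obtained by choosing an ordered pair $(v_5,v_{10})$ with $v_5v_{10}\notin E(K_1)$ and $\{v_5,v_{10}\}\cap\{u,u'\}=\emptyset$ (giving $(1\pm O(1/n))\cdot n(n-1-d)$ choices), then applying \ref{prop:pathcountingforlowerbound} twice with $|Z|\le 10$ (taking $Z$ to be the already-chosen vertices) to count the length-$4$ $(K_n\setminus K_1,K_1\setminus F)$-alternating paths $v_1\to v_5$ and $v_6\to v_{10}$. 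This yields
\[
d_L(K_1)=(1\pm\eta/20)\cdot n(n-1-d)\cdot \Big(\tfrac{(2m)^2(n-1-d)^2}{n^3}\Big)^2.
\]
The corresponding count from $K_2\in\mathcal{K}_2\cap\cK'(F)$ is parallel, except that $v_5v_{10}$ must lie in $E(K_2\setminus F)$ (so removing it preserves $F\subset K_1$); by \ref{prop:degreesofKminusF} this gives $(1\pm O(1/n))\cdot 2m$ ordered choices, while the path counts are identical after reversing the direction of the alternating pattern. Double-counting $e(L)$ from the two sides then gives the cross-ratio bound, with the dihedral over-counting factor of the $10$-cycle (a factor of $2$ for cycle orientation) appearing symmetrically on both sides and cancelling.

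The main obstacle is the additional hypothesis in \ref{prop:pathcountingforlowerbound} that, when $d\ge n-10^3/\eta$, requires the path's starting vertex to have no $K_n\setminus K$-neighbour in $Z$. In this dense regime the choices of $v_5$ and $v_{10}$ must be restricted to the common $K$-neighbourhood of $\{v_1,v_6\}$; since $n-1-d=O(1/\eta)$, this removes only an $O(1/(\eta n))$-fraction of candidates, but the restriction must be imposed symmetrically on both sides so that the correction cancels in the ratio. A secondary subtlety is that switches from $K_2\in\mathcal{K}_2\cap\cK'(F)$ might land in $\mathcal{K}_1\setminus\cK'(F)$ and so fail to contribute to $e(L)$; I would handle this by first establishing a weak order-of-magnitude version of the claim (in the spirit of Claim~\ref{clm:edgeswellspread2}) to ensure $|\mathcal{K}_1|$ sufficiently dominates $|\{K:F\subset K\}|/n^6$, so that the absolute bound $|\mathcal{K}_1\setminus\cK'(F)|\le|\{K:F\subset K\}|/n^6$ becomes negligible compared to the main term in the double-count.
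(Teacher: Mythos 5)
Your switching gadget is genuinely different from the paper's: the paper proves this claim with $6$-cycles through $e$ (remove $e=v_1v_6$, pick one edge $v_1v_2\in E(K\setminus F)$, and close up with a single length-$4$ alternating path supplied by \ref{prop:pathcountingforlowerbound}), which needs only \emph{one} application of \ref{prop:pathcountingforlowerbound} per configuration and gives the same ratio $2m$ versus $n(n-1-d)$ that you extract from $10$-cycles. Your bookkeeping for the exceptional graphs is fine in principle (the crude degree bound $d^4(n-1-d)^4$ times $|\{K:F\subset K\}|/n^6$ from \eqref{eq:cKprimebound} can simply be carried as an additive error and absorbed in the final rearrangement, so the preliminary ``weak order-of-magnitude'' step you propose is not actually needed), and in the regime $d<n-10^3/\eta$ your two-path count goes through.

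The genuine gap is in the dense regime $d\geq n-10^3/\eta$, which this claim must cover (unlike Claim~\ref{clm:mostedgepairs:lowerbound}, which the paper only invokes after reducing to $n-1-d>\eta m/10n$, where the extra hypothesis of \ref{prop:pathcountingforlowerbound} is vacuous). For the lower bound on $d_L(K_2)$ you must apply the lower bound of \ref{prop:pathcountingforlowerbound} twice, and the second application takes $Z$ to contain the three internal vertices of the first path. In the dense regime the hypothesis requires the second path's start vertex (say $v_{10}$) to have none of its up to $10^3/\eta$ non-$K$-neighbours inside $Z$; restricting $v_5,v_{10}$ to the common $K$-neighbourhood of $\{v_1,v_6\}$ controls only $v_1,v_6,v_5$, not the internal vertices of the first path, which are produced by \ref{prop:pathcountingforlowerbound} and are not under your control, and the $|Z|\leq 10$ cap prevents you from excluding $N_{K_n\setminus K_2}(v_{10})$ in the first application. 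So this is not a small symmetric correction that ``cancels in the ratio''; the hypothesis needed for the second path count simply cannot be verified with the stated tools, and reversing the order of the two paths meets the same obstruction. Repairing it would require extra input (e.g.\ an upper bound on length-$4$ alternating paths through a prescribed vertex in the dense regime, or a strengthened Lemma~\ref{lem:concforlowerpart} allowing larger $Z$ there), whereas the paper's $6$-cycle argument avoids the issue entirely: its single path application uses $Z=\{v_1\}$, and the dense-regime condition is arranged by discarding the $O(1/\eta)$ bad choices of $v_2$ out of roughly $2m/n$, exactly as done via \ref{prop:degreesofKminusF} in the paper's proof.
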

\claimproofstart[Proof of Claim~\ref{clm:edgeswellspreadlowerbound}] Let $e\in E(K_n \setminus F)$, and suppose, for contradiction, that
\begin{equation}\label{eq:forcontradiction}
|\{K\in \cK_d(n):F+e\subset K\}|< \left(1-\frac{\eta}{2}\right)\frac{2m}{2m+n(n-1-d)} |\{K\in \cK_d(n):F\subset K\}|.
\end{equation}
Let $\cK_e^+=\{K\in \cK_d(n):F+e\subset K\}$ and $\cK_e^-=\{K\in \cK_d(n):F\subset K,e\notin E(K)\}$. Let $L$ be the auxiliary bipartite graph with vertex classes $\cK_e^+$ and $\cK_e^-$ where there is an edge between $K\in \cK_e^+$ and $K'\in \cK_e^-$ if $E(K)\triangle E(K')$ is the edge set of a cycle of length $6$.

Let $K\in \cK_e^+$. Then, we have $e\in E(K\setminus F)$. Let $v_1,v_6$ be such that $e=v_1v_6$. As the number of sequences $v_2,v_3,v_4,v_5$ with $v_2v_3\in E(K)$, $v_4\in N_{K_n\setminus K}(v_3)$, $v_5\in N_{K_n\setminus K}(v_6)$ is at most $(n-1-d)^2dn$, we have
\[
d_L(K)\leq (n-1-d)^2dn\leq \frac{m(n-1-d)^2}{\eps}.
\]
Now, suppose $K\in K_e^+$ is such that \ref{prop:pathcountingforlowerbound} holds. Then, by picking a neighbour $v_2$ of $v_1$ in $K_n\setminus K$ with at most $(n-1-d)$ options, and then applying \ref{prop:pathcountingforlowerbound}, we have
\[
d_L(K)\leq (n-1-d)\cdot \left(1+ \frac{\eta}{10}\right)\frac{(2m)^2(n-1-d)^2}{n^3}.
\]
Note that by~\eqref{eq:forcontradiction} we have $|K_e^-|  = |\{K\in \cK_d:F\subset K\}| - |K_e^+|  > \frac{|\{K\in \cK_d:F\subset K\}|}{2m}$. Thus we have
\begin{align}
e(L)&\leq |\cK_e^+|\cdot \left(1+ \frac{\eta}{10}\right)\frac{(2m)^2(n-1-d)^3}{n^3}+\frac{|\{K\in \cK_d:F\subset K\}|}{n^6}\cdot \frac{m(n-1-d)^2}{\eps}\nonumber\\
&\leq |\cK_e^+|\cdot \left(1+ \frac{\eta}{10}\right)\frac{(2m)^2(n-1-d)^3}{n^3}+\frac{2|\cK_e^-|}{\eps (2m)\cdot 2n^2}\cdot \frac{(2m)^3(n-1-d)^2}{n^4}\nonumber\\
&\leq |\cK_e^+|\cdot \left(1+ \frac{\eta}{10}\right)\frac{(2m)^2(n-1-d)^3}{n^3}+|\cK_e^-|\cdot \frac{\eta}{10}\cdot \frac{(2m)^3(n-1-d)^2}{n^4}.\label{eq:eLlowerboundforlowerpart}
\end{align}

Furthermore, by \eqref{eq:cKprimebound} and \eqref{eq:forcontradiction}, for at least  $(1-\frac{\eta}{50})|K_e^-|$
graphs $K\in \cK_e^-$ we have that \ref{prop:pathcountingforlowerbound} holds. Let $K\in \cK_e^-$ be such a $K$ and let $v_1,v_6$ be such that $e=v_1v_6$.
Choose a neighbour $v_2$ of $v_1$ in $K\setminus F$ such that if $d\geq n-10^3/\eta$ then $v_2\notin N_{K_n\setminus K}(\{v_6\})$. If $d\geq n-10^3/\eta$, then the number of possibilities is, by \ref{prop:degreesofKminusF} and as $m/n\geq \eps d\geq C\eps \log n$
at least
\[
\left(1-\frac{\eta}{100}\right)\frac{2m}{n}-|N_{K_n\setminus K}(\{v_1,v_6\})|\geq \left(1-\frac{\eta}{100}\right)\frac{2m}{n}-\frac{2\cdot 10^3}{\eta}\geq \left(1-\frac{\eta}{50}\right)\frac{2m}{n},
\]
and we have that $N_{K_n\setminus K}(v_2,\{v_1,v_6\})=\emptyset$. When $d<n-10^3\eta$, the number of choices for $v_2$ is, by \ref{prop:degreesofKminusF} at least  $\left(1-\frac{\eta}{100}\right)2m/n$, so in either case there are at least $\left(1-\frac{\eta}{50}\right)2m/n$ choices.
Then, choose a $(K_n\setminus K,K\setminus F)$-alternating $v_2,v_6$-path with length $4$ avoiding $v_1$ so that, as \ref{prop:pathcountingforlowerbound} holds for $K$, there are at least $\left(1-\frac{\eta}{100}\right)(2m)^2(n-1-d)^2/n^3$ possibilities. Thus,
\[
d_L(K)\geq \left(1-\frac{\eta}{50}\right)\frac{2m}{n}\cdot \left(1-\frac{\eta}{100}\right)\frac{(2m)^2(n-1-d)^2}{n^3}\geq \left(1-\frac{\eta}{20}\right)\frac{(2m)^3(n-1-d)^2}{n^4}.
\]
As this holds for at least $(1-\frac{\eta}{50})|K_e^-|$ graphs $K\in \cK_e^-$, we have
\[
e(L)\geq
  \left(1-\frac{\eta}{10}\right)|\cK_e^-|\cdot \frac{(2m)^3(n-1-d)^2}{n^4}.
\]

In combination with \eqref{eq:eLlowerboundforlowerpart}, we have
\[
\left(1+ \frac{\eta}{5}\right)\cdot |\cK^+_e|\cdot \frac{(2m)^2(n-1-d)^3}{n^3}\geq \left(1-\frac{\eta}{5}\right)|\cK_e^-|\cdot \frac{(2m)^3(n-1-d)^2}{n^4}.
\]
Therefore,
\[
|\cK^+_e|\geq \left(1-\frac{\eta}{2}\right)\frac{2m}{n(n-1-d)}|\cK_e^-|=\left(1-\frac{\eta}{2}\right)\frac{2m}{n(n-1-d)}(|\{K\in \cK_d(n):F\subset K\}|-|\cK^+_e|),
\]
so that
\[
|\cK^+_e|\cdot\frac{2m+n(n-1-d)}{n(n-1-d)}\geq \left(1-\frac{\eta}{2}\right)\frac{2m}{n(n-1-d)}|\{K\in \cK_d(n):F\subset K\}|,
\]
and thus
\[
|\cK^+_e|\geq \left(1-\frac{\eta}{2}\right)\frac{2m}{2m+n(n-1-d)} |\{K\in \cK_d(n):F\subset K\}|,
\]
as required.
\claimproofend

Now, if $n-1-d\leq \eta m/10n$, then, from Claim~\ref{clm:edgeswellspreadlowerbound}, we have for each $f\in E(K_n\setminus F)$ that
\begin{align*}
(1+\eta)|\{K\in \cK_d(n):F+f\subset K\}|&\geq (1+\eta)\cdot \left(1-\frac{\eta}{2}\right)\frac{2m}{2m+\eta m/10}|\{K\in \cK_d(n):F\subset K\}|\\
&\geq |\{K\in \cK_d(n):F\subset K\}|,
\end{align*}
so that \eqref{eq:dregcontainingeorfinlem:lowerbound} holds for every $e\in E(K_n\setminus F)$. Assume, then, that $n-1-d>\eta m/10n$. As $m\geq \eps dn$, this implies that  $n-1-d>\eta\eps d/10\geq C\eta\eps\log n/10\geq \sqrt{C}\log n$. Note that this implies that $d < \frac{n}{1+\sqrt{C}\log n} < n - 10^3/\eta$.
Next we prove the following claim.

\begin{claim}\label{clm:mostedgepairs:lowerbound}
Let $e,f\in E(K_n\setminus F)$ share no vertices and let  $n-1-d>\eta m/10n$.
Then,
\begin{equation}\label{eq:dregcontainingeorfinthm:ifdisjoint:lb}
|\{K\in \mathcal{K}_d(n):F+e\subset E(K)\}|\leq \left(1+\frac{\eta}{3}\right)|\{K\in \mathcal{K}_d(n):F+f\subset E(K)\}|.
\end{equation}
\end{claim}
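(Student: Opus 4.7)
Label the edges $e=u_1u_2$ and $f=v_1v_2$ (which share no vertices by assumption), and set
\[
\cK^e = \{K\in \cK_d(n): F+e\subset K,\ f\notin E(K)\}, \qquad
\cK^f = \{K\in \cK_d(n): F+f\subset K,\ e\notin E(K)\}.
\]
Since $|\{K\in \cK_d(n):F+e\subset K\}| = |\cK^e| + |\{K\in \cK_d(n): F+e+f\subset K\}|$ and the analogous identity holds for $f$, it suffices to prove $|\cK^e| \leq (1+\eta/3)|\cK^f|$. I will do this via a switching argument that directly generalises the cycle-of-length-$6$ switch used in Claim~\ref{clm:edgeswellspreadlowerbound}.

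Define an auxiliary bipartite graph $L_{e,f}$ with vertex classes $\cK^e$ and $\cK^f$, with $KK'$ an edge of $L_{e,f}$ exactly when $E(K)\triangle E(K')$ is the edge-disjoint union of two vertex-disjoint $6$-cycles $C_e$ and $C_f$, with $e\in E(C_e)$, $f\in E(C_f)$, and each $C_\bullet$ alternating between $K\setminus K'$ and $K'\setminus K$. Since $F\subset K\cap K'$, every edge of $C_e\cup C_f$ lies in $E(K_n)\setminus E(F)$; the alternation structure means $C_e$ and $C_f$ simultaneously perform the $6$-cycle rotations that delete $e$ (as in Claim~\ref{clm:edgeswellspreadlowerbound}) and insert $f$, so $K'$ is automatically $d$-regular and contains $F+f$ but not $e$.

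Exactly as in the lower-bound half of the proof of Claim~\ref{clm:edgeswellspreadlowerbound}, for $K\in \cK^e$ such that \ref{prop:pathcountingforlowerbound} holds I construct the two cycles successively. First, construct $C_f$ by picking $b_1\in N_{K\setminus F}(v_1)\setminus(\{u_1,u_2,v_2\}\cup N_{K_n\setminus K}(v_2))$ (giving $(1\pm\eta/50)(2m/n)$ choices by \ref{prop:degreesofKminusF}, where we use the hypothesis $n-1-d>\eta m/(10n)$ to control the removed neighbourhood), then choose a $(K_n\setminus K,K\setminus F)$-alternating $b_1,v_2$-path of length $4$ avoiding $\{u_1,u_2,v_1\}$, giving $(1\pm\eta/100)(2m)^2(n-1-d)^2/n^3$ options by \ref{prop:pathcountingforlowerbound}. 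Second, construct $C_e$ by picking $a_1\in N_{K_n\setminus K}(u_1)$ avoiding $V(C_f)\cup\{u_2\}$ (giving $(1\pm\eta/50)(n-1-d)$ options, using that $n-1-d>\eta m/(10n)\geq \sqrt{C}\log n$ dominates the constant-size avoidance set), and then pick a $(K\setminus F,K_n\setminus K)$-alternating $a_1,u_2$-path of length $4$ avoiding $V(C_f)\cup\{u_1\}$, which by reversal equals a $(K_n\setminus K,K\setminus F)$-alternating $u_2,a_1$-path so \ref{prop:pathcountingforlowerbound} again yields $(1\pm\eta/100)(2m)^2(n-1-d)^2/n^3$ options; note the avoidance set $V(C_f)\cup \{u_1\}$ has size at most $10$, the allowed threshold in \ref{prop:pathcountingforlowerbound}, and the condition $n-1-d > \eta m/(10n) \geq \eps\eta d/10$ ensures $d < n-10^3/\eta$ so the side hypothesis in \ref{prop:pathcountingforlowerbound} is vacuous. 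Multiplying, every good $K\in \cK^e$ has $d_{L_{e,f}}(K)\geq (1-\eta/10)\cdot W$, where $W:=\frac{(2m)^5(n-1-d)^5}{n^7}$. Symmetrically, every good $K'\in \cK^f$ satisfies $d_{L_{e,f}}(K')\leq (1+\eta/10)\cdot W$, whereas the crude bound from the upper-bound half of Claim~\ref{clm:edgeswellspreadlowerbound} gives $d_{L_{e,f}}(K')\leq (2m/n)\cdot dn(n-1-d)^2\cdot dn(n-1-d)^2\leq n^{O(1)}\cdot W$ for \emph{every} $K'\in \cK^f$. A double-count of $e(L_{e,f})$, combined with the fact that both $|\cK^e|$ and $|\cK^f|$ are at least a constant fraction of $|\{K\in \cK_d(n):F\subset K\}|$ by Claim~\ref{clm:edgeswellspreadlowerbound}, and that only at most $n^{-6}|\{K\in\cK_d(n):F\subset K\}|$ graphs in $\cK^f$ violate \ref{prop:pathcountingforlowerbound} by \eqref{eq:cKprimebound}, yields $|\cK^e|\leq (1+\eta/3)|\cK^f|$.

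\textbf{Main obstacle.} The most delicate step is verifying that the vertex-disjointness constraints between $C_e$ and $C_f$, together with the additional avoidances needed inside each cycle, do not erode the path counts by more than a $(1\pm\eta/100)$ factor. Because each 6-cycle contributes six vertices and \ref{prop:pathcountingforlowerbound} tolerates an avoidance set of size at most $10$, we are essentially at the threshold; careful ordering (build $C_f$ first, $C_e$ second) is needed so that the avoidance set never grows beyond $10$. The second subtlety is when $n-1-d$ is close to the cut-off $\eta m/(10n)$: here the supply of $K_n\setminus K$-edges at each vertex is small and one must verify the avoidance sets leave enough room, which is precisely what the hypothesis $n-1-d>\eta m/(10n)$ is designed to guarantee (and is why the complementary range was handled separately earlier in the lemma's proof).
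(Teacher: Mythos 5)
Your overall architecture is sound and close to the paper's: the paper also reduces the claim to comparing $\cK^e=\{K:F+e\subset K,\ f\notin E(K)\}$ with $\cK^f$, and double-counts an auxiliary bipartite graph whose edges are switchings assembled from two applications of \ref{prop:pathcountingforlowerbound}. The only structural difference is the gadget: the paper glues the two length-$4$ alternating paths together with $e$ and $f$ into a single $10$-cycle (with $e$ and $f$ as opposite edges), whereas you keep them as two vertex-disjoint $6$-cycles, one performing the deletion of $e$ and one the insertion of $f$. Both gadgets preserve $d$-regularity, both are recoverable uniquely from $E(K)\triangle E(K')$, and both yield degree estimates $(1\pm O(\eta))W$ on the graphs satisfying \ref{prop:pathcountingforlowerbound}, so this difference is essentially cosmetic.

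There is, however, a genuine gap in how you dispose of the exceptional graphs. You justify absorbing the error terms by asserting that "both $|\cK^e|$ and $|\cK^f|$ are at least a constant fraction of $|\{K\in\cK_d(n):F\subset K\}|$ by Claim~\ref{clm:edgeswellspreadlowerbound}". This is false: that claim lower-bounds $|\{K:F+e\subset K\}|$ by the fraction $\frac{2m}{2m+n(n-1-d)}$ of $|\{K:F\subset K\}|$, which is of order $d/n$ when $d$ is small, and in any case it says nothing about $|\cK^e|=|\{K:F+e\subset K\}|-|\{K:F+e+f\subset K\}|$, which a priori could even be zero. Without a lower bound on $|\cK^e|/|\{K:F\subset K\}|$ that comfortably exceeds $n^{-6}$, the contribution of the at most $n^{-6}|\{K:F\subset K\}|$ graphs violating \ref{prop:pathcountingforlowerbound} (which enter on both sides of the double count, via the crude degree bound) cannot be absorbed into $\frac{\eta}{100}W|\cK^e|$, and indeed your reduced inequality $|\cK^e|\leq(1+\eta/3)|\cK^f|$ need not hold when both sets are tiny. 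The repair is the case split the paper makes at the start: either $|\cK^e|\leq\frac{\eta}{3}|\{K:F+f\subset K\}|$, in which case the claim is immediate from $|\{K:F+e\subset K\}|=|\cK^e|+|\{K:F+e+f\subset K\}|$, or else $|\cK^e|>\frac{\eta}{3}|\{K:F+f\subset K\}|\geq\frac{\eta}{3n}|\{K:F\subset K\}|$ by Claim~\ref{clm:edgeswellspreadlowerbound}, which is exactly what the double count needs. A second, more minor error: your count of $(1\pm\eta/50)(2m/n)$ choices for $b_1$ after deleting $N_{K_n\setminus K}(v_2)$ is wrong as written, since that set has $n-1-d$ elements and can be far larger than $|N_{K\setminus F}(v_1)|\approx 2m/n$; as you yourself observe, in the present regime $d<n-10^3/\eta$, so the side condition of \ref{prop:pathcountingforlowerbound} is vacuous and this deletion should simply be omitted.
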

\claimproofstart[Proof of Claim~\ref{clm:mostedgepairs:lowerbound}] Fixing such $e,f\in E(K_n\setminus F)$, let $\mathcal{K}^e=\{K\in \cK_d(n):F+e\subset K,f\notin E(K)\}$ and $\mathcal{K}^f=\{K\in \cK_d(n):F+f\subset K, e\notin E(K)\}$.
Note that \eqref{eq:dregcontainingeorfinthm:ifdisjoint:lb} follows if we have that  $|\cK^e|\leq \frac{\eta}{3}|\{K\in \cK_d(n):F+f\subset K\}|$, so we can assume otherwise. Thus, by Claim~\ref{clm:edgeswellspreadlowerbound}, we can assume that
\begin{equation}\label{eq:cKebound}
|\cK^e|>\frac{\eta}{3}|\{K\in \cK_d(n):F+f\subset K\}|\geq \frac{\eta}{3n}|\{K\in \cK_d(n):F\subset K\}|.
\end{equation}

Form the auxiliary bipartite graph $L$ with vertex classes $\cK^e$ and $\cK^f$, where there is an edge between $K\in \cK^e$ and $K'\in \cK^f$ if $E(K)\triangle E(K')$ is the edge set of a 10-cycle in which $e$ and $f$ are opposing edges of the cycle (see Figure~\ref{fig:switch}).

Suppose that $K\in \cK^e$ satisfies \ref{prop:pathcountingforlowerbound}.
Let $x_1,x_2,y_1,y_2$ be such that $e=x_1x_2$ and $f=y_1y_2$. By \ref{prop:pathcountingforlowerbound} applied with $x=x_1$, $y=y_1$ and $Z=\{x_2,y_2\}$, we have that the number of $(K_n\setminus K, K\setminus F)$-alternating $x_1,y_1$-paths $P$ of length $4$ avoiding $\{x_2,y_2\}$ is at least $\left(1-\frac{\eta}{100}\right)\cdot {(2m)^2(n-1-d)^2}/{n^3}$. Pick such a path $P$. By \ref{prop:pathcountingforlowerbound}, the number of $(K_n\setminus K, K\setminus F)$-alternating $x_2,y_2$-paths of length $4$ avoiding $V(P)$ is at  at least $\left(1-\frac{\eta}{100}\right)\cdot {(2m)^2(n-1-d)^2}/{n^3}$
Therefore, we have that
\[
d_{L}(K)\geq \left(\left(1-\frac{\eta}{100}\right)\cdot \frac{(2m)^2(n-1-d)^2}{n^3}\right)^2
\geq \left(1-\frac{\eta}{6}\right)\cdot \frac{16m^4(n-1-d)^4}{n^6}.
\]
As, by \eqref{eq:cKebound}, $|\cK^e|>\frac{\eta}{3n}|\{K\in \cK_d(n):F\subset K\}|$, this holds for at least $\left(1-\frac{\eta}{50}\right)|\cK^e|$ different $K\in \cK^e$ by \eqref{eq:cKprimebound}. Therefore, we have
 \begin{align}
e(L)\geq \left(1-\frac{\eta}{50}\right)|\cK^e|\cdot \left(1-\frac{\eta}{6}\right)\frac{16m^4(n-1-d)^4}{n^6}\geq |\cK^e|\cdot \left(1-\frac{\eta}{5}\right)\frac{16m^4(n-1-d)^4}{n^6}.\label{eq:eLlowerbound}
 \end{align}

On the other hand, for each $K'\in \cK^f$, we have (working similarly to the way we did in Claim~\ref{clm:edgeswellspreadlowerbound}), $d_L(K')\leq (d^2(n-1-d)^2)^2$. Furthermore, for each $K'\in \cK^f$ for which \ref{prop:pathcountingforlowerbound} holds, we have $d_L(K')\leq \left(1+ \frac{3\eta}{100}\right)\cdot \frac{16m^4(n-1-d)^4}{n^6}$. As, by \eqref{eq:cKprimebound} and \eqref{eq:cKebound}, this latter bound holds for all but $|\{K\in \cK_d(n):F\subset K\}|/n^6\leq |\cK^e|/ n^3$ graphs $K'\in \cK^f$, we have
 \begin{align}
 e(L)&\leq |\cK^f|\cdot \left(1+ \frac{3\eta}{100}\right)\cdot \frac{16m^4(n-1-d)^4}{n^6}+\frac{|\cK^e|}{n^3}\cdot (d^2(n-1-d)^2)^2
\nonumber\\
 &=
 |\cK^f|\cdot \left(1+ \frac{3\eta}{100}\right)\cdot \frac{16m^4(n-1-d)^4}{n^6}+ |\cK^e|\cdot \frac{d^4n^3}{16 m^4}\cdot \frac{16m^4(n-1-d)^4}{n^6}\nonumber\\
 & \leq
 |\cK^f|\cdot \left(1+ \frac{3\eta}{100}\right)\cdot \frac{16m^4(n-1-d)^4}{n^6}+ |\cK^e|\cdot \frac{\eta}{100}\cdot \frac{16m^4(n-1-d)^4}{n^6},\nonumber
 \end{align}
 where we have used that $m\geq \eps dn$.

 In combination with \eqref{eq:eLlowerbound}, we have that
\[
|\cK^e|\cdot \left(1-\frac{\eta}{5}-\frac{\eta}{100}\right)\frac{16m^4(n-1-d)^4}{n^6}\leq |\cK^f|\cdot \left(1+ \frac{3\eta}{100}\right)\frac{16m^4(n-1-d)^4}{n^6},
\]
and, hence, $|\cK^e|\leq (1+\eta/3)|\cK^f|$, so that
\begin{align*}
|\{K\in \mathcal{K}_d(n):F+e\subset K\}|&\leq |\cK^e|+|\{K\in \cK_d(n):F+e+f\subset K\}|\\
&\leq \left(1+\frac{\eta}{3}\right)|\cK^f|+|\{K\in \cK_d(n):F+e+f\subset K\}|\\
&\leq \left(1+\frac{\eta}{3}\right)|\{K\in \mathcal{K}_d(n):F+f\subset K\}|,
\end{align*}
as required.
\claimproofend

Let, then $e,f\in [n]^{(2)}\setminus E(F)$. Pick some $e'\notin E(F)$ which is vertex-disjoint from $e$ and $f$. Then, by Claim~\ref{clm:mostedgepairs:lowerbound} applied twice, we have
\begin{align*}
|\{K\in \mathcal{K}_d(n):F+e\subset E(K)\}|&\leq \left(1+\frac{\eta}{3}\right)|\{K\in \mathcal{K}_d(n):F+e'\subset E(K)\}|\\
&\leq \left(1+\frac{\eta}{3}\right)^2|\{K\in \mathcal{K}_d(n):F+f\subset E(K)\}|,
\end{align*}
so that, as $\eta\ll 1$, \eqref{eq:dregcontainingeorfinlem:lowerbound} holds, as required.
\end{proof}


\begin{thebibliography}{10}

\bibitem{bender1974asymptotic}
E.~A. Bender.
\newblock The asymptotic number of non-negative integer matrices with given row and column sums.
\newblock {\em Discrete Mathematics}, 10(2):217--223, 1974.

\bibitem{bender1978asymptotic}
E.~A. Bender and E.~R. Canfield.
\newblock The asymptotic number of labeled graphs with given degree sequences.
\newblock {\em Journal of Combinatorial Theory, Series A}, 24(3):296--307, 1978.

\bibitem{bollobas1980probabilistic}
B.~Bollob{\'a}s.
\newblock A probabilistic proof of an asymptotic formula for the number of labelled regular graphs.
\newblock {\em European Journal of Combinatorics}, 1(4):311--316, 1980.

\bibitem{bollobas1983almost}
B.~Bollob{\'a}s.
\newblock Almost all regular graphs are {H}amiltonian.
\newblock {\em European Journal of Combinatorics}, 4(2):97--106, 1983.

\bibitem{bollobas1984evolution}
B.~Bollob{\'a}s.
\newblock The evolution of sparse graphs.
\newblock {\em Graph theory and combinatorics (Cambridge, 1983)}, pages 35--57, 1984.

\bibitem{cooper2002randomconnect}
C.~Cooper, A.~Frieze, and B.~Reed.
\newblock Random regular graphs of non-constant degree: {C}onnectivity and {H}amiltonicity.
\newblock {\em Combinatorics, Probability and Computing}, 11(3):249--261, 2002.

\bibitem{cooper2002randomindependence}
C.~Cooper, A.~Frieze, B.~Reed, and O.~Riordan.
\newblock Random regular graphs of non-constant degree: Independence and chromatic number.
\newblock {\em Combinatorics, Probability and Computing}, 11(4):323--341, 2002.

\bibitem{cuckler2009hamiltonian}
B.~Cuckler and J.~Kahn.
\newblock Hamiltonian cycles in {D}irac graphs.
\newblock {\em Combinatorica}, 29(3):299--326, 2009.

\bibitem{dudek2017embedding}
A.~Dudek, A.~Frieze, A.~Ruci{\'n}ski, and M.~{\v{S}}ileikis.
\newblock Embedding the {E}rd{\H{o}}s--{R}{\'e}nyi hypergraph into the random regular hypergraph and {H}amiltonicity.
\newblock {\em Journal of Combinatorial Theory, Series B}, 122:719--740, 2017.

\bibitem{ER59}
P.~Erd{\H{o}}s and A.~R{\'e}nyi.
\newblock On random graphs {I}.
\newblock {\em Publ Math Debrecen}, 6:290--297, 1959.

\bibitem{frieze2019hamilton}
A.~Frieze.
\newblock Hamilton cycles in random graphs: a bibliography.
\newblock {\em arXiv preprint arXiv:1901.07139}, 2019.

\bibitem{gao2023number}
P.~Gao.
\newblock The number of perfect matchings, and the nesting properties, of random regular graphs.
\newblock {\em Random Structures \& Algorithms}, 62(4):935--955, 2023.

\bibitem{gao2020kim}
P.~Gao, M.~Isaev, and B.~McKay.
\newblock Kim-{V}u’s sandwich conjecture is true for {$d\geq\log^4 n$}.
\newblock {\em arXiv preprint arXiv:2011.09449}, 2023.

\bibitem{gao2020sandwichingFIRSTBOUND}
P.~Gao, M.~Isaev, and B.~D. McKay.
\newblock Sandwiching random regular graphs between binomial random graphs.
\newblock In {\em Proceedings of the Fourteenth Annual ACM-SIAM Symposium on Discrete Algorithms}, pages 690--701. SIAM, 2020.

\bibitem{gao2022sandwichingFIRSTBOUND}
P.~Gao, M.~Isaev, and B.~D. McKay.
\newblock Sandwiching dense random regular graphs between binomial random graphs.
\newblock {\em Probability Theory and Related Fields}, 184(1-2):115--158, 2022.

\bibitem{hollom2025monotonicity}
L.~Hollom, L.~Lichev, A.~Mond, J.~Portier, and Y.~Wang.
\newblock Monotonicity and decompositions of random regular graphs.
\newblock {\em arXiv preprint arXiv:2505.22875}, 2025.

\bibitem{janson2011random}
S.~Janson, T.~{\L}uczak, and A.~Rucinski.
\newblock {\em Random graphs}.
\newblock John Wiley \& Sons, 2011.

\bibitem{joos2022fractional}
F.~Joos and M.~K{\"u}hn.
\newblock Fractional cycle decompositions in hypergraphs.
\newblock {\em Random Structures \& Algorithms}, 61(3):425--443, 2022.

\bibitem{kim2000concentration}
J.~H. Kim and V.~H. Vu.
\newblock Concentration of multivariate polynomials and its applications.
\newblock {\em Combinatorica}, 20(3):417--434, 2000.

\bibitem{kim2004sandwiching}
J.~H. Kim and V.~H. Vu.
\newblock Sandwiching random graphs: universality between random graph models.
\newblock {\em Advances in Mathematics}, 188(2):444--469, 2004.

\bibitem{klimovsova2023sandwiching}
T.~Klimo{\v{s}}ov{\'a}, C.~Reiher, A.~Ruci{\'n}ski, and M.~{\v{S}}ileikis.
\newblock Sandwiching biregular random graphs.
\newblock {\em Combinatorics, Probability and Computing}, 32(1):1--44, 2023.

\bibitem{komlos1983limit}
J.~Koml{\'o}s and E.~Szemer{\'e}di.
\newblock Limit distribution for the existence of {H}amiltonian cycles in a random graph.
\newblock {\em Discrete mathematics}, 43(1):55--63, 1983.

\bibitem{krivelevich2001random}
M.~Krivelevich, B.~Sudakov, V.~H. Vu, and N.~C. Wormald.
\newblock Random regular graphs of high degree.
\newblock {\em Random Structures \& Algorithms}, 18(4):346--363, 2001.

\bibitem{mcdiarmid1998concentration}
C.~McDiarmid.
\newblock Concentration.
\newblock In {\em Probabilistic methods for algorithmic discrete mathematics}, pages 195--248. Springer, 1998.

\bibitem{mckay1981subgraphs}
B.~D. McKay.
\newblock Subgraphs of random graphs with specified degrees.
\newblock {\em Congressus Numerantium}, 33:213--223, 1981.

\bibitem{mckay1990asymptotic}
B.~D. McKay and N.~C. Wormald.
\newblock Asymptotic enumeration by degree sequence of graphs of high degree.
\newblock {\em European Journal of Combinatorics}, 11(6):565--580, 1990.

\bibitem{posa1976hamiltonian}
L.~P{\'o}sa.
\newblock Hamiltonian circuits in random graphs.
\newblock {\em Discrete Mathematics}, 14(4):359--364, 1976.

\bibitem{robinson1992almost}
R.~W. Robinson and N.~C. Wormald.
\newblock Almost all cubic graphs are {H}amiltonian.
\newblock {\em Random Structures \& Algorithms}, 3(2):117--125, 1992.

\bibitem{robinson1994almost}
R.~W. Robinson and N.~C. Wormald.
\newblock Almost all regular graphs are {H}amiltonian.
\newblock {\em Random Structures \& Algorithms}, 5(2):363--374, 1994.

\bibitem{shamir1984large}
E.~Shamir and E.~Upfal.
\newblock Large regular factors in random graphs.
\newblock In {\em North-Holland Mathematics Studies}, volume~87, pages 271--282. Elsevier, 1984.

\bibitem{wormald1981asymptoticconnect}
N.~C. Wormald.
\newblock The asymptotic connectivity of labelled regular graphs.
\newblock {\em Journal of Combinatorial Theory, Series B}, 31(2):156--167, 1981.

\bibitem{wormald1981asymptoticcycle}
N.~C. Wormald.
\newblock The asymptotic distribution of short cycles in random regular graphs.
\newblock {\em Journal of Combinatorial Theory, Series B}, 31(2):168--182, 1981.

\bibitem{wormald1999models}
N.~C. Wormald.
\newblock Models of random regular graphs.
\newblock {\em London Mathematical Society lecture note series}, pages 239--298, 1999.

\end{thebibliography}
\end{document}